\documentclass{amsart}
\pagestyle{plain}
\setlength{\parskip}{0in}
\setlength{\textwidth}{5.5in}
\setlength{\topmargin}{0in}
\setlength{\textheight}{9in}
\setlength{\parindent}{.25in}
\setlength{\oddsidemargin}{1cm}
\setlength{\evensidemargin}{1cm}

\setlength{\parindent}{.2in}

\usepackage{amsthm,mathtools}
\usepackage[all]{xy}

\theoremstyle{plain}
\theoremstyle{definition}
\newtheorem{theorem}{Theorem}[section]
\newtheorem{lemma}[theorem]{Lemma}

\newtheorem{problem}[theorem]{Problem}
\newtheorem{question}[theorem]{Question}
\newtheorem{proposition}[theorem]{Proposition}
\newtheorem{corollary}[theorem]{Corollary}

\newtheorem{definition}[theorem]{Definition}
\newtheorem{example}[theorem]{Example}
\newtheorem{remark}[theorem]{Remark}

\usepackage{amssymb,amsmath,tabularx,graphicx,float,placeins, hyperref}
\usepackage{tikz}
\usetikzlibrary[calc,intersections,through,backgrounds,arrows,decorations.pathmorphing]

\def\Sfrak{\mathfrak{S}}

\def\Bcal{\mathcal{B}}\def\Gcal{\mathcal{G}}\def\Jcal{\mathcal{J}}\def\Ical{\mathcal{I}}\def\Lcal{\mathcal{L}}\def\Mcal{\mathcal{M}}\def\Ncal{\mathcal{N}}\def\Wcal{\mathcal{W}}\def\Xcal{\mathcal{X}}\def\Ycal{\mathcal{Y}}\def\Zcal{\mathcal{Z}}

\def\Fbb{\mathbb{F}}\def\Kbb{\mathbb{K}}\def\Nbb{\mathbb{N}}\def\Pbb{\mathbb{P}}\def\Rbb{\mathbb{R}}\def\Zbb{\mathbb{Z}}

\def\hookra{\hookrightarrow}\def\LRa{\Leftrightarrow}\def\longlra{\longleftrightarrow}\def\ov{\overline}\def\pr{\prime}\def\ra{\rightarrow}\def\setm{\setminus}\def\thra{\twoheadrightarrow}

\DeclareMathOperator{\Comp}{Comp}  
\DeclareMathOperator{\Con}{Con}  
\DeclareMathOperator{\con}{con}  
\DeclareMathOperator{\des}{des}  
\DeclareMathOperator{\Int}{Int}  
\DeclareMathOperator{\MTub}{MTub}  
\DeclareMathOperator{\NCSym}{NCSym}  
\DeclareMathOperator{\std}{std}  

\DeclareMathOperator{\topT}{top} 


\newcommand{\down}{\downarrow}

\newcommand{\covers}{{\,\,\,\cdot\!\!\!\! >\,\,}}
\newcommand{\covered}{{\,\,<\!\!\!\!\cdot\,\,\,}}
\renewcommand{\top}{\operatorname{top}}
\newcommand{\pidown}{\pi_\downarrow}

\newcommand{\inv}{\operatorname{inv}}
\newcommand{\meet}{\wedge}
\newcommand{\join}{\vee}




\addtolength{\marginparwidth}{3mm}

\newcounter{margincounter}
\setcounter{margincounter}{1}


\begin{document}

\title{Lattices from graph associahedra and subalgebras of the Malvenuto-Reutenauer algebra}

\author{Emily Barnard, Thomas McConville}

\maketitle

\begin{abstract}
The Malvenuto-Reutenauer algebra is a well-studied combinatorial Hopf algebra with a basis indexed by permutations. This algebra contains a wide variety of interesting sub Hopf algebras, in particular the Hopf algebra of plane binary trees introduced by Loday and Ronco. We compare two general constructions of subalgebras of the Malvenuto-Reutenauer algebra, both of which include the Loday-Ronco algebra. The first is a construction by Reading defined in terms of lattice quotients of the weak order, and the second is a construction by Ronco in terms of graph associahedra. To make this comparison, we consider a natural partial ordering on the maximal tubings of a graph and characterize those graphs for which this poset is a lattice quotient of the weak order.
\end{abstract}

\setcounter{tocdepth}{2}
\tableofcontents

\section{Introduction}

Given a graph $G$, Postnikov defined a graph associahedron $P_G$ as an example of a \emph{generalized permutohedron}, a polytope whose normal fan coarsens the braid arrangement \cite{postnikov:2009permutohedra}. Graph associahedra were also introduced independently in \cite{carr.devadoss:2006coxeter} and \cite{davis.janus.scott:2003fundamental}. Some significant examples of graph associahedra include the associahedron, the cyclohedron, and the permutohedron. Combinatorially, the faces of the graph associahedron correspond to certain collections of connected subgraphs of $G$, called \emph{tubings}. We recall these definitions in Section~\ref{sec:tubing}. We consider a poset $L_G$ on the maximal tubings of $G$ whose Hasse diagram is an orientation of the $1$-skeleton of the graph associahedron.


In \cite{ronco:2012tamari}, Ronco defined a binary operation on a vector space generated by the tubings of an ``admissible'' family of graphs $\Gcal$, which gives this space the structure of an associative algebra. We call this algebra a \emph{tubing algebra}; see Section~\ref{subsec_hopf_algebra}. In particular, when $\Gcal$ is the set of complete graphs $K_n$ or path graphs $P_n$, the tubing algebra is isomorphic to either the Malvenuto-Reutenauer algebra on permutations \cite{malvenuto.reutenauer:1995duality} or the Loday-Ronco algebra on binary trees \cite{loday.ronco:1998hopf}, respectively. The interpretation of these algebras in terms of tubings was given previously in \cite{forcey.springfield:2010geometric}.

In Section~\ref{subsec_tubing_coalgebra}, we introduce the notion of a ``restriction-compatible'' family of graphs. Such families come with a comultiplication on their maximal tubings. We call the resulting coalgebra a \emph{tubing coalgebra}.

Reading introduced a general technique to construct subalgebras of the Malvenuto-Reutenauer algebra using lattice quotients of the weak order on permutations in \cite{reading:2005lattice}. Using the terminology of \cite{reading:2005lattice}, if a sequence of lattice congruences $\{\Theta_n\}_{n\geq 0}$ is translational (respectively, insertional), then the set of congruence classes of $\Sfrak_n$ modulo $\Theta_n$ naturally index a basis of a subalgebra (respectively, sub-coalgebra) of the Malvenuto-Reutenauer algebra.


The main goal of this work is to compare the above constructions of Reading and Ronco. For any graph $G$ with vertex set $[n]$, there is a canonical surjective map ${\Psi_G:\Sfrak_n\ra L_G}$ obtained by coarsening the braid arrangement in $\Rbb^n$ to the normal fan of~$P_G$. Our first main result characterizes graphs for which the map $\Psi_G$ is a lattice map. We say a graph $G$ is \emph{filled} if for each edge $\{i,k\}$ in $G$, there are edges $\{i,j\}$ and $\{j,k\}$ in $G$ whenever $i<j<k$.

\begin{theorem}\label{thm_main_lattice}
The map $\Psi_G$ is a lattice quotient map if and only if $G$ is filled.
\end{theorem}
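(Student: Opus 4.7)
My plan is to describe $\Psi_G$ via a tubing-of-permutations construction, characterize the contracted cover relations combinatorially, and then apply Reading's arc-based characterization of lattice congruences of the weak order \cite{reading:2005lattice}. Concretely, for $\sigma \in \Sfrak_n$, define the $i$-th tube of $\Psi_G(\sigma)$ to be the connected component of $\sigma_i$ in $G[\{\sigma_i, \sigma_{i+1}, \ldots, \sigma_n\}]$ (a right-to-left construction matching the normal-fan coarsening, as checked on the Tamari example $G = P_n$). Writing $\sigma \equiv_G \tau$ when $\Psi_G(\sigma) = \Psi_G(\tau)$, a direct case analysis on adjacent swaps shows that the cover $\sigma \lessdot \sigma s_i$ with $a := \sigma_i < \sigma_{i+1} =: b$ is contracted by $\equiv_G$ if and only if $a$ and $b$ lie in distinct connected components of $G[\{a, b\} \cup R]$, where $R := \{\sigma_{i+2}, \ldots, \sigma_n\}$.

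For the non-filled direction, let $\{a, c\} \in E(G)$ with $a < c$ and $a < j < c$ satisfy $\{a, j\} \notin E(G)$ (the symmetric case is identical). If $a$ and $j$ lie in the same connected component of $G$, I would construct two cover relations sharing the Reading arc $(a, j, S_1, S_2)$ but with different contracted statuses: placing $c$ (together with any additional vertices realizing a path from $a$ to $j$ through $c$) to the right of the swap adds those vertices to $R$ and connects $a$ to $j$ in the tested subgraph, so that cover is not contracted; placing them to the left shrinks $R$ so $a, j$ revert to distinct components, and the cover is contracted. This arc-dependence failure contradicts the necessary condition in Reading's criterion. If instead $a$ and $j$ lie in different components of $G$, then $\{j, c\} \notin E(G)$ too and the induced subgraph on $\{a, j, c\}$ consists of the single edge $\{a, c\}$; a direct computation then produces a $\equiv_G$-fiber in $\Sfrak_n$ containing two incomparable maximal elements, so fibers are not intervals and $\equiv_G$ is not even an order congruence.

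For the filled direction, the key combinatorial lemma is: in a filled graph $G$, if $a < b$ lie in the same connected component of $G[U]$ for some $U \supseteq \{a, b\}$, then they lie in the same component of $G[U \cap [a, b]]$. I prove this by strong induction on $b - a$: on a shortest path from $a$ to $b$ in $G[U]$, any edge $\{u, v\}$ with $a$ or $b$ strictly between $u$ and $v$ admits a shortcut via the filled hypothesis applied at $j \in \{a, b\}$, yielding a shorter path to which induction applies. The lemma reduces the contracted condition to $a, b$ lying in distinct components of $G[\{a, b\} \cup (R \cap (a, b))]$, depending only on the Reading arc. The resulting set of contracted arcs $\Acal_G = \{(a, b, S_1, S_2) : a, b \text{ in distinct components of } G[\{a, b\} \cup S_2]\}$ is then closed under arc forcing: shrinking $S_2$ only removes vertices from the tested subgraph and preserves disconnection, while sub-arc forcing is handled by applying the filled lemma inside the shrunken interval $[a', b'] \subseteq [a, b]$.

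The main obstacle is the combinatorial lemma about filled graphs; the inductive shortcutting argument must carefully manage paths that weave in and out of $[a, b]$, and the filled hypothesis is precisely what supplies the shortcut at each exit. Once the lemma is in hand, both the arc-determination of contractedness and the closure of $\Acal_G$ under arc forcing reduce to monotonicity checks, and the non-filled direction is a routine witness construction split only into the same-component and different-component subcases.
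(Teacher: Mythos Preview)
Your filled direction takes a genuinely different route from the paper. The paper separately proves that $\Psi_G$ preserves meets when $G$ is right-filled (by building a theory of $G$-tree inversions and $G$-permutations and showing that $w\mapsto\inv(\Psi_G(w))$ is monotone; see Corollary~\ref{g-perm lattice cong} and Theorem~\ref{inversion order}) and that it preserves joins when $G$ is left-filled (by duality; Corollary~\ref{left filled cor}). You bypass this machinery by characterizing the contracted covers directly and invoking Reading's subarc criterion (Corollary~\ref{uncontracted}). Your ``filled lemma'' is correct---indeed, clamping each vertex of a path in $G[U]$ to $[a,b]$ via $m_i=\max(a,\min(v_i,b))$ and using the filled hypothesis on each edge shows consecutive $m_i$ are equal or adjacent---and the same clamping to $[a',b']$ handles closure of the uncontracted arcs under subarcs. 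One small gap: having found a congruence $\Theta$ with exactly your arc set, you still need $\equiv_G=\Theta$; for this you must know that each $\equiv_G$-class is connected in the Hasse diagram (so that $\equiv_G$ is generated by its contracted covers). This holds because the fibers of $\Psi_G$ are linear-extension sets of $G$-forests, but you should say so explicitly.

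Your non-filled direction has a genuine gap. In the case $\{a,j\}\notin E$ with $a,j$ in the same component of $G$, you propose to toggle the contracted status of an arc with endpoints $a,j$ by moving ``$c$ together with a path from $a$ to $j$ through $c$'' across the swap. But such a simple path need not exist, and without it your two permutations may fail to have different contracted status. Take $G$ on $[4]$ with edge set $\{\{1,2\},\{2,3\},\{1,4\}\}$ and $(a,c,j)=(1,4,3)$: here $\{1,4\}\in E$ and $\{1,3\}\notin E$, yet the only simple path from $1$ to $3$ is $1\text{--}2\text{--}3$, which avoids $c=4$ entirely (vertex $4$ has degree one). A direct check shows that both arcs on $(1,3)$ have \emph{consistent} contracted status in this graph, so your construction produces no witness at $(a,j)=(1,3)$; the actual inconsistent arcs here are $(3,4)$ and $(2,4,\pm)$, which your argument does not locate. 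The paper avoids this difficulty by first proving that if $\Psi_G$ is a lattice map then so is $\Psi_{\std(G')}$ for every induced subgraph $G'$ (Lemma~\ref{lem: interval lemma}), thereby reducing the non-filled case to the three non-filled graphs on three vertices, each of which is dispatched by an explicit computation.
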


Restricting attention to filled graphs, we have the following comparison between the constructions of Reading and Ronco.

\begin{theorem}\label{thm_main}
  Let $\Gcal=\{G_n\}_{n\geq 0}$ be a sequence of filled graphs, and let $\mathbf{\Theta}=\{\Theta_n\}_{n\geq 0}$ be the associated sequence of lattice congruences of the weak order.
  \begin{enumerate}
  \item\label{thm_main_1} The family $\Gcal$ is admissible if and only if $\mathbf{\Theta}$ is translational.
  \item\label{thm_main_2} The family $\Gcal$ is restriction-compatible if and only if $\mathbf{\Theta}$ is insertional.
  \end{enumerate}
\end{theorem}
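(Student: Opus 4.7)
The plan is to exploit Theorem~\ref{thm_main_lattice}: for each filled graph $G_n$, the fibers of $\Psi_{G_n}$ form the classes of a lattice congruence $\Theta_n$, and the quotient is precisely $L_{G_n}$. Under this identification, admissibility and restriction-compatibility of $\Gcal$ should translate into compatibility statements among the maps $\Psi_{G_n}$ across varying $n$, and these in turn match the definitions of translational and insertional sequences of congruences. In both parts (1) and (2), the proof amounts to checking that a single commutative square characterizes each property on both sides.

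For part (1), I would first recast admissibility of $\Gcal$ as the statement that, for each $a,b$ with $a+b=n$, the graph $G_n$ is obtained by the prescribed gluing of $G_a$ and $G_b$ used by Ronco to define the tubing product, so that every product of a maximal tubing of $G_a$ with a maximal tubing of $G_b$ is a maximal tubing of $G_n$. Translationality of $\mathbf{\Theta}$ is the analogous statement on the permutation side, formulated in terms of the shifted concatenation $\Sfrak_a \times \Sfrak_b \ra \Sfrak_n$: whenever $\sigma \equiv \sigma' \bmod \Theta_a$ and $\tau \equiv \tau' \bmod \Theta_b$, every permutation in the shuffle of $\sigma$ with the shifted $\tau$ is congruent mod $\Theta_n$ to the corresponding permutation from $\sigma'$ and $\tau'$. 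The key lemma to establish is that $\Psi_{G_n}$ sends each such shuffle to the product tubing built from $\Psi_{G_a}(\sigma)$ and $\Psi_{G_b}(\tau)$ if and only if $G_n$ has the edge-structure demanded by admissibility. Granted this, both directions of (1) follow by comparing fibers.

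For part (2), restriction-compatibility says that the induced subgraph of $G_n$ on any interval $[i,j]$ again lies in $\Gcal$, and that restricting a maximal tubing of $G_n$ to this interval yields a maximal tubing of the smaller graph. Insertionality of $\mathbf{\Theta}$ says, dually to translationality, that standardizing a window of any congruence class of $\Theta_n$ is again a congruence class of $\Theta_{j-i+1}$. Since $\Psi_{G_n}$ is defined by reading off tubes from the positions of a permutation, it commutes with window-standardization in exactly the sense needed precisely when the graphs restrict compatibly, which gives (2) by the same fiber comparison.

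The main obstacle in both parts will be the ``only if'' direction, where a lattice-congruence condition must be leveraged to recover graph-theoretic structure. For this I would invoke Reading's description of $\Theta_n$ via forcing of cover relations in the weak order (equivalently, via shards of the braid arrangement), and then use Theorem~\ref{thm_main_lattice} to translate statements about which cover relations are contracted by $\Theta_n$ into statements about which edges must appear in $G_n$. Once this dictionary is in hand, both equivalences reduce to an inspection of cover relations, carried out by a short case analysis on adjacent transpositions and their images under $\Psi_{G_n}$.
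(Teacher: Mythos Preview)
Your plan has the right instinct---translate graph conditions on $\Gcal$ into arc-contraction conditions on $\mathbf{\Theta}$---but several of the definitions you work from are not the ones in play, and the key technical device the paper relies on is missing from your sketch.

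First, your statement of restriction-compatibility is incorrect: it is \emph{not} the condition that induced subgraphs on intervals $[i,j]$ lie in $\Gcal$. The paper's definition requires, for every subset $I\subseteq V(G_n)$, that $\std(G_n|_I)$ and (crucially) $\std(G_n/I)$ be subgraphs of the appropriate graphs in $\Gcal$, where $G/I$ is the reconnected complement. The contraction condition is what carries the content in the equivalence with insertionality, and it does not reduce to intervals. Likewise, your description of ``insertional'' as ``standardizing a window of a congruence class yields a congruence class'' is not the operative definition; insertionality says that if $\Theta_n$ contracts an arc $\alpha$, then $\Theta_{n+1}$ contracts every arc obtained from $\alpha$ by inserting a node. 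These are arc-level statements, not window-restriction statements, and your commutative-square picture does not obviously capture them.

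Second, the mechanism you propose for the ``only if'' directions---invoke forcing/shards and read off edges---is the right idea, but it cannot be executed without knowing \emph{which} arcs generate $\Theta_{G_n}$. The paper's proof rests on Theorem~\ref{generators}: for a filled graph $G$, the congruence $\Theta_G$ is generated precisely by the positive arcs $(x,y,+)$ for minimal non-edges $\{x,y\}$. This is what converts ``$\Theta_n$ contracts $\alpha$'' into concrete edge/non-edge information about $G_n$, and it is the step that makes both directions of part~(\ref{thm_main_2}) go through as a short arc computation. Without this identification, your ``short case analysis on adjacent transpositions'' has no traction.

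Finally, for part~(\ref{thm_main_1}) the paper does not argue via shuffles and fiber comparison at all. It proves two independent classifications---admissible $1$-parameter families are exactly $\Gcal(A)$ for $A\subseteq\Nbb$ (Proposition~\ref{prop_admissible_characterization}), and filled families with translational $\mathbf{\Theta}$ are exactly $\{H_{k,n}\}$ (Proposition~\ref{prop_translational_char})---and then observes that a filled $\Gcal(A)$ forces $A=\{i:i\le k\}$, i.e.\ $G_n^A=H_{k,n}$. Your shuffle-compatibility route might be made to work, but it is a different (and longer) argument than the classification the paper actually uses.
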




In \cite{forcey:2012species}, Forcey posed the problem of determining whether $L_G$ is a lattice for any graph $G$. This turns out to be false in general; cf. Section~\ref{subsec:tubing_lattice}. We say a graph $G$ on $[n]$ is \emph{right-filled} if whenever $\{i,k\}$ is an edge, so is $\{j,k\}$ for $i<j<k$. Dually, we say $G$ is \emph{left-filled} if $\{i,j\}$ is an edge whenever there is an edge $\{i,k\}$ for $i<j<k$. We prove that $L_G$ is a lattice whenever $G$ is either left-filled or right-filled. More precisely, these are the cases when $L_G$ is a semilattice quotient of the weak order. For other graphs, the poset $L_G$ may still be a lattice, even if it is not a semilattice quotient of the weak order. Some additional examples and conjectures are discussed in Section~\ref{sec:other}. 


The rest of the paper is organized as follows. We introduce the poset of maximal tubings $L_G$ in Section~\ref{sec:tubing}. The main result in this section is Theorem~\ref{thm:NRC}, which states that $L_G$ has the \emph{non-revisiting chain property}, defined in Section~\ref{subsec:NRC}. In Section~\ref{sec:lattice}, we recall the congruence-uniform lattice structure of the weak order on permutations and elaborate on the canonical map from permutations to maximal tubings. Sections~\ref{sec_lattice} and~\ref{sec_hopf} are devoted to proving Theorems~\ref{thm_main_lattice} and~\ref{thm_main}, respectively. We end the paper with some open problems and conjectures in Section~\ref{sec:other}.

\section{Poset of maximal tubings}\label{sec:tubing}

\subsection{Tubings and $G$-trees}

In this section, we recall the principal combinatorial objects in this paper, namely the maximal tubings of a graph and $G$-trees.

Let $G=(V,E)$ be a simple graph with vertex set $V=[n]:=\{1,\ldots,n\}$. If $I\subseteq V$, we let $G|_I$ denote the induced subgraph of $G$ with vertex set $I$. A \emph{tube} is a nonempty subset $I$ of vertices such that the induced subgraph $G|_I$ is connected. Any tube not equal to $V$ is called a \emph{proper tube}. We let $\Ical(G)$ be the set of all tubes of $G$.

We define the \emph{deletion} $G\setm I$ to be the graph $G|_{V\setm I}$ and the \emph{contraction} (or \emph{reconnected complement}) $G/I$ as the graph with vertex set $V\setm I$ and edges $\{i,j\},\ (i\neq j)$ if either $\{i,j\}\in E(G)$ or there exists a tube $J$ of $G|_I$ such that $\{i,k\}\in E(G)$ and $\{j,l\}\in E(G)$ for some $k,l\in J$.
 Note that we define deletion and contraction on sets of vertices of $G$ rather than on edges as it is done for graphic matroids. Furthermore, the contracted graph $G/I$ is always simple, i.e. it has no loops or parallel edges.

Two tubes $I, J$ are said to be \emph{compatible} if either

\begin{itemize}
\item they are \emph{nested}: $I\subseteq J$ or $J\subseteq I$, or
\item they are \emph{separated}: $I\cup J$ is not a tube.
\end{itemize}

A \emph{tubing} $\Xcal$ of $G$ is any collection of pairwise compatible tubes. The collection $\Xcal$ is said to be a \emph{maximal tubing} if it is maximal by inclusion. We let $\MTub(G)$ be the set of maximal tubings of the graph $G$. If $\Xcal$ is a tubing of $G$ and $X_1,\ldots,X_r\in\Xcal$ are pairwise disjoint, then the union $I=X_1\cup\cdots\cup X_r$ is called an \emph{ideal} of $\Xcal$. This terminology may be explained by the connection to $G$-trees given later in this section.

\begin{lemma}
If $\Xcal$ is a tubing of $G$ with an ideal $I$ then there is a unique collection $X_1,\ldots, X_r$ of pairwise disjoint tubes in $\Xcal$, namely the connected components of $G|_I$, such that $I=X_1\cup\cdots\cup X_r$.
\end{lemma}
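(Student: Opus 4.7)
The plan is to show that any representation $I=X_1\cup\cdots\cup X_r$ of an ideal as a disjoint union of tubes in $\Xcal$ must coincide with the decomposition of $I$ into the connected components of $G|_I$, which forces both existence and uniqueness.

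First I would exploit the fact that $\Xcal$ is a tubing to pin down how the $X_i$ interact. Since any two $X_i,X_j$ belong to a tubing, they are compatible; and since they are assumed pairwise disjoint, neither is nested in the other. Hence every pair must be \emph{separated}, i.e., $X_i\cup X_j$ is not a tube. On the other hand each $X_i$ is itself a tube, so $G|_{X_i}$ is connected, which means $X_i$ lies entirely inside a single connected component of $G|_I$.

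The crucial step is to show that each connected component $C$ of $G|_I$ contains exactly one of the $X_i$. The containment of components in the $X_i$-decomposition is by the previous paragraph, so $C$ is a disjoint union $X_{i_1}\cup\cdots\cup X_{i_s}$ of some of the $X$'s. I would argue by contradiction: if $s\geq 2$, the connectedness of $C$ in $G|_I$ forces the existence of indices $a\neq b$ and an edge of $G|_I$ joining some vertex of $X_{i_a}$ to some vertex of $X_{i_b}$ (otherwise $C$ would split into the two nonempty clopen pieces $X_{i_a}$ and $C\setm X_{i_a}$). Gluing the two connected induced subgraphs $G|_{X_{i_a}}$ and $G|_{X_{i_b}}$ along this edge yields a connected induced subgraph on $X_{i_a}\cup X_{i_b}$, so $X_{i_a}\cup X_{i_b}$ is a tube. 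This contradicts separation, so $s=1$.

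Once each component of $G|_I$ equals some $X_i$ and each $X_i$ is contained in a component, the two partitions of $I$ agree, which simultaneously identifies the $X_i$ with the components of $G|_I$ and proves the decomposition is unique. I do not anticipate any serious obstacle; the only place to be careful is justifying the existence of the cross-edge from connectedness of $C$, but this is a standard two-coloring argument on a finite connected graph.
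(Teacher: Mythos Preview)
Your argument is correct. The paper actually states this lemma without proof, treating it as an immediate consequence of the definitions; your write-up supplies exactly the natural justification: disjoint compatible tubes must be separated, each tube lies in one connected component of $G|_I$, and a cross-edge between two of them would make their union a tube, contradicting separation. There is nothing to add.
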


Tubings of $G$ may be restricted to certain induced subgraphs or contracted graphs as follows.
If $I$ is a subset of $[n]$, let $\Comp(I)$ be the set of maximal tubes of $G|_I$; i.e., $J\in\Comp(I)$ if $J\subseteq I$ and $G|_J$ is a connected component of $G|_I$. If $\Xcal$ is a tubing of $G$, we set
$$\Xcal|_I:=\bigcup_{J\in\Xcal}\Comp(I\cap J).$$

\begin{lemma}\label{lem:tubing_restriction}
Let $\Xcal$ is a tubing of $G$ and $I\subseteq [n]$.
  The collection $\Xcal|_I$ is a tubing of $G|_I$. If $\Xcal$ is maximal then so is $\Xcal|_I$.
\end{lemma}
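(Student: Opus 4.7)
The first assertion, that $\Xcal|_I$ is a tubing of $G|_I$, is proved by a direct case analysis. Every element of $\Xcal|_I$ is by construction a connected subset of $I$, hence a tube of $G|_I$. To verify pairwise compatibility, take $K_1 \in \Comp(I\cap J_1)$ and $K_2 \in \Comp(I\cap J_2)$ arising from $J_1, J_2 \in \Xcal$. If $J_1, J_2$ are nested, say $J_1 \subseteq J_2$, then $K_1$ is a connected subset of $I\cap J_2$ and so lies in some component of $G|_{I\cap J_2}$; either this component is $K_2$ (whence $K_1 \subseteq K_2$), or it is a different component, in which case $K_1$ and $K_2$ are disjoint with no edge between them in $G$, hence separated. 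If instead $J_1, J_2$ are separated in $G$, then $K_1 \subseteq J_1$ and $K_2 \subseteq J_2$ are likewise disjoint with no edges between, hence separated.

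For maximality, my plan is to exhibit a bijection $\psi\colon I \to \Xcal|_I$. Together with the general upper bound that any tubing of a graph $H$ has at most $|V(H)|$ elements, this forces $\Xcal|_I$ to be maximal. The structural input I rely on is that in a maximal tubing $\Xcal$ each tube $T \in \Xcal$ has a unique \emph{root vertex} $r(T) \in T$ lying in no proper sub-tube of $T$ in $\Xcal$, and that $r$ defines a bijection $\Xcal \to V$. Existence of $r(T)$ follows from connectedness: the maximal proper sub-tubes of $T$ in $\Xcal$ are pairwise disjoint and separated (so contribute no edges between one another), and if they covered $T$ then $G|_T$ would be disconnected. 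The same argument applied to an arbitrary tubing $\Tcal$ of $H$ yields the uniform upper bound $|\Tcal| \le |V(H)|$ by mapping each $T \in \Tcal$ injectively to such a root vertex.

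Given this setup I set $\psi(u)$ to be the connected component of $G|_{I \cap r^{-1}(u)}$ containing $u$. Injectivity: if $\psi(u_1) = \psi(u_2) = K$, then $K \subseteq r^{-1}(u_1) \cap r^{-1}(u_2)$ forces the two containing tubes to be nested, and the defining property of a root vertex then forces $u_1 = u_2$. Surjectivity: given $K \in \Xcal|_I$, let $J^*$ be the smallest tube of $\Xcal$ with $K \in \Comp(I\cap J^*)$; one argues $r(J^*) \in K$, since otherwise $K$ would lie inside some child of $J^*$ in $\Xcal$ and remain a component of its intersection with $I$, contradicting minimality of $J^*$; then $\psi(r(J^*)) = K$ by construction. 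The main obstacle is verifying the root-vertex structure in maximal tubings, especially uniqueness, which requires the maximality of $\Xcal$. A more direct alternative—taking an extra tube $K$ of $G|_I$ compatible with $\Xcal|_I$ and trying to lift it to a tube of $G$ compatible with $\Xcal$—is complicated by edges from $K$ into $V\setminus I$ that are not constrained by $\Xcal|_I$-compatibility, so the counting route via $\psi$ seems cleaner.
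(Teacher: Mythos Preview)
Your argument is correct. The compatibility check in the first paragraph is clean, and the counting argument for maximality via the root-vertex bijection $\psi$ goes through: injectivity and surjectivity are both verified carefully, and the auxiliary bound $|\Tcal|\le|V(H)|$ for arbitrary tubings follows as you say (choosing any root for each tube suffices, since the injectivity step only uses the defining property of a root).

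The paper takes a different route: it does not give a self-contained proof of this lemma at all, but instead defers to \cite[Definition~3.4]{forcey.springfield:2010geometric}, observing that the restriction $\Xcal\mapsto\Xcal|_I$ is induced by a cellular map between graph associahedra (a generalized Tonks projection). That viewpoint immediately explains why maximal tubings go to maximal tubings, since vertices of $P_G$ map to vertices of $P_{G|_I}$. Your approach is more elementary and entirely combinatorial, avoiding any polytope geometry; it also makes the structure of $\Xcal|_I$ explicit by identifying, for each $u\in I$, exactly which tube of $\Xcal|_I$ has $u$ as its root. The paper's approach, by contrast, situates the lemma within the broader geometric picture that is used repeatedly later (e.g.\ in the discussion of normal fans and Minkowski summands), so each has its merits.
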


Lemma~\ref{lem:tubing_restriction} can be deduced from a cellular map between different graph associahedra; see \cite[Definition 3.4]{forcey.springfield:2010geometric}. This map is a generalized form of the \emph{Tonks projection}, one of the standard maps from the faces of the permutahedron to the faces of the associahedron.

When $I$ is an ideal of $\Xcal$ we set
$$\Xcal/I:=\{J\setm I:\ J\in\Xcal,\ J\nsubseteq I\}.$$
\begin{lemma}\label{lem:tubing_del_con}
Let $\Xcal$ is a tubing of $G$ with an ideal $I$.
The collection $\Xcal/I$ is a tubing of~$G/I$.
If $\Xcal$ is maximal then so is $\Xcal/I$.
\end{lemma}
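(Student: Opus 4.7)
My plan is to first establish a structural observation that organizes the entire argument. Write $I=X_1\sqcup\cdots\sqcup X_r$, with each $X_i\in\Xcal$ a connected component of $G|_I$. For any $J\in\Xcal$, compatibility with each $X_i$ forces one of: $J\cap X_i=\emptyset$ with no $G$-edge between them (separated), $J\subseteq X_i$, or $X_i\subseteq J$ (both nested). When $J\nsubseteq I$, the middle option is excluded, so $J$ either contains $X_i$ entirely or is $G$-disjoint from it with no adjacent edge. I would use this trichotomy repeatedly.

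With that in hand, I would verify the first assertion in two steps. \emph{Connectedness:} for $J\in\Xcal$ with $J\nsubseteq I$, given two vertices of $J\setm I$, connect them by a path in $G|_J$; any maximal subpath entering $I$ lies entirely in one component $X_i$ (since the $X_i$ are the components of $G|_I$), and its two endpoints, which lie in $J\setm I$, form an edge of $G/I$ induced by the tube $X_i$. \emph{Pairwise compatibility:} if $J\subseteq K$ in $\Xcal$, the images are nested; if $J,K$ are disjoint with no $G$-edge between them, then any hypothetical $G/I$-edge from $J\setm I$ to $K\setm I$ would either already be a $G$-edge (excluded) or would be induced by some $X_i$ adjacent in $G$ to both $J$ and $K$, but the trichotomy then forces $X_i\subseteq J\cap K=\emptyset$, a contradiction.

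The main obstacle is maximality, which requires a lifting construction. Given a tube $T$ of $G/I$ compatible with every element of $\Xcal/I$, I would define
$$\tilde{T}\;:=\;T\;\cup\;\bigcup\bigl\{X_i\,:\,X_i\text{ has a $G$-neighbor in }T\bigr\}$$
and show $\tilde{T}\in\Xcal$. Connectedness of $G|_{\tilde{T}}$ reverses the path-collapsing step above, using that each added $X_i$ is itself connected and is attached to $T$ by a $G$-edge, and $\tilde{T}\setm I=T$ is immediate. The substantive point is compatibility of $\tilde{T}$ with each $J\in\Xcal$: when $J\subseteq I$, so $J\subseteq X_i$ for some $i$, compatibility is governed directly by whether $X_i\subseteq\tilde{T}$; when $J\nsubseteq I$, the hypothesis that $T$ is compatible with $J\setm I$ in $G/I$ must be transported into compatibility of $\tilde{T}$ with $J$ in $G$. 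The delicate sub-case is separation, where any putative $G$-edge between $J$ and $\tilde{T}$ would, by the trichotomy, either induce a $G/I$-edge between $J\setm I$ and $T$ or place some $X_i$ in $J\cap\tilde{T}$, and both options contradict the standing hypotheses. Maximality of $\Xcal$ then gives $\tilde{T}\in\Xcal$, and since $\tilde{T}\setm I=T$ with $\tilde{T}\nsubseteq I$, we conclude $T\in\Xcal/I$.
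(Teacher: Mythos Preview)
The paper states Lemma~\ref{lem:tubing_del_con} without proof, so there is no argument of the paper's to compare against. Your proposal supplies a complete and correct direct verification.

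Your organizing ``trichotomy'' is the right tool: for $J\in\Xcal$ with $J\nsubseteq I$, compatibility with each component $X_i$ forces either $X_i\subseteq J$ or $X_i$ and $J$ separated in $G$ (disjoint with no connecting edge). One small point worth making explicit is that whenever you reduce a $G/I$-edge to ``some $X_i$ adjacent to both endpoints,'' you are silently using that any tube of $G|_I$ lies in a single component $X_i$; this is immediate but is the step that justifies replacing an arbitrary connecting tube by a full component.

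The maximality argument via the lift $\tilde{T}=T\cup\bigcup\{X_i:X_i\text{ has a }G\text{-neighbor in }T\}$ is sound. In the nested sub-case $J\setm I\subseteq T$ you need that every $X_i\subseteq J$ actually has a $G$-neighbor in $J\setm I\subseteq T$ (so that $J\cap I\subseteq\tilde{T}$); this follows because $G|_J$ is connected and no edge of $G$ joins distinct components $X_i,X_j$, so the first step of a path in $G|_J$ leaving $X_i$ must land in $J\setm I$. In the separated sub-case, your outline correctly traces every possible $G$-edge between $\tilde{T}$ and $J$ back to a forbidden $G/I$-edge between $T$ and $J\setm I$, using the trichotomy to absorb any intermediate $X_i$ into one side. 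With these details filled in, the lift $\tilde{T}$ is a tube of $G$ compatible with all of $\Xcal$, so $\tilde{T}\in\Xcal$ and $T=\tilde{T}\setm I\in\Xcal/I$, establishing maximality.
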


Any maximal tubing $\Xcal$ contains exactly $n$ tubes. Indeed, we have the following bijection between $\Xcal$ and $[n]$.

\begin{lemma}
  If $\Xcal$ is a maximal tubing, then each tube $I$ contains a unique element $\topT_{\Xcal}(I)\in [n]$ not contained in any proper tube of $\Xcal|_I$. Furthermore, the function $\topT_{\Xcal}$ is a bijection between the tubes in $\Xcal$ and the vertex set $[n]$.
\end{lemma}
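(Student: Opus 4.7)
The plan is to induct on $|I|$, reducing first via Lemma~\ref{lem:tubing_restriction} to the statement that for any maximal tubing $\Ycal$ of a connected graph $H$ on $[n]$, (a) the full vertex set $[n]$ lies in $\Ycal$, (b) there is a unique vertex of $[n]$ not contained in any proper tube of $\Ycal$, and (c) $\topT_{\Ycal} : \Ycal \to [n]$ is a bijection. The base case $n = 1$ is immediate.

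For the inductive step, first I would note that $[n] \in \Ycal$ by maximality (it is a tube, compatible with every other tube). Let $X_1, \ldots, X_r$ be the maximal elements of the inclusion poset $\Ycal \setminus \{[n]\}$; every proper subtube in $\Ycal$ is then contained in some $X_i$. A key structural fact to establish is that two compatible tubes with a common vertex must be nested (since the union of two overlapping tubes is connected, hence itself a tube, so ``separation'' fails); this forces the $X_i$ to be pairwise disjoint and to have no edges between them in $H$.

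The main step is to prove that $C := [n] \setminus (X_1 \cup \cdots \cup X_r)$ has exactly one element. For $|C| \geq 1$: if $C = \emptyset$, the $X_i$'s partition $[n]$ with no edges between them, which disconnects $H$ unless $r = 1$; but $r = 1$ forces $X_1 = [n]$, contradicting the properness of $X_1$. For $|C| \leq 1$: I would assume distinct $v, w \in C$ and derive a contradiction to maximality by constructing the tube
\[
T_w \; := \; \{w\} \cup \bigcup_{i:\, w \text{ adj. } X_i} X_i,
\]
where ``$w$ adj. $X_i$'' means $w$ has a neighbor in $X_i$. I would check that $T_w$ is compatible with each $X_j$: nested if $w$ is adjacent to $X_j$, and otherwise disjoint and edge-free (the latter using pairwise separation of the $X_i$). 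Because every tube of $\Ycal$ other than $[n]$ lies inside some $X_j$, compatibility with each $X_j$ extends to compatibility with all of $\Ycal$. Since $T_w$ contains $w \in C$ but not $v \in C$, it is neither $[n]$, any $X_j$, nor contained in any $X_j$, producing the desired tube outside $\Ycal$.

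Once $C = \{v\}$ is established, the proof is completed by defining $\topT_{\Ycal}([n]) = v$ and applying the inductive hypothesis to each $\Ycal|_{X_i}$, using the disjoint decomposition $\Ycal = \{[n]\} \sqcup \bigsqcup_i \Ycal|_{X_i}$ to assemble the bijection, and the fact that the proper tubes of $\Ycal$ cover exactly $[n] \setminus \{v\}$ to secure uniqueness. The main obstacle I expect is the $|C| \leq 1$ direction: constructing $T_w$ is straightforward, but verifying that it is compatible with \emph{every} tube in $\Ycal$, not merely with the maximal proper subtubes, requires unpacking how separation among the $X_i$'s propagates to their subtubes, and this is precisely what forces the ``saturated'' definition of $T_w$.
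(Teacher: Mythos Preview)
Your proposal is correct, but it takes a genuinely different route from the paper's proof. The paper argues directly and briefly: it declares the well-definedness of $\topT_{\Xcal}(I)$ ``straightforward''; then for each $k\in[n]$ it observes that the tubes of $\Xcal$ containing $k$ are pairwise nested (since compatible tubes that intersect must be nested), so there is a smallest one $I$, and for this $I$ one has $\topT_{\Xcal}(I)=k$. Surjectivity is immediate, and injectivity follows because if $\topT_{\Xcal}(I)=\topT_{\Xcal}(J)=k$ with $I\subsetneq J$, then $I$ would be a proper tube of $\Xcal|_J$ containing $k$.

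Your inductive decomposition is longer but more self-contained: the $|C|=1$ argument you give is precisely the content hidden behind the paper's ``straightforward'', so you are filling a gap the paper leaves to the reader. Your approach also buys something extra: the maximal proper tubes $X_1,\ldots,X_r$ you isolate are exactly the children of the root in the $G$-tree, so your proof essentially constructs the bijection of Theorem~\ref{G-trees} along the way. The paper's proof, by contrast, is quicker to write and makes the inverse map $k\mapsto(\text{smallest tube containing }k)$ completely explicit without any recursion.
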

\begin{proof}
It is straight forward to check that $\topT_\Xcal(I)$ is well-defined for each tube $I\in \Xcal$.
Let $k\in [n]$ and let $\Ical$ be the set of tubes in $\Xcal$ which contain $k$.
Observe that $\Ical$ is not empty (because the connected component of $G$ containing $k$ is a tube in $\Xcal$.)
Because each of the tubes in $\Ical$ are nested, there is a smallest tube $I\in \Ical$ (under containment) which contains $k$.
For this tube, we have $\topT_\Xcal(I) =k$.

It follows that if $\topT_\Xcal(I)=\topT_\Xcal(J)=k$ then $I=J$.
Therefore $\topT_\Xcal$ is indeed a bijection.
\end{proof}

\begin{figure}
  
  \centering
  \includegraphics{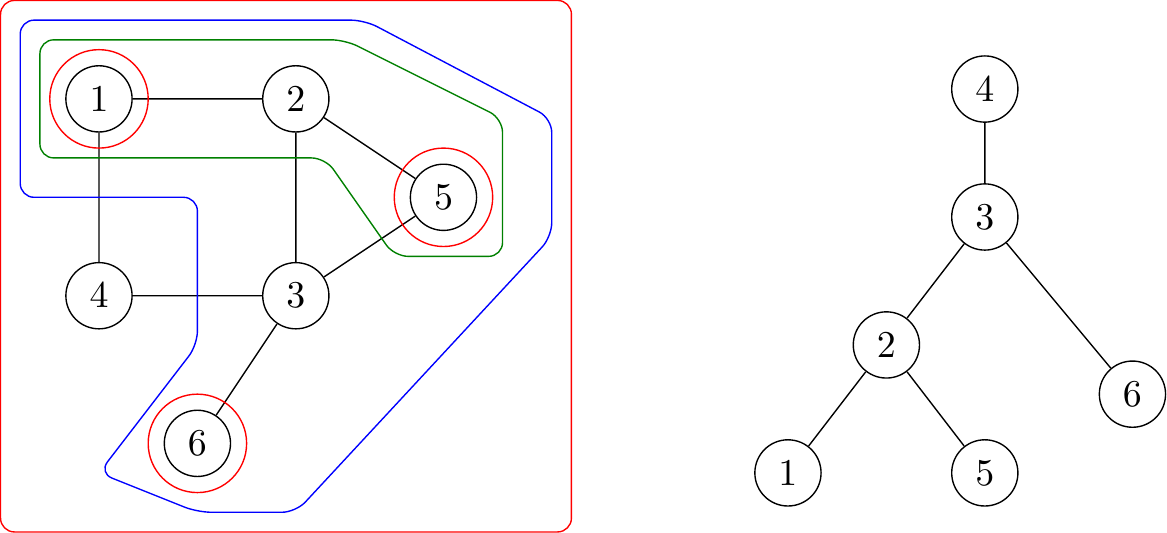}
  \caption{\label{fig:tubing_tree}(left) A maximal tubing (right) Its associated $G$-tree}
  
\end{figure}

Let $T$ be a forest poset on $[n]$.
That is, each connected component of $T$ is a rooted tree, and $i<_T k$ whenever $i$ and $k$ belong to the same connected component, and the unique path from $i$ to the root of this component passes through $k$.
%
Let $i_\down$ denote the principal order ideal generated by $i$ in $T$.
We say that $T$ is a \emph{$G$-forest}, or \emph{$G$-tree} when $T$ is connected, if it satisfies both of the following conditions (see also \cite[Definition~8.1]{postnikov.reiner.williams:2008faces}):
\begin{itemize}
\item For each $i\in [n]$, the set $i_\down$ is a tube of $G$;
\item If $i$ and $k$ are incomparable in $T$, then $i_\down \cup k_\down$ is not tube of $G$.
\end{itemize}

Given a $G$-forest $T$, observe that the collection $\chi(T)=\{i_\down: i\in [n]\}$ is a maximal tubing on $G$.
Indeed, consider $I=i_\down$ and $J=k_\down$ for any pair $i$ and $k$ in $[n]$.
If $i$ and $k$ are not comparable, then it is immediate that $I$ and $J$ are compatible (because $I\cup J$ is not a tube).
On the other hand, if $i$ and $k$ are comparable, then either $I\subset J$ or $J\subset I$.
The following theorem is essentially \cite[Proposition~8.2]{postnikov.reiner.williams:2008faces}, specialized to the case where the building set $\Bcal$ is the collection of tubes of~$G$. 
An example of this correspondence is shown in Figure~\ref{fig:tubing_tree}.
\begin{theorem}\label{G-trees}
Let $G$ be a graph with vertex set $[n]$.
Then the map $\chi$ which sends $T\mapsto \{i_\down: i\in [n]\}$ is a bijection from the set of $G$-forests to the set of maximal tubings on $G$.
The inverse to $\chi$, which we denote by $\tau$ maps the maximal tubing $\Xcal$ to a tree-poset $T$ satisfying: $\top_\Xcal(I)<\top_\Xcal(J)$ if and only if $I\subset J$, where $I$ and $J$ are tubes in $\Xcal$.
\end{theorem}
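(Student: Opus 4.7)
The plan is to construct the inverse map $\tau$ explicitly and verify it is well defined and mutually inverse to $\chi$. Given a maximal tubing $\Xcal$, the preceding lemma provides a bijection $\top_\Xcal:\Xcal\to [n]$. I would define $\tau(\Xcal)$ to be the poset on $[n]$ obtained by transporting the reverse containment order on $\Xcal$ across this bijection: that is, $i<_T k$ iff $\top_\Xcal^{-1}(i)\subsetneq\top_\Xcal^{-1}(k)$. This is the unique poset satisfying the property stated in the theorem, so the real work is showing it is a $G$-forest and that $\chi$ and $\tau$ are inverse.

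First I would check that $\tau(\Xcal)$ is a forest poset. Any two tubes $I,J\in\Xcal$ are compatible, so they are either nested or separated; when they share any vertex in common they cannot be separated, hence they are nested. In particular the set of tubes in $\Xcal$ containing any fixed element forms a chain under $\subseteq$, which translates across $\top_\Xcal$ into saying that the principal filter above any element of $T$ is a chain — the defining property of a forest. Next, for each $i\in[n]$, writing $I_i:=\top_\Xcal^{-1}(i)$, I would verify the identity $i_\down=I_i$ inside $\tau(\Xcal)$. The inclusion $i_\down\subseteq I_i$ is immediate: if $j\le_T i$ then $I_j\subseteq I_i$, and $j=\top_\Xcal(I_j)\in I_j\subseteq I_i$. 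For the reverse inclusion, given $k\in I_i$, let $I_k$ be the smallest tube of $\Xcal$ containing $k$, which exists and satisfies $\top_\Xcal(I_k)=k$ by the argument in the proof of the preceding lemma; minimality forces $I_k\subseteq I_i$, hence $k\le_T i$. This shows $i_\down=I_i$ is a tube of $G$, giving the first defining property of a $G$-forest. For the second, if $i$ and $k$ are incomparable in $\tau(\Xcal)$ then $I_i$ and $I_k$ are incomparable under containment, so compatibility forces them to be separated, i.e., $i_\down\cup k_\down=I_i\cup I_k$ is not a tube.

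For $\chi\circ\tau=\id$, the identity $i_\down=I_i$ established above gives $\chi(\tau(\Xcal))=\{I_i:i\in[n]\}=\Xcal$ since $\top_\Xcal$ is a bijection. For $\tau\circ\chi=\id$, start with a $G$-forest $T$ and set $\Xcal=\chi(T)$. In a forest, two principal order ideals are either nested (if the generators are comparable) or disjoint (if they are not), so for any $i\in[n]$ the restriction $\Xcal|_{i_\down}$ consists exactly of the tubes $\{j_\down:j\le_T i\}$; the proper ones union to $i_\down\setminus\{i\}$, identifying $\top_\Xcal(i_\down)=i$. Unwinding the definition of $\tau$, $i<_{\tau(\Xcal)}k$ iff $i_\down\subsetneq k_\down$ iff $i<_T k$, so $\tau(\chi(T))=T$.

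The main conceptual step — and the one that requires care rather than being a routine rearrangement — is identifying $i_\down$ with the tube $I_i$ inside $\tau(\Xcal)$, since this is where the combinatorial structure of compatibility (nested-or-separated) interacts with the forest structure of the poset. Once this identification is in hand, both the $G$-forest axioms and the two inverse checks fall out directly. Since the statement is essentially a specialization of \cite[Proposition 8.2]{postnikov.reiner.williams:2008faces} to the building set of tubes of $G$, I would consider citing that result and presenting only the verification that this specialization matches the combinatorics of maximal tubings, rather than repeating the full argument.
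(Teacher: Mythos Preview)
Your direct proof is correct. The key identification $i_\down=I_i$ is handled carefully, and both inverse checks are clean; the one place where a reader might pause is the implicit use in step~(7) that in a forest poset $i_\down\subsetneq k_\down$ forces $i<_T k$, but this is immediate since $i\in k_\down$ and $i\ne k$.

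As for comparison: the paper does not give its own proof of this theorem at all. It simply states the result and attributes it to \cite[Proposition~8.2]{postnikov.reiner.williams:2008faces}, specialized to the building set of tubes of $G$. So your final suggestion---to cite that result and only verify that the specialization matches---is exactly what the paper does, while your full argument supplies the details that the paper omits. Either approach is fine; your write-up has the virtue of being self-contained.
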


It follows that $G$ is connected if and only if each $G$-forest is actually a $G$-tree. 


\subsection{Graph associahedra}\label{subsec:graph_assoc}

Before defining the graph associahedron, the main polytopes discussed in this paper, we recall the definition of the normal fan of a polytope.

A \emph{(polyhedral) fan} $\Ncal$ is a set of cones in $\Rbb^n$ such that for any two elements $C,C^{\pr}\in\Ncal$, their intersection $C\cap C^{\pr}$ is in $\Ncal$ and it is a face of both $C$ and $C^{\pr}$. It is \emph{complete} if $\bigcup_{C\in\Ncal} C=\Rbb^n$ and \emph{pointed} if $\{0\}\in\Ncal$. A pointed fan $\Ncal$ is \emph{simplicial} if the number of extreme rays of each $C\in\Ncal$ is equal to its dimension. We consider a simplicial fan to be a type of ``realization'' of a simplicial complex; more accurately, it is a cone over a geometric realization.

For a polytope $P\subseteq\Rbb^n$ and $f\in(\Rbb^n)^*$ in the dual space, we let $P^f$ be the subset of $P$ at which $f$ achieves its maximum value. We consider an equivalence relation on $(\Rbb^n)^*$ where $f\sim g$ if $P^f=P^g$. It is not hard to show that each equivalence class is a relatively open polyhedral cone. The \emph{normal fan} of $P$ is the set of closures of these cones, which forms a complete polyhedral fan. A polytope is simple if and only if its normal fan is simplicial.

The set of tubings of a graph forms a flag simplicial complex $\Delta_G$, called the \emph{nested set complex}. A set $W$ consisting of the vertices of a connected component of $G$ is a tube that is compatible with every other tube, so it is a cone point in $\Delta_G$. The nested set complex is sometimes defined with these cone points removed since this subcomplex is a simplicial sphere. For our purposes, however, it will be convenient to consider the maximal tubes as part of every maximal tubing of $G$.


The nested set complex may be realized as a simplicial fan, which is the normal fan $\Ncal_G$ of a polytope $P_G$ known as the graph associahedron \cite[Theorem 2.6]{carr.devadoss:2006coxeter}, \cite[Theorem 3.14]{feichtner.sturmfels:2005matroid}, \cite[Theorem 7.4]{postnikov:2009permutohedra}. We recall Postnikov's construction below.

For polytopes $P,Q\subseteq\Rbb^n$, their \emph{Minkowski sum} $P+Q$ is the polytope
$$P+Q=\{\mathbf{x}+\mathbf{y}\ |\ \mathbf{x}\in P,\ \mathbf{y}\in Q\}.$$
The normal fan of $P$ is a coarsening of the normal fan of $P+Q$ \cite[Proposition~7.12]{ZieglerGu}.
Let $\mathbf{e}_1,\ldots,\mathbf{e}_n$ be the standard basis vectors in $\Rbb^n$. Given $I\subseteq[n]$, let $\Delta_I$ be the simplex with vertices $\{\mathbf{e}_i\ |\ i\in I\}$. The \emph{graph associahedron} $P_G$ is the Minkowski sum of simplices $\Delta_I$ over all tubes $I$ of $G$; that is,

$$P_G=\sum\Delta_I=\left\{\sum \mathbf{x}_I\ |\ (\mathbf{x}_I\in\Delta_I:\ I\ \mbox{is a tube})\right\}.$$

\begin{figure}
  
  \centering
  \includegraphics{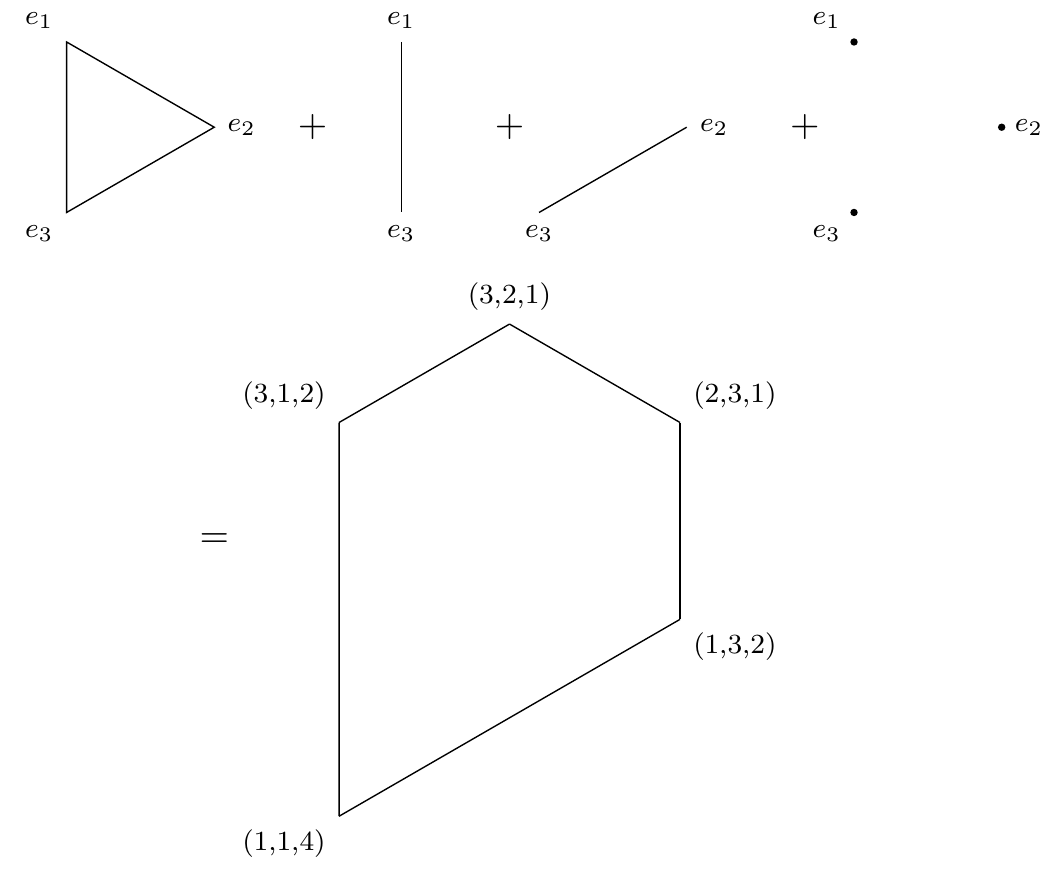}
  \caption{The graph associahedron for the graph with edge set $E=\{\{1,3\},\{2,3\}\}$}
  
\end{figure}

Proofs that the face lattice of $P_G$ coincides with the nested set complex are given in \cite{feichtner.sturmfels:2005matroid} and \cite{postnikov:2009permutohedra}.
We recall the correspondence between maximal tubings and vertices, which will be most important for our purposes.
See \cite[Proposition~7.9]{postnikov:2009permutohedra}.
Recall that the notation $i_{\down}$ refers to the principal order ideal generated by $i$ in a $G$-tree.
For a maximal tubing $\Xcal$, we interpret $i_{\down}$ as the smallest tube in $\Xcal$ that contains the element $i$.

\begin{lemma}\label{polytope_poset}
If $\Xcal$ is any maximal tubing, the point $\mathbf{v}^\Xcal=(v_1,\ldots,v_n)$ is a vertex of $P_G$ where $v_i$ is the number of tubes $I$ such that $i\in I$ and $I\subseteq i_{\down}$. Conversely, every vertex of $P_G$ comes from a maximal tubing in this way. 

\end{lemma}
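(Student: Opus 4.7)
The plan is to invoke the standard description of vertices of a Minkowski sum: a point is a vertex of $P_G=\sum_I \Delta_I$ iff some linear functional picks out a unique vertex $\mathbf{e}_{\phi(I)}$ on each summand $\Delta_I$, in which case the vertex is $\sum_I \mathbf{e}_{\phi(I)}$. So for a given maximal tubing $\Xcal$, the task splits into constructing a function $\phi_\Xcal$ on the tubes of $G$ such that $\mathbf{v}^\Xcal=\sum_I \mathbf{e}_{\phi_\Xcal(I)}$, and then exhibiting a single linear functional $f_\Xcal$ that realizes $\phi_\Xcal$.

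To build $\phi_\Xcal$, first note that any two tubes of $\Xcal$ sharing a vertex must be nested: they are compatible, and their union is connected (hence a tube), so they cannot be separated. Consequently the tubes of $\Xcal$ containing a fixed tube $I$ of $G$ form a chain under inclusion with a unique minimum $J^*(I)$, and we set $\phi_\Xcal(I):=\top_\Xcal(J^*(I))$. The substantive step is to verify $\phi_\Xcal(I)\in I$. Using Lemma~\ref{lem:tubing_restriction} and the maximality of $\Xcal$, one checks that $\Xcal|_{J^*(I)}=\{J\in\Xcal:J\subseteq J^*(I)\}$ and that its maximal proper tubes are exactly the connected components of $G|_{J^*(I)\setminus\{\phi_\Xcal(I)\}}$. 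If $\phi_\Xcal(I)\notin I$, then the connected tube $I$ would sit inside one such component $C$, and $C$ is a tube of $\Xcal$ strictly contained in $J^*(I)$ and still containing $I$, contradicting the minimality of $J^*(I)$. The same chain argument identifies $\phi_\Xcal(I)$ as the unique $i\in I$ with $I\subseteq i_\down$: any such $i$ has $i_\down\in\Xcal$, whence $i_\down\supseteq J^*(I)$ by minimality and $i_\down\subseteq J^*(I)$ since $i\in J^*(I)\in\Xcal$, forcing $i_\down=J^*(I)$ and $i=\top_\Xcal(J^*(I))$. Counting tubes by their $\phi_\Xcal$-value then yields $v_i=\#\{I:\phi_\Xcal(I)=i\}$, so $\mathbf{v}^\Xcal=\sum_I \mathbf{e}_{\phi_\Xcal(I)}\in P_G$.

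For the linear functional, let $T=\tau(\Xcal)$ and choose any weights $c\colon[n]\to\Rbb$ strictly increasing along chains of $T$, for instance $c_i=|i_\down|$. For a tube $I$ of $G$ and any $j\in I$ with $j\neq\phi_\Xcal(I)$, we have $j_\down\subsetneq J^*(I)=(\phi_\Xcal(I))_\down$, so $j<_T\phi_\Xcal(I)$ and $c_j<c_{\phi_\Xcal(I)}$. Hence $f_\Xcal(\mathbf{x})=\sum c_i x_i$ is uniquely maximized on each $\Delta_I$ at $\mathbf{e}_{\phi_\Xcal(I)}$, and therefore uniquely maximized on $P_G$ at $\mathbf{v}^\Xcal$, which confirms $\mathbf{v}^\Xcal$ is a vertex. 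The converse follows from the already-cited identification of $\Ncal_G$ with the nested set fan: $P_G$ has exactly $|\MTub(G)|$ vertices, and the maximal cone of $\Ncal_G$ indexed by a maximal tubing $\Ycal$ consists precisely of functionals strictly increasing along chains of $\tau(\Ycal)$. Any given vertex $\mathbf{v}$ is uniquely maximized by some functional $f$ lying in the interior of such a cone for some $\Ycal$, and the argument above then shows $f$ selects $\mathbf{v}^\Ycal$, giving $\mathbf{v}=\mathbf{v}^\Ycal$. The main obstacle is the set-theoretic step $\phi_\Xcal(I)\in I$ together with its characterization via $I\subseteq i_\down$; once these are pinned down, the functional construction and the bijection with vertices are routine.
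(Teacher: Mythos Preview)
Your proof is correct and follows essentially the same approach as the paper's: both exploit the decomposition $P_G^f=\sum_I\Delta_I^f$ and pick a linear functional whose coefficients increase along chains of the $G$-tree $\tau(\Xcal)$. Your explicit description $\phi_\Xcal(I)=\top_\Xcal(J^*(I))$ and your verification that this lies in $I$ play the role of the paper's Lemma~\ref{poset_polytope_helper}; your weights $c_i=|i_\down|$ replace the paper's linear-extension position indices, and for the converse you invoke the cited identification of $\Ncal_G$ with the nested set fan, whereas the paper constructs the tubing from the permutation directly---but these are cosmetic variations on the same argument.
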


Before we give the proof of Lemma~\ref{polytope_poset}, we need the following easy lemma.

\begin{lemma}\label{poset_polytope_helper}
Let $\Xcal$ be a tubing of $G$ and let $w_1\ldots w_n$ a permutation on $[n]$ such that $\{w_1,\ldots, w_j\}$ is an ideal of $\Xcal$ for each $j\in[n]$.
Suppose that $i=w_j$ for some $j\in[n]$, and write the ideal $\{w_1,\ldots, w_j\}$ as a disjoint union of tubes $X_1\cup \cdots \cup X_r$.
Then $i_\down=X_l$ for some $l\in [r]$.
\end{lemma}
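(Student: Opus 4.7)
The plan is to identify $i_\down$ with the unique component $X_l$ containing $i$. One inclusion is immediate: since $X_l\in\Xcal$ is a tube containing $i$ and $i_\down$ is the smallest such tube, $i_\down\subseteq X_l$. For the reverse, I will argue that every tube $Y\in\Xcal$ containing $i$ in fact satisfies $X_l\subseteq Y$. First I will observe that any two tubes in $\Xcal$ with a common vertex must be nested: they are compatible, and their union is connected (concatenate paths through the shared vertex), so they cannot be separated. Applied to $Y$ and $X_l$ (both containing $i$), this gives $Y\subseteq X_l$ or $X_l\subseteq Y$, so it suffices to rule out the strict case $Y\subsetneq X_l$.

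Assume for contradiction that $Y\subsetneq X_l$. Using the earlier uniqueness lemma, decompose the ideal $\{w_1,\dots,w_{j-1}\}$ as a disjoint union of tubes $Y_1,\dots,Y_s\in\Xcal$, namely the connected components of $G|_{\{w_1,\dots,w_{j-1}\}}$. For each $m$, the tubes $Y_m$ and $Y$ are compatible; if moreover $Y_m\cap Y\neq\emptyset$, then by the same nested-versus-separated argument they are nested. Since $i\in Y$ but $i\notin Y_m$ (as $Y_m$ is contained in $\{w_1,\dots,w_{j-1}\}$, which omits $i=w_j$), the only possibility is $Y_m\subseteq Y$. Hence each $Y_m$ is either disjoint from $Y$ or contained in $Y$.

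It now follows that the collection $\{Y\}\cup\{Y_m:Y_m\cap Y=\emptyset\}$ consists of pairwise disjoint tubes in $\Xcal$ whose union is $(\{w_1,\dots,w_{j-1}\}\setminus Y)\cup Y=\{w_1,\dots,w_j\}$. But the decomposition of this ideal into pairwise disjoint tubes of $\Xcal$ is unique, so this family must coincide with $\{X_1,\dots,X_r\}$. In particular $Y=X_m$ for some $m$, and since the $X_m$'s are disjoint while $i\in Y\cap X_l$, we must have $Y=X_l$, contradicting $Y\subsetneq X_l$. The main subtlety in the argument is the dichotomy step, verifying that every component $Y_m$ of $\{w_1,\dots,w_{j-1}\}$ is either disjoint from $Y$ or entirely inside $Y$; once that is in hand, the contradiction follows directly from uniqueness of the component decomposition.
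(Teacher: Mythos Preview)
Your proof is correct, but it takes a different route from the paper's argument. The paper proceeds more directly: assuming $i_\down\subsetneq X_l$, it picks a single vertex $k\in X_l\setminus i_\down$ with $i_\down\cup\{k\}$ a tube (possible since $G|_{X_l}$ is connected), observes that $k=w_p$ for some $p<j$ so that $k_\down$ lies inside the ideal $\{w_1,\dots,w_p\}$, and then checks that $i_\down$ and $k_\down$ are neither nested nor separated, contradicting compatibility. Your approach instead works structurally with the entire decomposition of $\{w_1,\dots,w_{j-1}\}$ and leans on the uniqueness lemma for ideal decompositions to force $Y=X_l$. Both arguments are short; the paper's avoids invoking uniqueness by isolating a single bad pair of tubes, while yours has the minor bonus of showing directly that $X_l$ is contained in \emph{every} tube of $\Xcal$ through $i$ (not just $i_\down$), which is of course equivalent to the stated conclusion.
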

\begin{proof}
Since $i_\down$ is the smallest tube in $\Xcal$ containing $i$ there is a unique $l\in [r]$ such that $i_\down \subseteq X_l$.
Assume that $i_\down$ is a proper subset of $X_l$, and choose $k\in X_l\setminus i_\down$ such that $i_\down\cup \{k\}$ is a tube.
(This is possible because $X_l$ is a tube; that is, $G|_{X_l}$ is connected.)
Since $k\in \{w_1,\ldots, w_j\}$ (and clearly $k\ne i$), there is some $p<j$ such that $w_p=k$.

Now consider the tube $k_\down\subseteq \{w_1,\ldots, w_p\}$.
Observe that $i_\down\not\subseteq k_\down$ because $i\not \in k_\down$.
Also $k_\down\not\subseteq i_\down$ because $k\notin i_\down$.
But $k_\down \cup i_\down$ is a tube (since $\{k\}\cup i_\down$ is a tube), and that is a contradiction.
The statement follows.
\end{proof}

\begin{proof}[Proof of Lemma~\ref{polytope_poset}]
  By definition, a point $\mathbf{v}\in P_G$ is a vertex if there exists a linear functional $f:\Rbb^n\ra\Rbb$ such that $\mathbf{v}$ is the unique point in $P_G$ at which $f$ achieves its maximum value. We let $P_G^f$ denote this vertex. The key observation is that if $P_G=\sum\Delta_I$ is the decomposition of the graph associahedron $P_G$ as a Minkowski sum of simplices, then $P_G^f=\sum\Delta_I^f$.
  
  If $f$ is any linear functional such that $f(\mathbf{e}_i)\neq f(\mathbf{e}_j)$ for all $i\neq j$, then $f$ is maximized at a unique vertex of the simplex $\Delta_I$ for any nonempty $I\subseteq[n]$. Namely, if $w=w_1\cdots w_n$ is the permutation of $[n]$ such that $f(\mathbf{e}_{w_1})<\cdots <f(\mathbf{e}_{w_n})$, then $\Delta_I^f=\mathbf{e}_{w_k}$ where $k$ is the maximum index such that $w_k\in I$.

Now let $\Xcal$ be a maximal tubing, and let $\mathbf{v}=\mathbf{v}^{\Xcal}$ be defined as above. Let $w=w_1\cdots w_n$ be a permutation such that $\{w_1,\ldots,w_j\}$ is an ideal of $\Xcal$ for all $j$. 
(Such a permutation exists.
For example, take any linear extension of the $G$-tree corresponding to $\Xcal$.)
Set $$f(x_1,\ldots,x_n)=x_{w_1}+2x_{w_2}+\cdots+nx_{w_n}.$$ We claim that $P_G^f=\mathbf{v}$.

Let $I$ be a tube (not necessarily in $\Xcal$), and let $i\in I$.
To verify the claim, we will show that $f|_{\Delta_I}$ is maximized at the vertex $\mathbf{e}_i$ if and only if $\Delta_I$ contributes $\mathbf{e}_i$ to $\mathbf{v}$.
That is, $f|_{\Delta_I}$ is maximized at the vertex $\mathbf{e}_i$ if and only if $I \subseteq i_\down$.
Suppose that $i=w_j$ in the permutation~$w$. 
Observe that $f|_{\Delta_I}$ is maximized at $\mathbf{e}_i$ if and only if $I\subseteq \{w_1,w_2,\ldots, w_j\}$.
Write the ideal $\{w_1,w_2,\ldots, w_j\}$ as a disjoint union $X_1\cup X_2\cup\cdots \cup X_r$ of tubes in $\Xcal$.
By Lemma~\ref{poset_polytope_helper}, $i_\down=X_l$ for some $l$. 
If $I\subseteq X_1\cup X_2\cup \cdots \cup X_r$ then $I \subseteq X_l$ because $i\in I$ and $I$ is a tube.
Clearly, if $I\subseteq i_\down =X_l$ then $I \subseteq X_1\cup\cdots \cup X_r$.
We have proved the claim that $P_G^f=\mathbf{v}$.

  Next, we prove that every vertex of $P_G$ is of the form $\mathbf{v}^{\Xcal}$ for some $\Xcal$. Let $w$ be a permutation and $f$ any linear functional such that $f(\mathbf{e}_{w_1})<\cdots<f(\mathbf{e}_{w_n})$. If there exists some maximal tubing $\Xcal$ such that $\{w_1,\ldots,w_j\}$ is an ideal of $\Xcal$ for all $j$, then we know that $P_G^f=\mathbf{v}^{\Xcal}$. Indeed, one can define a tubing $\Xcal=\{X_1,\ldots,X_n\}$ where $X_j$ is the largest tube in the subset $\{w_1,\ldots,w_j\}$ containing $w_j$.
 (That is, $X_j$ is the set of vertices of the connected component of $G|_{ \{w_1,\ldots,w_j\}}$ containing $w_j$.)
It is clear that $\Xcal$ has the desired property.
\end{proof}

If $I$ is any tube of $G$, then the subcomplex of tubings containing $I$ is isomorphic to the product of nested set complexes $\Delta_{G|_I}\times\Delta_{G/I}$. By induction, we may deduce that any face of $P_G$ is isomorphic to a product of graph associahedra.

When $G$ is a complete graph, the polytope $P_G$ is the ``standard'' permutahedron, and its normal fan $\Ncal_G$ is the set of cones defined by the braid arrangement. For a general graph $G$, the polytope $P_G$ is a Minkowski summand of the standard permutahedron, so its normal fan is coarser than that defined by the braid arrangement.

Besides the usual ordering of tubings by inclusion, there is an alternate partial order introduced by Forcey \cite{forcey:2012species} and Ronco \cite{ronco:2012tamari}. We describe the restriction of their poset to $\MTub(G)$.


Suppose that $I$ is a non-maximal tube in $\Xcal$.
Since $P_G$ is a simple polytope whose face lattice is dual to $\Delta_G$, there exists a unique tube $J$ distinct from $I$ such that $\Ycal=\Xcal\setm\{I\}\cup\{J\}$ is a maximal tubing of $G$. Define a \emph{flip} as the relation $\Xcal\ra \Ycal$ if $\topT_\Xcal(I)<\topT_{\Ycal}(J)$. We say $\Xcal\leq \Ycal$ holds if there exists a sequence of flips of maximal tubings of the form $\Xcal\ra\cdots\ra \Ycal$.

\begin{figure}
  
  \centering
  \includegraphics{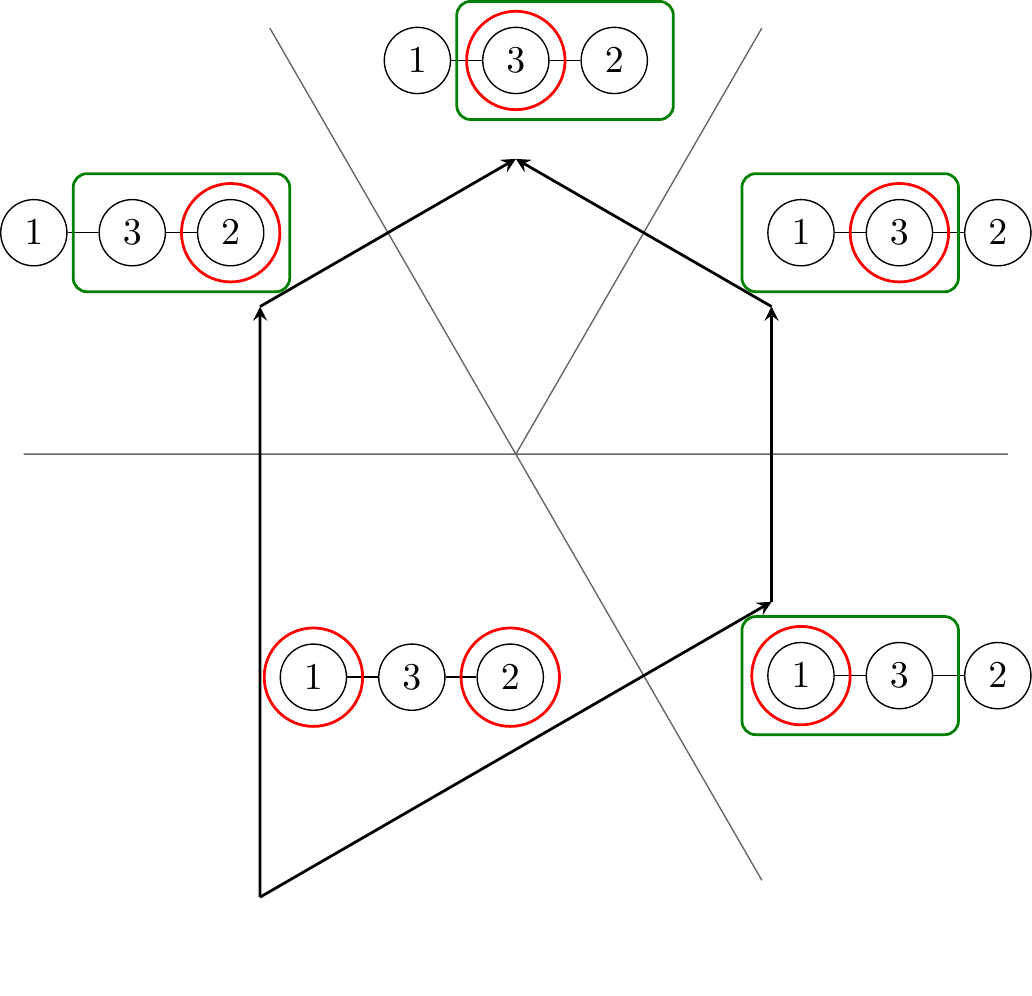}
  \caption{\label{fig:L_ex}A poset of maximal tubings}
 
\end{figure}

\begin{lemma}\label{lem_poset}
The set $\MTub(G)$ is partially ordered by the relation $\leq$.
\end{lemma}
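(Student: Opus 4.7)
The plan is to verify reflexivity, transitivity, and antisymmetry of $\leq$. Reflexivity (via empty flip sequences) and transitivity (via concatenation) are immediate from the definition, so the real content is antisymmetry: I must rule out directed cycles of flips. I will do this by exhibiting a real-valued monovariant on $\MTub(G)$ that strictly increases under every flip.

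The monovariant I propose is $\phi(\Xcal) := \sum_{i=1}^{n} (n-i)\, v_i^{\Xcal}$, where $(v_1^{\Xcal}, \ldots, v_n^{\Xcal}) = \mathbf{v}^{\Xcal}$ is the vertex of the graph associahedron $P_G$ attached to $\Xcal$ by Lemma~\ref{polytope_poset} (any linear functional with strictly decreasing coefficients would serve). The key claim is that for any flip $\Xcal \to \Ycal$ swapping a tube $I$ for a tube $J$ with $a := \top_{\Xcal}(I) < b := \top_{\Ycal}(J)$, the vertex difference satisfies $\mathbf{v}^{\Ycal} - \mathbf{v}^{\Xcal} = c(\mathbf{e}_a - \mathbf{e}_b)$ for some $c > 0$. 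Granting this, $\phi(\Ycal) - \phi(\Xcal) = c(b-a) > 0$, so any sequence of flips strictly increases $\phi$, forcing antisymmetry.

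I would establish the key claim in three steps. First, for $i \in [n] \setminus \{a, b\}$, the smallest tube in $\Xcal$ containing $i$ is not $I$ (since $\top_{\Xcal}(I) = a \neq i$), so it lies in the common subtubing $\Xcal \cap \Ycal$; symmetrically on the $\Ycal$ side. Hence $i_\down^{\Xcal} = i_\down^{\Ycal}$ and therefore $v_i^{\Xcal} = v_i^{\Ycal}$. Second---the delicate step---I would show $a \notin J$ (and symmetrically $b \notin I$) by contradiction: if $a \in J$, then either $a_\down^{\Ycal} = J$, forcing the contradictory $b = a$, or there exists a tube $K \in \Xcal \cap \Ycal$ with $a \in K \subsetneq J$; since tubes of $\Xcal$ containing $a$ form a chain under inclusion with smallest member $I$, this $K$ satisfies $I \subsetneq K \subsetneq J$, making $I$ and $J$ nested and thus compatible, contradicting the fact that they are swapped by a flip. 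Third, knowing $a \notin J$, the tube $a_\down^{\Ycal}$ equals the parent of $I$ in the $G$-tree $T_{\Xcal}$ and strictly contains $a_\down^{\Xcal} = I$; by the formula in Lemma~\ref{polytope_poset} this forces $v_a^{\Ycal} > v_a^{\Xcal}$. Since $\mathbf{v}^{\Xcal}$ and $\mathbf{v}^{\Ycal}$ lie in the common hyperplane $\sum_i x_i = |\Ical(G)|$ and all other coordinates agree, the excess $c := v_a^{\Ycal} - v_a^{\Xcal} > 0$ matches $v_b^{\Xcal} - v_b^{\Ycal}$, completing the claim.

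The main obstacle I anticipate is the second step, $a \notin J$: this is the pivotal moment where the incompatibility of $I$ and $J$ must be combined with the chain structure of tubes through a common element inside a tubing. The remainder is bookkeeping around the Minkowski-sum formula of Lemma~\ref{polytope_poset}.
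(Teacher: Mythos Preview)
Your approach is essentially the same as the paper's: both orient the edges of $P_G$ by a linear functional with strictly decreasing coefficients and deduce acyclicity of the flip relation from the claim that $\mathbf{v}^{\Ycal}-\mathbf{v}^{\Xcal}=c(\mathbf{e}_a-\mathbf{e}_b)$ with $c>0$. The paper simply asserts this edge description (identifying $c$ as the number of tubes of $G$ contained in $I\cup J$ that contain both $a$ and $b$), whereas you supply a careful three-step justification; your argument for $a\notin J$ and the resulting strict inequality $v_a^{\Ycal}>v_a^{\Xcal}$ is correct and slightly more self-contained than what the paper writes.
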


\begin{proof}
The edges of the graph associahedron $P_G$ take the following form. Let $\Xcal$ and $\Ycal$ be maximal tubings of $G$ such that $\Ycal=\Xcal\setm\{I\}\cup\{J\}$ for some distinct tubes $I,J$. Set $i=\topT_\Xcal(I)$ and $j=\topT_{\Ycal}(J)$. Then the vertices $\mathbf{v}^{\Xcal}$ and $\mathbf{v}^{\Ycal}$ agree on every coordinate except the $i^{th}$ and $j^{th}$ coordinates. Indeed, $\mathbf{v}^{\Ycal}-\mathbf{v}^{\Xcal}=\lambda(\mathbf{e}_i-\mathbf{e}_j)$ where $\lambda$ is equal to the number of tubes of $G$ contained in $I\cup J$ that contain both $i$ and $j$. 

Let $\lambda:\Rbb^n\ra\Rbb$ such that $\lambda(x_1,\ldots,x_n)=nx_1+(n-1)x_2+\cdots+x_n$. If $\Xcal$ and $\Ycal$ are as above and $i<j$, then $\lambda(\mathbf{v}^{\Ycal}-\mathbf{v}^{\Xcal})>0$. Hence, the relation $\Xcal\ra \Ycal$ on maximal tubings is induced by the linear functional $\lambda$. Consequently, the relation is acyclic, so its transitive closure is a partial order.
\end{proof}

We let $(L_G,\leq)$ denote the poset on $\MTub(G)$ defined above. An example of the poset $L_G$ for the graph $G$ with vertex set $V=[3]$ and edge set $E=\{\{1,3\},\{2,3\}\}$ is given in Figure~\ref{fig:L_ex}. The figure demonstrates that $L_G$ is the transitive, reflexive closure of an orientation of the 1-skeleton of the graph associahedron $P_G$.

\begin{remark}
  The proof of Lemma~\ref{lem_poset} identifies the poset of maximal tubings with a poset on the 1-skeleton of the polytope $P_G$ oriented by a linear functional. This type of construction of a poset on the vertices of a polytope appears frequently in the literature, e.g. in the shellability of polytopes \cite{bruggesser.mani:1972shellable}, the complexity of the simplex method \cite{kalai:1997linear}, and the generalized Baues problem \cite{bjorner:1992essential}, among others.
  One may choose to orient the edges of $P_G$ by some other generic linear functional $\lambda^{\pr}$, giving some new partial order $L$ on the vertices of $P_G$. Letting $w=w_1\cdots w_n$ be the permutation such that $\lambda^{\pr}(\mathbf{e}_{w_1})>\cdots >\lambda^{\pr}(\mathbf{e}_{w_n})$, it is easy to see that $L\cong L_{G^{\pr}}$ where $G^{\pr}$ is the graph obtained by relabeling vertex $w_i$ in $G$ by $i$ for all $i\in[n]$. Hence, by considering the class of posets $L_G$, we are considering all posets on the vertices of a graph associahedron induced by a generic linear functional.
\end{remark}

\subsection{Properties of the poset of maximal tubings}\label{subsec_properties}


In this section, we cover some basic properties of $L_G$ that hold for any graph $G$.

If $H$ is a graph with $V(H)\subseteq\Nbb$, the \emph{standardization} $\std(H)$ is the same graph with vertex set $V(\std(H))=[n],\ n=|V(H)|$ such that the vertices of $\std(H)$ have the same relative order as in $H$. That is, there is a graph isomorphism $\phi:H\ra\std(H)$ such that if $i,j\in V(H),\ i<j$ then $\phi(i)<\phi(j)$.

\begin{lemma}\label{lem_decomposition}
  Let $G$ be a graph, $I\subseteq V(G)=[n]$ such that $G$ does not have any edge $\{i,j\}$ with $i\in I$ and $j\in[n]\setm I$. Then
  $$L_G\cong L_{\std(G|_I)}\times L_{\std(G|_{[n]\setm I})}.$$
\end{lemma}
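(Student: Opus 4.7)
The plan is to show that the hypothesis forces a complete splitting of tubes across the partition $\{I,[n]\setm I\}$, promote this to a bijection of maximal tubings, and then verify that flips and their orientation respect the product structure.

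First I would establish the following structural fact: under the hypothesis that no edge of $G$ joins $I$ to $[n]\setm I$, every tube of $G$ is contained either in $I$ or in $[n]\setm I$. This is immediate because a tube induces a connected subgraph of $G$, and a subset meeting both $I$ and $[n]\setm I$ cannot be connected in the absence of crossing edges. A second immediate observation is that any tube $A\subseteq I$ and any tube $B\subseteq [n]\setm I$ are automatically \emph{separated}, since $A\cup B$ is disconnected in $G$ and hence not a tube. Consequently, any tubing $\Xcal$ of $G$ partitions unambiguously as $\Xcal=\Xcal_1\sqcup\Xcal_2$, where $\Xcal_1=\{A\in\Xcal:A\subseteq I\}$ is a tubing of $G|_I$ and $\Xcal_2=\{B\in\Xcal:B\subseteq [n]\setm I\}$ is a tubing of $G|_{[n]\setm I}$; conversely, any such pair $(\Xcal_1,\Xcal_2)$ glues to a tubing of $G$. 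This correspondence clearly sends maximal tubings to pairs of maximal tubings, so it restricts to a bijection
\[
\Phi:\MTub(G)\longrightarrow\MTub(G|_I)\times\MTub(G|_{[n]\setm I}).
\]
Composing with standardization on each factor (which is a graph isomorphism, hence induces a bijection on maximal tubings) yields the underlying bijection of sets for the claimed decomposition.

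Next I would analyze flips. If $\Xcal\ra\Ycal$ is a flip in $L_G$ exchanging tubes $A\in\Xcal$ and $B\in\Ycal$, then $\Xcal$ and $\Ycal$ agree outside the swap, so $\Xcal_2=\Ycal_2$ or $\Xcal_1=\Ycal_1$; in particular both $A$ and $B$ lie on the same side of the partition. Indeed, $\Phi$ exhibits the $1$-skeleton of $P_G$ as that of the product polytope $P_{G|_I}\times P_{G|_{[n]\setm I}}$ (from the Minkowski-sum description $P_G=\sum_J\Delta_J$, since every tube $J$ lies in one side). It follows that covers in $L_G$ correspond to covers in exactly one of the two factors, with the other factor fixed.

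Finally, I would verify that the orientation of flips agrees with the product orientation. If a flip $\Xcal\ra\Ycal$ occurs on the $I$-side, the tubes $A,B$ lie in $I$, and the flip condition is $\topT_\Xcal(A)<\topT_\Ycal(B)$ as elements of $[n]$. Since $\topT_\Xcal(A)$ and $\topT_\Ycal(B)$ are both in $I$, and standardization $\phi:I\to[|I|]$ is an order-preserving bijection, the inequality is equivalent to $\topT_{\Xcal_1'}(\phi(A))<\topT_{\Ycal_1'}(\phi(B))$ in the standardized graph, where $\Xcal_1',\Ycal_1'$ are the images under $\phi$. An identical argument handles flips on the $[n]\setm I$-side. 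Together with the cover analysis of the previous paragraph, this shows that $\Xcal\leq\Ycal$ in $L_G$ if and only if the two coordinates of $\Phi(\Xcal)$ are componentwise $\leq$ the two coordinates of $\Phi(\Ycal)$ in $L_{\std(G|_I)}\times L_{\std(G|_{[n]\setm I})}$, which is the desired isomorphism of posets.

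The only mildly nontrivial point is the claim that covers of $L_G$ lie entirely within one side; this is clear from the product polytope structure, but one could alternatively verify it directly using compatibility of tubes. Everything else is bookkeeping.
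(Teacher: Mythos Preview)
Your proposal is correct and follows essentially the same approach as the paper: split tubes across the partition, obtain the bijection on maximal tubings, argue that a flip exchanges two tubes on the same side (the paper does this via the observation that the exchanged tubes must be \emph{incompatible}, hence cannot be separated), and then check that standardization preserves the orientation of flips. Your additional polytope-product remark is a valid alternative for the ``same side'' claim, but the paper's incompatibility argument is the cleaner route and is what you yourself flag at the end.
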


\begin{proof}
  Under the assumptions about $G$ and $I$, there do not exist any tubes $X$ such that $X\cap I\neq\emptyset$ and $X\cap([n]\setm I)\neq\emptyset$. Furthermore, any tube of $G|_I$ is compatible as a tube of $G$ with any tube of $G|_{[n]\setm I}$. Hence, the set $\MTub(G)$ naturally decomposes as a Cartesian product
  $$\MTub(G)\stackrel{\sim}{\longlra}\MTub(\std(G|_I))\times\MTub(\std(G|_{[n]\setm I})).$$
  We claim that this bijection induces the desired isomorphism of posets
  $$L_G\cong L_{\std(G|_I)}\times L_{\std(G|_{[n]\setm I})}.$$

  If $\Xcal,\Ycal\in\MTub(G)$ such that $\Ycal=\Xcal\setm\{J\}\cup\{J^{\pr}\}$ for some tubes $J,J^{\pr}$ then $J$ and $J^{\pr}$ must be incompatible. Consequently, either both tubes $J,J^{\pr}$ are contained in $I$, or both tubes are contained in $[n]\setm I$. Without loss of generality, assume that $\topT_{\Xcal}(J)<\topT_{\Ycal}(J^{\pr})$ and that $J$ and $J^{\pr}$ are both subsets of $I$, which implies $\Xcal\ra\Ycal$ holds in $L_G$.

  Let $\phi:H\ra\std(H)$ be the natural graph isomorphism between $H$ and its standardization. Then the inequality $\topT_{\std(\Xcal|_I)}(\phi(J))<\topT_{\std(\Xcal|_I)}(\phi(J^{\pr}))$ still holds, so we have the relation $\std(\Xcal|_I)\ra\std(\Ycal|_I)$ in $L_{\std(G|_I)}$.

  Conversely, if $\Xcal$ and $\Ycal$ are maximal tubings of $\std(G|_I)$ and $\Zcal$ is any maximal tubing of $G|_{[n]\setm I}$, then the relation $\Xcal\ra\Ycal$ in $L_{\std(G|_I)}$ implies a relation $(\phi^{-1}(\Xcal)\cup\Zcal)\ra(\phi^{-1}(\Ycal)\cup\Zcal)$ in $L_G$.
\end{proof}


If $(L,\leq)$ is any poset, its \emph{dual} $(L^*,\leq^*)$ is the poset with the same underlying set such that $a\leq b$ if and only if $b\leq^* a$. 
If $G$ is any graph with $V(G)=[n]$, we let $G^*$ be the graph obtained by swapping vertices $i$ and $n+1-i$ for all $i$. 
This induces a natural bijection between maximal tubings of $G$ and maximal tubings of $G^*$.

\begin{lemma}\label{graph duality}
  The natural bijection $\MTub(G)\ra\MTub(G^*)$ induces an isomorphism of posets $L_G^*\cong L_{G^*}$.
\end{lemma}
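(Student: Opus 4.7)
The plan is to make the natural bijection explicit via the relabeling $\sigma\colon[n]\to[n]$, $\sigma(i)=n+1-i$, and then track how the defining ingredients of the flip relation transform.

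First I would observe that $\sigma$ is a graph isomorphism $G\to G^*$, so it sends tubes to tubes, compatible pairs to compatible pairs, and thus induces a bijection $\Xcal\mapsto \Xcal^*:=\{\sigma(I):I\in\Xcal\}$ from $\MTub(G)$ to $\MTub(G^*)$. This is the ``natural bijection'' in the statement. The key computation is how $\topT$ transforms: for a tube $I\in\Xcal$, the restriction $\Xcal|_I$ is carried by $\sigma$ to $\Xcal^*|_{\sigma(I)}$, and a vertex $v\in I$ lies in no proper tube of $\Xcal|_I$ if and only if $\sigma(v)\in\sigma(I)$ lies in no proper tube of $\Xcal^*|_{\sigma(I)}$. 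Hence
$$\topT_{\Xcal^*}(\sigma(I))=\sigma(\topT_{\Xcal}(I))=n+1-\topT_{\Xcal}(I).$$

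Next I would compare flips. Suppose $\Xcal\to\Ycal$ is a flip in $L_G$, with $\Ycal=\Xcal\setm\{I\}\cup\{J\}$ and $\topT_{\Xcal}(I)<\topT_{\Ycal}(J)$. Applying $\sigma$ gives $\Ycal^*=\Xcal^*\setm\{\sigma(I)\}\cup\{\sigma(J)\}$, and by the identity above
$$\topT_{\Xcal^*}(\sigma(I))=n+1-\topT_{\Xcal}(I)>n+1-\topT_{\Ycal}(J)=\topT_{\Ycal^*}(\sigma(J)).$$
So the flip in $G^*$ goes the other way: $\Ycal^*\to\Xcal^*$ is a flip in $L_{G^*}$. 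Since flips generate the partial orders on $L_G$ and $L_{G^*}$ (by Lemma~\ref{lem_poset}), we conclude that $\Xcal\leq\Ycal$ in $L_G$ if and only if $\Ycal^*\leq\Xcal^*$ in $L_{G^*}$, which is exactly the asserted isomorphism $L_G^*\cong L_{G^*}$.

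The only thing requiring any care is the $\topT$ identity, and this is immediate once one notes that $\sigma$ is an order-reversing bijection $[n]\to[n]$ that also happens to be a graph isomorphism $G\to G^*$; everything else is formal. There is no real obstacle beyond bookkeeping.
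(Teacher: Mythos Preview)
Your proof is correct and follows essentially the same approach as the paper: both arguments verify that the relabeling $i\mapsto n+1-i$ reverses the flip relation via the identity $\topT_{\Xcal^*}(\sigma(I))=n+1-\topT_{\Xcal}(I)$, and then pass to the transitive closure. Your version is more explicit about why the $\topT$ identity holds, but the strategy is identical.
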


\begin{proof}
  Let $\Xcal,\Ycal\in\MTub(G)$ are distinct tubings such that $\Ycal=\Xcal\setm\{I\}\cup\{J\}$. Let $\Xcal^*,\Ycal^*\in\MTub(G^*)$ be the corresponding maximal tubings of $G^*$. Then
  \begin{align*}
    \Xcal\ra\Ycal &\LRa \top_{\Xcal}(I)<\top_{\Ycal}(J)\\
    &\LRa \top_{\Ycal^*}(J^*)<\top_{\Xcal^*}(I^*)\\
    &\LRa \Ycal^*\ra\Xcal^*.
  \end{align*}
 Passing to the transitive closure of $\ra$, we deduce that $L_G$ and $L_{G^*}$ are dual posets.
\end{proof}



\subsection{The non-revisiting chain property}\label{subsec:NRC}

In this section, we prove that graph associahedra have the non-revisiting chain property, defined below. This is equivalent to the statement that for any tubing $\Xcal$, the set of maximal tubings containing $\Xcal$ is an interval of~$L_G$.

Given a polytope $P$, we will say a linear functional $\lambda:\Rbb^n\ra\Rbb$ is \emph{generic} if it is not constant on any edge of $P$. When $\lambda$ is generic, we let $L(P,\lambda)$ be the poset on the vertices of $P$ where $v\leq w$ if there exists a sequence of vertices $v=v_0,v_1,\ldots,v_l=w$ such that $\lambda(v_0)<\lambda(v_1)<\cdots<\lambda(v_l)$ and $[v_{i-1},v_i]$ is an edge for all $i\in\{1,\ldots,l\}$.

The following properties of $L(P,\lambda)$ are immediate.

\begin{proposition}\label{prop:omega_properties}
  Let $P$ be a polytope with a generic linear functional $\lambda$.
  \begin{enumerate}
  \item The dual poset $L(P,\lambda)^*$ is isomorphic to $L(P,-\lambda)$.
  \item If $F$ is a face of $P$, then the inclusion $L(F,\lambda)\hookra L(P,\lambda)$ is order-preserving.
  \item $L(P,\lambda)$ has a unique minimum $v_{\hat{0}}$ and a unique maximum $v_{\hat{1}}$.
  \end{enumerate}
\end{proposition}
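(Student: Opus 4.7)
The plan is to handle the three parts in order, using part (1) to halve the work in part (3) via duality.

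For part (1), I would check that the identity map on vertices is the desired isomorphism $L(P,\lambda)^*\to L(P,-\lambda)$. A witnessing chain $v_0,\ldots,v_l$ for $v_0\leq v_l$ in $L(P,\lambda)$ has consecutive vertices joined by edges and $\lambda(v_0)<\cdots<\lambda(v_l)$; reading it backwards, the same edges appear and $(-\lambda)$ strictly increases, so the reversed sequence witnesses $v_l\leq v_0$ in $L(P,-\lambda)$. This is exactly the statement that $L(P,-\lambda)$ is the dual poset.

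For part (2), the key observation is that every edge of a face $F$ of $P$ is itself an edge of $P$. Hence any chain in the $1$-skeleton of $F$ with strictly $\lambda$-increasing vertices is also such a chain in the $1$-skeleton of $P$, so a witness for $v\leq w$ in $L(F,\lambda)$ is automatically a witness for $v\leq w$ in $L(P,\lambda)$. The inclusion is therefore order-preserving.

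For part (3), define $v_{\hat 1}$ to be the unique vertex where $\lambda$ attains its maximum on $P$ (unique by genericity), and $v_{\hat 0}$ dually; by part (1) it suffices to show $v\leq v_{\hat 1}$ for every vertex $v$. The core claim is: if $v\neq v_{\hat 1}$, then there exists an edge $\{v,v'\}$ of $P$ with $\lambda(v')>\lambda(v)$. Granted this claim, one iteratively extends a strictly $\lambda$-increasing chain from $v$; the process must terminate since the vertex set of $P$ is finite and $\lambda$ is strictly increasing along the chain, and the only possible terminal vertex is $v_{\hat 1}$ itself. Concatenating these edges yields the required witness that $v\leq v_{\hat 1}$.

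The main (and only nontrivial) obstacle is the core claim above. My approach is to invoke the standard polytope fact that the tangent cone $C_v(P)=\{t(x-v):x\in P,\ t\geq 0\}$ at a vertex $v$ of $P$ is generated as a convex cone by the edge directions $v'-v$ where $v'$ ranges over the neighbors of $v$ in the $1$-skeleton. If every edge at $v$ satisfied $\lambda(v')<\lambda(v)$ (strict by genericity), then $\lambda$ would be nonpositive on each generator of $C_v(P)$, hence on the entire cone; since $P-v\subseteq C_v(P)$, this would force $\lambda(x)\leq\lambda(v)$ for all $x\in P$, contradicting $v\neq v_{\hat 1}$. I would either cite this tangent-cone description from a standard reference such as Ziegler's text, or derive it by intersecting $P$ with a small affine neighborhood of $v$ to obtain a polytope whose vertex figure at $v$ exhibits the edge directions as extreme rays.
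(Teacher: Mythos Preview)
Your argument is correct in all three parts. The paper itself does not give a proof: it simply declares that these properties ``are immediate'' and moves on. Your write-up supplies the details the authors left implicit; in particular, your tangent-cone justification of the core claim in part (3) (that a non-maximizing vertex has a $\lambda$-increasing edge) is the standard way to see why the simplex method works, and is exactly the kind of fact the authors would have had in mind. There is nothing to compare beyond noting that you have written out what the paper considered routine.
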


The pair $(P,\lambda)$ is said to have the \emph{non-revisiting chain (NRC) property} if whenever $\mathbf{x}<\mathbf{y}<\mathbf{z}$ in $L(P,\lambda)$ such that $\mathbf{x}$ and $\mathbf{z}$ lie in a common face $F$, then $\mathbf{y}$ is also in $F$.
The name comes from the fact that if $P$ has the NRC property, then any sequence of vertices following edges monotonically in the direction of $\lambda$ does not return to a face after leaving it. 
By definition, the NRC property means that faces are \emph{order-convex} subsets of $L(P,\lambda)$.
(Recall that a subset $S$ of a poset is \emph{order-convex} provided that whenever elements $x,z\in S$ satisfy $x<z$ then the entire interval $[x,z]$ belongs to $S$.)
In light of Proposition~\ref{prop:omega_properties}, this is equivalent to the condition that for any face $F$, the set of vertices of $F$ form an interval of $L(P,\lambda)$ isomorphic to $L(F,\lambda)$.

\begin{remark}
  There is also an unoriented version of the NRC property due to Klee and Wolfe called the \emph{non-revisiting path property}, which is the condition that for any two vertices $\mathbf{v}, \mathbf{w}$ of $P$, there exists a path from $\mathbf{v}$ to $\mathbf{w}$ that does not revisit any facet of $P$. 
It was known that the Hirsch conjecture on the diameter of 1-skeleta of polytopes is equivalent to the conjecture that every polytope has the non-revisiting path property.
These conjectures were formulated to determine the computational complexity of the simplex method from linear programming in the \emph{worst-case} scenario.
The Hirsch conjecture was disproved by Santos \cite{santos:2012counterexample}, but many interesting questions remain.
In particular, the polynomial Hirsch conjecture remains open.
\end{remark}

In contrast to the non-revisiting path property, many low-dimensional polytopes lack the non-revisiting chain property.
For example, if $P$ is a simplex of dimension at least $2$, then $[\mathbf{v}_{\hat{0}},\mathbf{v}_{\hat{1}}]$ is an edge of $P$ that is not an interval of $L(P,\lambda)$.
However, the property does behave nicely under Minkowski sum.

\begin{proposition}\label{prop:MS_NRF}
  If $(P,\lambda)$ and $(Q,\lambda)$ have the non-revisiting chain property, then so does $(P+Q,\lambda)$.
\end{proposition}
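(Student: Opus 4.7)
The plan is to reduce the non-revisiting chain property of $(P+Q,\lambda)$ to the assumed property for the summands via the standard Minkowski-sum dictionary relating faces of $P+Q$ to pairs of faces of $P$ and $Q$. First I would recall that $(P+Q)^f = P^f + Q^f$ for every linear functional $f$. Two consequences follow: each vertex $\mathbf{v}$ of $P+Q$ decomposes uniquely as $\mathbf{v} = \mathbf{v}_P + \mathbf{v}_Q$ with $\mathbf{v}_P$ a vertex of $P$ and $\mathbf{v}_Q$ a vertex of $Q$, and every face of $P+Q$ has the form $F+G$ for some face $F$ of $P$ and face $G$ of $Q$ (the $f$ witnessing the vertex also witnesses the face decomposition).

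Next I would analyze how edges of $P+Q$ behave under the projection $\mathbf{v}\mapsto(\mathbf{v}_P,\mathbf{v}_Q)$. Since an edge is a one-dimensional face, the previous step forces every edge of $P+Q$ to be of the form $e + g$ where either $e$ is an edge of $P$ and $g$ is a vertex of $Q$, or $e$ is a vertex of $P$ and $g$ is an edge of $Q$. Therefore, traversing an edge of $P+Q$ in the direction of increasing $\lambda$ changes exactly one of the two components and changes it along an edge of $P$ or $Q$ in the direction of increasing $\lambda$; the other component is unchanged. Concatenating cover relations, the map $\mathbf{v}\mapsto(\mathbf{v}_P,\mathbf{v}_Q)$ is order-preserving from $L(P+Q,\lambda)$ to $L(P,\lambda)\times L(Q,\lambda)$.

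With the dictionary in hand, NRC follows immediately. Given $\mathbf{x}<\mathbf{y}<\mathbf{z}$ in $L(P+Q,\lambda)$ with $\mathbf{x},\mathbf{z}$ in a common face, write that face as $F+G$. The order-preserving projection gives $\mathbf{x}_P \leq \mathbf{y}_P \leq \mathbf{z}_P$ in $L(P,\lambda)$ with $\mathbf{x}_P,\mathbf{z}_P\in F$, so NRC for $(P,\lambda)$ forces $\mathbf{y}_P\in F$; symmetrically $\mathbf{y}_Q\in G$. Hence $\mathbf{y} = \mathbf{y}_P + \mathbf{y}_Q$ is a vertex of $P+Q$ lying in $F+G$, and therefore is a vertex of the face $F+G$, as required.

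The main obstacle is not the NRC argument itself but cleanly establishing the vertex and edge decomposition for the Minkowski sum, in particular the identification of edges of $P+Q$ with ``edge-plus-vertex'' pairs. Once that Minkowski-sum dictionary is set up, NRC transfers by componentwise application and no combinatorial case analysis is needed.
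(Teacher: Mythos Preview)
Your approach is essentially the same as the paper's: both establish that the vertex-decomposition map $\mathbf{v}\mapsto(\mathbf{v}_P,\mathbf{v}_Q)$ is order-preserving into $L(P,\lambda)\times L(Q,\lambda)$ (the paper isolates this as a separate lemma, argued via normal cones rather than via $(P+Q)^f=P^f+Q^f$), and then apply NRC componentwise to the face decomposition $F+G$.

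There is, however, a small error in your edge analysis. You assert that every edge of $P+Q$ is of the form $e+g$ with exactly one of $e,g$ an edge and the other a vertex, so that traversing an edge of $P+Q$ changes exactly one component. This is false: if $e$ and $g$ are parallel edges of $P$ and $Q$ respectively, then $e+g$ is again a segment and can be an edge of $P+Q$ (take $P=Q=[0,1]\subset\Rbb$ for the simplest example). The paper handles this third case explicitly: when both $E^{\pr}=[\mathbf{v}^{\pr},\mathbf{w}^{\pr}]$ and $E^{\pr\pr}=[\mathbf{v}^{\pr\pr},\mathbf{w}^{\pr\pr}]$ are edges, they are parallel and $E=E^{\pr}+E^{\pr\pr}$, so $\lambda$ achieving its minimum on $E$ at $\mathbf{v}=\mathbf{v}^{\pr}+\mathbf{v}^{\pr\pr}$ forces $\lambda(\mathbf{v}^{\pr})<\lambda(\mathbf{w}^{\pr})$ and $\lambda(\mathbf{v}^{\pr\pr})<\lambda(\mathbf{w}^{\pr\pr})$. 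Thus both components move upward simultaneously. Once you allow this possibility, your conclusion that the projection is order-preserving is correct, and the remainder of your NRC argument is identical to the paper's.
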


The proof of Proposition~\ref{prop:MS_NRF} relies on Lemma~\ref{lem:sum_order_embed}. For polytopes $P$ and $Q$, the normal fan of $P+Q$ is the common refinement of $\Ncal(P)$ and $\Ncal(Q)$; that is,
$$\Ncal(P+Q)=\{C\cap C^{\pr}\ |\ C\in\Ncal(P),\ C^{\pr}\in\Ncal(Q)\}.$$
Let $V(P)$ be the set of vertices of $P$, and let $C_v$ be the normal cone to the vertex $v$ in $P$. From the description of the normal fan of $P+Q$, there is a canonical injection $\iota:V(P+Q)\hookra V(P)\times V(Q)$ that assigns a vertex $\mathbf{v}\in P+Q$ to $(\mathbf{u},\mathbf{w})$ if the normal cones satisfy $C_{\mathbf{v}}=C_\mathbf{u}\cap C_\mathbf{w}$.

\begin{lemma}\label{lem:sum_order_embed}
  The map $\iota:V(P+Q)\hookra V(P)\times V(Q)$ is an order-preserving function from $L(P+Q,\lambda)$ to $L(P,\lambda)\times L(Q,\lambda)$.
\end{lemma}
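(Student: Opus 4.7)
The plan is to reduce the problem to verifying the order-preserving property on covering edges of $L(P+Q,\lambda)$, and then invoke transitivity. More precisely, suppose $[\mathbf{v},\mathbf{v}']$ is an edge of $P+Q$ with $\lambda(\mathbf{v})<\lambda(\mathbf{v}')$, and write $\iota(\mathbf{v})=(\mathbf{u},\mathbf{w})$ and $\iota(\mathbf{v}')=(\mathbf{u}',\mathbf{w}')$. It suffices to show that $\mathbf{u}\leq\mathbf{u}'$ in $L(P,\lambda)$ and $\mathbf{w}\leq\mathbf{w}'$ in $L(Q,\lambda)$, because any cover relation in $L(P+Q,\lambda)$ is of this form, and the partial order $\leq$ on each factor is the transitive closure of the edge relation oriented by $\lambda$.

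To analyze such an edge, the plan is to use the description of $\Ncal(P+Q)$ as the common refinement of $\Ncal(P)$ and $\Ncal(Q)$ that is recalled just before the lemma. The cone $C_{\mathbf{v}}\cap C_{\mathbf{v}'}$ is a codimension-$1$ cone in $\Ncal(P+Q)$, so it equals $C^P\cap C^Q$ for some cones $C^P\in\Ncal(P)$ and $C^Q\in\Ncal(Q)$; these cones correspond to faces $F_P\subseteq P$ and $F_Q\subseteq Q$ each of dimension at most $1$, and the edge $[\mathbf{v},\mathbf{v}']$ equals the Minkowski sum $F_P+F_Q$. Picking any linear functional in the relative interior of $C_{\mathbf{v}}$ (resp.\ $C_{\mathbf{v}'}$) identifies $\mathbf{u},\mathbf{w}$ (resp.\ $\mathbf{u}',\mathbf{w}'$) as the vertices of $F_P,F_Q$ that lie on the $\mathbf{v}$-end (resp.\ $\mathbf{v}'$-end) of these faces.

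From this description, the vectors $\mathbf{u}'-\mathbf{u}$ and $\mathbf{w}'-\mathbf{w}$ are each nonnegative scalar multiples of the edge direction $\mathbf{v}'-\mathbf{v}$, vanishing exactly when $F_P$ (resp.\ $F_Q$) is a single vertex; and their sum equals $\mathbf{v}'-\mathbf{v}$. Since $\lambda(\mathbf{v}'-\mathbf{v})>0$, it follows that $\lambda(\mathbf{u}')\geq\lambda(\mathbf{u})$ and $\lambda(\mathbf{w}')\geq\lambda(\mathbf{w})$. When equality holds for $P$, the vertices $\mathbf{u}$ and $\mathbf{u}'$ coincide and $\mathbf{u}\leq\mathbf{u}'$ trivially; when strict inequality holds, $F_P=[\mathbf{u},\mathbf{u}']$ is an edge of $P$ oriented positively by $\lambda$, so $\mathbf{u}<\mathbf{u}'$ in $L(P,\lambda)$ by definition. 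The same dichotomy gives $\mathbf{w}\leq\mathbf{w}'$ in $L(Q,\lambda)$, completing the cover step.

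The main obstacle I expect is the geometric verification that $\mathbf{u}'-\mathbf{u}$ and $\mathbf{w}'-\mathbf{w}$ are parallel to $\mathbf{v}'-\mathbf{v}$; the cleanest way is to observe that $F_P$ and $F_Q$ share the common outward normal cone $C_{\mathbf{v}}\cap C_{\mathbf{v}'}$ with the edge $[\mathbf{v},\mathbf{v}']$, which forces them to lie in parallel affine lines (the unique line perpendicular to this codimension-$1$ cone). Everything else in the argument is bookkeeping about the correspondence between normal cones and faces under Minkowski sums.
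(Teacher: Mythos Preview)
Your proposal is correct and follows essentially the same route as the paper. The paper writes the normal cone of the edge as $C_E=(C_{\mathbf{v}'}\cap C_{\mathbf{w}'})\cap(C_{\mathbf{v}''}\cap C_{\mathbf{w}''})$ and uses the codimension-$1$ constraint to force each factor to have codimension at most $1$, i.e.\ to be the normal cone of a vertex or of an edge of $P$ (resp.\ $Q$); your faces $F_P,F_Q$ are exactly these, and your parallelism/sign argument is the same case split the paper carries out.
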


\begin{proof}
  Let $E=[\mathbf{v},\mathbf{w}]$ be an edge of $P+Q$, and suppose $\lambda(\mathbf{v})<\lambda(\mathbf{w})$. It suffices to show that $\iota(\mathbf{v})<\iota(\mathbf{w})$.

  Let $\iota(\mathbf{v})=(\mathbf{v}^{\pr},\mathbf{v}^{\pr\pr})$ and $\iota(\mathbf{w})=(\mathbf{w}^{\pr},\mathbf{w}^{\pr\pr})$. 
 Then the normal cone $C_E$ is the intersection of $C_\mathbf{v}$ and $C_\mathbf{w}$, which themselves are the intersections of $C_{\mathbf{v}^{\pr}},\ C_{\mathbf{v}^{\pr\pr}}$ and $C_{\mathbf{w}^{\pr}},\ C_{\mathbf{w}^{\pr\pr}}$. 
 Since
  $$C_E=(C_{\mathbf{v}^{\pr}}\cap C_{\mathbf{w}^{\pr}})\cap(C_{\mathbf{v}^{\pr\pr}}\cap C_{\mathbf{w}^{\pr\pr}})$$
  is a cone of codimension 1, we may deduce that $C_{\mathbf{v}^{\pr}}\cap C_{\mathbf{w}^{\pr}}$ and $C_{\mathbf{v}^{\pr\pr}}\cap C_{\mathbf{w}^{\pr\pr}}$ are both of codimension $\leq 1$. 
  Hence, the segments $E^{\pr}=[\mathbf{v}^{\pr},\mathbf{w}^{\pr}]$ and $E^{\pr\pr}=[\mathbf{v}^{\pr\pr},\mathbf{w}^{\pr\pr}]$ are either vertices or edges of $P$ and $Q$, respectively. 
  Moreover, if both $E^{\pr}$ and $E^{\pr\pr}$ are edges, then they must be parallel and $E=E^{\pr}+E^{\pr\pr}$. 
  In the event one of them is a vertex, say $E^{\pr\pr}$ (so that $\mathbf{v}''=\mathbf{w}''$), then $E^{\pr}$ must be an edge, and
  $$\lambda(\mathbf{v}^{\pr})=\lambda(\mathbf{v})-\lambda(\mathbf{v}^{\pr\pr})<\lambda(\mathbf{w})-\lambda(\mathbf{v}^{\pr\pr})=\lambda(\mathbf{w})-\lambda(\mathbf{w}^{\pr\pr})=\lambda(\mathbf{w}^{\pr}).$$
  If both $E^\pr$ and $E^{\pr\pr}$ are edges, then since $\lambda$ achieves its minimum value on $E=E^{\pr}+E^{\pr\pr}$ at $\mathbf{v}=\mathbf{v}^{\pr}+\mathbf{v}^{\pr\pr}$, we have $\lambda(\mathbf{v}^{\pr})<\lambda(\mathbf{w}^{\pr})$ and $\lambda(\mathbf{v}^{\pr\pr})<\lambda(\mathbf{w}^{\pr\pr})$. 
  In both cases, $\iota(\mathbf{v})<\iota(\mathbf{w})$ holds.
\end{proof}

\begin{proof}[Proof of Proposition~\ref{prop:MS_NRF}]
Every face of $P+Q$ is of the form $F+F^{\pr}$ where $F$ is a face of $P$ and $F^{\pr}$ is a face of $Q$. 
Suppose $\mathbf{u},\mathbf{v},\mathbf{w}$ are vertices of $P+Q$ such that $\mathbf{u}<\mathbf{v}<\mathbf{w}$ in $L(P+Q,\lambda)$ and $\mathbf{u},\mathbf{w}\in F+F^{\pr}$. Set $\iota(\mathbf{u})=(\mathbf{u}_P,\mathbf{u}_Q)$, and analogously for $\iota(\mathbf{v})$ and $\iota(\mathbf{w})$. 
Then $\mathbf{u}_P\leq \mathbf{v}_P\leq \mathbf{w}_P$ in $L(P,\lambda)$ and $\mathbf{u}_Q\leq \mathbf{v}_Q\leq \mathbf{w}_Q$ in $L(Q,\lambda)$. 
Since $P$ and $Q$ have the non-revisiting chain property, $\mathbf{v}_P$ is in $F$ and $\mathbf{v}_Q$ is in $F^{\pr}$.
 Hence, $\mathbf{v}=\mathbf{v}_P+\mathbf{v}_Q$ is in $F+F^{\pr}$, as desired.
\end{proof}

\begin{corollary}[Proposition 7.2 \cite{hersh:2018nonrevisiting}]
  Every zonotope has the non-revisiting chain property with respect to any generic linear functional.
\end{corollary}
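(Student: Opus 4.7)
The plan is to deduce the corollary from Proposition~\ref{prop:MS_NRF} by decomposing the zonotope as a Minkowski sum of its generating line segments and inducting on the number of summands.

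By definition, a zonotope $Z \subseteq \Rbb^n$ is a Minkowski sum $Z = S_1 + \cdots + S_m$ of finitely many line segments $S_i = [\mathbf{a}_i, \mathbf{b}_i]$. The base case of the induction is a single line segment $S$: it has exactly two vertices, so $L(S,\lambda)$ is a two-element chain for any linear functional $\lambda$ that is non-constant on $S$. No strict chain $\mathbf{x} < \mathbf{y} < \mathbf{z}$ of three distinct vertices exists, so the NRC property holds vacuously.

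For the inductive step, assuming that $(S_1 + \cdots + S_k, \lambda)$ has the non-revisiting chain property, I apply Proposition~\ref{prop:MS_NRF} with $P = S_1 + \cdots + S_k$ and $Q = S_{k+1}$ to obtain the NRC property for $(S_1 + \cdots + S_{k+1}, \lambda)$. Iterating $m$ times yields the result for $Z$.

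The one wrinkle is that Proposition~\ref{prop:MS_NRF} implicitly requires $\lambda$ to be generic on each summand. This is handled by the standard fact that every edge direction of a zonotope is parallel to one of its generating segments: if $\lambda$ is generic on $Z$, then $\lambda(\mathbf{b}_i - \mathbf{a}_i) \neq 0$ for every $i$, and consequently $\lambda$ is generic on each partial Minkowski sum $S_1 + \cdots + S_k$ along the induction. The main obstacle, then, is purely one of bookkeeping about genericity; the combinatorial content of the statement is already packaged into Proposition~\ref{prop:MS_NRF}.
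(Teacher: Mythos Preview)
Your proof is correct and matches the paper's intended argument: the corollary is stated immediately after Proposition~\ref{prop:MS_NRF} without a separate proof, so the paper expects exactly the induction on line-segment summands that you carry out. Your extra care about genericity on the partial Minkowski sums is a nice detail that the paper leaves implicit.
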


We now return to graph associahedra. 
Let $G$ be a graph on $[n]$, and let $\lambda$ be the linear functional in the proof of Lemma~\ref{lem_poset}, where $\lambda(\mathbf{x})=nx_1+(n-1)x_2+\cdots+x_n$, so that $L_G\cong L(P_G,\lambda)$. Using the decomposition $P_G=\sum\Delta_I$, Lemma~\ref{lem:sum_order_embed} implies that $\pi_J:L_G\ra L(\Delta_J,\lambda)$ obtained as the composition
$$L_G\hookra\bigotimes_I L(\Delta_I,\lambda)\thra L(\Delta_J,\lambda)$$
is order-preserving. We note that the poset $L(\Delta_J,\lambda)$ is a chain where $\mathbf{e}_i>\mathbf{e}_j$ whenever $i,j\in J$ with $i<j$. 
\begin{lemma}\label{NRC helper}
Suppose that $\Xcal$ is a maximal tubing of $G$ and $J$ is a tube not necessarily in~$\Xcal$.
Then there exists a unique $k\in J$ such that $J\subseteq k_{\down}$, and for this $k$ we have $\pi_J(\Xcal)=\mathbf{e}_k.$
\end{lemma}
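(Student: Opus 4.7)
The plan is to reduce the statement to the vertex characterization established in the proof of Lemma~\ref{polytope_poset}. That proof uses a generic linear functional $f$ with $f(\mathbf{e}_{w_1})<\cdots<f(\mathbf{e}_{w_n})$ for a suitable linear extension $w$ of $\tau(\Xcal)$, showing that the Minkowski summand $\Delta_I$ contributes $\Delta_I^f=\mathbf{e}_i$ to $\mathbf{v}^{\Xcal}$ exactly when $i\in I$ and $I\subseteq i_\down$, and hence $\mathbf{v}^{\Xcal}=\sum_I\Delta_I^f$. By Lemma~\ref{lem:sum_order_embed}, the embedding $\iota$ records these Minkowski summands coordinatewise, so $\pi_J(\Xcal)=\Delta_J^f$. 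Thus it suffices to show that there is a unique $k\in J$ satisfying $J\subseteq k_\down$, since then the characterization forces $\pi_J(\Xcal)=\mathbf{e}_k$.

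For existence, the collection $\{X\in\Xcal:J\subseteq X\}$ is nonempty, since the vertex set of the connected component of $G$ containing $J$ is a tube compatible with every other tube of $G$ and therefore lies in every maximal tubing. Any two tubes $X,X'$ in this collection meet in $J\neq\emptyset$, so $X\cup X'$ is a connected subgraph, hence a tube; thus $X$ and $X'$ are not separated, and by compatibility they must be nested. The collection is therefore a chain; let $X$ be its minimum and set $k:=\topT_{\Xcal}(X)$, so that $k_\down=X\supseteq J$. It remains to show $k\in J$. Suppose not, so $J\subseteq X\setm\{k\}$. In the $G$-tree $\tau(\Xcal)$, the children $j_1,\ldots,j_r$ of $k$ have pairwise disjoint downsets $(j_1)_\down,\ldots,(j_r)_\down$ that partition $X\setm\{k\}$; each $(j_i)_\down$ is a tube (hence connected in $G$), while no union $(j_i)_\down\cup (j_{i'})_\down$ is a tube because distinct children are incomparable in $\tau(\Xcal)$. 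These downsets are therefore precisely the connected components of $G|_{X\setm\{k\}}$, so the connected set $J$ is contained in some $(j_i)_\down$. But $(j_i)_\down\in\Xcal$ and $(j_i)_\down\subsetneq X$, contradicting the minimality of $X$.

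For uniqueness, suppose $k_1,k_2\in J$ both satisfy $J\subseteq (k_i)_\down$. Then $(k_1)_\down$ and $(k_2)_\down$ are tubes of $\Xcal$ that share $J$, so as above they are nested; assume $(k_1)_\down\subseteq (k_2)_\down$. The containment $k_2\in J\subseteq (k_1)_\down$ means $k_2$ is a descendant of $k_1$ in $\tau(\Xcal)$, which forces $(k_2)_\down\subseteq (k_1)_\down$, hence $k_1=k_2$. The only substantial step is existence, where the identification of the children downsets in $\tau(\Xcal)$ with the connected components of $G|_{k_\down\setm\{k\}}$ does the essential work; the rest follows immediately from Lemma~\ref{polytope_poset} and Lemma~\ref{lem:sum_order_embed}.
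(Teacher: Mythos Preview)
Your proof is correct and follows essentially the same overall strategy as the paper: both identify $\pi_J(\Xcal)$ with the contribution of $\Delta_J$ to the Minkowski-sum decomposition of $\mathbf{v}^{\Xcal}$, and then invoke the characterization from the proof of Lemma~\ref{polytope_poset}. The uniqueness arguments are also the same.

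The one noteworthy difference is in how existence of $k$ is established. The paper obtains existence essentially for free from the geometry: since $\mathbf{v}^{\Xcal}=\sum_I\Delta_I^f$, the summand $\Delta_J^f$ is automatically some vertex $\mathbf{e}_k$ of $\Delta_J$, and the ``if and only if'' in Lemma~\ref{polytope_poset} then forces $k\in J$ and $J\subseteq k_\down$. You instead give a direct combinatorial argument via the $G$-tree $\tau(\Xcal)$, locating the minimal tube of $\Xcal$ containing $J$ and using that the children's downsets are precisely the connected components of $k_\down\setminus\{k\}$. Your route is more self-contained and makes the tree structure explicit, at the cost of being longer; the paper's version is shorter but leans more heavily on the geometric setup. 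One small imprecision: the fact that $\iota$ sends $\mathbf{v}^{\Xcal}$ to the tuple $(\Delta_I^f)_I$ follows from the \emph{definition} of $\iota$ via normal cones (together with $P_G^f=\sum_I\Delta_I^f$), not from Lemma~\ref{lem:sum_order_embed} itself, which only asserts that $\iota$ is order-preserving.
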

\begin{proof}
Recall that $k_\down$ is the smallest tube in $\Xcal$ that contains $k$.
Hence, there is at most one such element $k\in J$ satisfying $J\subseteq k_\down$.
(Indeed, if $j\in J$ and $J\subseteq j_\down$ then we have $j\in J\subseteq k_\down$.
Thus $j_\down\subseteq k_\down$.
By symmetry, $k_\down\subseteq j_\down$.
Therefore $j=k$.)

Consider the vertex $\mathbf{v}^\Xcal$ in $P_G$.
Lemma~\ref{polytope_poset} implies that $\Delta_J$ contributes $\mathbf{e}_k$ to $\mathbf{v}^\Xcal$ if and only if $k\in J$ and $J\subseteq k_\down$.
Therefore $\pi_J(\Xcal) = \mathbf{e}_k$, as desired.

\end{proof} 

\begin{theorem}\label{thm:NRC}
  The pair $(P_G,\lambda)$ has the non-revisiting chain property.
\end{theorem}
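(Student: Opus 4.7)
The key fact is that every face of $P_G$ is of the form $F_\Tcal$ for some tubing $\Tcal$, and a vertex $\Xcal$ lies in $F_\Tcal$ if and only if $\Tcal\subseteq\Xcal$. Since an intersection of order-convex subsets of a poset is order-convex, it suffices to show that for each proper tube $J$ of $G$, the set $\{\Xcal\in\MTub(G):J\in\Xcal\}$ is order-convex in $L_G\cong L(P_G,\lambda)$.

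Suppose $\Xcal_1\leq\Xcal_2\leq\Xcal_3$ in $L_G$ with $J\in\Xcal_1\cap\Xcal_3$, and assume toward contradiction that $J\notin\Xcal_2$. Lemma~\ref{NRC helper} supplies a unique $k\in J$ with $J\subseteq k_{\down}$ in $\Xcal_2$; since $J\notin\Xcal_2$, we have $J\subsetneq J':=k_{\down}$. Because $G|_{J'}$ is connected and properly contains $J$, I may choose $l\in J'\setm J$ such that $J\cup\{l\}$ is a tube of $G$. The main step is to apply the order-preserving projection $\pi_{J\cup\{l\}}$. Let $m$ be the unique element of $J\cup\{l\}$ with $J\cup\{l\}\subseteq m_{\down}$ in $\Xcal_1$. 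If $m\in J$, then since $J\in\Xcal_1$ is a tube containing $m$, we would have $m_{\down}\subseteq J$ in $\Xcal_1$, contradicting $l\in m_{\down}\setm J$. So $m=l$, meaning $\pi_{J\cup\{l\}}(\Xcal_1)=\mathbf{e}_l$, and the same argument gives $\pi_{J\cup\{l\}}(\Xcal_3)=\mathbf{e}_l$. Because $\pi_{J\cup\{l\}}$ is order-preserving into the chain $L(\Delta_{J\cup\{l\}},\lambda)$, the value at $\Xcal_2$ is squeezed to $\mathbf{e}_l$ as well, so $J\cup\{l\}\subseteq l_{\down}$ in $\Xcal_2$.

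To finish, observe that $k\in J\subseteq l_{\down}$ in $\Xcal_2$ forces $k_{\down}\subseteq l_{\down}$, while $l\in J'=k_{\down}$ forces $l_{\down}\subseteq k_{\down}$, so $l_{\down}=k_{\down}=J'$. Then $k$ and $l$ are two distinct elements of $J'$ both satisfying $J'\subseteq(\cdot)_{\down}$ in $\Xcal_2$, contradicting the uniqueness asserted in Lemma~\ref{NRC helper} applied to the tube $J'$. The main obstacle in the plan is spotting the right auxiliary projection: choosing the tube $J\cup\{l\}$ is exactly what transmits the hypothesis ``$J$ belongs to both $\Xcal_1$ and $\Xcal_3$'' through order-preserving projections into a usable constraint on $\Xcal_2$. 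Everything else reduces to the connectivity of induced subgraphs and the uniqueness clause of Lemma~\ref{NRC helper}.
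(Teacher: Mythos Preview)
Your proof is correct and follows essentially the same strategy as the paper: reduce to facets, then for the facet indexed by a tube, choose an auxiliary element to form a slightly larger tube and apply the order-preserving projection $\pi$ to the corresponding simplex. The paper first reduces (without loss of generality) to the case where $\Xcal_1\ra\Xcal_2$ is a flip and takes the auxiliary element $b=\top_{\Xcal_2}(I')$ coming from that flip; you instead read the auxiliary element $l$ directly off the structure of $\Xcal_2$ via Lemma~\ref{NRC helper} (any $l\in k_\down\setminus J$ with $J\cup\{l\}$ a tube). Your squeeze argument, forcing $\pi_{J\cup\{l\}}(\Xcal_2)=\mathbf{e}_l$ from both sides and then deriving $k_\down=l_\down$, is a clean way to reach the contradiction and lets you avoid the flip-reduction step entirely; the paper instead computes $\pi_J(\Ycal)=\mathbf{e}_a$ explicitly and contradicts order-preservation on the pair $\Ycal<\Zcal$. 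These are minor variations on the same idea.
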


\begin{proof}
  Every face of $P_G$ is the intersection of some facets, and the intersection of a family of order-convex sets is again order-convex. Hence, it suffices to prove that if $F$ is any facet of $P_G$ then $V(F)$ is an order-convex subset of $L(P_G,\lambda)$. We argue by way of contradiction that this set is order-convex by selecting an appropriate projection $\pi_J$.

  Under the dictionary between tubings of $G$ and faces of $P_G$, if $F$ is a facet, then there exists a tube $I$ such that
  $$V(F)=\{\mathbf{v}^{\Xcal}\ |\ \Xcal\in\MTub(G),\ I\in\Xcal\}.$$
  Suppose that there are maximal tubings $\Xcal<\Ycal<\Zcal$ such that $I\in\Xcal$ and $I\in\Zcal$ but $I$ is not in $\Ycal$. Given that such a triple exists, we are free to assume that $\Xcal\ra\Ycal$ is a flip. Then the flip exchanges $I$ for some tube $I^{\pr}$. Let $a=\topT_{\Xcal}(I)$ and $b=\topT_{\Ycal}(I^{\pr})$. The union $I\cup I^{\pr}$ is a tube in both $\Xcal$ and $\Ycal$ such that $b=\topT_{\Xcal}(I\cup I^{\pr})$ and $a=\topT_{\Ycal}(I\cup I^{\pr})$. 
Hence, $I$ is maximal a tube in the ideal $(I\cup I^{\pr})\setminus \{b\}$ in $\Xcal$.
That means $G|_I$ is one of the connected components of $G|_{I\cup I^{\pr}\setm\{b\}}$. Since $I\cup I^{\pr}$ is a tube, this implies $I\cup\{b\}$ is a tube as well.

 Set $J=I\cup\{b\}$. 
We claim that if $\Wcal$ is any maximal tubing containing $I$, then the projection $\pi_J(\Wcal)=\mathbf{e}_b$. 
If $\pi_J(\Wcal)=\mathbf{e}_k\ne \mathbf{e}_b$ then Lemma~\ref{NRC helper} says that $k\in J$ and $J \subseteq k_\down$.
Since $k\ne b$, it follows that $k\in I$.
Since $k_\down$ is the smallest tube in $\Wcal$ that contains $k$, we have $k_\down \subseteq I$.
But then $I\subsetneq J \subseteq k_\down \subseteq I$, and that is a contradiction.
Therefore, $\pi_J(\Wcal) = \mathbf{e}_b$.

So $\pi_J(\Xcal)=\mathbf{e}_b=\pi_J(\Zcal)$, but $\pi_J(\Ycal)=\mathbf{e}_a$, contradicting the assumption that $\Ycal<\Zcal$.
\end{proof}

\begin{corollary}\label{faces_are_intervals}
For any tubing $\Ycal$ of $G$, the set of maximal tubings which contain $\Ycal$ is an interval in $L_G$.
\end{corollary}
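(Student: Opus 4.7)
The plan is to exploit the dictionary between tubings of $G$ and faces of $P_G$, together with the non-revisiting chain property established in Theorem~\ref{thm:NRC}. Specifically, I would observe that the tubing $\Ycal$ corresponds to a face $F$ of the graph associahedron $P_G$ whose vertex set is exactly $V(F)=\{\mathbf{v}^{\Xcal}:\Xcal\in\MTub(G),\ \Ycal\subseteq\Xcal\}$. Thus the corollary amounts to showing that, for every face $F$ of $P_G$, the set $V(F)$ is an interval of $L_G\cong L(P_G,\lambda)$.

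Next I would apply the structural results about $L(P,\lambda)$. By Proposition~\ref{prop:omega_properties}(3), applied to the polytope $F$ with the restricted linear functional $\lambda$, the poset $L(F,\lambda)$ has a unique minimum $\mathbf{v}_{\hat 0}$ and a unique maximum $\mathbf{v}_{\hat 1}$. By Proposition~\ref{prop:omega_properties}(2), the inclusion $L(F,\lambda)\hookra L(P_G,\lambda)$ is order-preserving, so $\mathbf{v}_{\hat 0}$ and $\mathbf{v}_{\hat 1}$ remain the minimum and maximum of $V(F)$ inside $L_G$, and in particular $V(F)\subseteq [\mathbf{v}_{\hat 0},\mathbf{v}_{\hat 1}]$.

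To finish, I would invoke Theorem~\ref{thm:NRC}: the non-revisiting chain property says precisely that $V(F)$ is order-convex in $L_G$. Applying order-convexity to the pair $\mathbf{v}_{\hat 0}<\mathbf{v}_{\hat 1}$ yields the reverse containment $[\mathbf{v}_{\hat 0},\mathbf{v}_{\hat 1}]\subseteq V(F)$, so $V(F)=[\mathbf{v}_{\hat 0},\mathbf{v}_{\hat 1}]$ is an interval of $L_G$, as desired.

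There is no real obstacle here: essentially all of the work is already packaged in Theorem~\ref{thm:NRC} and Proposition~\ref{prop:omega_properties}. The only mild point to be careful about is to remember that a minimum/maximum of a subposet arising from an order-preserving embedding is automatically a minimum/maximum of the image inside the ambient poset, which is needed to combine the two ingredients cleanly.
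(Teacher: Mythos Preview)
Your proposal is correct and follows exactly the approach the paper intends: the paper notes just before Theorem~\ref{thm:NRC} that, in light of Proposition~\ref{prop:omega_properties}, the NRC property is equivalent to every face $F$ having $V(F)$ form an interval of $L(P,\lambda)$, and Corollary~\ref{faces_are_intervals} is then stated without proof as an immediate consequence. Your write-up simply unpacks that equivalence carefully, so there is nothing to add.
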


\begin{remark}
  Another property that a polytope graph may have is the \emph{non-leaving face property}, which is satisfied if for any two vertices $u,v$ that lie in a common face $F$ of $P$, every geodesic between $u$ and $v$ is completely contained in $F$. This property holds for all zonotopes, but is quite special for general polytopes. Although ordinary associahedra are known to have the non-leaving face property, not all graph associahedra do. We note that the example geodesic in \cite[Figure 6]{manneville.pilaud:2015graph} that leaves a particular facet cannot be made into a monotone path, so it does not contradict our Theorem~\ref{thm:NRC}.
\end{remark}

Recall that the M\"obius function $\mu=\mu_L:\Int(L)\ra\Zbb$ is the unique function on the intervals of a finite poset $L$ such that for $x\leq y$:
$$\sum_{x\leq z\leq y}\mu(x,z)=\begin{cases}1\ \mbox{if }x=y\\0\ \mbox{if }x\neq y\end{cases}.$$

When $L(P,\lambda)$ is a lattice with the non-revisiting chain property, the M\"obius function was determined in \cite{hersh:2018nonrevisiting}. One way to prove this is to show that $L(P,\lambda)$ is a crosscut-simplicial lattice; cf. \cite{mcconville:2017crosscut}. In the case of the poset of maximal tubings, we may express the M\"obius function as follows. For a tubing $\Xcal$, let $|\Xcal|$ be the number of tubes it contains.

\begin{corollary}\label{cor:mobius}
  Let $G$ be a graph with vertex set $[n]$ such that $L_G$ is a lattice. Let $\Xcal$ be a tubing that contains every maximal tube. 
 The set of maximal tubings containing $\Xcal$ is an interval $[\Ycal,\Zcal]$ of $L_G$ such that $\mu(\Ycal,\Zcal)=(-1)^{n-|\Xcal|}$. If $[\Ycal,\Zcal]$ is not an interval of this form, then $\mu(\Ycal,\Zcal)=0$.
\end{corollary}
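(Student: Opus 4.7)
The plan is to combine the interval structure from Corollary~\ref{faces_are_intervals} with the product decomposition of faces of the graph associahedron, and then compute the M\"obius function either by induction on dimension or by importing the crosscut-simplicial machinery referenced in the surrounding text.

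First, the opening sentence of the statement is immediate: by Corollary~\ref{faces_are_intervals}, the set of maximal tubings containing $\Xcal$ is an interval $[\Ycal,\Zcal]$ of $L_G$. The hypothesis that $\Xcal$ contains every maximal tube of $G$ is exactly what is needed for $\Xcal$ to correspond to a face $F$ of $P_G$ under the tubing--face dictionary, and counting tubes gives $\dim F = n - |\Xcal|$ (start from $\dim P_G = n-c$ when the tubing consists of only the $c$ connected components, and lose one dimension for each additional tube). Proposition~\ref{prop:omega_properties}(2) together with the NRC property from Theorem~\ref{thm:NRC} upgrades this into an \emph{order-isomorphism} $[\Ycal,\Zcal]\cong L(F,\lambda)$.

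For part (a), I would then invoke the product decomposition of faces mentioned after Lemma~\ref{polytope_poset}: any face $F$ of $P_G$ is a product $\prod_j P_{G_j}$ of graph associahedra for certain induced and contracted minors $G_j$ of $G$, and the restriction of $\lambda$ respects this decomposition on the 1-skeleton, so $L(F,\lambda)\cong\prod_j L_{G_j}$. Because $L_G$ is a lattice, so is every interval, hence every factor $L_{G_j}$ is a lattice. Since M\"obius functions multiply over direct products, the statement reduces to the claim $\mu_{L_{G'}}(\hat 0,\hat 1)=(-1)^{m-1}$ for every connected graph $G'$ on $m$ vertices with $L_{G'}$ a lattice. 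I would prove this by induction on $m$: the base $m=1$ is trivial, and for $m\ge 2$ the inductive step follows from the crosscut-simplicial property of $L_{G'}$ (as recorded in \cite{mcconville:2017crosscut}, \cite{hersh:2018nonrevisiting}), which guarantees that $L_{G'}$ restricted to the atoms and coatoms has the combinatorial structure forcing Rota's crosscut sum to collapse to $(-1)^{m-1}$, matching $(-1)^{\dim P_{G'}}$.

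For part (b), the task is to show $\mu(\Ycal,\Zcal)=0$ when $[\Ycal,\Zcal]$ is not of the form described in part (a). Here I would again use the crosscut-simplicial structure: the key fact is that in such a lattice, for every interval $[\Ycal,\Zcal]$, the join of all atoms of $[\Ycal,\Zcal]$ is some canonical element $\Zcal'$, and $\mu(\Ycal,\Zcal)$ vanishes unless $\Zcal'=\Zcal$. Translating through the isomorphism of Step~1, the ``atoms join to the top'' condition is equivalent to $[\Ycal,\Zcal]$ being the full set of maximal tubings containing the intersection $\Xcal=\bigcap \{\Wcal : \Ycal\le \Wcal \le \Zcal\}$, which is exactly the face-interval condition. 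Otherwise the crosscut sum vanishes, giving $\mu(\Ycal,\Zcal)=0$.

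The main obstacle is the vanishing in part (b): the face-interval dichotomy is geometrically clear but requires a combinatorial verification that the tubing $\Xcal$ obtained by intersecting all maximal tubings in $[\Ycal,\Zcal]$ really contains every maximal tube of $G$ precisely when the interval is a face interval. I expect this to follow cleanly from the identification of atoms of $[\Ycal,\Zcal]$ with flips performed at specific tubes of $\Ycal$, together with the characterization of faces in $\Delta_G$ via compatible tubings; the rest of the bookkeeping is forced by the crosscut-simplicial framework already established for $L_G$ in the previous subsection.
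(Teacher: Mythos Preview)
Your proposal reaches the right conclusion through the same crosscut-simplicial framework that the paper points to, but the paper does not actually supply a proof of the corollary: the preceding paragraph simply cites \cite{hersh:2018nonrevisiting}, which determines the M\"obius function of any $L(P,\lambda)$ that is both a lattice and has the non-revisiting chain property (the latter established in Theorem~\ref{thm:NRC}), and mentions the crosscut-simplicial route of \cite{mcconville:2017crosscut} as an alternative. That is the entire argument.

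Compared to that one-line citation, your argument is more explicit but carries unnecessary structure. The product decomposition $L(F,\lambda)\cong\prod_j L_{G_j}$ and the induction on the number of vertices are both superfluous: once you grant that $L_G$ is crosscut-simplicial (which you invoke anyway in both parts), Rota's crosscut theorem computes $\mu(\Ycal,\Zcal)$ for \emph{every} interval simultaneously, with no reduction to smaller graphs required. Your part~(b) is essentially the intended argument; part~(a) would be cleaner if you dropped the induction and applied the crosscut formula directly to the face interval, noting that the atoms of $[\Ycal,\Zcal]$ are the $n-|\Xcal|$ upward flips of $\Ycal$ inside the $(n-|\Xcal|)$-dimensional face. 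One small caution if you retain the product decomposition: the factors are only isomorphic to $L_{H_j}$ for suitable \emph{relabelings} $H_j$ of the minors, since the restriction of $\lambda$ to a face need not induce the canonical orientation on each factor (cf.\ the remark following Lemma~\ref{lem_poset}). This does not affect the M\"obius computation, but it would need to be stated carefully.
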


Based on some small examples, we conjecture that Corollary~\ref{cor:mobius} is true even without the assumption that $L_G$ is a lattice.

\subsection{Covering relations and $G$-forests}\label{subsec_Gtrees}

As above, let $G$ be a graph with vertex set $[n]$.
In the following sections, it will be useful to realize $L_G$ as a partial order on the set of $G$-forests.
The advantage to working with $G$-forests, rather than maximal tubings, is that cover relations in $L_G$ are encoded by certain adjacent (covering) pairs in the forest poset.

As in Theorem~\ref{G-trees}, let $T$ be a $G$-forest and let $\Xcal$ be the maximal tubing $\chi(T)$.
Recall that we write $i<_T k$ if $k$ is in the unique path from $i$ to the root.
A covering relation \emph{in} $T$ is a pair $i$ and $k$ such that $i<_Tk$ and also, $i$ and $k$ are adjacent in $T$.
We say that $k$ \emph{covers} $i$ (or $i$ \emph{is covered by} $k$) and write $i\covered_T k$ (or $k\covers_T i$).
We say that $k$ has a \emph{lower (resp. upper) cover} if there exists an element $i\in T$ such that $i \covered_Tk$ (resp. $i\covers_T k)$.
The following easy lemma will be useful.

\begin{lemma}\label{parent}
Let $T$ be a $G$-forest or $G$-forest.
Each element in $T$ has at most one upper cover.
In particular, if $i<_T j$ and $i<_T k$, then $j$ and $k$ are comparable.
\end{lemma}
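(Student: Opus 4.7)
The plan is to derive both claims directly from the definition of a forest poset, observing that neither uses the $G$-compatibility conditions at all; the statement is really just a property of forest posets. Recall from the setup that every connected component of $T$ is a rooted tree, and $i <_T k$ precisely when $k$ lies on the unique path from $i$ to the root of the component containing~$i$.

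First I would translate the covering relation into tree language: a pair $i \covered_T k$ is a covering relation in $T$ exactly when $k$ is the neighbor of $i$ on the unique path from $i$ to the root of its component. Indeed, such a neighbor $k$ clearly satisfies $i <_T k$, and any element strictly between $i$ and $k$ would have to lie on the path from $i$ to the root and also on the edge $\{i,k\}$, which is impossible. This immediately gives at most one upper cover: the root of the component has none, and every other element has as unique upper cover its parent on the path to the root.

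For the second assertion, suppose $i <_T j$ and $i <_T k$. By definition both $j$ and $k$ lie on the unique path $P$ from $i$ to the root of the component containing~$i$. Reading $P$ as a sequence of vertices gives a chain in $T$ under $<_T$, so the two elements $j,k \in P$ are comparable, as required.

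There is no real obstacle; the only thing worth flagging is the minor typo ``$G$-forest or $G$-forest'' in the statement (clearly meant to be ``$G$-tree or $G$-forest''), and the fact that the argument applies to any forest poset, so the $G$-forest hypothesis is used only to ensure $T$ has the assumed shape.
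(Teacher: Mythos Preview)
Your proof is correct, but it follows a different route from the paper's. You argue directly from the forest-poset definition: both $j$ and $k$ lie on the unique path from $i$ to the root, and that path is a chain. The paper instead invokes the tubing structure: since $i\in j_\down\cap k_\down$, the two principal ideals are tubes in the tubing $\chi(T)$ with nonempty intersection, hence must be nested (compatible tubes that intersect are nested), and nesting of $j_\down$ and $k_\down$ gives comparability of $j$ and $k$. Your argument is more elementary and, as you note, shows the lemma is really about forest posets in general; the paper's argument ties the claim back to the tube language that dominates the rest of the section, which is convenient for later use but slightly obscures that the $G$-compatibility conditions play no role here.
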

\begin{proof}
Suppose that $i$ is less than $j$ and $k$ in $T$.
Then the tubes $j_\down$ and $k_\down$ have nonempty intersection.
Thus, either $j_\down\subseteq k_\down$ or $k_\down \subseteq j_\down$.
\end{proof}

We say that the pair $(i,k)$ is a \emph{descent of $T$} if $k\covered_T~i$ and $i<k$ as integers.
Dually, the pair is an \emph{ascent of $T$} if $i>k$ as integers.
The next proposition follows from Theorem~\ref{G-trees}.
\begin{proposition}\label{cor: covering relations}
Suppose that $T$ is a $G$-forest and $\Xcal$ is the corresponding maximal tubing~$\chi(T)$.
\begin{itemize}
\item Each descent $(i,k)$ in $T$ corresponds bijectively to a covering relations $\Xcal \covers \Xcal'$ in~$L_G$.
\item Each ascent $(i,k)$ in $T$ corresponds bijectively to a covering relation $\Xcal'' \covers \Xcal$ in $L_G$.
\end{itemize}
\end{proposition}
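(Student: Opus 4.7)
The plan is to match each covering pair of $T$ with a flip $\Xcal\leftrightarrow\Xcal^\pr$ of maximal tubings, then read off which of $\Xcal,\Xcal^\pr$ is above from the integer comparison of the parent and the child.

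Because $P_G$ is simple and $L_G$ has the non-revisiting chain property (Theorem~\ref{thm:NRC}), each cover relation at $\Xcal$ is a single flip $\Xcal^\pr=\Xcal\setm\{I\}\cup\{J\}$ along an edge of $P_G$, and conversely every such flip is a cover. These flips are in bijection with the non-maximal tubes $I\in\Xcal$; under $\chi$, these are exactly the tubes $I=k_\down$ for the non-root elements $k\in T$. Since each non-root $k$ has a unique parent $i$ in $T$, we obtain a bijection between the covers at $\Xcal$ and the covering pairs $(i,k)$ of $T$, each of which is either a descent ($i<k$) or an ascent ($i>k$).

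The key claim is that the flip at $I=k_\down$ produces a tube $J$ with $\topT_{\Xcal^\pr}(J)=i$. First, $J$ must be incompatible with $I$: otherwise $J$ would be compatible with every tube of $\Xcal$ and so lie in $\Xcal$ by maximality, forcing $\Xcal^\pr=\Xcal\setm\{I\}$. Second, $J$ is compatible with $i_\down\in\Xcal$ but incompatible with $I\subseteq i_\down$; a short case check (ruling out $J$ disjoint from $i_\down$, $i_\down\subseteq J$, and $J=i_\down$) forces $J\subsetneq i_\down$. Finally, if $i\notin J$ then $J\subseteq i_\down\setm\{i\}$, and by the $G$-tree condition on the incomparable children of $i$, the set $i_\down\setm\{i\}$ decomposes as a disjoint union of pairwise separated tubes $k^\pr_\down$ indexed by the children $k^\pr$ of $i$. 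The connected tube $J$ must lie entirely inside a single $k^\pr_\down$, but then $J$ is compatible with $I=k_\down$ (nested if $k^\pr=k$, separated if $k^\pr\neq k$), contradicting the first step. Hence $i\in J$, and since $J\subsetneq i_\down$ the tube $J$ is the smallest tube of $\Xcal^\pr$ containing $i$, i.e.\ $\topT_{\Xcal^\pr}(J)=i$.

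Combining $\topT_\Xcal(I)=k$ and $\topT_{\Xcal^\pr}(J)=i$: at a descent we have $\topT_\Xcal(I)>\topT_{\Xcal^\pr}(J)$, so the flip orients as $\Xcal^\pr\to\Xcal$ and yields a cover $\Xcal\covers\Xcal^\pr$; at an ascent the inequality reverses, yielding $\Xcal^\pr\covers\Xcal$. The bijective statements of the proposition then follow from the bijection between cover relations at $\Xcal$ and covering pairs of $T$ established above. The main obstacle is the middle step showing $i\in J$, which is the only place the $G$-tree condition is actively used; the rest is bookkeeping about $\topT$ values.
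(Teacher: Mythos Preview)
Your argument is correct. The paper itself offers no detailed proof of this proposition, stating only that it ``follows from Theorem~\ref{G-trees},'' and in the very next proposition it asserts without justification that a flip is a cover relation. Your proof supplies the missing content: the invocation of Theorem~\ref{thm:NRC} (the non-revisiting chain property) is exactly what is needed to show that every flip is a cover, since the edge $[\mathbf{v}^{\Xcal},\mathbf{v}^{\Xcal^\pr}]$ is a face whose only vertices are $\Xcal$ and $\Xcal^\pr$, and order-convexity rules out any intermediate $\Zcal$. Your computation that $\topT_{\Xcal^\pr}(J)=i$ is also correct and is precisely the piece one needs to read off the flip direction from the descent/ascent dichotomy; the case analysis forcing $i\in J$ via the $G$-tree condition on the children of $i$ is clean and complete. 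So your proof is not a different route so much as a fully fleshed-out version of what the paper leaves implicit.
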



\begin{proposition}\label{cover relations}
Let $T$ be a $G$-forest with descent $(i,k)$, and let $\Xcal=\chi(T)$ be its corresponding maximal tubing.
Write the ideal $\{x: x<_{T} k\}$ as the disjoint union of tubes $Y_1\cup \cdots \cup Y_t$.
Then swapping $i$ and $k$, we obtain a $G$-forest covered by $T$ in $L_G$, whose corresponding maximal tubing is  $$\Xcal\setminus \{k_\down\} \cup \left\{ i_\down \setminus \left(\{k\} \cup \bigcup Y_{a_j}\right) \right\},$$
where the union $\bigcup Y_{a_j}$ is over all $Y_{a_j}\in \{Y_1,\ldots, Y_t\}$ such that $Y_{a_j}\cup \{i\}$ not a tube.
(Throughout $x_\down$ is interpreted as the principal order ideal in $T$.)
\end{proposition}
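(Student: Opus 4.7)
The plan is to define $\Xcal':=\Xcal\setm\{k_\down\}\cup\{J\}$ with $J:=i_\down\setm(\{k\}\cup\bigcup Y_{a_j})$ as in the statement, then verify that $\Xcal'$ is a maximal tubing of $G$ and that $\Xcal\covers\Xcal'$ in $L_G$; the desired $G$-forest is then $T':=\tau(\Xcal')$.

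The first step is to verify that $J$ is a tube of $G$. I would decompose $J$ as the disjoint union of $\{i\}$, the ideals $j'_\down$ for the children $j'\neq k$ of $i$ in $T$, and the kept components $Y_j$. By the $G$-forest property, the connected components of $G|_{i_\down\setm\{i\}}$ are exactly the $j_\down$ for children $j$ of $i$, so every such $j'_\down$ (with $j'\neq k$) contains a vertex that is $G$-adjacent to $i$. The kept $Y_j$ contain such a vertex by the very definition of ``kept''. Hence $G|_J$ is connected.

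The second and main step is to check that $J$ is compatible with each remaining tube $L=x_\down\in\Xcal\setm\{k_\down\}$. I would organize the argument by case on the $T$-position of $x$ relative to $i$ and $k$. If $x\geq_T i$, then $L\supseteq i_\down\supseteq J$. If $x<_T i$ and $x$ is not in the $k$-branch of $T$, then $L\subseteq j'_\down\subseteq J$ for some other child $j'$ of $i$. If $x<_T k$ and $x$ lies in a kept $Y_j$, then $L\subseteq Y_j\subseteq J$. If $x<_T k$ and $x$ lies in a removed $Y_j$, I would show $L$ and $J$ are separated by ruling out $G$-edges from $L$ to each piece of $J$: distinct components of $G|_{k_\down\setm\{k\}}$ have no mutual edges, distinct child-ideals of $i$ are mutually edge-free in $G|_{i_\down\setm\{i\}}$, and by the definition of ``removed'' there is no edge from $i$ to $Y_j\supseteq L$. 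Finally, if $x$ is incomparable to $i$ in $T$, then $x_\down$ and $i_\down$ are already separated by the $G$-forest property, so the same holds between $x_\down$ and $J\subseteq i_\down$.

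Once $\Xcal'$ is shown to be a maximal tubing, it remains to verify the flip direction. Since $i_\down$ is the smallest tube of $\Xcal$ containing $i$ and $J\subsetneq i_\down$, the smallest tube of $\Xcal'$ containing $i$ is $J$, hence $\top_{\Xcal'}(J)=i$. The descent hypothesis gives $i<k=\top_\Xcal(k_\down)$, so the flip convention yields $\Xcal'\ra\Xcal$, i.e.\ $\Xcal\covers\Xcal'$ in $L_G$. The main obstacle is the removed-$Y_j$ compatibility case: one must carefully rule out $G$-edges from $L$ to each of the three types of pieces comprising $J$, and this bookkeeping is the bulk of the work.
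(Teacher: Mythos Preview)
Your proposal is correct and follows essentially the same route as the paper's proof. Both arguments define the candidate tube $J$ (the paper calls it $S$), verify it is compatible with every remaining tube of $\Xcal\setm\{k_\down\}$ via a case analysis, and conclude that the resulting maximal tubing differs from $\Xcal$ by a flip in the correct direction. Your case split is organized by the $T$-position of $x$ (where $L=x_\down$), while the paper splits according to the compatibility relation between $L$ and $i_\down$; these are equivalent bookkeepings via the bijection $\chi$. You are somewhat more explicit than the paper in two places: you spell out why $J$ is connected (the paper just asserts ``observe that $S$ is a tube''), and you explicitly compute $\top_{\Xcal'}(J)=i<k=\top_{\Xcal}(k_\down)$ to pin down the flip direction, whereas the paper simply states that $\Xcal$ covers $\Ycal$.
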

\begin{proof}
Write $S$ for $ i_\down \setminus \left(\{k\} \cup \bigcup Y_{a_j}\right)$ and $\Ycal$ for $\Xcal\setminus \{k_\down\} \cup \{S\}$.
First we show that $\Ycal$ is a maximal tubing.
Observe that $S$ is a tube.
We check that each tube $I$ in $\Xcal\setminus \{k_\down\}$ is compatible with $S$.
Since both $I$ and $i_\down$ are tubes in $\Xcal$, either $I\subset i_\down$, $I\supset i_\down$, or $I\cup i_\down$ is not a tube.
If $I\supset i_\down$ or $I\cup i_\down$ is not a tube, then the fact that $S\subset i_\down$ implies that $I$ and $S$ are compatible.
So, we assume that $I$ is a subset of $i_\down$.
Write $X_1\cup X_2\cup \cdots \cup X_r$ for the ideal $\{x: x<_{T} i\}$.
Since $i\covers_{T} k$ we have $k_\down= X_l$ for some $l$.
Thus $I\subseteq X_j$ for some $j\ne l$ or  $I\subseteq Y_s$ for some $s\in [t]$.
If $I\subseteq X_j$ then it follows immediately that $I\subseteq S$.
Similarly, if $I\subseteq Y_s$ and $Y_s\cup \{i\}$ is a tube, then $I\subseteq S$.

Assume that $I\subseteq Y_s$ and $Y_{s}\cup \{i\}$ is not a tube.
Then $I\not\subseteq S$ and $S\not\subseteq I$. 
We claim that that $Y_s\cup S$ is not a tube. 

Observe that $X_j$ and $Y_s$ are compatible in $\Xcal$, and neither $X_j\not\subset Y_s$ nor $Y_s\not\subseteq X_j$, for each $j\in [r]$ with $j\ne l$.
Thus, $X_j\cup Y_s$ is not a tube.
The same argument shows each tube $Y_s\cup Y_j$ is not a tube, for each $j\in[t]$ with $j\ne s$.
Thus $Y_s\cup S$ is not a tube, and hence $I\cup S$ is not a tube.
We conclude that $I$ and $S$ are compatible.

We conclude that $\Ycal$ is a maximal tubing of $G$.
Since $\Ycal$ differs from $\Xcal$ by a flip, it follows that $\Xcal$ covers $\Ycal$ in~$L_G$.
\end{proof}

\section{Lattices}\label{sec:lattice}
\subsection{Lattices and lattice congruences}
Recall that a poset $L$ is a lattice if each pair $x$ and $y$
has a greatest common lower bound or \emph{meet} $x\meet y$, and 
has a smallest common upper bound or \emph{join} $x\join y$.
Throughout we assume that $L$ is finite.

A set map $\phi: L\to L'$ is a \emph{lattice map} if it satisfies  $\phi(x\meet y) = \phi(x)\meet \phi(y)$ and  $\phi(x\join y) = \phi(x)\join \phi(y)$.
We say that $\phi$ preserves both the meet and join operations.
When $\phi$ is surjective, we say that it is a \emph{lattice quotient map} and $L'$ is a lattice quotient of $L$.
We say that $\phi$ is \emph{meet (join) semilattice map} if it preserves the meet (join) operation, and the image $\phi(L)$ is called a \emph{meet (join) semilattice quotient} of $L$.

To determine whether a given set map $\phi: L\to L'$ preserves either the meet or join operations, we consider the equivalence relation on $L$ induced by the fibers of $\phi$.
That is, set $x\equiv y \mod \Theta_\phi$ if $\phi(x)=\phi(y)$.

\begin{definition}\label{def: cong}
Let $L$ be a finite lattice, and let $\Theta$ be an equivalence relation on $L$.
We say that $\Theta$ is a \emph{lattice congruence} if it satisfies both of the following conditions for each $x,y,$ and $z$ in $L$.
\begin{equation}\label{meet-preserving}
\text{ if $x\equiv y \mod \Theta$ then $x\meet z\equiv y\meet z\mod \Theta$}\tag{$\Mcal$}
\end{equation}

\begin{equation}\label{join-preserving}
\text{ if $x\equiv y \mod \Theta$ then $x\join z\equiv y\join z\mod \Theta$}\tag{$\Jcal$}
\end{equation}
We say that $\Theta$ is a \emph{meet (join) semilattice congruence} if $\Theta$ satisfies~\ref{meet-preserving} (\ref{join-preserving}).
\end{definition}

Observe that $\phi: L\to L'$ preserves the meet (join) if and only if the equivalence relation $\Theta_\phi$ induced by its fibers is a meet (join) semilattice congruence.
The next proposition implies that each meet semilattice  congruence on $L$ gives rise to a meet semilattice quotient. 
\begin{proposition}\label{meet cong}
Let $\Theta$ be an equivalence relation on $L$.
Then $\Theta$ is a meet semilattice congruence if and only if $L$ satisfies each of the following conditions:
\begin{enumerate}
\item Each $\Theta$-class has a unique minimal element;
\item the map $\pidown^\Theta:L\to L$ which sends $x$ to the unique minimal element in its $\Theta$-class is order preserving.
\end{enumerate}
In particular, the subposet of $L$ induced by $\pidown^\Theta(L)$ is a meet semilattice quotient of $L$.
\end{proposition}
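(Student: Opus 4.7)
The plan is to prove the two directions of the equivalence, then deduce the ``in particular'' clause from the structure of $\pidown^\Theta$.

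For the forward direction, assume $\Theta$ is a meet semilattice congruence. For (1), I would show each $\Theta$-class is closed under meets: given $x\equiv y \mod \Theta$, specializing $z=x$ in $(\Mcal)$ yields $x = x\meet x \equiv y\meet x$, so $x\meet y$ lies in the class of $x$. Iterating, the meet of a finite class $C$ lies in $C$ and is its unique minimum. For (2), given $x\leq y$ with $a=\pidown^\Theta(x)$ and $b=\pidown^\Theta(y)$, the congruences $a\equiv x$ and $b\equiv y$ together with $(\Mcal)$ give $a\meet b \equiv x\meet b \equiv x\meet y = x$; so $a\meet b$ lies in the class of $x$, forcing $a\leq a\meet b\leq b$.

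For the reverse direction, assume (1) and (2). To verify $(\Mcal)$, let $x\equiv y \mod \Theta$ with common class minimum $m$, fix $z\in L$, and set $a=\pidown^\Theta(x\meet z)$. The key step is to pinch $a$ between two bounds, each equal to $\pidown^\Theta(m\meet z)$. Since $m\meet z\leq x\meet z$, order-preservation gives $\pidown^\Theta(m\meet z)\leq a$. Conversely, from $a\leq x\meet z\leq x$ and order-preservation we get $a=\pidown^\Theta(a)\leq\pidown^\Theta(x)=m$, while $a\leq x\meet z\leq z$ directly. Hence $a\leq m\meet z$, and a final application of order-preservation together with idempotency yields $a=\pidown^\Theta(a)\leq\pidown^\Theta(m\meet z)$. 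So $a=\pidown^\Theta(m\meet z)$, and by symmetry $\pidown^\Theta(y\meet z)=\pidown^\Theta(m\meet z)$ as well, so $x\meet z\equiv y\meet z \mod \Theta$.

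For the ``in particular'' clause, I would verify that the induced subposet $\pidown^\Theta(L)$ has meets and that $\pidown^\Theta$ preserves them. For $a,b\in\pidown^\Theta(L)$, the element $c=\pidown^\Theta(a\meet b)$ satisfies $c\leq a$ and $c\leq b$ by order-preservation, while any $d\in\pidown^\Theta(L)$ with $d\leq a,b$ satisfies $d\leq a\meet b$ and hence $d=\pidown^\Theta(d)\leq c$; so $c$ is the meet of $a,b$ in the induced subposet. A parallel two-way inequality shows $\pidown^\Theta(x\meet y)=\pidown^\Theta(\pidown^\Theta(x)\meet\pidown^\Theta(y))$, so $\pidown^\Theta$ preserves the meet operation. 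The main obstacle is the pinching argument in the reverse direction, since it is the one place where all three features of $\pidown^\Theta$---order-preservation, idempotency, and $\pidown^\Theta(u)\leq u$---must be deployed simultaneously; once this is done, both the congruence property and the quotient structure follow with routine two-way inequalities.
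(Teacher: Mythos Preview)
Your proof is correct and more complete than the paper's. The paper simply cites \cite[Proposition~9-5.2]{reading:2016lattice} for the equivalence between the meet-congruence condition $(\Mcal)$ and conditions (1)--(2), and then proves only the ``in particular'' clause. Your argument for that clause is essentially identical to the paper's: both show that for $x,y\in\pidown^\Theta(L)$ the element $\pidown^\Theta(x\meet y)$ is the meet in the induced subposet, using order-preservation of $\pidown^\Theta$ for the upper bound and idempotency for the lower bound. What you add is a self-contained proof of the equivalence itself---the forward direction via closure of classes under meets, and the reverse direction via your pinching argument showing $\pidown^\Theta(x\meet z)=\pidown^\Theta(m\meet z)$. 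This makes your write-up independent of the external reference, at the cost of a few more lines; the paper's approach is terser but relies on the reader consulting Reading's book.
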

\begin{proof}
The proof of the first statement can be found in \cite[Proposition~9-5.2]{reading:2016lattice}.
We assume that $\Theta$ is a meet semilattice congruence or, equivalently, that the two conditions above hold.
We check that the subposet of $L$ induced by the image $\pidown^{\Theta}(L)$ is a lattice and that $\pidown^{\Theta}$ is a meet semilattice map.

Suppose that $x$ and $y$ belong to $\pidown^\Theta(L)$.
We write $x\meet_\Theta y$ to distinguish the meet operation in $\pidown^\Theta(L)$ from the meet operation in $L$.
(In general, these are different operations; that is, $x\meet_\Theta y \ne x\meet y$.)
It is enough to show that the meet $x\meet_{\Theta} y$ is equal to $\pidown^{\Theta}(x\meet y)$.

Because $\pidown^\Theta$ is order preserving, we have $\pidown^{\Theta}(x\meet y)\le x, y$.
If $z\in \pidown^\Theta(L)$ and $z$ is a common lower bound for $x$ and $y$ then $z\le x\meet y$.
Applying the fact that $\pidown^\Theta$ is order preserving again, we have $z=\pidown^\Theta(z) \le \pidown^\Theta(x\meet y)$.
\end{proof}
%
%

The set $\Con(L)$ of lattice congruences of $L$ forms a distributive lattice under the refinement order.
That is, $\Theta\leq\Theta^{\pr}$ holds if $x\equiv y\mod\Theta^{\pr}$ implies $x\equiv y\mod\Theta$ for $x,y\in L$. 
Hence, when $\Con(L)$ is finite, it is the lattice of order ideals of its subposet of join-irreducible elements. 
If $L$ is a lattice with a cover relation $x\lessdot y$, the \emph{contraction} $\con(x,y)$ is the most refined lattice congruence identifying $x$ and $y$.
It is known that $\con(x,y)$ is join-irreducible, and if $L$ is finite, then every join-irreducible lattice congruence is of this form \cite[Proposition 9-5.14]{reading:2016lattice}.

\subsection{Lattice congruences of the weak order}\label{subsec_weak_cong}
Recall $x\le y$ in the weak order on $\Sfrak_n$ if $\inv(x) \subseteq \inv(y)$, where $\inv(x)$ is the set of inversions of $x$.
(A pair $(i,k)$ is an \emph{inversion} of $x$ if $i<k$, and $k$ precedes $i$ in $x=x_1\ldots x_n$.
That is, $i=x_s$ and $k=x_r$, where $r<s$.)
It is well-known that the weak order on $\Sfrak_n$ is a lattice.

A \emph{descent} of $x$ is an inversion $(i,k)$ such that $i$ and $k$ are consecutive in $x_1\ldots x_n$.
That is, $i=x_s$ and $k=x_{s-1}$, where $s\in \{2,\ldots, n\}$.
The \emph{descent set} $\des(x)$ of $x$ is the set of all descents of $x$.
An \emph{ascent} is a noninversion $(i,k)$ in which $i=x_{s-1}$ and $k=x_{s}$.

If $y_s=i$ and $y_{s-1}=k$ is a descent of $y$, then swapping the positions of $i$ and $k$, we obtain a permutation $x$ (with $x_i=y_i$ for each $i\in [n]\setminus \{s-1, s\}$ and $x_{s-1}=i$ and $x_s=k$) that is covered by $y$ in the weak order.
Each lower cover relation $y\covers x$ corresponds bijectively to a descent of $y$.
Dually, each upper cover relation $y\covered y'$ corresponds bijectively to an ascent of $y$.
The following lemma is immediate.

\begin{lemma}\label{descents and inversions}
Suppose that $x'> x$ in the weak order on $\Sfrak_n$.
Then there exists a descent $(i,k)$ of $x'$ which is not an inversion of $x$.
Swapping $i$ and $k$ in $x'$ we obtain a permutation $x''$ which satisfies $x'\covers x''\ge x.$
\end{lemma}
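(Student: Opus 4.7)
The plan is to exhibit a descent $(i,k)$ of $x'$ that is not an inversion of $x$ via an extremal argument, and then to verify that swapping $i$ and $k$ produces a permutation $x''$ with the required properties.

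Since $x < x'$ in the weak order, we have $\inv(x) \subsetneq \inv(x')$, so the set $\inv(x') \setminus \inv(x)$ is nonempty. From this set, I would pick a pair $(i,k)$ with $i < k$ that minimizes the position gap $s - r$ in $x'$, where $k = x'_r$ and $i = x'_s$ with $r < s$. The claim is that $s - r = 1$, in which case $(i,k)$ is automatically the desired descent of $x'$.

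To prove this, suppose for contradiction that $s - r \geq 2$, and let $j = x'_{r+1}$; note that $j \notin \{i,k\}$ since the positions $r, r+1, s$ are distinct. Three cases arise according to how $j$ compares with $i$ and $k$. If $i < j < k$, then both $(j,k)$ at positions $r,r+1$ and $(i,j)$ at positions $r+1,s$ are inversions of $x'$ of strictly smaller position gap, so minimality places both in $\inv(x)$; since $k$ precedes $j$ and $j$ precedes $i$ in $x$, we obtain $(i,k) \in \inv(x)$, a contradiction. If $j < i$, then minimality applied to the gap-$1$ inversion $(j,k)$ of $x'$ forces $(j,k) \in \inv(x)$, so $k$ precedes $j$ in $x$; however, $(j,i)$ is a non-inversion of $x'$ (since $j$ precedes $i$ in $x'$), and hence a non-inversion of $x$ because $\inv(x) \subseteq \inv(x')$, giving $j$ precedes $i$ in $x$; combined with $i$ precedes $k$ in $x$ (from $(i,k) \notin \inv(x)$), this yields the cyclic precedence $i \prec k \prec j \prec i$ in $x$, which is impossible. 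The case $j > k$ is symmetric: minimality forces $(i,j) \in \inv(x)$, while $(k,j)$ is a non-inversion of $x'$ and thus of $x$, producing the same cyclic precedence.

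Having secured a descent $(i,k)$ of $x'$ lying outside $\inv(x)$, swapping the values $i$ and $k$ in $x'$ yields a permutation $x''$ with $\inv(x'') = \inv(x') \setminus \{(i,k)\}$, so $x' \covers x''$ in the weak order; combined with $\inv(x) \subseteq \inv(x') \setminus \{(i,k)\} = \inv(x'')$, we conclude $x \leq x''$. I expect the main obstacle to be the cases $j < i$ and $j > k$: the key insight is that a non-inversion of $x'$ is automatically a non-inversion of $x$, which together with the inversion in $x$ forced by minimality produces a three-cycle of precedence relations in $x$ that cannot exist.
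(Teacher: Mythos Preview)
Your argument is correct. The paper does not give a proof of this lemma at all; it simply declares it ``immediate'' and moves on, presumably relying on the standard fact that the weak order is graded by the number of inversions (so any interval $[x,x']$ contains a maximal chain, and in particular some $x''$ with $x\le x''\lessdot x'$, whose associated descent is then automatically outside $\inv(x)$). Your extremal argument is a genuinely different, more self-contained route: rather than invoking gradedness, you locate the desired descent directly by minimizing the position gap among inversions of $x'$ not in $\inv(x)$ and ruling out a gap larger than one by a short case analysis. This buys you an elementary proof that does not depend on any prior structure theory of the weak order, at the cost of a few more lines.
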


Recall that each pair $x\covered y$ maps to a join-irreducible congruence $\con(x,y)$ in $\Con(\Sfrak_n)$.
For $n>2$, this map is not injective.
We can obtain a bijection by restricting to pairs $x\covered y$ where $y$ is join-irreducible.
Below, we make this bijection explicit with the combinatorics of arc diagrams.

An \emph{arc} is a triple $\alpha=(i,k, \epsilon)$ where $1\leq i<k\leq n$ and $\epsilon=(\epsilon_1,\ldots,\epsilon_{k-i-1})$ such that $\epsilon_h\in\{+,-\}$ for $h\in[k-i-1]$. 
Listing the numbers $1,\ldots,n$ vertically, an arc is typically drawn as a path from $i$ to $k$ that goes to the left of $j$ if $\epsilon_{j-i}=+$ and to the right of $j$ if $\epsilon_{j-i}=-$.

\begin{figure}

  \centering
  \includegraphics{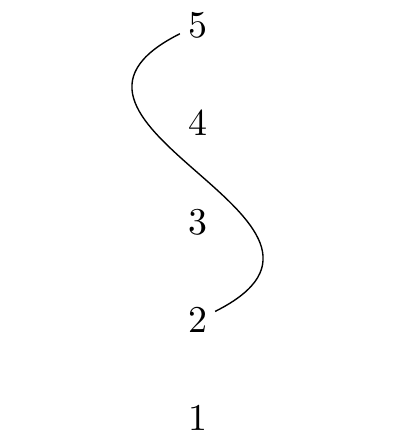}
  \caption{\label{fig_arc}The arc $(2,5,(-,+))$}
  
\end{figure}

If $x\lessdot y$ is a cover relation of permutations that swaps $i$ and $k$, then we define $\alpha(x,y)=(i,k,\epsilon)$ to be the arc such that for $i<j<k$:
$$\epsilon_{j-i}=\begin{cases}+\ \mbox{if }u^{-1}(j) > u^{-1}(i)\\-\ \mbox{if }u^{-1}(j) < u^{-1}(k)\end{cases}.$$
For example, $\alpha(32514,35214)=(2,5,(-,+))$ is the arc in Figure~\ref{fig_arc}.

Given an arc $(x,y,\epsilon)$, write $\{l_1<l_2<\ldots < l_p\}$ for the set $\{x':\epsilon_{x'-x} = -\}$ and $\{r_1<r_2<\ldots<r_q\}$ for the set $\{y': \epsilon_{y'-x} = +\}$.
Informally, $l_1<\ldots<l_p$ are the nodes on the left side of the arc $(x,y,\epsilon)$, and $r_1<\ldots<r_q$ are the nodes on the right side of the arc.
The next results follows from \cite[Proposition~2.3]{reading:2015noncrossing}.
 
\begin{proposition}\label{arc to perm}
Let $\alpha=(x,y,\epsilon)$ be an arc, and let $l_1<\ldots<l_p$ and $r_1<\ldots<r_q$ be defined as above.
Then, among all permutations $w\in \Sfrak_n$ such that $\alpha(u,w) =\alpha$ for some $u$ covered by $w$, the unique minimal element is
\[j_\alpha= 12\ldots (x-1) l_1\ldots l_p \,y\, x\, r_1\ldots r_q (y+1) (y+2) \ldots n\]
In particular, $j_\alpha$ is join-irreducible.
\end{proposition}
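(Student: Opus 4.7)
The plan is to compute $\inv(j_\alpha)$ directly from the formula for $j_\alpha$ and then to show that every element of $\inv(j_\alpha)$ is forced to lie in $\inv(w)$ for every $w$ satisfying the hypothesis. Since $w_1\le w_2$ in the weak order if and only if $\inv(w_1)\subseteq\inv(w_2)$, this will give $j_\alpha\le w$. Combined with the immediate observation that $j_\alpha$ itself belongs to the set---swapping the adjacent entries $y$ and $x$ back into the order $xy$ produces a permutation $u_\alpha$ with $u_\alpha\covered j_\alpha$ and $\alpha(u_\alpha,j_\alpha)=\alpha$---this yields the claimed unique minimum.

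The first step is to unpack the hypothesis. Any cover $u\covered w$ with $\alpha(u,w)=\alpha$ is an adjacent transposition exchanging $x$ and $y$, with $y$ immediately preceding $x$ in $w$. By the definition of $\epsilon$, every $l\in L:=\{l_1,\ldots,l_p\}$ appears to the left of this $yx$ block in $w$, and every $r\in R:=\{r_1,\ldots,r_q\}$ appears to its right. This forces four families of pairs to lie in $\inv(w)$: the pair $(x,y)$ itself; the pairs $(x,l_i)$ with $l_i\in L$, since $x<l_i$ and $l_i$ precedes $x$; the pairs $(r_j,y)$ with $r_j\in R$, since $r_j<y$ and $y$ precedes $r_j$; and the pairs $(r_j,l_i)$ with $r_j<l_i$, since every $l\in L$ precedes every $r\in R$ in $w$ (they are separated by the central $yx$ block).

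Reading the formula $j_\alpha = 12\cdots(x-1)\,l_1\cdots l_p\,y\,x\,r_1\cdots r_q\,(y+1)\cdots n$ block by block should confirm that these four families are exactly $\inv(j_\alpha)$. Each of the four monotone blocks is internally increasing, and any pair of entries drawn from two distinct blocks---apart from the single swap in the middle---has its smaller entry before its larger one. Combined with the previous step, this will yield $\inv(j_\alpha)\subseteq\inv(w)$ and hence $j_\alpha\le w$, establishing the minimality.

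For join-irreducibility, I would scan the adjacent positions of $j_\alpha$ and show that the only descent is the pair $(x,y)$ at the central $yx$ junction. The transitions $(x-1,l_1)$, $(l_p,y)$, $(x,r_1)$, and $(r_q,y+1)$ are ascents because $l_1>x-1$, $l_p<y$, $r_1>x$, and $r_q<y+1$, while no pair inside a monotone block can be a descent. By the bijection between lower covers and descents, $j_\alpha$ has a single lower cover and is therefore join-irreducible. The main bookkeeping obstacle is handling the degenerate cases $x=1$, $y=n$, $p=0$, or $q=0$; each simply removes one of the boundary transitions from the descent check without introducing a new descent.
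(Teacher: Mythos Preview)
Your argument is correct, and the paper itself does not give a proof here---it simply cites \cite[Proposition~2.3]{reading:2015noncrossing}. So you have supplied a self-contained direct argument where the paper defers to the literature.

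One small point of exposition: the sentence ``any pair of entries drawn from two distinct blocks---apart from the single swap in the middle---has its smaller entry before its larger one'' is not literally true, since the cross-block pairs $(x,l_i)$, $(r_j,y)$, and $(r_j,l_i)$ with $r_j<l_i$ are all inversions you have already (correctly) listed. What you presumably mean is that a case check over all pairs of blocks shows the four families you enumerated are \emph{exactly} the inversions of $j_\alpha$; that case check is routine and your enumeration of it is right. Tightening that one sentence would make the write-up clean.
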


The map $\alpha$ induces a bijection between join-irreducible lattice congruences and join-irreducible permutations.

\begin{theorem}\label{thm_weak_arcs}
Given two cover relations $x\lessdot y$ and $x^{\pr}\lessdot y^{\pr}$, we have $\con(x,y)=\con(x^{\pr},y^{\pr})$ if and only if $\alpha(x,y)=\alpha(x^{\pr},y^{\pr})$.
\end{theorem}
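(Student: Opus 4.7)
The strategy is to identify $\alpha(x,y)$ as a complete invariant of the join-irreducible congruence $\con(x,y)$. Since the weak order on $\Sfrak_n$ is a congruence-uniform lattice (see~\cite[Ch.~9]{reading:2016lattice}), every join-irreducible congruence is of the form $\con(j^*, j)$ for a unique join-irreducible element $j$, with $j^*$ its unique lower cover. Moreover, for a cover $x \lessdot y$, the $j$ satisfying $\con(x,y) = \con(j^*, j)$ is characterized as the minimum join-irreducible with $j \leq y$ and $j \not\leq x$. The plan is to show this minimum equals $j_\alpha$ from Proposition~\ref{arc to perm}, where $\alpha = \alpha(x,y)$; the theorem then follows at once from the injectivity of $\alpha \mapsto j_\alpha$.

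The first step is to verify $j_\alpha \leq y$ and $j_\alpha \not\leq x$. Reading off $\inv(j_\alpha)$ from the explicit permutation $j_\alpha = 12\cdots(i-1)\, l_1 \cdots l_p\, k\, i\, r_1 \cdots r_q\, (k+1)\cdots n$ in Proposition~\ref{arc to perm}, one finds that $\inv(j_\alpha)$ consists of $(i,k)$, the pairs $(i, l_s)$, the pairs $(r_t, k)$, and the pairs $(r_t, l_s)$ with $l_s > r_t$. By the definition of $\alpha$ --- $\epsilon_{j-i} = -$ means $j \in (i,k)$ appears to the left of the adjacent descent pair $(k,i)$ in $y$, while $\epsilon_{j-i} = +$ means $j$ appears to the right --- each of the listed inversions of $j_\alpha$ is immediately seen to lie in $\inv(y)$. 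Since $(i,k) \notin \inv(x)$, we conclude both $j_\alpha \leq y$ and $j_\alpha \not\leq x$.

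The main obstacle is proving the minimality of $j_\alpha$. Let $j'$ be any join-irreducible with $j' \leq y$, $j' \not\leq x$, and unique descent $(i', k')$. Since $\inv(y) \setminus \inv(x) = \{(i,k)\}$, we must have $(i,k) \in \inv(j')$. A position analysis using the explicit form of $j'$ (whose intermediate entries are sorted in increasing order on each side of the descent) yields $i' \leq i < k \leq k'$, and forces $k$ to be a left node of $j'$ whenever $k < k'$, and $i$ to be a right node of $j'$ whenever $i' < i$. With these constraints in hand, for any left node $l_s$ of $j_\alpha$ the inclusion $\inv(j') \subseteq \inv(y)$ forces $l_s$ to appear as a left node of $j'$ as well, and similarly each right node $r_t$ of $j_\alpha$ must appear as a right node of $j'$: in each case the alternative placement would introduce an inversion of $j'$ of the form $(l_s, k)$ or $(i, r_t)$ that is not in $\inv(y)$. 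A direct inversion comparison then gives $\inv(j_\alpha) \subseteq \inv(j')$, so $j_\alpha \leq j'$.

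Combining these ingredients, $\con(x,y) = \con(j_\alpha^*, j_\alpha)$, which depends only on $\alpha$. Since $\alpha \mapsto j_\alpha$ is injective by Proposition~\ref{arc to perm}, $\con(x,y) = \con(x',y')$ holds if and only if $j_{\alpha(x,y)} = j_{\alpha(x',y')}$, equivalently $\alpha(x,y) = \alpha(x',y')$.
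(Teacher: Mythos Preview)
The paper does not supply a proof of this theorem; it is stated as background, implicitly drawn from Reading's work on arc diagrams (the same source cited for Theorem~\ref{thm_forcing_arcs}). Your argument therefore cannot be compared to a proof in the paper, but it does provide a correct, self-contained one.

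Your strategy is sound. The reduction via congruence-uniformity to identifying the minimum of $\{z : z \leq y,\ z \not\leq x\}$ as $j_\alpha$ is valid: in any lattice, if $m$ is this minimum then $m$ is join-irreducible, $m\vee x=y$, and $m^*\leq x$, whence $\con(x,y)=\con(m^*,m)$ by a two-line perspectivity argument. The explicit inversion-set computation for $j_\alpha$ is accurate, and your position analysis of a competing join-irreducible $j'$ goes through because every join-irreducible with descent values $(i',k')$ really does have the form $1\cdots(i'{-}1)\,l'_1\cdots l'_{p'}\,k'\,i'\,r'_1\cdots r'_{q'}\,(k'{+}1)\cdots n$: the increasing left block forces $1,\ldots,i'{-}1$ to the front and the increasing right block forces $k'{+}1,\ldots,n$ to the end. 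With that form in hand, the constraints you list (that $i'\leq i<k\leq k'$, that $k$ is a left node of $j'$ unless $k=k'$, that $i$ is a right node unless $i=i'$, and that each $l_s$, $r_t$ must land on the same side in $j'$ as in $j_\alpha$ lest $\inv(j')\not\subseteq\inv(y)$) are all correct, and together yield $\inv(j_\alpha)\subseteq\inv(j')$. The final appeal to the injectivity of $j\mapsto\con(j^*,j)$ on join-irreducibles is exactly the content of congruence-uniformity invoked at the outset, so the converse direction is justified.
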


In light of Theorem~\ref{thm_weak_arcs}, we will identify a join-irreducible lattice congruence $\Theta^{\alpha}$ of the weak order by its associated arc $\alpha$. For arcs $\alpha,\beta$, we say that $\alpha$ \emph{forces} $\beta$ if $\Theta^{\beta}\leq\Theta^{\alpha}$. An arc $\alpha=(i,k,\epsilon)$ is a \emph{subarc} of $\beta=(i^{\pr},k^{\pr},\epsilon^{\pr})$ if $i^{\pr}\leq i<k\leq k^{\pr}$ and for all $j\in[k-i-1]$, $\epsilon_j=\epsilon_{j+i-i^{\pr}}^{\pr}$. The following theorem is \cite[Theorem 4.4]{reading:2015noncrossing}, which is a translation of \cite{reading:2004lattice}.

\begin{theorem}\label{thm_forcing_arcs}
Given arcs $\alpha$ and $\beta$, $\alpha$ forces $\beta$ if and only if $\alpha$ is a subarc of $\beta$.
\end{theorem}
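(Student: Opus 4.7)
The plan is to prove both implications using the bijection between arcs and join-irreducible congruences furnished by Theorem~\ref{thm_weak_arcs}.

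For the ``if'' direction, suppose $\alpha$ is a subarc of $\beta$. I would first reduce to the case of an \emph{elementary extension}, meaning $\beta=(i-1,k,(\eta,\epsilon))$ or $\beta=(i,k+1,(\epsilon,\eta))$ for some sign $\eta\in\{+,-\}$; a general subarc relation is a composition of finitely many elementary extensions, and the forcing relation is transitive. In each elementary case, starting from the cover relation $j_\alpha^{\downarrow}\covered j_\alpha$ realizing $\alpha$ (where $j_\alpha$ is as in Proposition~\ref{arc to perm} and $j_\alpha^{\downarrow}$ is obtained by restoring the adjacent pair $k,i$ to ascending order), I would exhibit a permutation $z\in\Sfrak_n$ such that $(j_\alpha^{\downarrow}\join z,\,j_\alpha\join z)$ is a cover relation whose arc is $\beta$; the choice of $z$ inserts the newly added endpoint $i-1$ or $k+1$ in the position dictated by $\eta$. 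Since lattice congruences preserve joins by~\ref{join-preserving}, the congruence $\Theta^\alpha$ must contract this derived cover, and hence $\Theta^\beta\leq\Theta^\alpha$.

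For the ``only if'' direction, for each arc $\alpha$ I would construct a candidate equivalence relation $\Theta_\alpha$ on $\Sfrak_n$ as the smallest equivalence relation identifying the two endpoints of every cover relation $x\covered y$ for which $\alpha$ is a subarc of $\alpha(x,y)$. Granted that $\Theta_\alpha$ is a lattice congruence, it contracts the cover realizing $\alpha$ (since every arc is a subarc of itself), so $\Theta^\alpha\leq\Theta_\alpha$; and, by Theorem~\ref{thm_weak_arcs} together with the construction, no cover relation whose arc fails to contain $\alpha$ as a subarc is contracted by $\Theta_\alpha$. Hence if $\alpha$ is not a subarc of $\beta$, then $\Theta_\alpha$ does not contract the $\beta$-cover, so $\Theta^\beta\not\leq\Theta_\alpha$, and therefore $\Theta^\beta\not\leq\Theta^\alpha$, which is the contrapositive of the desired implication.

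The main obstacle is verifying that $\Theta_\alpha$ really is a lattice congruence, i.e., that it satisfies~\ref{meet-preserving} and~\ref{join-preserving}. By a standard polygon argument, this reduces to a local check on every ``square'' and every ``hexagon'' in the Hasse diagram of the weak order. Commuting squares, arising from two disjoint adjacent transpositions, have identical arcs on each pair of parallel cover relations, so the subarc-of-$\alpha$ property propagates trivially. Braid hexagons, arising when three consecutive integers occupy three adjacent positions, require a case-by-case analysis: for each of the three pairs of parallel covers one must verify that ``$\alpha$ is a subarc of $\alpha(x,y)$'' holds on one side if and only if it holds on the other. This combinatorial analysis of how the arc transforms under braid moves is the technical heart of the argument; together with Theorem~\ref{thm_weak_arcs} it certifies that $\Theta_\alpha$ is a congruence and completes the proof.
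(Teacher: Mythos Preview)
The paper does not prove Theorem~\ref{thm_forcing_arcs}; it is quoted verbatim from \cite[Theorem~4.4]{reading:2015noncrossing} (itself a reformulation of results in \cite{reading:2004lattice}), so there is no in-paper proof to compare against. Your polygon approach for the ``only if'' direction is essentially Reading's argument and is the right strategy.

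There is, however, a genuine gap in your ``if'' direction. You assert that for each elementary extension $\beta$ of $\alpha$ one can find $z$ so that $(j_\alpha^{\downarrow}\vee z,\ j_\alpha\vee z)$ is a \emph{cover} with arc $\beta$. This fails already in $\Sfrak_3$: take $\alpha=(2,3,())$, so $j_\alpha^{\downarrow}=123$ and $j_\alpha=132$, and try to reach $\beta=(1,3,(+))$. The unique cover with arc $\beta$ is $132\covered 312$, so one would need $123\vee z=132$ and $132\vee z=312$; the first forces $z\in\{123,132\}$, and neither choice gives $132\vee z=312$. More generally, for certain sign choices the forcing from a short arc to its elementary extension cannot be realised by a single join (or a single meet) --- in a braid hexagon it requires a join followed by a meet (contract the bottom edge, join with the opposite atom, then meet with an appropriate coatom). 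So the mechanism you describe is too restrictive.

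The fix is already implicit in your second half. The hexagon check you propose for $\Theta_\alpha$ --- that the ``subarc of $\alpha$'' condition propagates correctly across every square and braid hexagon --- is exactly the statement that contracting any cover with arc $\gamma$ forces, inside each hexagon, contraction of the adjacent covers whose arcs are elementary extensions of $\gamma$. Carried out carefully, this local analysis simultaneously yields the ``if'' direction (forcing propagates along elementary extensions, hence along all subarc chains) and shows that $\Theta_\alpha$ is a congruence contracting precisely the arcs having $\alpha$ as a subarc; then $\Theta^\alpha=\Theta_\alpha$ and both implications follow. I would therefore drop the separate join argument and let the polygon computation do all the work.
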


We say that a lattice congruence $\Theta$ \emph{contracts} an arc if $\Theta^{\alpha} \le \Theta$.
At times we say that $\Theta$ contracts a pair $x\covered y$, when we mean $\Theta^{\alpha(x,y)} \le \Theta$.
Equivalently, $x\equiv_\Theta y$.
Similarly, $\alpha$ is \emph{uncontracted} if $\Theta^{\alpha}\not\subseteq \Theta$.
In this case each pair $x\covered y$ with $\alpha(x,y)=\alpha$ belongs to a distinct $\Theta$-class.

\begin{figure}
    
  \centering
  \includegraphics{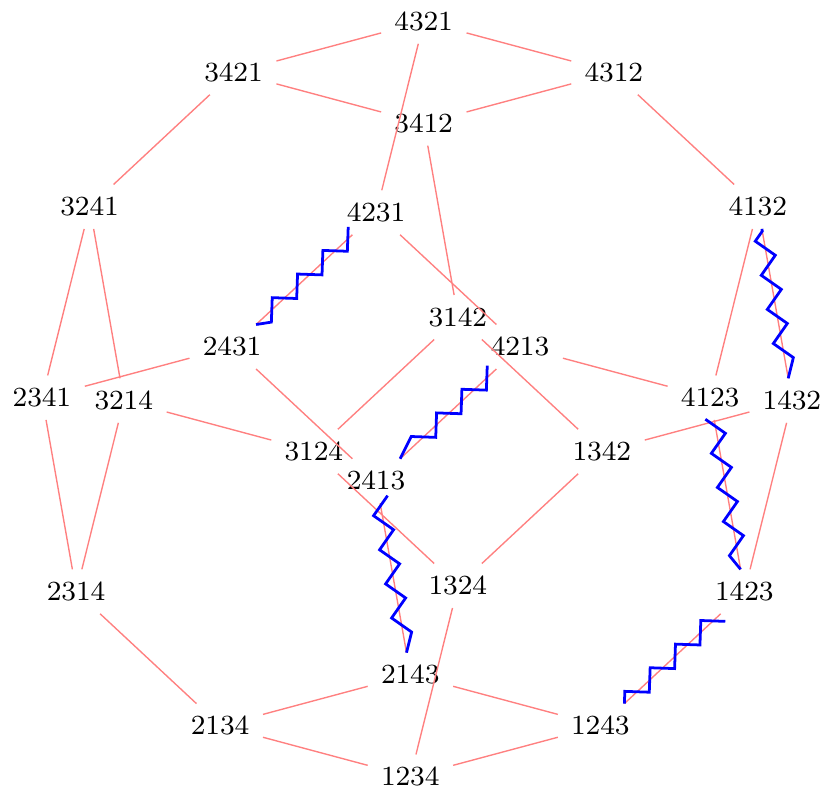}
  \caption{\label{fig_cong}A lattice congruence of the weak order}
  
\end{figure}

\begin{example}
  The weak order on $\Sfrak_4$ is shown in Figure~\ref{fig_cong}. Permutations connected by blue zigzags are equivalence classes of the lattice congruence that contracts the arcs $(2,4,(+)),\ (1,4,(+,+))$ and $(1,4,(-,+))$. The first arc is a subarc of the latter two, so the congruence is the join-irreducible $\Theta^{(2,4,(+))}$.  
\end{example}

The following is \cite[Corollary~4.5]{reading:2015noncrossing}.
\begin{corollary}\label{uncontracted}
A set $U$ of arcs is the set of arcs that are uncontracted by some lattice congruence $\Theta$ if and only if $U$ is closed under taking subarcs.
\end{corollary}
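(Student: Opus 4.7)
The plan is to deduce this from the forcing theorem (Theorem~\ref{thm_forcing_arcs}) combined with the distributivity of $\Con(\Sfrak_n)$ and the join-irreducibility of each $\Theta^{\alpha}$. Both directions then reduce to short transitivity arguments about the congruence lattice.

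For the forward implication, suppose $U$ is the set of arcs uncontracted by some congruence $\Theta$. Given $\beta\in U$ and any subarc $\alpha$ of $\beta$, Theorem~\ref{thm_forcing_arcs} yields $\Theta^{\beta}\leq\Theta^{\alpha}$. If $\alpha$ were contracted by $\Theta$, so that $\Theta^{\alpha}\leq\Theta$, transitivity would give $\Theta^{\beta}\leq\Theta$, contradicting $\beta\in U$. Hence $\alpha\in U$, so $U$ is closed under subarcs.

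For the converse, assume $U$ is closed under subarcs, and define
$$\Theta:=\bigvee_{\alpha\notin U}\Theta^{\alpha}$$
in $\Con(\Sfrak_n)$. I would then verify that the arcs uncontracted by this $\Theta$ are exactly $U$. One containment is immediate from the definition: any $\alpha\notin U$ satisfies $\Theta^{\alpha}\leq\Theta$ and hence is contracted. For the other containment, suppose toward contradiction that some $\beta\in U$ satisfied $\Theta^{\beta}\leq\Theta$. At this point I would invoke the distributive-lattice principle that a join-irreducible element lying below a finite join must lie below one of the joinands, using the distributivity of $\Con(\Sfrak_n)$ and the fact that $\Theta^{\beta}=\con(x,y)$ is join-irreducible. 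That would yield $\Theta^{\beta}\leq\Theta^{\alpha}$ for some specific $\alpha\notin U$, whence Theorem~\ref{thm_forcing_arcs} gives that $\alpha$ is a subarc of $\beta$; since $U$ is closed under subarcs and $\beta\in U$, this forces $\alpha\in U$, the desired contradiction.

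The essential leverage of the whole argument is the distributive-lattice step in the converse; without it, $\Theta^{\beta}\leq\bigvee_{\alpha\notin U}\Theta^{\alpha}$ need not imply $\Theta^{\beta}\leq\Theta^{\alpha}$ for any single $\alpha$, and one could have no way to locate a subarc witness inside $U^{c}$. Every other ingredient in the proof -- join-irreducibility of each $\Theta^{\alpha}$, distributivity of the congruence lattice, and the characterization of forcing by subarcs -- has already been recorded earlier in this section, so once that principle is invoked the argument becomes essentially mechanical.
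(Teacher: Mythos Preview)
Your argument is correct. The paper does not supply its own proof of this statement; it is quoted directly as \cite[Corollary~4.5]{reading:2015noncrossing}, so there is nothing to compare against beyond noting that your derivation from Theorem~\ref{thm_forcing_arcs} together with the distributivity of $\Con(\Sfrak_n)$ (so that each join-irreducible $\Theta^{\beta}$ is join-prime) is exactly the intended mechanism.
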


\begin{example}\label{subgraphs}
Let $V\subset [n]$, and consider the the map $\rho: \Sfrak_n\to \Sfrak_{V}$ which sends the permutation $w=w_1\ldots w_n$ to the subword of $w$ in which we delete each $w_i\not\in V$.
Let $\Theta$ denote the smallest (or most refined) lattice congruence on $\Sfrak_n$ in which $\rho(x)= \rho(y)$ implies that $x\equiv _\Theta y$.
We claim that $\rho$ is a lattice map if and only if $V$ is an interval \cite[Example~2.2]{reading:2017homomorphisms}.

First assume that $\rho$ is a lattice map, or, equivalently, the classes of $\Theta$ are precisely the fibers of $\rho$.
Then $\alpha(x,y)$ is uncontracted by $\Theta$ whenever $\rho(x)\ne \rho(y)$.
This happens whenever $x=(i,k)y$ and $i,k\in V$.
Thus, the set of arcs uncontracted by $\Theta$ is $\{(i,k,\epsilon): i,k\in V\}$.
Since this set must be closed under taking subarcs, it follows that $V$ is an interval.

Conversely, if $V$ is an interval then the set $U=\{(i,k,\epsilon): i,k\in V\}$ is the set of uncontracted arcs for some lattice congruence $\Theta'$ (because this set is closed under taking subarcs).
For each $x\covered y$, we have $\alpha(x,y)\notin U$ if and only if $\rho(x)=\rho(y)$.
Thus $\Theta'= \Theta$, and the equivalence classes of $\Theta$ are precisely the fibers of $\rho$.
\end{example}

\subsection{Map from permutations to $G$-forests}
Recall that for any graph $G$ with vertex set $[n]$, and permutation $w=w_1\ldots w_n$, we have $\Psi_G(w) = \{X_1,\ldots, X_n\}$ where $X_i$ is the largest tube in the subset $\{w_1,\ldots,w_j\}$ containing $w_j$.
That is, $X_j$ is the set of vertices of the connected component of $G|_{ \{w_1,\ldots,w_j\}}$ containing $w_j$.
Next, we recursively describe the surjection $\Psi_G: \Sfrak_n\to L_G$ as a map onto the set of $G$-trees. 

Given a connected graph $G$ with vertex set $[n]$ and permutation $w=w_1\ldots w_n$ we recursively construct a $G$-tree $\Psi_G(T)$ as follows:
Let $w_n$ be the root of $T$.
Let $G_1,\ldots, G_r$ be the connected components of the subgraph induced by $\{w_1,\ldots, w_{n-1}\}$.
Restricting $w_1\ldots w_{n-1}$ to each component $G_i$ gives a subword of $w$.
We apply the construction to each subword to obtain subtrees $T_1,\ldots, T_r$.
Finally, we attach each subtree to the root $w_n$.
The next proposition follows from \cite[Corollary~3.9]{postnikov.reiner.williams:2008faces}.
\begin{proposition}\label{linear extensions}
The fiber $\Psi_G^{-1}(T)\subseteq \Sfrak_n$ is the set of linear extensions of~$T$.
\end{proposition}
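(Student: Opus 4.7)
The plan is to argue by induction on $n = |V(G)|$, using the recursive construction of $\Psi_G$ together with the fact that in any $G$-tree, the principal order ideals of the children of the root partition $[n] \setminus \{\rho\}$ into the connected components of $G|_{[n] \setminus \{\rho\}}$, where $\rho$ denotes the root. Once this identification is in place, the proposition reduces to the combinatorial statement that a permutation is a linear extension of $T$ if and only if its last letter is the root and the induced subwords on the children's principal ideals are linear extensions of the respective subtrees.

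For the forward inclusion, suppose $\Psi_G(w) = T$. By the recursion, $w_n$ is the root of $T$, and the subwords of $w_1 \ldots w_{n-1}$ on the connected components of $G|_{\{w_1, \ldots, w_{n-1}\}}$ map recursively to the subtrees $T_i$ below the root. Since the root is the unique maximum of $T$ in $<_T$, it may safely appear last; by induction each subword is a linear extension of $T_i$, and their interleaving in $w$ then yields a linear extension of $T$.

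For the reverse inclusion, suppose $w$ is a linear extension of $T$ with root $\rho$. Since $\rho$ is the unique maximum of $T$ in $<_T$, it must occupy the final position of $w$, so $w_n = \rho$. Let $c_1, \ldots, c_s$ be the children of $\rho$ in $T$ and set $V_i := (c_i)_\down$. The sets $V_i$ partition $[n] \setminus \{\rho\}$, each $V_i$ is a tube (by the first $G$-tree axiom) and hence connected, and for $i \neq j$ the incomparability of $c_i$ and $c_j$ combined with the second $G$-tree axiom shows that $V_i \cup V_j$ is not a tube; joined with the connectivity of each piece, this forces the absence of any edge between $V_i$ and $V_j$ in $G$. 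Hence the $V_i$ are exactly the connected components of $G|_{[n] \setminus \{\rho\}}$, matching the recursive step of $\Psi_G$. Each subword of $w$ on $V_i$ is a linear extension of the subposet $T_i$ (which is itself a $G|_{V_i}$-tree), and by induction is sent to $T_i$ by the recursive $\Psi_G$; assembling the pieces yields $\Psi_G(w) = T$.

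The main obstacle is the matching step in the reverse direction, namely identifying the vertex sets of the subtrees hanging below the root with the connected components of $G|_{[n] \setminus \{\rho\}}$. This is precisely where both $G$-tree axioms enter essentially, producing both the connectivity of each piece (first axiom) and their separation from one another (second axiom); without either, the recursive definition of $\Psi_G$ would fail to align with the decomposition of $T$ into root and subtrees.
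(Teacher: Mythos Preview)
Your argument is correct. The paper does not actually supply a proof of this proposition; it simply records that the statement follows from \cite[Corollary~3.9]{postnikov.reiner.williams:2008faces}. Your self-contained induction on $|V(G)|$, using the recursive description of $\Psi_G$, is a clean direct route: the forward inclusion unwinds the recursion, and the key step in the reverse inclusion---identifying the principal ideals $(c_i)_{\down}$ of the children of the root with the connected components of $G|_{[n]\setminus\{\rho\}}$---is exactly where the two $G$-tree axioms are needed, and you invoke them correctly (connectivity of each piece from the first axiom, pairwise separation from the second). One small point worth making explicit is that each restricted subposet $T_i = T|_{(c_i)_\down}$ is again a $G|_{(c_i)_\down}$-tree, so that the inductive hypothesis applies; this is immediate since principal order ideals in $T_i$ coincide with those in $T$, but it is the hinge that lets the induction go through. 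Your argument is written for connected $G$; the general case follows at once from Lemma~\ref{lem_decomposition}.
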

The authors of \cite{postnikov.reiner.williams:2008faces} define a special section of the map $\Psi_G$, whose image we describe below.
See \cite[Definition~8.7]{postnikov.reiner.williams:2008faces} and Proposition~\ref{prw 8.9}.
\begin{definition}\label{b-permutations}
\normalfont
Let $G$ be a graph with vertex set $[n]$.
A permutation $w$ in $\Sfrak_n$ is a $G$-permutation provided that $w_i$ and $\max\{w_1,\ldots, w_i\}$ lie in the same connected component of $G|_{\{w_1,\ldots, w_i\}}$.
\end{definition}

The following lemma is \cite[Proposition~8.10]{postnikov.reiner.williams:2008faces}.
\begin{lemma}\label{lex smallest}
Let $T$ be a $G$-forest and let $w\in \Phi_G^{-1}(T)$.
Then $w$ is a $G$-permutation if and only if it is the lexicographically minimal linear extension of $T$.
\end{lemma}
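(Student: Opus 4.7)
My plan is first to reformulate the $G$-permutation condition as a purely tree-theoretic condition on $w$, and then to prove that this condition characterizes the lex-min linear extension via two inductions on the step index. Because $w$ is a linear extension of $T$, the prefix $\{w_1,\ldots,w_i\}$ is an order ideal of $T$, hence an ideal of $\chi(T)$. The $G$-forest axiom (incomparable $j,k$ give $j_\down\cup k_\down$ not a tube) implies that the connected components of $G|_{\{w_1,\ldots,w_i\}}$ are exactly the subtrees $e_\down$ as $e$ ranges over the $T$-maximal elements of this ideal. Since the $T$-ancestors of $w_i$ have not yet been placed, $w_i$ is itself $T$-maximal in the ideal, so its connected component is $(w_i)_\down$. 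Writing $M_i:=\max\{w_1,\ldots,w_i\}$, the $G$-permutation condition therefore becomes $M_i\leq_T w_i$ for every $i$, a condition referring only to the tree $T$.

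For the forward direction, I will let $v$ denote the lex-min linear extension of $T$ and prove $M_i\leq_T v_i$ by induction on $i$. The case $i=1$ is trivial. For the inductive step I will assume the conclusion for all $k<i$ and aim for a contradiction from $M_i\not\leq_T v_i$. Setting $v_j:=M_{i-1}$, the possibility $v_j>_T v_i$ is ruled out because an ancestor cannot precede its descendant in a linear extension, so $v_i$ and $v_j$ must be $T$-incomparable. Since $v_j$ was the smallest $T$-minimal element available at step $j$ and $v_i<v_j$, lex-minimality forces $v_i$ not to be $T$-minimal at step $j$, so a strict $T$-descendant $d=v_k$ of $v_i$ is placed at some step $k$ with $j<k<i$. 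Because every element placed on $[j,i-1]$ is bounded by $v_j$, we have $M_k=v_j$, and the inductive hypothesis yields $v_j=M_k\leq_T d<_T v_i$, contradicting the incomparability of $v_j$ and $v_i$.

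For the converse, I will suppose $w$ satisfies $M_i\leq_T w_i$ for every $i$ and show by induction on $i$ that $w_i=v_i$. The base case $w_1=v_1$ follows because $w_1$ is a $T$-leaf with $w_1\geq v_1$, and $w_1>v_1$ would place $v_1$ at some $w_k$ where the condition gives $M_k\leq_T v_1$ with $M_k\geq w_1>v_1$, forcing the leaf $v_1$ to have a proper $T$-descendant. For the inductive step, assuming $w_j=v_j$ for $j<i$ and $w_i\neq v_i$, the minimality of $v_i$ gives $w_i>v_i$ and $v_i=w_k$ for some $k>i$. The condition at step $k$ makes $M_k$ a proper $T$-descendant of $v_i$, which by the $T$-minimality of $v_i$ at step $i$ must lie in $\{v_1,\ldots,v_{i-1}\}$ and hence satisfies $M_k\leq M_{i-1}$. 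Monotonicity of $M$ pins down $M_k=M_{i-1}$, so $M_{i-1}$ is a $T$-descendant of both $v_i$ and (by the condition at step $i$) $w_i$, making $v_i$ and $w_i$ $T$-comparable; both possible orders then contradict the linear-extension placement of the descendants of $v_i$ or $w_i$.

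The main obstacle will be the inductive step of the converse, where one must juggle the positions of $v_i$, $w_i$, and $M_{i-1}$ in both $v$ and $w$ simultaneously and exploit monotonicity of $M$ to force $M_k=M_{i-1}$; the initial tree-theoretic reformulation is also essential, as it converts a seemingly graph-dependent condition into one that interacts cleanly with the $T$-structure needed for the inductions.
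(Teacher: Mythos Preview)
Your argument is correct. The paper itself does not prove this lemma; it simply cites \cite[Proposition~8.10]{postnikov.reiner.williams:2008faces}. So you have supplied a self-contained proof where the paper only gives a reference.

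Your key move---translating the $G$-permutation condition into the purely tree-theoretic condition $M_i\leq_T w_i$---is exactly right and matches the paper's internal logic: since $\{w_1,\ldots,w_i\}$ is an order ideal of $T$, its decomposition into connected components of $G$ is the decomposition into principal order ideals $e_\down$ for $e$ ranging over the $T$-maximal elements of the prefix (this is essentially Lemma~\ref{poset_polytope_helper}), and $w_i$ is one of those maximal elements. Both inductions then go through as you describe; in the converse, the step where you force $M_k=M_{i-1}$ and then invoke the forest property that a common lower bound implies comparability is precisely Lemma~\ref{parent} in the paper. One cosmetic remark: your ``forward'' and ``converse'' labels are swapped relative to the biconditional as written in the lemma, but both directions are present and correct.
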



Let $V$ be a subset of the vertex set $[n]$, and let $G'$ be the subgraph of $G$ induced by $V$.
We write $\rho_{G'}: L_G\to L_{G'}$ for the map which takes a maximal tubing $\Xcal$ to $\Xcal|_{V}$.
Similarly, write $\rho_{V}: \Sfrak_n\to \Sfrak_{V}$ for the map which sends a permutation $w=w_1w_2\ldots w_n$ to the subword in which we delete each $w_i\notin V$ (without changing the order of the remaining entries).

\begin{lemma}\label{lem: interval lemma}
Let $G$ be a graph with vertex set $[n]$.
If $\Psi_G:\Sfrak_n \to L_G$ is a lattice map, then for each $V\subseteq[n]$ and induced subgraph $G'=G|_{V}$, the map $\Psi_{\std(G')}: \Sfrak_{\std(V)} \to L_{\std(G')}$ is a lattice map.
\end{lemma}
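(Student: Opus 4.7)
The plan is to verify that the equivalence relation $\Theta_{\std(G')}$ on $\Sfrak_{\std(V)}$, whose classes are the fibers of $\Psi_{\std(G')}$, is a lattice congruence. By Corollary~\ref{uncontracted}, this reduces to showing that the set of arcs uncontracted by $\Theta_{\std(G')}$ is closed under taking subarcs.

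First I would verify the commutative square
\[
\Psi_{\std(G')} \circ \std \circ \rho_V = \std \circ \rho_{G'} \circ \Psi_G,
\]
which follows by unpacking the recursive definition of $\Psi_G$ and appealing to Lemma~\ref{lem:tubing_restriction}. For an arc $\bar\alpha$ on $\Sfrak_{\std(V)}$, I define a \emph{lift} to an arc $\alpha$ on $\Sfrak_n$ by taking $v_{\bar i^{\bar\alpha}}, v_{\bar k^{\bar\alpha}}$ as the endpoints (where $V=\{v_1<\cdots<v_{n'}\}$) and choosing $\epsilon$-values freely at positions outside $V$. A direct computation shows that $\bar j_{\bar\alpha}$ is the standardized restriction of $j_\alpha$ (and similarly for $\bar j_{\bar\alpha_*}$). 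Combined with the commutative square, this shows that $\bar\alpha$ is uncontracted by $\Theta_{\std(G')}$ if and only if $\rho_{G'}$ distinguishes $\Psi_G(j_\alpha)$ from $\Psi_G(j_{\alpha_*})$.

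Next, suppose $\bar\alpha'$ is a subarc of $\bar\alpha$ on $\Sfrak_{\std(V)}$. By choosing consistent $\epsilon$-values at non-$V$ positions, I can produce lifts $\alpha, \alpha'$ in $\Sfrak_n$ so that $\alpha'$ is a subarc of $\alpha$ there. If $\bar\alpha$ is uncontracted by $\Theta_{\std(G')}$, then by the previous paragraph $\alpha$ is uncontracted by $\Theta_G$; by the hypothesis that $\Psi_G$ is a lattice map, the uncontracted arcs of $\Theta_G$ are closed under subarcs, so $\alpha'$ is uncontracted by $\Theta_G$ as well. The remaining task is to conclude that $\bar\alpha'$ itself is uncontracted by $\Theta_{\std(G')}$, i.e., that the cover relation in $L_G$ corresponding to $\alpha'$ survives the restriction $\rho_{G'}$.

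I expect this final step to be the main obstacle, since $\rho_{G'}$ can in general collapse distinct maximal tubings. The productive route is to derive a direct combinatorial criterion for uncontractedness by examining the $G$-tree of $\bar j_{\bar\alpha_*}$ directly: using Proposition~\ref{cor: covering relations} together with the structure of $\Psi$ in Lemma~\ref{polytope_poset}, the arc $\bar\alpha=(\bar i,\bar k,\bar\epsilon)$ is uncontracted by $\Theta_{\std(G')}$ exactly when $\bar i$ and $\bar k$ are comparable in that tree, which happens exactly when there is a path in $\std(G')$ from $\bar i$ to $\bar k$ whose intermediate vertices are less than $\bar i$ or lie at positions where $\bar\epsilon=-$. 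The analogous statement holds for $\Theta_G$, and with both criteria in hand, closure under subarcs transfers naturally, since paths in $\std(G')$ are paths in $G$ supported on $V$ and the realization property is inherited by the induced subgraph $G|_V$.
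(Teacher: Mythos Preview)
Your opening reduction misapplies Corollary~\ref{uncontracted}. That corollary says that a set $U$ of arcs equals the uncontracted set of \emph{some} lattice congruence if and only if $U$ is subarc-closed. It does \emph{not} say that an arbitrary equivalence relation $\Theta$ is a congruence whenever the set of cover relations it fails to collapse happens to be subarc-closed. For instance, take the equivalence relation on $\Sfrak_3$ whose only nontrivial class is $\{123,321\}$: no cover relation is identified, so every arc is ``uncontracted'' and the set is trivially subarc-closed, yet this is not a congruence since the class is not an interval. Consequently, even if everything after your first paragraph succeeds in establishing subarc-closure, you have not shown that $\Theta_{\std(G')}$ is a congruence. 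The combinatorial criterion you sketch at the end suffers from the same defect: it tells you exactly which covers the fibers of $\Psi_{\std(G')}$ collapse, but says nothing about whether those fibers are order-convex or whether the projection maps are order-preserving.

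The paper's argument avoids arc combinatorics altogether. It embeds $\Sfrak_V$ into $\Sfrak_n$ as the interval $I$ of permutations of the form $w_1\cdots w_r\,b_1\cdots b_s$ with $\{w_1,\ldots,w_r\}=V$ and $b_1<\cdots<b_s$ the elements of $[n]\setminus V$ in increasing order; the restriction $\rho_V:I\to\Sfrak_V$ is then a lattice isomorphism. One checks that the square $\Psi_{G'}\circ\rho_V=\rho_{G'}\circ\Psi_G|_I$ commutes, and that $\rho_{G'}$ restricted to $\Psi_G(I)$ is a poset isomorphism onto $L_{G'}$ (the paper verifies injectivity directly, using that $V$ is an ideal of every tubing in $\Psi_G(I)$). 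Since $I$ is a sublattice of $\Sfrak_n$, the hypothesis that $\Psi_G$ is a lattice map makes $\Psi_G|_I$ a lattice map, and $\Psi_{G'}$ is then exhibited as a composition of lattice maps and isomorphisms. This bypasses entirely the question of which arcs $\Theta_{\std(G')}$ contracts.
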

\begin{proof}
Let $V=a_1<a_2<\cdots<a_r$ and $[n]\setminus V = b_1<b_2<\cdots<b_s$, where $r+s=n$.
We consider the interval $I$ in the weak order on $\Sfrak_n$ whose elements consist of all of the permutations on $V$ followed by the fixed permutation $b_1b_2\ldots b_s$.
Observe that for each $\Xcal\in \Psi_G(I)$, the set $V$ is an ideal.
Indeed, let $w=w_1\ldots w_rb_1\ldots b_s$ be a permutation in $I$ and let $\Psi_G(w)=\Xcal=\{X_1\ldots, X_n\}$.
For each connected component $H$ of $G'$ there is a largest integer $j$ in $[r]$ such that $w_j\in H$.
Recall that $X_j$ is the set of vertices of the connected component of $G|_{\{w_1,\ldots w_j\}}$ that contains~$w_j$.
Thus $X_j=H$.

We claim that the following diagram commutes.

\begin{center}
\begin{tikzpicture}[scale=0.75]
  \node (A) at (0,0) {$I$}; 
  \node(B) at (3,0) {$\Psi_G(I)$};
  \node (C) at (0,-2) {$\Sfrak_{V}$}; 
  \node (D) at (3,-2) {$L_{G'}$};
\node (E) at (-0.25,-3.2) {$\Sfrak_{\std(V)}$};
\node (F) at (3.35, -3.2) {$L_{\std(G')}$};
  \node at (1.65,.25) {\scriptsize{$\Psi_G$}};
   \node at (-.35,-1) {\scriptsize{$\rho_{V}$}};
    \node at (3.35,-1) {\scriptsize{$\rho_{G'}$}};
  \node at (1.65,-1.75) {\scriptsize{$\Psi_{G'}$}};
   \node at (1.6,-2.85) {\scriptsize{$\Psi_{\std(G')}$}};
  \draw[->>] (A.east)--(B.west);
  \draw[->] (A.south)--(C.north);
  \draw[->>] (C.east)--(D.west);
  \draw[->] (B.south)--(D.north);
  \draw[-] (3.,-3) -- (3, -2.5);
  \draw[-] (3.1,-3) -- (3.1, -2.5);
    \draw[-] (-.05,-3) -- (-.05, -2.5);
  \draw[-] (.05,-3) -- (.05, -2.5);
   \draw[->>] (E.east) -- (F.west);
\end{tikzpicture}
\end{center}


Set $\Psi_G(w):=\Xcal$ and $\Psi_{G'}(\rho_V(w)):=\Ycal$, where $w= w_1\ldots w_rb_1\ldots b_s$ as above.
Observe that $w_1<w_2<\cdots<w_r$ is a linear extension for both $\tau(\Xcal|_V)$ and $\tau(\Ycal)$.
Therefore, $\rho_{G'}(\Psi_{G}(w))=\Xcal|_{V}=\Ycal=\Psi_{G'}(\rho_{V}(w))$, and the diagram commutes.

Next, we check that $\rho_{G'}: \Psi_{G}(I)\to L_{G'}$ is a poset isomorphism.
Because $\rho_{V}:I\to \Sfrak_V$ is a poset isomorphism, it follows that $\rho_{G'}$ is surjective.
Suppose that $\Xcal, \Ycal\in \Psi_G(I)$ with $\Xcal|_V =\Zcal= \Ycal|_V.$
We will argue that $\tau(\Xcal)=T_{\Xcal}$ and $\tau(\Ycal)= T_{\Ycal}$ are equal.

The only possible difference between $T_\Xcal$ and $T_\Ycal$ must occur among the elements of $V$.
(Each $i,k\notin V$ that are in the same connected component of $G|_{[n]\setminus V}$ are linearly ordered by $b_1<b_2<\cdots<b_s$.)
We write $<_\Xcal$ for the order relation in $T_\Xcal$ and similarly $<_\Ycal$ for the relation in $T_\Ycal$.
Assume that $i<_\Xcal k$ and $i\not<_\Ycal k$ for some $i,k\in V$.
Observe that the pair must be incomparable in the $G'$-forest $\tau(\Zcal)$.
Thus, $\{j\in [n]: j\le_\Xcal k\}\cap V$ is not a tube.
But since $V$ is an ideal in $\Xcal$, we have $\{j\in [n]: j\le_\Xcal k\}\cap V = \{j\in [n]: j\le_\Xcal k\}$.
The latter is clearly a tube.
By this contradiction, we conclude that $\Xcal= \Ycal$, as desired.

\end{proof}

\section{Lattices of maximal tubings}\label{sec_lattice}

\subsection{Right-filled graphs}
We say that a graph $G$ with vertex set $[n]$ and edge set $E$ is \emph{right-filled} provided that the following implication holds:

\begin{equation*}\label{right filled}
\text{If $\{i,k\}\in E$ then $\{j,k\}$ also belongs to $E$ for each $1\le i<j<k\le n$.}\tag{RF}
\end{equation*}

Dually, we say that $G$ is \emph{left-filled} provided that:
\begin{equation*}\label{left filled}
\text{If $\{i,k\}\in E$ then $\{i,j\}$ also belongs to $E$ for each $1\le i<j<k\le n$.}\tag{LF}
\end{equation*}

The goal of this section is two-fold:
First we show that if $G$ is right-filled, then the subposet of the weak order induced by the set of $G$-permutations in~$\Sfrak_n$ is a lattice.
In fact, we show that this subposet is a meet semilattice quotient of the weak order.
(See Corollary~\ref{g-perm lattice cong}.)
Second, we prove that $L_G$ is isomorphic to the subposet of the weak order induced by the set of $G$-permutations in~$\Sfrak_n$.
Hence, $L_G$ is a lattice. 
(See Theorem~\ref{inversion order}.)
\begin{remark}\label{rmk: dualizing and left-filled}
Recall that $G^*$ is the graph obtained from $G$ by swapping the labels $i$ and $n+1-i$ for all $i\in [n]$.
Observe that $G$ is right-filled if and only if $G^*$ is left-filled.
Lemma~\ref{graph duality} says that $L_{G^*} \cong {L_G}^*$, thus we obtain dual versions of  Corollary~\ref{g-perm lattice cong} and Theorem~\ref{inversion order} when $G$ is left-filled.
Some care is required.
In particular, we note that for left-filled graphs, $L_G$ is \emph{not} isomorphic to the subposet induced by the set of $G$-permutations.
\end{remark}

\begin{proposition}\label{connected}
Suppose that $G$ is a right-filled graph with vertex set $[n]$ and connected components $G_i=(V_i, E_i)$ where $i\in [s]$ and $s\ge 2$.
If $\Psi_i: \Sfrak_{V_i} \to L_{G_i}$ is a lattice map for each $i$, then $\Psi_G:\Sfrak_n\to L_G$ is a lattice map.
\end{proposition}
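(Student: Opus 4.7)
My plan is to reduce to the connected case by exhibiting $\Psi_G$ as a product of maps, one per connected component, each of which is known to be a lattice map. The crucial structural observation is that, in a right-filled graph, the connected components $V_1, \ldots, V_s$ partition $[n]$ into intervals, so that both Lemma~\ref{lem_decomposition} and Example~\ref{subgraphs} apply directly.

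First I would prove that each connected component of a right-filled graph is an interval of $[n]$. Suppose some component $V$ fails to be an interval, so that there exist $a<b$ in $V$ and some $c\notin V$ with $a<c<b$. Since $V$ is connected, there is a path in $V$ from $a$ to $b$; along this path some edge $\{x,y\}$ with $x<y$ must straddle $c$, i.e.\ $x<c<y$. The right-filled condition~\eqref{right filled} forces $\{c,y\}$ to be an edge of $G$, so $c$ lies in the same connected component as $y\in V$, a contradiction. Hence each $V_i$ is an interval of~$[n]$.

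Next, applying Lemma~\ref{lem_decomposition} inductively to these intervals yields a poset isomorphism $L_G\cong L_{\std(G_1)}\times\cdots\times L_{\std(G_s)}$, and since each $V_i$ is an interval, Example~\ref{subgraphs} says that each restriction map $\rho_{V_i}:\Sfrak_n\to\Sfrak_{V_i}$ is a lattice map. Under these identifications, I would verify that $\Psi_G$ coincides with the map $w\mapsto (\Psi_1(\rho_{V_1}(w)),\ldots,\Psi_s(\rho_{V_s}(w)))$. This is a direct unwinding of the definition: if $\Psi_G(w)=\{X_1,\ldots,X_n\}$, then each $X_j$ lies entirely in the component $V_i$ containing $w_j$, and inspecting how $X_j$ is built from the prefix $\{w_1,\ldots,w_j\}$ shows that $X_j$ is the same tube produced by applying $\Psi_i$ to the subword $\rho_{V_i}(w)$.

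Since each $\Psi_i$ is a lattice map by hypothesis and both compositions and products of lattice maps preserve meets and joins (meets and joins in a product lattice are computed componentwise), the factored map is a lattice map, and therefore so is $\Psi_G$. The main content of the argument lies in the structural lemma that right-filled graphs have interval components; once that is in hand, the rest is a routine assembly of Lemma~\ref{lem_decomposition}, Example~\ref{subgraphs}, and the hypothesis.
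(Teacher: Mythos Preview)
Your proposal is correct and follows essentially the same approach as the paper: first show that the right-filled condition forces each connected component to be an interval of $[n]$, then factor $\Psi_G$ through the product $\prod_i \Sfrak_{V_i}\to\prod_i L_{G_i}$ using Example~\ref{subgraphs} for the restriction maps and Lemma~\ref{lem_decomposition} for the isomorphism $L_G\cong\prod_i L_{G_i}$. Your argument that components are intervals (via an edge straddling a missing point) is a minor variant of the paper's geodesic argument, but the overall structure is the same.
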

\begin{proof}
We claim that each $V_i$ is an interval.
Write $m_i$ for $\min(V_i)$ and $M_i$ for $\max(V_i)$.
Observe that each geodesic $M_i=q_0, q_1, \ldots, q_k=m_i$ in the graph $G$ monotonically decreases.
That is, $q_0>q_1>\ldots>q_k$.
Indeed, if there exists $q_r>q_{r+1}<q_{r+2}$ then the~\ref{right filled} property implies that $q_r$ and $q_{r+2}$ are adjacent.
Applying the~\ref{right filled} property again, each closed interval $[q_r, q_{r+1}]\subseteq V_i$.
Thus, $V_i$ is an interval.

Observe that the following the diagram commutes.
By Lemma~\ref{lem_decomposition}, the vertical map from $L_G$ to $\prod_{i=1}^sL_{G_i}$ is an isomorphism.
The vertical map from $\Sfrak_n$ onto $\rho:=\prod_{i=1}^s\Sfrak_{V_i}$, where $\rho_{V_i}$ is the restriction map from Example~\ref{subgraphs}.
Since each $V_i$ is an interval, $\rho$ is a lattice map.
The statement of the proposition now follows.
\begin{center}
\begin{tikzpicture}[scale=0.75]
  \node (A) at (0,0) {$\Sfrak_n$}; 
  \node(B) at (5,0) {$L_G$};
  \node (C) at (0,-2) {$\prod_{i=1}^s\Sfrak_{V_i}$}; 
  \node (D) at (5,-2) {$\prod_{i=1}^s L_{G_i}$};
  \node at (2.75,.35) {\scriptsize{$\Psi_G$}};
  \node at (2.5,-1.7) {\scriptsize{$\prod_{i=1}^s\Psi_{G_{i}}$}};
  \draw[->>] (A.east)--(B.west);
  \draw[->>] (A.south)--(C.north);
  \draw[->>] (C.east)--(D.west);
  \draw[->] (B.south)--(D.north);
\end{tikzpicture}
\end{center}

\end{proof}
With Proposition~\ref{connected} in hand, we assume throughout that $G$ is connected.
We realize $L_G$ as a poset on the set of $G$-trees, where $T\le T'$ if and only if $\chi(T)\le \chi(T')$, where $\chi$ is the bijection $T\mapsto \{x_\down: x\in [n]\}$ from Theorem~\ref{G-trees}.

\begin{lemma}\label{lem: child relations}
Let $G$ be a left or right-filled graph, and let $T\in L_G$.
If $x_1$ and $x_2$ are incomparable in $T$, 
then there does not exist any triple $i<j<k$ such that $i$ and $k$ belong to ${x_1}_\down$ and $j\in {x_2}_\down$.
\end{lemma}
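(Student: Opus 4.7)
The plan is to argue by contradiction: assume such a triple $i<j<k$ exists with $i,k\in {x_1}_\down$ and $j\in{x_2}_\down$, and derive that ${x_1}_\down\cup {x_2}_\down$ is a tube of $G$, contradicting the second defining property of a $G$-forest (since $x_1$ and $x_2$ are incomparable in $T$).

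The first preliminary observation is that ${x_1}_\down\cap{x_2}_\down=\emptyset$. Indeed, if $y$ lay in both, then $y\le_T x_1$ and $y\le_T x_2$; either $y=x_1$ or $y=x_2$ (making $x_1,x_2$ comparable) or $y<_T x_1$ and $y<_T x_2$, in which case Lemma~\ref{parent} forces $x_1$ and $x_2$ to be comparable. Either way we contradict our hypothesis. In particular $j\notin {x_1}_\down$.

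The key step is to use the filled hypothesis to show that ${x_1}_\down\cup\{j\}$ is itself a tube. Since ${x_1}_\down$ is a tube, $G|_{{x_1}_\down}$ is connected, so there is a path $i=u_0,u_1,\dots,u_m=k$ inside ${x_1}_\down$. Because $u_0<j$, $u_m>j$ and no $u_r$ equals $j$, the integers $u_r$ must cross the value $j$ on some edge: there exist consecutive vertices $u_r,u_{r+1}$ on the path with one strictly less than $j$ and the other strictly greater. If $G$ is right-filled, applying property~\ref{right filled} to the edge $\{u_r,u_{r+1}\}$ with $j$ strictly between them yields an edge from $j$ to the larger of $u_r,u_{r+1}$; dually, if $G$ is left-filled, property~\ref{left filled} gives an edge from $j$ to the smaller one. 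In either case $j$ is adjacent in $G$ to a vertex of ${x_1}_\down$, so $G|_{{x_1}_\down\cup\{j\}}$ is connected and ${x_1}_\down\cup\{j\}$ is a tube.

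To finish, note that $G|_{{x_2}_\down}$ is connected and contains $j$, while $G|_{{x_1}_\down\cup\{j\}}$ is connected and contains $j$; gluing along $j$, $G|_{{x_1}_\down\cup{x_2}_\down}$ is connected, so ${x_1}_\down\cup{x_2}_\down$ is a tube of $G$. This contradicts the $G$-forest property applied to the incomparable pair $x_1,x_2$. I expect the one subtle point to be the crossing argument in the third paragraph, where one must carefully use that $j\notin {x_1}_\down$ to guarantee the crossing edge actually has $j$ strictly between its endpoints (so that the filled hypothesis is applicable); everything else is a direct invocation of the $G$-forest definition and Lemma~\ref{parent}.
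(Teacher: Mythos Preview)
Your proof is correct and follows essentially the same strategy as the paper: take a path from $i$ to $k$ inside ${x_1}_\down$, use the filled hypothesis to produce an edge between $j$ and some vertex of ${x_1}_\down$, and then contradict the $G$-forest axiom for the incomparable pair $x_1,x_2$. The only difference is cosmetic: the paper inducts on the path length $m$ (comparing $j$ with $q_{m-1}$ at each step), whereas your discrete intermediate-value ``crossing'' argument locates the relevant edge in one shot; both arguments rely on the disjointness ${x_1}_\down\cap{x_2}_\down=\emptyset$, which you make explicit via Lemma~\ref{parent} and the paper leaves implicit.
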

\begin{proof}
Consider the set of pairs $i<k$ in ${x_1}_\down$ such that $i<k-1$.
Because ${x_1}_\down$ is a tube, there is a path $i=q_0,\ldots, q_m=k$ in $G$ such that each $q_l$ belongs to ${x_1}_\down$.
Choose such a path so that $m$ is minimal.

We argue by induction on $m$ that there exists no vertex $j$ in ${x_2}_\down$ satisfying $i<j<k$.
Observe that if $j\in {x_2}_\down$ then neither $\{i,j\}$ nor $\{j,k\}$ are edges in $G$.
Thus the base case holds because $G$ is either right-filled or left-filled.

Now assume that $m>1$, let $j\in \{i+1,\ldots, k-1\}$, and for the moment assume that $G$ is right-filled.
Consider $q_{m-1}$.
If $q_{m-1}<j$, then the \ref{right filled}-property implies that $j$ and $k$ are adjacent.
Hence $j\not \in {x_2}_\down$.
If $j<q_{m-1}$ then we have $i<j<q_{m-1}$, and $i$ and $q_{m-1}$ are connected by a path of length~$m-1$.
By induction $j\not \in {x_2}_\down$, and the statement follows.

If $G$ is left-filled the proof is similar, except that we compare $j$ with $q_2$ instead of $q_{m-1}$.
\end{proof}

\begin{proposition}\label{prop: child relations}
Let $G$ be a left or right-filled graph, and let $T\in L_G$.
Suppose that $x_1$ and $x_2$ are incomparable in $T$ and that $x_1<x_2$ as integers.
Then each element in ${x_1}_\down$ is smaller than each element in ${x_2}_\down$ (as integers).
\end{proposition}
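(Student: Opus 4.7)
The strategy is to reduce the statement to a purely combinatorial fact about the two principal order ideals $A := (x_1)_{\down}$ and $B := (x_2)_{\down}$, viewed as subsets of $\Zbb$. First I would observe that $A \cap B = \emptyset$: any common element $y$ would satisfy $y \leq_T x_1$ and $y \leq_T x_2$, and Lemma~\ref{parent} would then force $x_1$ and $x_2$ to be comparable in $T$, contradicting the hypothesis. In particular $x_1 \in A \setminus B$ and $x_2 \in B \setminus A$.

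Next I would exploit Lemma~\ref{lem: child relations} in \emph{both} directions. Its hypothesis is symmetric in $x_1$ and $x_2$, so it simultaneously rules out triples $i < j < k$ with $i, k \in A$ and $j \in B$, and triples $i < j < k$ with $i, k \in B$ and $j \in A$. Equivalently, neither of $A$ and $B$ contains a pair of integers straddling an element of the other set.

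The proposition then follows from the elementary claim that any two disjoint subsets of $\Zbb$ satisfying both non-straddling conditions must satisfy $\max A < \min B$ or $\max B < \min A$. I would prove this by contradiction: if both $\max A \geq \min B$ and $\max B \geq \min A$, one can pick $a \in A,\ b \in B$ with $a < b$ and $a' \in A,\ b' \in B$ with $b' < a'$; comparing $a$ with $b'$ then yields either the forbidden triple $b' < a < b$ (when $b' < a$) or the forbidden triple $a < b' < a'$ (when $a < b'$). Given that $x_1 \in A$, $x_2 \in B$, and $x_1 < x_2$, we must be in the case $\max A < \min B$, which is the desired conclusion.

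I do not expect any substantial obstacle, since the geometric content is already packaged inside Lemma~\ref{lem: child relations} and the remainder is a short case analysis on integer orderings. The only subtlety is recognizing that the stated lemma actually supplies \textbf{both} non-straddling conditions at once, via the symmetry of its hypothesis in $x_1$ and $x_2$; without that observation one might attempt unnecessary separate arguments for left-filled and right-filled graphs.
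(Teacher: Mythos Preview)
Your proposal is correct and follows essentially the same approach as the paper: both proofs invoke Lemma~\ref{lem: child relations} symmetrically in $x_1$ and $x_2$ and then carry out a short integer-ordering case analysis to reach a contradiction. Your packaging of the combinatorial step as a standalone claim about disjoint non-straddling subsets of $\Zbb$ is a bit cleaner than the paper's direct argument, but the underlying idea is identical.
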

\begin{proof}
Set $i:= \max \{a\in {x_2}_{\down}: \text{there exists } b\in {x_1}_\down\text{ with }a< b\}$.
So, there is some $j\in {x_1}_\down$ such that $i<j$.

Assume that $i$ is the largest element in ${x_2}_\down$.
Thus, $x_1<i<j$
(where we have the first inequality because $x_1<x_2$ and $x_2<i$.)
Since $x_1$ and $j$ both belong to ${x_1}_\down$, we have a contradiction to Lemma~\ref{lem: child relations}.

So, there exists some number $k$ in ${x_2}_\down$ with $k>i$, and the maximality of $i$ implies that $k\not<j$ for any $j\in {x_1}_\down$.
Then the triple $i<j<k$ satisfies: $i,k$ both in ${x_2}_{\down}$ and $j\in {x_1}_\down$.
That is a contradiction to Lemma~\ref{lem: child relations} again.
(Note that the roles of $x_1$ and $x_2$ are symmetric in Lemma~\ref{lem: child relations}.)
The proposition follows.
\end{proof}

Below we recursively construct a special linear extension $\sigma(T)$ for $T\in L_G$.
First, if $T$ has a root $x$ then we remove it.
Let $C_1,\ldots,C_r$ be the connected components of $T\setminus \{x\}$.
We index the connected components so that each element of $C_i$ is less than each element of $C_j$ (as integers) whenever $i<j$.
Next, we apply the construction to each component to obtain $v_{{C_i}_1}\ldots v_{{C_i}_s}=\sigma(C_i)$ for $i\in [r]$.
Finally, we concatenate the words $\sigma(C_1)\ldots \sigma(C_r)$, ending with the root~$x$ (if there is one).
Observe that $\sigma(T)$ is the lexicographically minimal linear extension of the $G$-tree $T$.
The next proposition follows from Lemma~\ref{lex smallest} (see also \cite[Proposition~8.10]{postnikov.reiner.williams:2008faces}).
\begin{proposition}\label{prw 8.9}
The image $\sigma(L_G)$ is the equal to the set of $G$-permutations in $\Sfrak_n$.
Moreover, the map $\Psi_G$ induces a bijection between $G$-permutations and $G$-trees, and $\sigma: L_G\to \Sfrak_n$ is a section of the map $\Psi_G$.
\end{proposition}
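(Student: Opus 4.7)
The plan is to reduce the proposition to Lemma~\ref{lex smallest} combined with an explicit verification that the recursively defined word $\sigma(T)$ is the lexicographically minimal linear extension of the $G$-tree $T$. Granting this identification, all three claims follow at once. Proposition~\ref{linear extensions} identifies $\Psi_G^{-1}(T)$ with the set of linear extensions of $T$, so $\sigma(T)\in\Psi_G^{-1}(T)$, i.e.\ $\Psi_G\circ\sigma=\id_{L_G}$, establishing that $\sigma$ is a section. Lemma~\ref{lex smallest} then says every $\sigma(T)$ is a $G$-permutation, so $\sigma(L_G)$ sits inside the set of $G$-permutations; conversely, any $G$-permutation $w$ is a linear extension of $\Psi_G(w)$ which by Lemma~\ref{lex smallest} is lex-minimal among those, forcing $w=\sigma(\Psi_G(w))$, so $\sigma$ surjects onto the $G$-permutations. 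Combined, $\sigma$ is a bijection between $L_G$ (viewed as the set of $G$-trees) and the $G$-permutations, with two-sided inverse given by restricting $\Psi_G$.

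The core work is thus to verify that $\sigma(T)$ really is the lex-minimal linear extension of $T$. I would proceed by induction on $|T|$. Let $x$ be the root of $T$ and let $C_1,\dots,C_r$ be the connected components of $T\setm\{x\}$, indexed so that $\min C_1<\cdots<\min C_r$ as integers. Because $x$ is the unique poset-maximum of $T$, every linear extension of $T$ places $x$ in the last position, so lex-minimization reduces to the first $n-1$ entries. Here Proposition~\ref{prop: child relations} is decisive: the roots of the $C_i$ are distinct children of $x$, hence pairwise incomparable in $T$, so that proposition forces every element of $C_i$ to be smaller (as an integer) than every element of $C_j$ whenever $i<j$. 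This integer-separation means there is no lex advantage to interleaving elements of $C_j$ into the $C_i$-block; any such interleave strictly increases the word. Consequently the lex-minimal extension has the block form $\sigma(C_1)\sigma(C_2)\cdots\sigma(C_r)\,x$, and by induction each $\sigma(C_i)$ is itself the lex-minimal linear extension of $C_i$. This is precisely the recursion defining $\sigma$.

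The main obstacle I anticipate is ensuring the hypothesis of Proposition~\ref{prop: child relations} is satisfied in the context in which I want to apply it; but this is automatic, since distinct children of a common node in a forest are incomparable, so the proposition applies directly to any pair $C_i, C_j$. Since Proposition~\ref{prop: child relations} has already been established under the left- or right-filled hypothesis that is in force throughout this subsection, no further ingredient is needed. The remainder of the argument is just bookkeeping around the recursive definition of $\sigma$ and the surjectivity/section identities extracted from Lemma~\ref{lex smallest} and Proposition~\ref{linear extensions}.
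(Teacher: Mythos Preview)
Your proposal is correct and follows the same approach as the paper. The paper simply asserts (in the sentence immediately preceding the proposition) that ``$\sigma(T)$ is the lexicographically minimal linear extension of the $G$-tree $T$'' and then states that the proposition follows from Lemma~\ref{lex smallest}; you have supplied the inductive argument behind that assertion, correctly invoking Proposition~\ref{prop: child relations} for the integer-separation of the subtrees $C_i$, which is exactly what the paper's indexing convention for the $C_i$ tacitly relies on.
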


A pair of numbers $(i,j)$ is an \emph{inversion} of a $G$-tree $T$ if $i<j$ and $j<_T i$.
For example, a descent of $T$ is an inversion such that $i$ covers $j$ in $T$.
A pair $(i,j)$ is a non-inversion if $i<j$ and $i<_T j$. 
(Pairs $i$ and $j$ which are incomparable in $T$ are neither inversions nor non-inversions.)
Write $\inv(T)$ for the set of all inversions of $T$ and $\inv^\wedge(T)$ for the set of noninversions.
The next lemma follows immediately from Proposition~\ref{prop: child relations} and the construction of $\sigma(T)$.
The second item of the statement also follows from \cite[Proposition~9.5]{postnikov.reiner.williams:2008faces}. 

\begin{lemma}\label{lem: pidown}
Let $G$ be left or right-filled graph with vertex set $[n]$.
Suppose that $x$ and $x'$ are incomparable in $T$ and $x$ precedes $x'$ in the linear extension $\sigma(T)$.
Then $x$ is less than $x'$ as integers.
In particular:
\begin{itemize}
\item the inversion set of $T$ is equal to the inversion set of $\sigma(T)$;
\item the descent set of $T$ is equal to the descent set of $\sigma(T)$
\end{itemize} 
\end{lemma}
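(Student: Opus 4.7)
The plan is to deduce both bullet points from a direct verification of the main statement (first sentence), which itself is immediate from the recursive definition of $\sigma$ together with Proposition~\ref{prop: child relations}.

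For the main statement, suppose $x$ and $x'$ are incomparable in $T$. Then at some stage of the recursion there is a node $y$ whose removal separates $x$ and $x'$ into distinct connected components $C_a$ and $C_b$ of the subtree rooted at $y$ (or, in the forest case, into distinct components at the top level of the forest). The construction orders these components so that $C_i$ is concatenated before $C_j$ in $\sigma(T)$ exactly when every element of $C_i$ is integer-less than every element of $C_j$, and Proposition~\ref{prop: child relations} guarantees that such a total order exists. Since $x$ precedes $x'$ in $\sigma(T)$, we must have $a<b$, so all of $C_a$ is integer-less than all of $C_b$, and in particular $x<x'$.

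For the inversion set equality, I would split on whether the pair $(i,k)$ with $i<k$ is comparable in $T$. If it is, then $(i,k)\in\inv(T)$ iff $k<_T i$, which holds iff $k$ precedes $i$ in every linear extension of $T$—including $\sigma(T)$—so this is equivalent to $(i,k)\in\inv(\sigma(T))$. If $i$ and $k$ are incomparable, then $(i,k)\notin\inv(T)$, and the main statement forces $i$ to precede $k$ in $\sigma(T)$, so $(i,k)\notin\inv(\sigma(T))$ either.

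For the descent set equality, a tree-descent $(i,k)$ has $i$ as the parent of $k$ with $i<k$. In the recursion applied at $i$, the children's subtrees are concatenated in increasing integer order and $i$ is appended at the end, so the element immediately preceding $i$ in $\sigma(T)$ is the root of the last subtree—namely, the largest-integer child of $i$. To conclude that this largest child is $k$, I need the auxiliary fact that in a $G$-tree for a filled graph, a node has at most one child of larger integer value. For the right-filled case, this goes by contradiction: if $i$ had incomparable children $c_1<c_2$ both greater than $i$, then $i$ is adjacent in $G$ to some $e_2\in {c_2}_\down$ (because $i_\down$ is a tube and removing $i$ splits it into the sibling-subtrees as connected components), and applying the right-filled property to the edge $\{i,e_2\}$ at the intermediate vertex $c_1$ (using $i<c_1<e_2$) forces $\{c_1,e_2\}$ to be an edge, contradicting the fact that ${c_1}_\down\cup{c_2}_\down$ is not a tube. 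A dual argument handles the left-filled case, and the converse direction—every $\sigma(T)$-descent arises from a parent-child pair in $T$—follows by running the same analysis at the position immediately preceding $i$ in $\sigma(T)$. The main obstacle is precisely this auxiliary fact about unique larger children; everything else is bookkeeping with the recursive construction.
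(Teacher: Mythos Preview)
Your argument for the main statement and for the inversion set equality is correct and matches the paper's intended proof: both follow directly from Proposition~\ref{prop: child relations} and the recursive construction of $\sigma(T)$.

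For the descent set equality, your auxiliary fact---that a node has at most one child of larger integer value---is exactly right in the right-filled case, and your contradiction argument is essentially the same as the paper's later Lemma~\ref{descent helper}(1). So that direction is fine.

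The gap is your sentence ``a dual argument handles the left-filled case.'' It does not. Take $G$ on $\{1,2,3\}$ with edges $\{1,2\}$ and $\{1,3\}$; this is left-filled but not right-filled. The $G$-tree $T$ with root $1$ and children $2,3$ is valid (since $\{2,3\}$ is not a tube), and here the node $1$ has \emph{two} children larger than itself, so your auxiliary fact fails. Worse, the descent set claim itself fails: $\des(T)=\{(1,2),(1,3)\}$ while $\sigma(T)=231$ has $\des(\sigma(T))=\{(1,3)\}$. So the second bullet of the lemma is simply false for left-filled graphs as stated.

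The paper does not give a detailed proof here either---it only asserts the lemma follows from Proposition~\ref{prop: child relations} and cites \cite[Proposition~9.5]{postnikov.reiner.williams:2008faces} for the descent item---and its only downstream use of the descent equality (Lemma~\ref{lem: inversion covers}) is under a right-filled hypothesis. So the defect is in the lemma's stated generality, not in the paper's main results, and your proof is complete and correct once restricted to the right-filled case.
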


\begin{remark}\label{rmk: dual section}
Dually we recursively construct a (lexicographically) largest linear extension $\sigma^*(T)$ as follows:
As before $C_1,\ldots, C_r$ are the connected components of $T\setminus \{x\}$ (if $T$ has root $x$) or $T$ (if $T$ does not have a root), indexed so that each element in $C_i$ is less than each element in $C_j$ if $i<j$.
Apply the construction $\sigma^*(C_i)$ to each connected component.
Concatenate the words: $\sigma^*(C_r)\ldots \sigma^*(C_1)$, and finally end with the root $x$.
Indeed, if $G$ is either left or right filled, then $\sigma^*(T)$ is the unique largest element of the fiber $\Psi_G^{-1}(T)$.
\end{remark}


\begin{proposition}\label{inversions}
Suppose that $G$ is a right-filled graph with vertex set $[n]$.
If $w\le w'$ in the weak order on $\Sfrak_n$ then $\inv(\Psi_G(w))\subseteq \inv(\Psi_G(w'))$.
\end{proposition}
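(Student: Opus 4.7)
The plan is to reduce to a single cover relation in the weak order and then perform a case analysis, isolating the one configuration where the right-filled hypothesis is essential.

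First I would observe that it suffices to treat a cover $w \lessdot w'$, by induction on the length of a saturated chain from $w$ to $w'$. Write $w' = (a\ b) \cdot w$ where $w_k = a$, $w_{k+1} = b$, and $a<b$, and set $T = \Psi_G(w)$ and $T' = \Psi_G(w')$. If $a$ and $b$ are incomparable in $T$, then $w'$ is another linear extension of $T$, so Proposition~\ref{linear extensions} gives $T = T'$ and the inclusion is trivial. Otherwise $a <_T b$, since $w$ has $a$ before $b$.

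Next I would take an arbitrary inversion $(i,j) \in \inv(T)$ (so $i<j$ and $j \in i_\down^T$) and show $(i,j) \in \inv(T')$ using the recursive description of $\Psi_G$: if $s$ is the position of $i$ in $w$, then $i_\down^T = X_s$, the connected component of $G|_{\{w_1,\dots,w_s\}}$ containing $i$. A short case analysis on whether $i$ or $j$ lies in $\{a,b\}$ handles most configurations routinely---either the position of $i$ in $w'$ and the associated prefix set are identical to those in $w$, or (when $i=a$) the prefix set only grows by $\{b\}$. The one exception is the \emph{crucial case} $i = b$ and $j \notin \{a,b\}$: the prefix set used to define $b_\down$ shrinks from $\{u_1,\dots,u_{k-1},a,b\}$ to $\{u_1,\dots,u_{k-1},b\}$, and the task becomes showing that removing $a$ does not disconnect $j$ from $b$.

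The main obstacle is this last case, and this is where the right-filled hypothesis enters. I would prove the following auxiliary shortcutting lemma: for any right-filled $G$, any tube $I$, and any $a',b' \in I$ with $a'<b'$, there is a path from $a'$ to $b'$ inside $G|_{I \cap \{a',a'+1,\dots,n\}}$. Given a path $a' = p_0,p_1,\dots,p_m = b'$ in $G|_I$, let $p_l$ be the last vertex with $p_l < a'$; then $p_{l+1}\ge a'$, and applying the right-filled property to the edge $\{p_l,p_{l+1}\}$ (with $p_l < a' \le p_{l+1}$) shows that $\{a',p_{l+1}\}$ is an edge of $G$, so the detour $a',p_{l+1},\dots,p_m$ is a walk whose vertices are all $\ge a'$.

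Applying this lemma with $a' = b$ and $b' = j$ inside the tube $b_\down^T$ (which contains both $b$ and $j$, each strictly greater than $a$) produces a $b$-to-$j$ walk in $G$ using only vertices $> a$. Such a walk lives inside $G|_{\{u_1,\dots,u_{k-1},b\}}$, so $j$ lies in the connected component of $b$ there, which is exactly $b_\down^{T'}$. This completes the crucial case; everything outside it is bookkeeping about how positions and prefix sets change under the swap, and the substantive content of the proof is the shortcutting lemma.
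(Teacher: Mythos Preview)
Your argument is correct. The reduction to a cover relation, the case analysis on $i,j\in\{a,b\}$, and the crucial case $i=b$ are all handled properly; in particular your shortcutting lemma is valid, and applying it with $a'=b$, $b'=j$ inside the tube $b_\down^T$ indeed yields a $b$-to-$j$ path in $G|_{\{w_1,\ldots,w_{k-1},b\}}$, since $b_\down^T\subseteq\{w_1,\ldots,w_{k-1},a,b\}$ and the path avoids everything below $b$, in particular~$a$.

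Your route, however, is genuinely different from the paper's. The paper does \emph{not} reduce to a cover in the weak order. Instead it takes arbitrary $w\le w'$, notes that any $(i,k)\in\inv(T)$ is automatically in $\inv(w)\subseteq\inv(w')$, and then argues that $i$ and $k$ must be comparable in $T'$ (whence $(i,k)\in\inv(T')$ since $w'$ is a linear extension of $T'$). Comparability is proved by induction on the length of a path $i=q_0,\ldots,q_m=k$ inside $i_\down^T$: right-filledness forces $q_{m-1}>i$, so $(i,q_{m-1})\in\inv(T)$, and induction together with the tree property that every node has at most one parent (Lemma~\ref{parent}) finishes the step. Thus the paper's induction is on path length in $G$, and its key structural ingredient is the single-parent lemma for $G$-forests; your induction is on weak-order length, and your key ingredient is an explicit computation of how $i_\down$ changes under an adjacent swap, powered by the shortcutting lemma. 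Your approach is more hands-on and avoids Lemma~\ref{parent}; the paper's approach is shorter and handles all $w\le w'$ uniformly without tracking positions.
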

\begin{proof}
Write $T$ for $\Psi_G(w)$ and $T'$ for $\Psi_G(w')$.
Suppose that $(i,k)$ is an inversion in~$T$.
Since $w$ is a linear extension of $T$, we have $(i,k)\in \inv(w)$.
Hence $(i,k)\in \inv(w')$.
If $i$ and $k$ are comparable in $T'$, then $(i,k)\in \inv(T')$, since $w'$ is a linear extension of $T'$.

Because $(i,k)\in \inv(T)$, there is a path $i=q_0,\ldots, q_m=k$ (which we take to have minimal length) in $G$ connecting $i$ to $k$ such that $q_l<_T i$ for each $l\in[m]$.
We prove, by induction on $m$, that $i$ and $k$ are comparable in $T'$.
In the base case $i$ and $k$ are adjacent in~$G$, and the claim is immediate.

Assume $m>1$ (so, in particular, $i$ and $k$ are \textit{not} adjacent in $G$).
We make two easy observations:
First, because $G$ is right-filled, $q_{m-1}>i$.
(Indeed, if $q_{m-1}<i<k$ then $G$ must have the edge $\{i,k\}$, contrary to our assumption that $i$ and $k$ are not adjacent.)
Thus, $(i,q_{m-1})\in \inv(T)$.
By induction, $i$ and $q_{m-1}$ are comparable in $T'$.
Thus, $q_{m-1}<_{T'} i$.
Second, because $q_{m-1}$ is adjacent to $k$, they are also comparable in~$T'$.

If $k<_{T'} q_{m-1}$ then we are done by transitivity.
On the other hand, if $q_{m-1} <_{T'} k$ then Lemma~\ref{parent} implies that $k$ and $i$ are comparable in $T'$. 
\end{proof}
We obtain the following corollary.
\begin{corollary}\label{g-perm lattice cong}
Let $G$ be a right-filled graph with vertex set $[n]$ and let $T\in L_G$.
Then the equivalence relation $\Theta_G$ induced by the fibers of $\Psi_G$ satisfies:
\begin{enumerate}
\item  The $\Theta_G$-class $\Psi_G^{-1}(T)$ has a smallest element in the weak order, namely the $G$-permutation $v$ in $\Psi_G^{-1}(T)$;
\item the map $\pidown^G:\Sfrak_n\to \Sfrak_n$ which sends $w$ to the unique $G$-permutation in its $\Theta_G$-class is order preserving.
\end{enumerate}
Thus, the subposet of the weak order on $\Sfrak_n$ induced by the set of $G$-permutations is a meet-semilattice quotient of $\Sfrak_n$.
In particular, the subposet induced by the set of $G$-permutations is a lattice.
\end{corollary}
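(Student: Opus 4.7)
The plan is to verify the two hypotheses of Proposition~\ref{meet cong} for the equivalence relation $\Theta_G$ by piecing together the preceding lemmas, and then invoke Proposition~\ref{meet cong} directly.

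First, for item~(1), I would fix a $G$-tree $T$ and show that $\sigma(T)$ is the minimum of the fiber $\Psi_G^{-1}(T)$ under the weak order. By Proposition~\ref{linear extensions}, $\Psi_G^{-1}(T)$ consists exactly of the linear extensions of $T$, so any $w \in \Psi_G^{-1}(T)$ tautologically satisfies $\inv(T) \subseteq \inv(w)$. By Proposition~\ref{prw 8.9}, $\sigma(T)$ itself lies in this fiber and is a $G$-permutation; moreover Lemma~\ref{lem: pidown} (in the right-filled case) gives $\inv(\sigma(T)) = \inv(T)$. Hence $\inv(\sigma(T)) \subseteq \inv(w)$ for every $w \in \Psi_G^{-1}(T)$, so $\sigma(T) \le w$ in the weak order. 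Uniqueness of the $G$-permutation in each fiber is already recorded in Lemma~\ref{lex smallest} / Proposition~\ref{prw 8.9}.

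Next, for item~(2), I would verify that $\pidown^G$ is order-preserving by chasing inversions. Suppose $w \le w'$ in the weak order, and set $T := \Psi_G(w)$ and $T' := \Psi_G(w')$. Proposition~\ref{inversions} (which is exactly where right-filledness is crucially used) yields $\inv(T) \subseteq \inv(T')$. Applying Lemma~\ref{lem: pidown} to each side gives
\[
\inv(\pidown^G(w)) = \inv(\sigma(T)) = \inv(T) \subseteq \inv(T') = \inv(\sigma(T')) = \inv(\pidown^G(w')),
\]
and hence $\pidown^G(w) \le \pidown^G(w')$ in the weak order.

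Finally, having established both conditions, Proposition~\ref{meet cong} immediately implies that $\Theta_G$ is a meet semilattice congruence and that the subposet induced on the minima of the $\Theta_G$-classes, which by Proposition~\ref{prw 8.9} is precisely the set of $G$-permutations, is a meet semilattice quotient of the weak order on $\Sfrak_n$. Any finite meet semilattice with a maximum element is a lattice, and the weak order (and hence its image) has a maximum, giving the final statement. I do not expect any serious obstacle here: all the nontrivial content is already contained in Proposition~\ref{inversions} and Lemma~\ref{lem: pidown}, so the argument is essentially a synthesis, with the only subtlety being to point out explicitly that Proposition~\ref{meet cong}'s two bulleted conditions correspond exactly to items~(1) and~(2) of the corollary.
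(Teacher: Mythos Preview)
Your proposal is correct and follows essentially the same approach as the paper: use Lemma~\ref{lem: pidown} to identify $\inv(\sigma(T))=\inv(T)$ so that the $G$-permutation is minimal in each fiber, then combine Proposition~\ref{inversions} with Lemma~\ref{lem: pidown} to show $\pidown^G$ is order-preserving, and finish by invoking Proposition~\ref{meet cong}. The only minor difference is that you spell out the citations to Propositions~\ref{linear extensions} and~\ref{prw 8.9} more explicitly, and you add the ``finite meet-semilattice with a top is a lattice'' remark, which is actually already absorbed into the statement and proof of Proposition~\ref{meet cong}.
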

\begin{proof}
Suppose that $w\in \Psi^{-1}(T)$.
Since $w$ is a linear extension of $T$, $\inv(T)\subseteq \inv(w')$.
Since $\inv(T)=\inv(v)$, we conclude that $v\le w$.
Thus, $v$ is the unique minimal element of the fiber $\Psi^{-1}(T)$.

Suppose that $w\le w'$ in the weak order on $\Sfrak_n$.
Then Proposition~\ref{inversions} says that $\inv(\Psi_G(w))\subseteq \inv(\Psi_G(w'))$.
For each $u\in \Sfrak_n$, $\inv(\pidown^G(u)) = \inv(\Psi_G(u))$, by Lemma~\ref{lem: pidown}.
Thus, $\inv(\pidown^G(w))\subseteq \inv((\pidown^G(w'))$.

The remaining statements of the corollary follow immediately from Proposition~\ref{meet cong}.
\end{proof}

We are now prepared to state the main theorem of this section.
\begin{theorem}\label{inversion order}
Suppose that $G$ is right-filled and $T$ and $T'$ belong to $L_G$.
Then:
\begin{enumerate}
\item $T\le T'$ in $L_G$ if and only if $\inv(T)\subseteq \inv(T')$.
\item The poset of maximal tubings $L_G$ is isomorphic the subposet of the weak order induced by the set of $G$-permutations in $\Sfrak_n$.
In particular, $L_G$ is a lattice.
\item $\Psi_G: \Sfrak_n\to L_G$ is meet semilattice map.
That is, for all $w, w'\in \Sfrak_n$ we have \[\Psi_G(w\meet w') = \Psi_G(w)\meet \Psi_G(w').\]
\end{enumerate}
\end{theorem}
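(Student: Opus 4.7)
The plan is to prove (1), then deduce (2) and (3). The key auxiliary claim is: if $(i,k)$ is a descent of a $G$-tree $S$ in $L_G$, then by Lemma~\ref{lem: pidown} it is also a descent of $\sigma(S)$ at adjacent positions, and the permutation $w$ obtained from $\sigma(S)$ by swapping those adjacent entries $i$ and $k$ satisfies $\Psi_G(w) = S^\flat$, where $S^\flat$ is the $G$-tree covered by $S$ in $L_G$ via this descent as described in Proposition~\ref{cover relations}. I would prove this claim by induction on $n$: in the recursive construction of $\Psi_G$, the adjacent transposition is confined to the subword on a single connected component, and this local swap matches the tube manipulation in $S$.

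The forward direction of (1) then follows by cover analysis. For any cover $S \covers S^\flat$ in $L_G$ corresponding to a descent $(i,k)$, let $w$ be the swap of $\sigma(S)$ at those adjacent positions, so $w \covered \sigma(S)$ in the weak order. By the auxiliary claim, $\Psi_G(w) = S^\flat$, and Proposition~\ref{inversions} yields $\inv(S^\flat) = \inv(\Psi_G(w)) \subseteq \inv(\Psi_G(\sigma(S))) = \inv(S)$. Transitivity delivers the forward direction of (1).

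For the reverse direction I would use strong induction on $|\inv(T')| - |\inv(T)|$. The base case $\inv(T) = \inv(T')$ forces $T = T'$: by Lemma~\ref{lem: pidown}, $\inv(\sigma(T)) = \inv(T) = \inv(T') = \inv(\sigma(T'))$, and a permutation is determined by its inversion set, so $\sigma(T) = \sigma(T')$ and hence $T = T'$. For the inductive step, $\inv(T) \subsetneq \inv(T')$ puts $\sigma(T) < \sigma(T')$ in the weak order, and Lemma~\ref{descents and inversions} produces a descent $(i,k)$ of $\sigma(T')$ with $(i,k) \notin \inv(T)$; by Lemma~\ref{lem: pidown} this is also a descent of $T'$. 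Let $T''$ be the flip of $T'$ at $(i,k)$, and let $v''$ be the permutation obtained by swapping $i$ and $k$ in $\sigma(T')$. The auxiliary claim gives $\Psi_G(v'') = T''$, and Lemma~\ref{descents and inversions} gives $\sigma(T) \le v''$; Proposition~\ref{inversions} then yields $\inv(T) \subseteq \inv(T'')$. Since $|\inv(T'')| \le |\inv(v'')| < |\inv(T')|$, the induction hypothesis gives $T \le T''$, and combining with the cover $T'' \covered T'$ gives $T \le T'$.

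Part (2) follows immediately: $\sigma$ is a bijection from $L_G$ onto the $G$-permutations by Proposition~\ref{prw 8.9}, and by Lemma~\ref{lem: pidown} together with (1), $T \le T'$ in $L_G$ iff $\sigma(T) \le \sigma(T')$ in the weak order, so $\sigma$ is a poset isomorphism. The subposet of $G$-permutations is a meet semilattice by Corollary~\ref{g-perm lattice cong}, and together with the unique maximum of $L_G$ (Proposition~\ref{prop:omega_properties} and Theorem~\ref{thm:NRC}), this makes $L_G$ a lattice. Part (3) follows from the factorization $\Psi_G = \sigma^{-1} \circ \pidown^G$, which holds because $\sigma(\Psi_G(w))$ is the unique $G$-permutation in the fiber of $\Psi_G(w)$ by Corollary~\ref{g-perm lattice cong}: the map $\pidown^G$ preserves meets as a meet-semilattice quotient map, and $\sigma^{-1}$ preserves meets as a poset isomorphism. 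The hardest step in this plan is verifying the auxiliary claim $\Psi_G(w) = S^\flat$, which requires careful matching of the recursive construction of $\Psi_G$ with the tube manipulation described in Proposition~\ref{cover relations}.
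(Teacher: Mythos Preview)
Your proposal is correct, and the overall architecture---prove (1) by induction on $|\inv(T')|-|\inv(T)|$ after locating a descent of $T'$ not in $\inv(T)$, then derive (2) and (3) via the section $\sigma$ and Corollary~\ref{g-perm lattice cong}---matches the paper. The genuine difference is in how you establish $\inv(T)\subseteq\inv(T'')$ after flipping at that descent. The paper argues this by a direct combinatorial analysis (its Lemma~\ref{descent helper}(2) for the forward direction and the ``$C=\emptyset$'' path argument in Lemma~\ref{lem: inversion covers} for the reverse), explicitly identifying the inversions lost in the flip and checking none of them lies in $\inv(T)$. You instead prove the auxiliary claim that swapping the adjacent entries $k,i$ in $\sigma(S)$ yields a linear extension of the flipped tree $S^\flat$; this is a direct verification from Proposition~\ref{cover relations} (noting $k_\down$ is the rightmost child block of $i$ in $\sigma(S)$, so the swap produces $\sigma(C_1)\cdots\sigma(C_{r-1})\sigma(Y_1)\cdots\sigma(Y_t)\,i\,k$, which respects all relations in $S^\flat$). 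Once you have $\Psi_G(v'')=T''$, both directions of (1) follow from Proposition~\ref{inversions} alone, with no further tube-level bookkeeping. Your route is more conceptual and reuses Proposition~\ref{inversions} as a black box, at the cost of not exhibiting exactly which inversions disappear in a flip; the paper's route is more hands-on and yields that extra information, at the cost of the somewhat delicate right-filled path argument in Lemma~\ref{lem: inversion covers}. Your derivations of (2) and (3) are correct and more explicit than the paper, which leaves them implicit after proving (1).
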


\begin{lemma}\label{descent helper}
Suppose that $G$ is a right-filled graph with vertex set $[n]$, let $a\in [n]$, and let $T'\in L_G$.
Let the disjoint union $C_1\cup C_2\cup\cdots  \cup C_k$ of tubes denote the ideal $a_\down \setminus \{a\}$ in $T$, indexed so that each element of $C_i$ is less than each element in $C_j$ (as integers) whenever $i<j$.
\begin{enumerate}
\item If $(a,x)\in\inv(T)$, then $x$ belongs to~$C_k$.
\item If $(a,x)$ is a descent of $T'$, then swapping $a$ and $x$ we obtain $T$ which satisfies: $$\inv(T)\subseteq \inv(T').$$
\end{enumerate}
\end{lemma}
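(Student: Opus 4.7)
The plan is to prove the two parts in sequence. For Part~(1), a direct argument by contradiction using the right-filled property suffices; Part~(2) then follows by carefully tracking the tubing change from Proposition~\ref{cover relations}.

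For Part~(1), suppose toward a contradiction that $x \in C_i$ for some $i<k$. Because $a_\down$ (in $T'$) is a tube, the graph $G|_{a_\down}$ is connected, while the $C_j$ are the connected components of $G|_{a_\down \setminus \{a\}}$. Hence $a$ must be adjacent in $G$ to some vertex $c$ of $C_k$. By the indexing of the $C_j$, every element of $C_k$ exceeds every element of $C_i$, so $c > x$; combined with $a < x$ (from $(a,x)\in \inv(T')$), we get $a < x < c$. The right-filled property applied to the edge $\{a,c\}$ and the intermediate vertex $x$ forces $\{x,c\} \in E(G)$. But $x \in C_i$ and $c \in C_k$ lie in distinct components of $G|_{a_\down \setminus \{a\}}$, a contradiction.

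For Part~(2), observe first that $(a,x)$ is also an inversion of $T'$, so Part~(1) forces $x$ (being a child of $a$ in $T'$) to be the root of $C_k$. Applying Proposition~\ref{cover relations} with $i = a$ and $k = x$ describes $\chi(T)$ explicitly: remove $x_\down$ from $\chi(T')$ and insert $N := a_\down \setminus (\{x\} \cup \bigcup Y_{a_j})$, where the $Y_{a_j}$ are the subtrees of $x$ in $T'$ incompatible with $a$. A direct inspection of which tubes of $\chi(T)$ contain each vertex then shows that, in $T$, the principal down-set of $x$ equals the old $a_\down$; the principal down-set of $a$ equals $N$; and every other vertex has the same principal down-set as in $T'$.

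To verify $\inv(T) \subseteq \inv(T')$, take any $(p,q) \in \inv(T)$. If $p \notin \{a,x\}$, equality of down-sets immediately yields $(p,q) \in \inv(T')$. If $p = a$, then $q \in N \subseteq a_\down$ (in $T'$), so $(a,q) \in \inv(T')$. If $p = x$, then $q$ lies in the old $a_\down$ and $x < q$; since each $C_j$ with $j < k$ consists of integers strictly smaller than every element of $C_k$ (and $x$ itself lies in $C_k$), $q$ must lie in $C_k \subseteq x_\down$ (in $T'$), giving $(x,q) \in \inv(T')$. The main obstacle is the bookkeeping in Part~(2)---confirming the tube-to-element correspondence in $\chi(T)$ and correctly isolating the case $p = x$, which is precisely where Part~(1) and the right-filled hypothesis become essential; otherwise, $q$ could sit in some $C_j$ with $j < k$, a descendant of $a$ in $T'$ but not of $x$.
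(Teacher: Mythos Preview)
Your proof is correct and follows essentially the same approach as the paper's. The only cosmetic differences are that in Part~(1) you pick the auxiliary vertex $c$ from $C_k$ while the paper picks it from $C_{i+1}$ (either choice works via the right-filled property), and in the $p=x$ case of Part~(2) you argue directly from the integer ordering of the $C_j$'s whereas the paper re-invokes Part~(1) applied to the inversion $(a,q)$ of $T'$; these are the same observation phrased two ways.
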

\begin{proof}
By Proposition~\ref{prop: child relations}, the tubes $C_1,\ldots, C_k$ can be indexed as described in the statement of the lemma.
Suppose there exists $i<k$ and $x\in C_i$ such that $x>a$.
Let $y$ be any element of $C_{i+1}$ that is adjacent to $a$ in $G$.
Because $a<x<y$ and $G$ is right-filled, $x$ and $y$ are adjacent in $G$.
That is a contradiction.

Suppose that $(p,q)\in \inv(T)$.
Hence $p<q$ as integers and $q\in \{y\in [n]: y\le_{T} p\}$.
We must show that $q\in \{y\in [n]: y\le_{T'} p\}$.
If $p$ is not equal to $a$ or $x$ then the statement follows from the fact that $\{y\in [n]: y\le_{T'} p\}=\{y\in [n]: y\le_{T} p\}$.
If $p=a$ then Proposition~\ref{cover relations} implies that $\{y\in [n]: y\le_{T} a\}\subseteq\{y\in [n]: y\le_{T'} a\}$.
Thus $(p,q)\in \inv(T')$.
Assume that $p=x$, so that we have $a<x<q$, ordered as integers.
The first statement of the lemma implies that $q<_{T'} x$ (because $C_k = \{y\in [n]: y\le_{T'} x\}$).
Hence $(p,q)\in \inv(T')$.
\end{proof}

The next lemma is the $G$-tree analog to Lemma~\ref{descents and inversions} (which characterizes covering relations in the weak order on $\Sfrak_n$).

\begin{lemma}\label{lem: inversion covers}
Let $G$ be a right-filled graph with vertex set $[n]$.
Suppose that $T$ and $T'$ are in $L_G$ such that $\inv(T)\subsetneq \inv(T')$.
Then there exists $T''$ such that $T'\covers T''$ and $\inv(T)\subseteq \inv(T'')\subset \inv(T')$.
\end{lemma}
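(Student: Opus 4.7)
The plan is to reduce the statement to the weak order on $\Sfrak_n$ by applying Lemma~\ref{descents and inversions} through the section $\sigma$ of $\Psi_G$ from Proposition~\ref{prw 8.9}. By Lemma~\ref{lem: pidown}, the hypothesis $\inv(T)\subsetneq\inv(T')$ translates to $\sigma(T)<\sigma(T')$ in the weak order, so Lemma~\ref{descents and inversions} furnishes a permutation descent $(i,k)$ of $\sigma(T')$ with $(i,k)\notin\inv(T)$. I would first verify that this $(i,k)$ is also a \emph{tree} descent of $T'$. In $\sigma(T')$ the value $k$ occupies the position immediately before $i$, and since $\sigma(T')$ is a linear extension of $T'$, lex-minimality forces $i$ to be the parent of $k$ in $T'$: otherwise either $k$ and $i$ are incomparable in $T'$, in which case transposing their positions would produce a lex-smaller linear extension, or some intermediate vertex $j$ with $k<_{T'}j<_{T'}i$ would need to appear between them in every linear extension, contradicting their adjacency. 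Letting $T''$ denote the tree obtained from $T'$ by swapping the descent $(i,k)$ as in Proposition~\ref{cover relations}, Proposition~\ref{cor: covering relations} gives $T'\covers T''$ in $L_G$, and Lemma~\ref{descent helper}(2) yields the strict containment $\inv(T'')\subsetneq\inv(T')$.

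To establish the remaining containment $\inv(T)\subseteq\inv(T'')$, I would route through the lattice projection $\pidown^G$ of Corollary~\ref{g-perm lattice cong}. Let $v=(i,k)\cdot\sigma(T')$ be the permutation obtained from $\sigma(T')$ by transposing the adjacent entries $i$ and $k$; then $v$ is covered by $\sigma(T')$ in the weak order, and $\inv(v)=\inv(\sigma(T'))\setminus\{(i,k)\}$. Since $(i,k)\notin\inv(T)$ while $\inv(T)\subseteq\inv(T')$, one gets $\inv(\sigma(T))\subseteq\inv(v)$, i.e.\ $\sigma(T)\le v$ in the weak order. The critical subclaim is that $\Psi_G(v)=T''$, equivalently that $v$ is a linear extension of $T''$. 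Granting it, $\pidown^G(v)=\sigma(\Psi_G(v))=\sigma(T'')$, and applying the order-preserving map $\pidown^G$ to $\sigma(T)\le v$ yields $\sigma(T)=\pidown^G(\sigma(T))\le\pidown^G(v)=\sigma(T'')$, from which $\inv(T)\subseteq\inv(T'')$ follows.

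The hard part will be verifying the subclaim that $v$ is a linear extension of $T''$. Using the explicit description of $T''$ in Proposition~\ref{cover relations}, the only partial-order relations that change between $T'$ and $T''$ are the reversal of $k<_{T'}i$ to $i<_{T''}k$ together with the loss of some relations $c<_{T'}i$ for $c$ belonging to the tubes $Y_{a_j}$ whose union with $\{i\}$ fails to be a tube of $G$. I would carry out a short case analysis confirming that, after transposing the adjacent positions of $i$ and $k$ in $\sigma(T')$, every remaining tree-order relation of $T''$ is still respected by the positions in $v$. Once this subclaim is in place, the lemma follows by transferring $\sigma(T)\le v$ through $\pidown^G$ as above.
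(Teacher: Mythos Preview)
Your approach is correct and genuinely different from the paper's. Both proofs begin identically: use Lemma~\ref{lem: pidown} to pass to $\sigma(T)<\sigma(T')$, invoke Lemma~\ref{descents and inversions} to find a descent $(i,k)$ of $\sigma(T')$ outside $\inv(T)$, and observe (again via Lemma~\ref{lem: pidown}, which already gives $\des(T')=\des(\sigma(T'))$, so your lex-minimality argument is unnecessary) that $(i,k)$ is a tree descent of $T'$. The divergence is in showing $\inv(T)\subseteq\inv(T'')$. The paper computes $\inv(T'')$ explicitly as $\inv(T')\setminus(\{(i,k)\}\cup\{(i,b):b\in B\})$ and then argues directly, using the right-filled hypothesis, that no pair $(i,b)$ with $b\in B$ can lie in $\inv(T)$. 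You instead bypass this computation by noting $\sigma(T)\le v$ in weak order and pushing through the order-preserving map $\pidown^G$ from Corollary~\ref{g-perm lattice cong} (equivalently, applying Proposition~\ref{inversions} directly). This is cleaner conceptually, since the right-filled property has already been exploited in Proposition~\ref{inversions}; your route reuses that work rather than reproving a variant of it.

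Two small points about your sketch of the subclaim $\Psi_G(v)=T''$. First, your description of how the partial order changes is incomplete: besides the reversal $i<_{T''}k$ and the lost relations $c<_{T'}i$ for $c\in\bigcup Y_{a_j}$, there are also \emph{new} relations $c<_{T''}k$ for $c$ in the other child-tubes $X_j$ of $i$ (those with $j\neq l$). These are automatically respected by $v$, so this does not affect correctness. Second, the delicate case in your ``short case analysis'' is verifying that every $p\in i_\down^{T''}\setminus\{i\}$ sits before $i$'s new position in $v$, i.e.\ before $k$'s old position in $\sigma(T')$. For $p$ in a sibling tube $X_j$ of $k_\down^{T'}$ this requires knowing that $k_\down^{T'}$ is the \emph{last} child-tube of $i$ in the $\sigma$-ordering, which is exactly Lemma~\ref{descent helper}(1). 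You should cite it explicitly when you carry out the verification.
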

\begin{proof}
We claim that there exists some descent $(i,k)$ of $T'$ that is not an inversion~$T$.
The claim follows from Lemma~\ref{lem: pidown}.
Indeed, write $v$ for $\sigma(T)$ and $v'$ for $\sigma(T')$.
Lemma~\ref{lem: pidown} implies that $v'>v$ in the weak order.
By Lemma~\ref{descents and inversions}, there is some descent $(i,k)$ of $v'$ which is not an inversion of $v$.
Since $\inv(v)=\inv(T)$, $\inv(v')=\inv(T')$, and $\des(v')=\des(T')$ the claim follows.
Let $T''\covered T'$ via this $(i,k)$ descent.

Next, we apply Proposition~\ref{cover relations} to the covering relation $T'\covers T''$.
As in the notation of that proposition, we interpret $x_\down$ as the principal order ideal in $T'$.
We will continue to do so for the remainder of the proof.
Write the ideal $\{x: x<_{T'} k\}$ as the disjoint union of tube $Y_1\cup \cdots \cup Y_t$.
Proposition~\ref{cover relations} says that, $\chi(T'')$ is equal to$$\chi(T')\setminus \{k_\down\} \cup \left\{ i_\down \setminus \left(\{k\} \cup \bigcup Y_{a_j}\right) \right\},$$
where the union $\bigcup Y_{a_j}$ is over all $Y_{a_j}\in \{Y_1,\ldots, Y_t\}$ such that $\{i\}\cup Y_{a_j}$ not a tube.
We write $B$ for the set $\{b\in\bigcup Y_{a_j}: (i,b)\in \inv(T')\}.$
It follows that $$\inv(T')\setminus (\{(i,k)\}\cup \{(i,b): b\in B\})= \inv(T'').$$

Let $C$ be the set of $c\in \bigcup Y_{a_j}$ such that $(i,c)\in \inv(T)$.
(As above, each $Y_{a_j}$ satisfies: $Y_{a_j}\cup\{i\}$ is not a tube; so in particular, no element $c\in C$ is adjacent to $i$.)
To complete the proof, we argue that $C$ is empty.
Suppose not, and choose $c\in C$ so that there is a path $i=q_0, q_1,\ldots, q_m=c$ with $q_p \le_T i$ and $q_p\not \in C$ for each $p\in [m-1]$.

Consider $q_{m-1}$.
On the one hand, if $q_{m-1}<i$ (as integers) then the \ref{right filled}-property implies that $i$ and $c$ are adjacent.
But no element in $C$ is adjacent to $i$.
So we have a contradiction.

On the other hand, if $q_{m-1}>i$ then $(i,q_{m-1})$ is an inversion of $T$.
We will argue that $q_{m-1}$ must belong to $C$, and conclude a contradiction.
Since $\inv(T)\subset \inv(T')$ have have $(i,q_{m-1})\in \inv(T')$.
Thus $q_{m-1}$ is in the ideal $\{x: x<_{T'} i\}$, which we write as a disjoint union of tubes $X_1\cup X_2\cup \cdots \cup X_r$.
Because $i\covers_{T'} k$ and $(i,k)$ is a descent in $T'$, we have $k_{\down}=X_r$ by Lemma~\ref{descent helper}.
Since $q_{m-1}$ is adjacent to $c$, both belong to the same tube in the disjoint union $X_1\cup\cdots \cup X_r$.
Because $c\in Y_{a_j}\subseteq k_\down$, we have $q_{m-1}$ is also in $k_\down$.
Similarly, because $c$ and $q_{m-1}$ are adjacent, they belong to the same tube $Y_{a_j}$ in the ideal $\{x: x<_{T'} k\} = Y_1\cup \cdots \cup Y_t$.
We conclude that $q_{m-1}\in C$, contradicting our choice of~$c$.
Therefore, $\inv(T)\subseteq \inv(T'') \subset \inv(T')$.

\end{proof}

\begin{proof}[Proof of Theorem~\ref{inversion order}]
Lemma~\ref{descent helper} implies that $T\le T'$ then $\inv(T)\subseteq \inv(T')$.
Suppose that $\inv(T)\subseteq \inv(T')$.
We argue that $T\le T'$ by induction on the size of $\inv(T')\setminus \inv(T)$.
Lemma~\ref{lem: inversion covers} says there exists a $G$-tree $T''$ such that $$\text{$T'\covers T''$ and $\inv(T)\subseteq \inv(T'')\subset \inv(T')$}.$$

Lemma~\ref{lem: pidown} implies that $\inv(T)=\inv(T')$ if and only if $T=T'$.
When $\inv(T)$ and $\inv(T')$ differ by one element, we must have that $T=T''$.
When $\inv(T')\setminus \inv(T)$ has $m>1$ elements, the inductive hypothesis implies that $T\le T''\covered T'$, and we are done.
\end{proof}

\subsection{Left-filled graphs}
In this section we prove the analog of Corollary~\ref{g-perm lattice cong} and Theorem~\ref{inversion order} for left-filled graphs.

\begin{corollary}\label{left filled cor}
Let $G$ be a left-filled graph with vertex set $[n]$.
Then $L_G$ is a lattice, and $\Psi_G: \Sfrak_n\to L_G$ is a join semilattice map.
That is, for all $w, w'\in \Sfrak_n$, we have 
\[\Psi_G(w\join w')= \Psi_G(w)\join \Psi_G(w').\]
\end{corollary}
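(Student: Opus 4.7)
The plan is to deduce the corollary from the right-filled case (Corollary~\ref{g-perm lattice cong} and Theorem~\ref{inversion order}) by dualizing via the graph duality from Remark~\ref{rmk: dualizing and left-filled}. Let $\phi:[n]\to[n]$ be the order-reversing bijection $\phi(i)=n+1-i$, so that $\phi$ induces a graph isomorphism $G\to G^*$ and $G$ is left-filled if and only if $G^*$ is right-filled. By Lemma~\ref{graph duality} we have $L_G^*\cong L_{G^*}$, and by Theorem~\ref{inversion order} applied to $G^*$ the poset $L_{G^*}$ is a lattice; hence $L_G$ is a lattice, proving the first assertion.

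For the join semilattice property, I would first show that $\phi$ induces an anti-automorphism $\widetilde\phi:\Sfrak_n\to\Sfrak_n$ of the weak order, sending $w=w_1\cdots w_n$ to $\phi(w_1)\cdots\phi(w_n)$. A direct inversion-set check shows $\inv(\widetilde\phi(w))$ is precisely the image under $\phi$ of the set of non-inversions of $w$, so $\widetilde\phi$ reverses the weak order and is a bijection. Similarly, let $\Phi:L_G\to L_{G^*}$ denote the anti-isomorphism from Lemma~\ref{graph duality}, acting on a maximal tubing $\Xcal$ by $\Xcal\mapsto\{\phi(X):X\in\Xcal\}$.

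Next I would verify the key commuting square
\[
\Psi_{G^*}\circ\widetilde\phi \;=\; \Phi\circ\Psi_G.
\]
This is immediate from the ``forward'' description of $\Psi_G(w)=\{X_1,\ldots,X_n\}$ with $X_j$ the connected component of $G|_{\{w_1,\ldots,w_j\}}$ containing $w_j$: applying $\phi$ to $\{w_1,\ldots,w_j\}$ and using that $\phi:G\to G^*$ is a graph isomorphism identifies $\phi(X_j)$ with the corresponding tube in $\Psi_{G^*}(\widetilde\phi(w))$.

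Finally I would translate the meet semilattice property of $\Psi_{G^*}$ through this diagram. Given $w,w'\in\Sfrak_n$, write $u=\widetilde\phi(w)$ and $u'=\widetilde\phi(w')$. Since $\widetilde\phi$ is an anti-automorphism, $u\meet u'=\widetilde\phi(w\join w')$, and Corollary~\ref{g-perm lattice cong} applied to $G^*$ yields
\[
\Psi_{G^*}(\widetilde\phi(w\join w')) \;=\; \Psi_{G^*}(u)\meet\Psi_{G^*}(u').
\]
Using the commuting square on both sides rewrites this as $\Phi(\Psi_G(w\join w'))=\Phi(\Psi_G(w))\meet\Phi(\Psi_G(w'))$, and since $\Phi$ is an anti-isomorphism of lattices it converts the right-hand meet into $\Phi(\Psi_G(w)\join\Psi_G(w'))$. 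Injectivity of $\Phi$ gives $\Psi_G(w\join w')=\Psi_G(w)\join\Psi_G(w')$.

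The only real obstacle is bookkeeping: I must confirm that the two natural ``dualities'' (the anti-automorphism of $\Sfrak_n$ by relabeling entries, and the bijection $\MTub(G)\to\MTub(G^*)$ induced by $\phi$) really intertwine $\Psi_G$ with $\Psi_{G^*}$, and that the order reversals line up so that meets on one side become joins on the other. Everything else reduces to invoking the right-filled results.
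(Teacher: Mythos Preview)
Your argument is correct and follows essentially the same route as the paper: reduce to the right-filled case via the relabeling $\phi(i)=n+1-i$, use Lemma~\ref{graph duality} to identify $L_{G^*}\cong L_G^*$, verify the commuting square $\Psi_{G^*}\circ\widetilde\phi=\Phi\circ\Psi_G$, and transport the meet-preserving property of $\Psi_{G^*}$ through the two anti-isomorphisms to obtain the join-preserving property of $\Psi_G$. One small correction: the identity $\Psi_{G^*}(u\meet u')=\Psi_{G^*}(u)\meet\Psi_{G^*}(u')$ is stated as Theorem~\ref{inversion order}(3), not Corollary~\ref{g-perm lattice cong} (the latter only asserts that the \emph{$G^*$-permutation subposet} is a meet-semilattice quotient, which becomes the statement you need once combined with the poset isomorphism in Theorem~\ref{inversion order}(2)).
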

\begin{proof}
Observe that $G^*$ is right-filled.
(Recall that $G^*$ is the graph we obtain by swapping labels $i$ and $n+1-i$ for all $i$.)
Lemma~\ref{graph duality} says that $L_{G^*} \cong L_G^*$ (where $L_G^*$ is the dual of $L_G$, as posets).
Since $L_{G^*}$ is a lattice (by Theorem~\ref{inversion order}), we have $L_G$ is a lattice.

Indeed, Lemma~\ref{graph duality} implies that the following diagram commutes.
\begin{center}
\begin{tikzpicture}[scale=0.75]
  \node (A) at (0,0) {$\Sfrak_n$}; 
  \node(B) at (3,0) {$\Sfrak_n^*$};
  \node (C) at (0,-2) {$L_G$}; 
  \node (D) at (3,-2) {$L_{G^*}$};
  \node at (-.5,-.85) {\scriptsize{$\Psi_G$}};
  \node at (3.5,-.85) {\scriptsize{$\Psi_{G^*}$}};
  \draw[<->] (A.east)--(B.west);
  \draw[->>] (A.south)--(C.north);
  \draw[<->] (C.east)--(D.west);
  \draw[->>] (B.south)--(D.north);
\end{tikzpicture}
\end{center}

The maps in the top and bottom rows of the diagram are essentially the same:
They both swap $i$ and $n+1-i$ for all $i\in [n]$.
Both maps are lattice anti-isomorphisms.
It follows from Theorem~\ref{inversion order} that $\Psi_G$ is a join semilattice map.
\end{proof}

\subsection{Filled graphs and lattice congruences}
We prove our main result (see Theorem~\ref{thm_main_lattice}).
\begin{theorem}\label{main}
Suppose that $G$ is a graph with vertex set $[n]$ and edge set $E$.
Then $\Psi_G: \Sfrak_n\to L_G$ is a lattice quotient map if and only if $G$ is filled.
\end{theorem}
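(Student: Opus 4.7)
The plan is to dispatch the forward direction by quoting the preceding theorems and to handle the converse by reducing to a small-case check.

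For the forward direction, assume $G$ is filled, so $G$ is both right-filled and left-filled. Then Theorem~\ref{inversion order}(3) implies that $\Psi_G$ is a meet semilattice map, and Corollary~\ref{left filled cor} implies that $\Psi_G$ is a join semilattice map. Since $\Psi_G$ is surjective by construction, the combination says $\Psi_G$ is a lattice quotient map.

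For the converse I would argue the contrapositive. Suppose $G$ is not filled: there exist $i<j<k$ with $\{i,k\}\in E$ and at least one of $\{i,j\}$ or $\{j,k\}$ not in $E$. By Remark~\ref{rmk: dualizing and left-filled}, which translates between left- and right-filled through the isomorphism of Lemma~\ref{graph duality}, I may assume that $G$ is not right-filled, i.e.\ that $\{j,k\}\notin E$. Let $V=\{i,j,k\}$ and $G'=\std(G|_V)$. Lemma~\ref{lem: interval lemma} says that if $\Psi_G$ is a lattice map then $\Psi_{G'}:\Sfrak_3\to L_{G'}$ is a lattice map as well, so it suffices to show that $\Psi_{G'}$ is \emph{not} a lattice map.

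The graph $G'$ lives on $[3]$, has edge $\{1,3\}$, and lacks edge $\{2,3\}$; it either equals the edge set $\{\{1,3\}\}$ or $\{\{1,3\},\{1,2\}\}$. In each of these two cases I would enumerate the (at most five) maximal tubings of $G'$, compute $\Psi_{G'}(w)$ for all six $w\in\Sfrak_3$ using the description of $\Psi_{G'}$ as ``largest tube in $\{w_1,\dots,w_j\}$ containing $w_j$,'' and then exhibit two permutations $x\equiv y$ lying in a common fiber such that $x\vee y$ lies in a strictly larger fiber in $L_{G'}$. This produces $\Psi_{G'}(x)\vee\Psi_{G'}(y)\neq\Psi_{G'}(x\vee y)$, violating condition \ref{join-preserving} for $\Theta_{\Psi_{G'}}$. (For example, in the second case the two permutations $213$ and $132$ lie in the same fiber, while $213\vee 132 = 321$ lies in a strictly larger fiber; the first case is similar and even smaller.) This shows $\Psi_{G'}$ is not a join semilattice map, hence not a lattice map, completing the contrapositive.

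The substantive step in the converse is the reduction via Lemma~\ref{lem: interval lemma} and the duality via Lemma~\ref{graph duality}; once we are on three vertices, the remaining verification is a bounded, mechanical enumeration. The main pitfall I anticipate is bookkeeping: making sure the standardization $\std(G|_V)$ really does land in one of the listed edge sets, and that the duality from Lemma~\ref{graph duality} correctly converts a missing $\{i,j\}$-edge into a missing $\{j',k'\}$-edge under the swap $\ell\mapsto n+1-\ell$ so that the ``right-filled'' reduction is indeed without loss of generality.
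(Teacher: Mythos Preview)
Your forward direction and the overall strategy of the converse (reduce to a three-vertex induced subgraph via Lemma~\ref{lem: interval lemma}) match the paper exactly. The duality reduction you invoke at the level of $G$ is also legitimate: the commutative diagram in the proof of Corollary~\ref{left filled cor} shows that $\Psi_G$ is a lattice map if and only if $\Psi_{G^*}$ is.

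However, your treatment of the ``second case'' ($G'$ with edge set $\{\{1,2\},\{1,3\}\}$) contains a genuine error. First, the permutations $213$ and $132$ do \emph{not} lie in the same fiber: one computes $\Psi_{G'}(213)=\{\{2\},\{1,2\},\{1,2,3\}\}$ and $\Psi_{G'}(132)=\{\{1\},\{1,3\},\{1,2,3\}\}$, which are distinct. More fundamentally, this $G'$ is \emph{left-filled}, so by Corollary~\ref{left filled cor} the map $\Psi_{G'}$ \emph{is} a join semilattice map; no join violation of the kind you describe can exist. (The only nontrivial fiber is $\{231,321\}$, and these two permutations are comparable.) What fails here is the meet: for instance $312\wedge 231=123$ in $\Sfrak_3$, while $\Psi_{G'}(312)\wedge\Psi_{G'}(231)\neq\Psi_{G'}(123)$ in $L_{G'}$.

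The paper avoids this trap by not performing the duality reduction on $G$ first. Instead it directly analyzes all three possible edge sets for $G'$ on $[3]$ and observes that the case $\{\{1,2\},\{1,3\}\}$ is dual to the case $\{\{2,3\},\{1,3\}\}$, so the \emph{meet} fails rather than the join. Your argument is easily repaired along these lines: either drop the duality reduction and handle the three cases as the paper does, or keep your reduction but in the second case exhibit a meet violation rather than a join violation.
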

\begin{proof}
If $G$ is filled, then $\Psi_G: \Sfrak_n \to L_G$ preserves the meet operation (by Theorem~\ref{inversion order}) and the join operation (by Corollary~\ref{left filled cor}).
Thus $\Psi_G:\Sfrak_n\to L_G$ is a lattice quotient map.

Assume that $G$ is not filled.
Thus there exists $i<j<k$ such that $\{i,k\}\in E$ but either $\{i,j\}$ or $\{j,k\}$ is not in $E$.
Let $G'$ denote the induced subgraph $G|_{\{i,j,k\}}$.
We check that in all possible cases $\Psi_{\std(G')}:\Sfrak_{3}\to L_{\std(G')}$ is not a lattice map.
By Lemma~\ref{lem: interval lemma}, $\Psi_G: \Sfrak_n\to L_G$ is not a lattice map.

In the first case, assume $\{i,k\}$ and $\{j,k\}$ are edges, but $\{i,j\}$ is not.
Observe that $\Psi_{\std(G')}$ does not preserve the join operation.
On the one hand, $213\join 132 = 321$ in the weak order on $\Sfrak_3$.
Thus,
 \[\Psi_{\std(G')} (213\,\join\, 132) = \Psi_{\std(G')}(321)= \substack{1\\2\\3},\]
 where we write $\Psi_{\std(G')}(321)$ as a $\std(G')$-tree (with the elements ordered vertically).
 On the other hand,
 \[ \Psi_{\std(G')}(213) \join \Psi_{\std(G')}(132)= \substack{3\\[.25em]1\,2} \join\, \substack{2\\3\\1} =\substack{3\\1\,2}\]
The reader can check the computation of the join in $L_{\std(G')}$ with Figure~\ref{fig:L_ex}.
The case in which $\{i,k\}$ and $\{i,j\}$ are edges (but $\{j,k\}$ is not) is proved dually.

Assume that $\{i,k\}$ is an edge and neither $\{j,k\}$ and nor $\{i,j\}$ are edges.
Then, for example, $\Psi_{\std(G')}$ does not preserve the join operation.
Indeed
\[\Psi_{\std(G')}(123)=\Psi_{\std(G')}(213) = \Psi_{\std(G')}(132)\]
is the smallest element in $L_{\std(G')}$.
But $\Psi_{\std(G')}(213\,\join \,132)$ is the biggest element in $L_{\std(G')}$.

We conclude that if $G$ is not filled, then $L_G$ is not a lattice map.
\end{proof}

\subsection{Generators of the congruence $\Theta_G$}
Let $\Theta_G$ be the equivalence relation on $\Sfrak_n$ induced by the fibers of $\Psi_G$.
In light of Theorem~\ref{main}, when $G$ is filled $\Theta_G$ is a lattice congruence on the weak order.
Recall from Section~\ref{subsec_weak_cong} that $\Con(\Sfrak_n)$ is a finite distributive lattice.
We identify each congruence $\Theta$ with the corresponding order ideal of join-irreducible congruences.
The \emph{generators} of a congruence are the maximal elements of this order ideal.
Recall that the join-irreducible congruences of the weak order are given by arcs. (This is Theorem~\ref{thm_weak_arcs}.)

Let $(x,y, +)$ denote the arc with $\epsilon_i = +$ for each $i\in [y-x]$.
Occasionally we call such an arc a \emph{positive arc}.
(Pictorially, this is an arc which does not pass to the right of any point between its endpoints.)
A \emph{minimal non-edge} is a pair $x<y$ such that for each $z\in\{x+1, x+2,\ldots,y-1\}$, $\{x,z\}$ and $\{z,y\}$ are edges in $G$, but $\{x,y\}$ is not an edge.

\begin{theorem}\label{generators}
Suppose that $G$ is a filled graph, and consider the lattice congruence $\Theta_G$ induced by $\Psi_G$.
Then $\Theta_G$ is generated by $$\{(x,y,+): \{x,y\}\text{ is a minimal non-edge of $G$}\}.$$
\end{theorem}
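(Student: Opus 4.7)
The plan is to verify two conditions on the set $S := \{(x, y, +) : \{x, y\} \text{ is a minimal non-edge of } G\}$, which together imply $\Theta_G = \bigvee_{\beta \in S} \Theta^{\beta}$: (a) each arc in $S$ is contracted by $\Theta_G$, i.e., $\Theta^{\beta} \le \Theta_G$ for all $\beta \in S$; and (b) every arc $\alpha$ contracted by $\Theta_G$ has some element of $S$ as a subarc, so that by Theorem~\ref{thm_forcing_arcs} we have $\Theta^{\alpha} \le \Theta^{\beta}$ for some $\beta \in S$. Combined with the join-irreducibility of the $\Theta^{\alpha}$, these two conditions identify $\Theta_G$ with the join of $\{\Theta^{\beta} : \beta \in S\}$.

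The key tool for both parts is the following criterion: $\Theta_G$ contracts an arc $\alpha$ if and only if $\Psi_G(u_\alpha) = \Psi_G(j_\alpha)$, where $u_\alpha \covered j_\alpha$ is the distinguished cover of Proposition~\ref{arc to perm}. Writing $\alpha = (i, k, \epsilon)$ and letting $L = \{l_1 < \cdots < l_p\}$ denote the ``left'' middle elements (those with $\epsilon_{l_j - i} = -$), both $u_\alpha$ and $j_\alpha$ share the prefix $1, 2, \ldots, i-1, l_1, \ldots, l_p$ and differ only by the transposition of the next two entries $i$ and $k$. Using the recursive construction of $\Psi_G$ (equivalently Proposition~\ref{linear extensions}), the proof will show that these two orderings produce the same $G$-tree if and only if $i$ and $k$ lie in distinct connected components of $G|_{[i-1] \cup L \cup \{i, k\}}$.

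For part (a), take a minimal non-edge $\{x, y\}$, so for the arc $(x, y, +)$ we have $L = \emptyset$ and the relevant subgraph is $G|_{[x] \cup \{y\}}$. We argue that $y$ has no neighbor in $[x]$: the pair $\{x, y\}$ is not an edge by hypothesis, and any edge $\{z, y\}$ with $z < x$ would, via the filled condition applied to $z < x < y$, force $\{x, y\}$ itself to be an edge, a contradiction. Thus $y$ is isolated from $[x]$ in $G|_{[x] \cup \{y\}}$, and the criterion yields $\Psi_G(u_{(x, y, +)}) = \Psi_G(j_{(x, y, +)})$ as required.

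For part (b), suppose $\alpha = (i, k, \epsilon)$ is contracted by $\Theta_G$ and set $l_0 := i$, $l_{p+1} := k$. Because $i$ and $k$ lie in different components of $G|_{[i-1] \cup L \cup \{i, k\}}$, the candidate path $l_0, l_1, \ldots, l_{p+1}$ cannot be realized in $G$, so some consecutive pair $\{l_j, l_{j+1}\}$ fails to be an edge. Starting from this non-edge and iteratively replacing any non-minimal non-edge $\{a, b\}$ by a strictly smaller non-edge $\{a, z\}$ or $\{z, b\}$ (for some $z$ with $a < z < b$) whenever one exists, we obtain a minimal non-edge $\{x, y\}$ with $l_j \le x < y \le l_{j+1}$. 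Since the open interval $(l_j, l_{j+1})$ contains no element of $L$, we have $\{x+1, \ldots, y-1\} \subseteq R$, which is exactly the condition for $(x, y, +)$ to be a subarc of $\alpha$. The main technical hurdle will be establishing the connectivity criterion of the second paragraph cleanly from the construction of $\Psi_G$; once that is in hand, parts (a) and (b) reduce to the short graph-theoretic arguments outlined above.
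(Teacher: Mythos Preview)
Your proposal is correct.  The connectivity criterion you defer is indeed straightforward: since $u_\alpha$ and $j_\alpha$ differ only by swapping the adjacent entries $i$ and $k$, Proposition~\ref{linear extensions} gives $\Psi_G(u_\alpha)=\Psi_G(j_\alpha)$ exactly when $i$ and $k$ are incomparable in the $G$-tree $\Psi_G(u_\alpha)$, and by the construction of $\Psi_G$ this happens precisely when $i$ and $k$ lie in different components of $G|_{[i-1]\cup L\cup\{i,k\}}$.  One small point you leave implicit: to conclude that $S$ is \emph{the} set of generators (in the paper's sense of maximal contracted arcs), you should also observe that $S$ is an antichain under the subarc order; this follows quickly from the filled condition, since a proper positive subarc $(x,y,+)$ of $(x',y',+)$ with $\{x',y'\}$ a minimal non-edge forces $\{x,y\}$ to be an edge.

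Your route differs from the paper's for part (b).  The paper packages the contraction criterion as ``$j_\alpha$ is contracted iff $j_\alpha$ is not a $G$-permutation'' and proves three supporting lemmas (edges are never contracted; positive arcs on non-edges are contracted; non-positive arcs on \emph{minimal} non-edges are uncontracted).  It then argues by contradiction: assuming a generator $(x,y,\epsilon)\notin S$, it uses uncontractedness of all proper subarcs to show $j_\alpha$ is a $G$-permutation, which takes a delicate case analysis.  Your approach is more direct: from the connectivity criterion you immediately extract a broken edge along the chain $i=l_0<l_1<\cdots<l_{p+1}=k$, descend to a minimal non-edge inside one of the gaps $(l_j,l_{j+1})$, and note that this gap contains only right-side elements, so the resulting positive arc is a subarc of $\alpha$.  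Your argument is shorter and avoids the case split; the paper's approach has the side benefit of isolating the three lemmas, which characterize contraction arc-by-arc and may be useful elsewhere.
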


Before we prove Theorem~\ref{generators} we gather some useful facts.
Throughout this section, we write $j$ for a join-irreducible permutation and $j_*$ for the unique element that it covers in the weak order.
Recall that we associate each $j$ with the join-irreducible congruence $\Theta^\alpha$ where $\alpha$ is the arc $\alpha(j_*,j)$.
Conversely, given an arc $\alpha=(x,y,\epsilon)$, the corresponding join irreducible permutation is
\[j_{\alpha}= 12\ldots (x-1) l_1\ldots l_p \,y\, x\, r_1\ldots r_q (y+1) (y+2) \ldots n\]
where $\{l_1<l_2<\ldots < l_p\}$ is the set $\{x':\epsilon_{x'-x} = -\}$ and $\{r_1<r_2<\ldots<r_q\}$ is the set $\{y': \epsilon_{y'-x} = +\}$.
(See Proposition~\ref{arc to perm}.)

We say a join-irreducible element $j$ is \emph{contracted by $\Theta_G$} if $j\equiv_G  j_*$.
The congruence $\Theta^\alpha$ is a generator for $\Theta_G$ if $j_\alpha$ is contracted by $\Theta_G$, and for each subarc $\beta$ of $\alpha$, the corresponding permutation $j_{\beta}$ is \textit{not} contracted.
The next result follows immediately from Corollary~\ref{g-perm lattice cong}.
\begin{proposition}
Let $G$ be a filled graph with vertex set $[n]$ and let $j$ be a join-irreducible permutation in $\Sfrak_n$.
Then $j$ is not contracted by $\Theta_G$ if and only if $j$ is $G$-permutation.
\end{proposition}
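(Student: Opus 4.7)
The plan is to leverage the characterization of the unique minimum of each $\Theta_G$-class established in Corollary~\ref{g-perm lattice cong}, together with the order-convexity of congruence classes, to settle both directions quickly. Essentially no new combinatorics on arcs or tubings is required: the proposition will reduce to two general facts about finite lattices.

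For the forward direction, suppose $j$ is a $G$-permutation. Then by Corollary~\ref{g-perm lattice cong}, $j$ is the unique minimum of its $\Theta_G$-class in the weak order. If $j$ were contracted, we would have $j_* \equiv_G j$, so $j_*$ would lie in the same $\Theta_G$-class as $j$. But $j_* < j$ in the weak order, contradicting the minimality of $j$ in its class.

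For the reverse direction, suppose $j$ is not a $G$-permutation. Let $m$ denote the unique $G$-permutation in the $\Theta_G$-class of $j$, again by Corollary~\ref{g-perm lattice cong}. Since $j \ne m$, we have $m < j$ strictly in the weak order. Here I would invoke the standard fact that in any finite lattice, a join-irreducible element $j$ has a unique element $j_*$ it covers, and every element strictly below $j$ lies below $j_*$ (otherwise two incomparable maximal elements of $\{a : a < j\}$ would witness $j$ as a nontrivial join). Hence $m \le j_*$. Because $G$ is filled, Theorem~\ref{main} tells us that $\Theta_G$ is a full lattice congruence, so each $\Theta_G$-class is an interval—in particular order-convex. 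Therefore the entire interval $[m, j]$ lies in the $\Theta_G$-class of $j$, and in particular $j_* \equiv_G j$, so $j$ is contracted.

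The only potentially nontrivial step is the order-convexity of $\Theta_G$-classes, but this is immediate: if $x \equiv_G y$ and $x \le z \le y$, then applying the order-preserving map $\pidown^G$ from Corollary~\ref{g-perm lattice cong} gives $\pidown^G(x) \le \pidown^G(z) \le \pidown^G(y) = \pidown^G(x)$, forcing $z$ into the same class. Thus the entire argument is a short assembly of Corollary~\ref{g-perm lattice cong}, Theorem~\ref{main}, and the basic lattice-theoretic observation about join-irreducibles.
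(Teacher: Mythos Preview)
Your proof is correct and follows essentially the same approach as the paper, which simply states that the proposition ``follows immediately from Corollary~\ref{g-perm lattice cong}.'' You have supplied precisely the details that justify this: the $G$-permutation is the minimum of its $\Theta_G$-class, and order-convexity of classes (via $\pidown^G$ being order-preserving) forces $j_*$ into the class of $j$ whenever $j$ is not that minimum. Your appeal to Theorem~\ref{main} is unnecessary, as you yourself observe in the final paragraph---Corollary~\ref{g-perm lattice cong} alone suffices---but it is not incorrect or circular.
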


\begin{lemma}\label{not contracted}
Let $G$ be a filled graph with vertex set $[n]$ and $1\le x<y\le n$.
Suppose that $\{x,y\}$ is an edge in $G$, with $x<y$.
Then no arc $(x,y,\epsilon)$ is contracted by $\Theta_G$.
\end{lemma}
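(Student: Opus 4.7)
The plan is to reduce the statement to a direct verification about a single explicit permutation. By the proposition immediately preceding the lemma, an arc $\alpha$ is contracted by $\Theta_G$ exactly when the associated join-irreducible $j_\alpha$ fails to be a $G$-permutation. So it suffices to show that for every sign vector $\epsilon \in \{+,-\}^{y-x-1}$ the join-irreducible $j_{(x,y,\epsilon)}$ is a $G$-permutation, under the assumption that $\{x,y\}$ is an edge of the filled graph $G$.

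First I will apply Proposition~\ref{arc to perm} to write out the join-irreducible explicitly as
\[j_\alpha = 1\,2\,\cdots\,(x-1)\,l_1\,\cdots\,l_p\,y\,x\,r_1\,\cdots\,r_q\,(y+1)\,\cdots\,n,\]
where $\{l_1<\cdots<l_p\}$ and $\{r_1<\cdots<r_q\}$ partition $\{x+1,\ldots,y-1\}$ according to the signs of $\epsilon$. Then I will check the $G$-permutation condition prefix by prefix. For every entry in the initial run $1,2,\ldots,x-1,l_1,\ldots,l_p,y$, the newly appended element is strictly larger than everything appearing before it, so it coincides with the maximum of the prefix and the condition is automatic. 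The critical position is when $x$ is appended: the prefix now contains $y$ (the maximum), and the last entry is $x$, so I must certify that $x$ and $y$ lie in the same connected component of $G$ restricted to the prefix; since $\{x,y\}$ is assumed to be an edge, this is immediate. At each subsequent step, $r_i$ is appended while $y$ remains the maximum, and the filled hypothesis applied to the edge $\{x,y\}$ together with the intermediate vertex $r_i$ (with $x<r_i<y$) forces $\{r_i,y\}$ to be an edge; hence $r_i$ is directly adjacent to the maximum. Finally, for each of $y+1,\ldots,n$, the newly appended element is itself the maximum, so again the condition holds trivially.

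I do not foresee any essential obstacle: the verification is really a bookkeeping check that uses the filled hypothesis in a very transparent way. The only subtlety worth flagging is that the filled property is what guarantees that every strictly intermediate vertex (whether encoded as $+$ or $-$ in $\epsilon$) is adjacent to both $x$ and $y$, so that once $\{x,y\}$ is an edge, connectivity to the max persists through every $r_i$ appended after $x$. Without the filled condition this persistence would fail, which is exactly why the conclusion requires $G$ to be filled and not merely right-filled.
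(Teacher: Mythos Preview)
Your proof is correct and follows essentially the same approach as the paper: both verify directly that the join-irreducible $j_\alpha$ is a $G$-permutation by checking the defining condition at each prefix. The only difference is cosmetic---the paper observes once at the outset that $G|_{[x,y]}$ is a complete graph (since $G$ is filled and $\{x,y\}$ is an edge), which handles all the ``middle'' positions in one stroke, whereas you split the middle into the $l_i$'s, then $x$, then the $r_i$'s and invoke only the specific edges $\{x,y\}$ and $\{r_i,y\}$ needed at each step.
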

\begin{proof}
Let $j$ be join-irreducible with unique descent $(x,y)$.
Observe $G|_{[x,y]}$ is a complete graph because $G$ is filled.

Let $r=y-x+1$.
Write $j$ in one-line notation as:
 $$j= j_1\ldots j_n=12\ldots (x-1) j_x\ldots j_{x+r} (y+1) (y+2)\ldots n.$$
We claim that $j_i$ and $\max\{j_1,\ldots j_i\}$ belong to the same connected component of $G|_{\{j_1,\ldots j_i\}}$. 
If  $i\le x$ or $i \ge x+r+1$ then $j_i =  \max\{j_1,\ldots j_i\}$.
So the claim follows.

Suppose that $x<i\le x+r$.
Then $\max\{j_1,\ldots j_i\} = \max\{j_x,\ldots, j_i\}$.
Since $\{j_x,\ldots, j_i\}$ is a subset of $[x,y]$ claim follows.
Therefore $j$ is a $G$-permutation.
\end{proof}

\begin{lemma}\label{+ is contracted}
Suppose that $(x,y)$ is not an edge in $G$.
The arc (x,y,+) is contracted by~$\Theta_G$.
\end{lemma}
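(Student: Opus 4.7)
The plan is to show directly that $\Psi_G(j_\alpha) = \Psi_G((j_\alpha)_*)$ for $\alpha = (x,y,+)$, where $j_\alpha$ is the join-irreducible permutation associated with $\alpha$. This equality is precisely the statement that $j_\alpha \equiv_{\Theta_G} (j_\alpha)_*$, which is equivalent to $\Theta^{(x,y,+)} \leq \Theta_G$, i.e., that $(x,y,+)$ is contracted.

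The crucial preliminary observation is: \emph{if $G$ is filled, $\{x,y\} \notin E$, and $z \in \{1,\ldots,x-1\}$, then $\{z,y\} \notin E$.} Indeed, if $\{z,y\}$ were an edge, then since $z < x < y$, the filled condition applied to $\{z,y\}$ would force $\{x,y\} \in E$, a contradiction. Consequently, whenever $A \subseteq \{1,\ldots,x\} \cup \{y\}$ contains $y$, the vertex $y$ is isolated in $G|_A$: its only possible neighbor from $\{1,\ldots,x\}$ would have to be $x$, but $\{x,y\}$ is not an edge.

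By Proposition~\ref{arc to perm}, applied to $\alpha=(x,y,+)$ (so the $l$-set is empty and the $r$-set is $\{x+1,\ldots,y-1\}$), we have
\[
j_\alpha = 1,\,2,\,\ldots,\,(x-1),\,y,\,x,\,(x+1),\,\ldots,\,(y-1),\,(y+1),\,\ldots,\,n,
\]
and $(j_\alpha)_*$ is obtained by swapping $y$ and $x$ at positions $x$ and $x+1$. Writing $w=j_\alpha$ and $w'=(j_\alpha)_*$ and using the description $\Psi_G(w)=\{X_1,\ldots,X_n\}$ where $X_i$ is the connected component of $G|_{\{w_1,\ldots,w_i\}}$ containing $w_i$, I would note that $w$ and $w'$ agree in all positions outside $\{x,x+1\}$ and have identical prefix sets $\{w_1,\ldots,w_i\}$ whenever $i<x$ or $i\geq x+1$. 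So the tubes produced at positions $i \notin \{x,x+1\}$ are identical. At the two remaining positions, the isolation observation gives
\[
X_x = \{y\},\qquad X_{x+1} = \text{CC of } x \text{ in } G|_{\{1,\ldots,x\}},
\]
\[
X'_x = \text{CC of } x \text{ in } G|_{\{1,\ldots,x\}},\qquad X'_{x+1} = \{y\},
\]
where the equalities for $X_{x+1}$ and $X'_{x+1}$ use that $y$ is isolated in $G|_{\{1,\ldots,x,y\}}$. Hence $\{X_x,X_{x+1}\} = \{X'_x,X'_{x+1}\}$ as sets, and therefore $\Psi_G(j_\alpha)=\Psi_G((j_\alpha)_*)$.

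The only step requiring any real argument is the isolation observation; everything else is bookkeeping about prefix sets and connected components. In particular, since $y$ is isolated whenever it appears in a prefix set contained in $\{1,\ldots,x\}\cup\{y\}$, the potential worry that $x$ and $y$ could end up in a common tube is ruled out immediately, and the two tubings must coincide.
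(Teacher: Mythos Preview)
Your proof is correct and rests on the same key observation as the paper's: since $G$ is filled and $\{x,y\}\notin E$, the vertex $y$ is isolated in $G|_{\{1,\ldots,x,y\}}$. The paper phrases the conclusion slightly differently, invoking the characterization ``$j$ is contracted by $\Theta_G$ iff $j$ is not a $G$-permutation'' and observing that $j_{(x,y,+)}$ fails the $G$-permutation condition at position $x+1$ (where $w_{x+1}=x$ but $\max=y$ lies in a different component); you instead compute $\Psi_G(j_\alpha)$ and $\Psi_G((j_\alpha)_*)$ directly and show they coincide as tubings. Your route is marginally more self-contained in that it avoids appealing to the $G$-permutation criterion, but the substance is the same.
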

\begin{proof}
Let $j_{x,y}$ denote the join-irreducible corresponding to the arc $(x,y,+)$.
We argue that $j_{x,y}$ is contracted by $\Theta_G$.
Since $x$ and $y$ are not adjacent and $G$ is filled, $y$ is not connected to any vertex $x'<x$.
Write $j_{x,y}$ as $$1\,2\,\ldots (x-1) \,y x \, (x+1)\ldots (y-1)\,(y+1)\,\ldots n.$$
Observe that $j_{x,y}$ is not a $G$-permutation because $y=\max\{1,2,\ldots, x,y\}$ is isolated in the subgraph $G|_{\{1,2,\ldots, x,y\}}$.
Thus $j$ is contracted by $\Theta_G$.
\end{proof}

\begin{lemma}\label{minimal nonedges}
Suppose that $\{x,y\}$ is a minimal non-edge of $G$.
Let $\alpha$ be the arc $(x,y,\epsilon)$.
If $\epsilon\ne +$ then $\alpha$ is not contracted by $\Theta_G$.
\end{lemma}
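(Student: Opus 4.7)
The plan is to use the characterization (from the proposition following Theorem~\ref{generators}) that a join-irreducible permutation $j$ is not contracted by $\Theta_G$ if and only if $j$ is a $G$-permutation. So it suffices to show that for $\alpha=(x,y,\epsilon)$ with $\epsilon\neq +$, the join-irreducible $j_\alpha$ is a $G$-permutation.

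Using Proposition~\ref{arc to perm}, write
\[j_\alpha = 1\,2\,\ldots (x-1)\,l_1\,l_2\ldots l_p\,y\,x\,r_1\ldots r_q\,(y+1)\ldots n,\]
where $l_1<\cdots<l_p$ and $r_1<\cdots<r_q$ are the indices in $\{x+1,\ldots,y-1\}$ with $\epsilon_{\cdot-x}=-$ and $\epsilon_{\cdot-x}=+$, respectively. The assumption $\epsilon\neq +$ is precisely $p\geq 1$.

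I would then check Definition~\ref{b-permutations} position by position. Every entry up through $y$ (inclusive) is a new running maximum (the initial segment $1,\ldots,x-1$ is increasing, then $l_1<\cdots<l_p$ are all $>x-1$, then $y>l_p$), so the $G$-permutation condition at these positions is trivial. The entries past $y$ also give increasing running maxima, so they too are trivial. The only nontrivial positions are the slot containing $x$ and the slots containing $r_1,\ldots,r_q$; in each case the running maximum is $y$. For the slot containing $r_k$, the prefix includes both $r_k$ and $y$, and $\{r_k,y\}$ is an edge of $G$ by the minimal-non-edge hypothesis (since $x<r_k<y$), so $r_k$ and $y$ lie in the same component. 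For the slot containing $x$, the prefix is $\{1,\ldots,x-1,l_1,\ldots,l_p,y,x\}$; because $p\geq 1$, the vertex $l_1$ lies in $\{x+1,\ldots,y-1\}$, and the minimal-non-edge hypothesis gives both $\{x,l_1\}\in E(G)$ and $\{l_1,y\}\in E(G)$, providing a path $x-l_1-y$ within the prefix.

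Thus $j_\alpha$ satisfies the $G$-permutation condition at every position, so $j_\alpha$ is a $G$-permutation and $\alpha$ is uncontracted by $\Theta_G$, completing the proof. The only real content is the observation that $\epsilon\neq +$ forces some $l_k$ to be present as a ``bridge'' in the prefix at the moment $x$ is inserted; everything else is routine bookkeeping, and no step appears to present a genuine obstacle.
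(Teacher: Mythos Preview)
Your proof is correct and follows essentially the same approach as the paper: both verify directly that $j_\alpha$ is a $G$-permutation by checking each prefix, with the key step being that $\epsilon\neq +$ guarantees some $l_1\in(x,y)$ is present in the prefix at the moment $x$ is inserted, and the minimal-non-edge hypothesis makes $l_1$ adjacent to both $x$ and $y$. The paper phrases the casework slightly differently (splitting on whether the running maximum among $\{j_x,\ldots,j_i\}$ equals $y$ or not, and noting that $G|_{[x,y]}$ has every edge except $\{x,y\}$), but the bridge vertex it uses is $j_x=l_1$, so the arguments are the same in substance.
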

\begin{proof}
Let $j$ be a join-irreducible corresponding to $\alpha$.
Let $r=x-y+1$.
Write $j$ in one-line notation as $j_1\ldots j_n$.
Observe that $$j=12\ldots (x-1) j_x\ldots j_{x+r} (y+1) (y+2)\ldots n.$$

We claim that $j_i$ and $\max\{j_1,\ldots j_i\}$ belong to the same connected component of $G|_{\{j_1,\ldots j_i\}}$. 
If  $i\le x$ or $i \ge x+r+1$ then $j_i =  \max\{j_1,\ldots j_i\}$.
So the claim follows.

Suppose that $x<i\le x+r$.
Then $\max\{j_1,\ldots j_i\} = \max\{j_x,\ldots, j_i\}$.
Because $G$ is filled, our hypotheses imply that we have each edge in $\binom{[x,y]}{2}$ except $(x,y)$.
If $\max\{j_x,\ldots, j_i\}$ is not equal to $y$ then $G|_{\{j_x,\ldots, j_i\}}$ is a complete graph.
So the claim follows.

Assume that $\max\{j_x,\ldots, j_i\}=y$.
Because $\alpha\ne (x,y,+)$ we have $x<j_x<y$.
Thus $(x,j_x)$ and $(j_x,y)$ are both edges in $G$.
Therefore $G|_{\{j_x,\ldots, j_i\}}$ is connected.
So the claim follows, and $j$ is $G$-permutation.

\end{proof}

\begin{figure}

  \centering
  \includegraphics{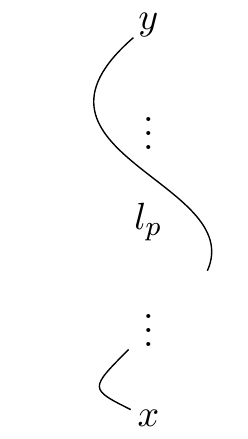}
  \caption{\label{fig:arc2}An arc considered in the proof of Theorem~\ref{generators}}
  
\end{figure}

\begin{proof}[Theorem~\ref{generators}]
Let $\Gcal$ denote the set $\{(x,y,+): (x,y)\text{ is a minimal non-edge}\}$.
Lemma~\ref{+ is contracted} implies that the join-irreducible elements in $\Gcal$ are among the generators of~$\Theta_G$.
To prove the theorem we argue that they are the only generators.

By way of contradiction, assume that $(x,y,\epsilon)$ is a generator of $\Theta_G$ and $(x,y,\epsilon) \notin \Gcal$.
Write $j$ for the corresponding join-irreducible.
By Lemma~\ref{not contracted}, $(x,y)$ is not an edge (because $j$ is contracted by $\Theta_G$).
If $\{x,y\}$ is a minimal non-edge then Lemma~\ref{minimal nonedges} says that $\epsilon=+$, and hence $(x,y,\epsilon)\in \Gcal$.
Thus we may assume that $\{x,y\}$ is not a minimal non-edge, and it has a subarc $\alpha'$ with end points $x'<y'$ which is a minimal non-edge.
Since no subarc of $\alpha$ is contracted, in particular $\alpha'$ is not contracted.
It follows that $\alpha'$ is not a positive arc.
Therefore, $\epsilon \ne +$.

To obtain a contradiction we argue that $j$ is a $G$-permutation.
Write $j$ as $$12\ldots (x-1) l_1\ldots l_p \,y\,x\, r_1 \ldots r_q (y+1) \ldots n$$ where $\{l_1<\ldots< l_p, r_1< \ldots< r_q\}= \{x+1, \ldots y-1\}$.
Therefore, we get $j_i = \max\{j_1,\dots j_i\}$ for $j_i \in \{1,2,\ldots l_p, (y+1), \ldots, n\}$.

We claim that $r_i$ is in the same connected component as $y$ in the subgraph induced by $\{1,2,\ldots,x,y,r_1,\ldots r_i\}$.
Let $j'$ be the join-irreducible whose corresponding arc is the subarc of $(x,y,\epsilon)$ with endpoints $x'=r_i<y$.
As above, write 
$$j'=1\,2\ldots (x'-1) \,l'_1\ldots l'_{p'} \,y\,x'\, r'_1 \ldots r'_{q'}\, (y+1) \ldots n.$$
Observe that $\{1,2,\ldots, (x'-1), l'_1,\ldots, l'_{p'} ,y,x'\} = \{1,2,\ldots,x,y,r_1,\ldots r_i\}$.
(Each of the entry $l_k$ remains left of the descent $(x' y')$ in $j'$ because $l_k<y$.
Each $r_k$ with $k<i$ is left of the descent $(x',y)$ because $r_k<r_i$.)

Because $j'$ is not contracted, it is a $G$-permutation.
Therefore $y$ and $r_i=x'$ belong to the same connected in the subgraph $G|_{\{1,2,\ldots,x,y,r_1,\ldots r_i\}}$.

Finally we consider $x$ and $y= \max\{1,2,\ldots, (x-1), l_1, \ldots, l_p, x,y\}$ in the subgraph $G|_{\{1,2,\ldots, (x-1), l_1, \ldots, l_p, x,y\}}$.
We will be done if we can show that $x$ and $y$ belong to the same connected component.
We do so by showing, first, that $x$ and $l_p$ belong to the same connected component in this subgraph, and second, that $l_p$ and $y$ belong the same connected component.
(Indeed we will see that $l_p$ and $y$ are adjacent in $G$.)
Observe that $l_p$ exists because $\epsilon \ne +$.

Consider the permutation $$j''=1\,2\ldots (x-1) l_1\ldots l_p \,x\, r_1 \ldots r_q \, y\, (y+1) \ldots n.$$
Observe that this permutation has a unique descent $(l_p,x)$, so it is join-irreducible.
Moreover, the arc corresponding to $j''$ is the subarc of $(x,y,\epsilon)$ with endpoints $x<l_p$.
Hence $j''$ is a $G$-permutation.
Thus $x$ and $l_p$ belong to the same connected component in the subgraph $G|_{\{1,2,\ldots, (x-1), l_1, \ldots, l_p, x\}}$.
So, $x$ and $l_p$ belong to the same connected component in~$G|_{\{1,2,\ldots, (x-1), l_1, \ldots, l_p, x, y\}}$.

Next consider the subarc of $(x,y,\epsilon)$ with endpoints $l_p <y$.
As none of the $l_i$ lie strictly between $l_p$ and $y$, this is a positive arc; see Figure~\ref{fig:arc2}.
Since it is not contracted, we must have $l_p$ and $y$ form an edge (by Lemma~\ref{+ is contracted}).

We conclude that $x$ and $y$ belong to the same connected component in the sugraph induced by $\{1,2,\ldots, (x-1), l_1, \ldots, l_p, x,y\}$.
Thus $j$ is a $G$-permutation.
By this contradiction, we obtain the desired result.
\end{proof}

\section{Algebras and coalgebras of tubings}\label{sec_hopf}

\subsection{The Malvenuto-Reutenauer algebra}\label{subsec_MR}

Fix a field $\Kbb$. For a set $X$, we let $\Kbb[X]$ denote the vector space over $\Kbb$ for which the set $X$ indexes a basis. For $X=\Sfrak_n$, we let $\Kbb[\Sfrak_n]$ have a distinguished basis $\{\Fbb_w:\ w\in\Sfrak_n\}$. The \emph{Malvenuto-Reutenauer} algebra is a Hopf algebra on the graded vector space
\[ \Kbb[\Sfrak_{\infty}]=\bigoplus_{n=0}^{\infty}\Kbb[\Sfrak_n]. \]
If $v=v_1\cdots v_n$ is a permutation of $[n]$ and $m\geq 0$, we define the \emph{shift by $m$} to be the word $v[m]=(v_1+m)(v_2+m)\cdots(v_n+m)$. For basis elements $\Fbb_u\in\Kbb[\Sfrak_m],\ \Fbb_v\in\Kbb[\Sfrak_n]$, the product $\Fbb_u\cdot\Fbb_v$ is the sum of the elements $\Fbb_w$ for which $w$ is a shuffle of $u$ and $v[m]$. For example,
$$\Fbb_{21}\cdot\Fbb_{12}=\Fbb_{2134}+\Fbb_{2314}+\Fbb_{2341}+\Fbb_{3214}+\Fbb_{3241}+\Fbb_{3421}.$$
The coproduct $\Delta(\Fbb_u)\in\Kbb[\Sfrak_{\infty}]\otimes\Kbb[\Sfrak_{\infty}]$ for $u\in\Sfrak_n$ is defined to be
$$\Delta(\Fbb_u)=\sum_{i=0}^n\Fbb_{\std(u_1\cdots u_i)}\otimes\Fbb_{\std(u_{i+1}\cdots u_n)},$$
where $\std(a_1\cdots a_i)$ for a sequence of distinct integers $a_1,\ldots,a_i$ is the element of $\Sfrak_i$ with the same relative order as $a_1\cdots a_i$. For example,
$$\Delta(\Fbb_{3241})=\iota\otimes \Fbb_{3241} + \Fbb_1\otimes \Fbb_{231} + \Fbb_{21}\otimes \Fbb_{21} + \Fbb_{213}\otimes \Fbb_1 + \Fbb_{3241}\otimes\iota.$$
Here, the element $\iota\in\Kbb[\Sfrak_0]$ is the multiplicative identity. The counit $\epsilon:\Kbb[\Sfrak_{\infty}]\ra\Kbb$ is the linear map with $\epsilon(\iota)=1$ and $\epsilon(\Fbb_v)=0$ for $v\in\Sfrak_n,\ n\geq 1$. These operations are compatible in a way that makes $\Kbb[\Sfrak_{\infty}]$ a (connected, graded) bialgebra. This automatically gives the Malvenuto-Reutenauer algebra the structure of a Hopf algebra; that is, it comes with a (unique) antipode $S$. We refer to \cite{grinberg.reiner:2014hopf} for further background on Hopf algebras from a combinatorial perspective.

The Malvenuto-Reutenauer algebra contains the algebra of noncommutative symmetric functions $\NCSym$ as a sub-Hopf algebra. Loday and Ronco \cite{loday.ronco:1998hopf} discovered a Hopf algebra $\Kbb[Y_{\infty}]=\bigoplus\Kbb[Y_n]$ on the vector space spanned by planar binary trees and a sequence of Hopf algebra embeddings
$$\NCSym\hookra\Kbb[Y_{\infty}]\hookra\Kbb[\Sfrak_{\infty}].$$

More generally, we may consider a family of nonempty sets $\{Z_0,Z_1,Z_2,\ldots\}$ with surjections $f_n:\Sfrak_n\thra Z_n$ for each $n\geq 0$. Letting $\{\Pbb_x:\ x\in Z_{\infty}\}$ be a basis for $\Kbb[Z_{\infty}]$, there is a vector space embedding $c:\Kbb[Z_{\infty}]\hookra\Kbb[\Sfrak_{\infty}]$ where
$$c(\Pbb_x)=\sum_{w\in f_n^{-1}(x)} \Fbb_w\ \hspace{3mm}\ \mbox{for }x\in Z_n.$$
We are especially interested in the case where $Z_n$ is the set of vertices of a generalized permutahedron of rank $n$ and $f_n:\Sfrak_n\thra Z_n$ is the canonical map. The main problem is to determine whether $c$ makes $\Kbb[Z_{\infty}]$ into an algebra or a coalgebra, i.e. whether $c(\Pbb_x)\cdot c(\Pbb_y)$ and $\Delta(c(\Pbb_x))$ lie in the image of $c$ for any $x,y\in Z_{\infty}$.

\subsection{Translational families of lattice congruences}\label{subsec_MR_alg}

As usual, we consider the symmetric group $\Sfrak_n$ as a poset under the weak order. When the map $f_n:\Sfrak_n\thra Z_n$ has the structure of a lattice quotient map, there is a generalized permutahedron known as a \emph{quotientope} with vertex set $Z_n$ associated to the map $f_n$ \cite{pilaud.santos:2017quotientopes}. In \cite{reading:2005lattice}, Reading proved that the embedding $c$ associated to a sequence of lattice quotient maps $\{f_n\}_{n\geq 0}$ is an algebra map (resp., coalgebra map) if the family $\{f_n\}$ is translational (resp., insertional). We recall the definition of a translational family in this section and of an insertional family in Section~\ref{subsec_insertional}.

Let $\Theta$ be a lattice congruence of the weak order on $\Sfrak_n$ for some $n$. 
Recall that $\Theta$ contracts a join-irreducible $j$ if $j\equiv j_*\mod \Theta$, where $j\covers j_*$.
Equivalently, for the corresponding arc $\alpha=\alpha(j_*,j)$, we have $\Theta^{\alpha}\leq\Theta$ in the lattice $\Con(L)$.
We abuse notation, and say that $\Theta$ \emph{contracts} the arc $\alpha$ if $\Theta$ contracts $j_{\alpha}$.
(Indeed, $\Theta$ contracts an arc $\alpha$ if and only if there exists a covering relation $u\lessdot w$ such that $\alpha(u,w)=\alpha$ and $u\equiv w\mod \Theta$.) 
In particular, the set of arcs contracted by $\Theta$ correspond to the set of join-irreducible elements of $\Con(L)$ less than or equal to $\Theta$ in $\Con(L)$. By Theorem~\ref{thm_forcing_arcs}, if $\alpha$ is contracted by $\Theta$ and $\alpha$ is a subarc of $\beta$, then $\beta$ is contracted by $\Theta$ as well.

Fix a sequence $\mathbf{\Theta}=\{\Theta_n\}_{n\geq 0}$ where $\Theta_n$ is a lattice congruence of the weak order on $\Sfrak_n$ for each $n\geq 0$. We let $Z_n=\Sfrak_n/\Theta_n$ be the set of equivalence classes modulo $\Theta_n$, and set $Z_{\infty}^{\mathbf{\Theta}}=\{Z_n\}_{n\geq 0}$.

As we consider lattice congruences of the weak order for varying $n$, we may say that $\alpha$ is an \emph{arc on $[n]$} to mean that it is an arc for $\Sfrak_n$. An arc $\alpha=(i,j,\epsilon)$ on $[n]$ is a \emph{translate} of an arc $\beta=(k,l,\epsilon^{\pr})$ on $[m]$ if $j-i=l-k$ and $\epsilon=\epsilon^{\pr}$. The family $\{\Theta_n\}_{n\geq 0}$ is called \emph{translational} if whenever $\Theta_n$ contracts an arc $\alpha$ and when $\beta$ is an arc on $[m]$ that is a translate of $\alpha$, the congruence $\Theta_m$ contracts $\beta$.

The following is equivalent to \cite[Theorem 1.2, Proposition 7.1]{reading:2005lattice}.

\begin{theorem}\label{thm_translational_subalg}
  If $\mathbf{\Theta}=\{\Theta_n\}_{n\geq 0}$ is a translational family, then the map
  $$c:\Kbb[Z_{\infty}^{\mathbf{\Theta}}]\ra\Kbb[\Sfrak_{\infty}]$$
  embeds $\Kbb[Z_{\infty}^{\mathbf{\Theta}}]$ as a subalgebra of $\Kbb[\Sfrak_{\infty}]$.
\end{theorem}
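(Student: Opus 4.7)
The plan is to show directly that the image of $c$ is closed under the shuffle product of $\Kbb[\Sfrak_{\infty}]$; injectivity of $c$ is automatic since distinct basis vectors $\Pbb_x,\Pbb_y$ are sent to sums over disjoint fibers $f_n^{-1}(x), f_n^{-1}(y)$. Fix $x \in Z_m$ and $y \in Z_n$. Expanding the definitions gives
\[ c(\Pbb_x) \cdot c(\Pbb_y) = \sum_{u \in f_m^{-1}(x)} \sum_{v \in f_n^{-1}(y)} \sum_{w} \Fbb_w, \]
where the innermost sum ranges over all shuffles $w$ of $u$ with $v[m]$. Since the pair $(u,v)$ is reconstructed from any such $w$ by reading off the subword on values $[1,m]$ and the standardized subword on $[m+1, m+n]$, each $\Fbb_w$ occurs with multiplicity at most $1$, and the product equals $\sum_{w \in W} \Fbb_w$ where $W = W_{x,y}$ is the union of these shuffles. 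The theorem then reduces to the claim that $W$ is a union of $\Theta_{m+n}$-equivalence classes, for in that case the product becomes $\sum_{z \in f_{m+n}(W)} c(\Pbb_z)$, which lies in the image of $c$.

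Equivalence classes of a finite lattice congruence are intervals, so any two elements of a common class are joined by a chain of covering pairs each of which is itself collapsed by the congruence. It therefore suffices to prove the following: if $w \in W$ and $w \lessdot w'$ is a covering pair of $\Sfrak_{m+n}$ with $w \equiv w' \mod \Theta_{m+n}$, then $w' \in W$. Such a cover corresponds to swapping adjacent entries $w_s = i$ and $w_{s+1} = k$ with $i < k$, and I would split into three cases according to how $\{i,k\}$ meets the threshold $m$.

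In the ``mixed'' case $i \in [1,m]$, $k \in [m+1,m+n]$, the swap moves an element of $[1,m]$ past an element of $[m+1,m+n]$ at adjacent positions. The subword of $w'$ on values $[1,m]$ coincides as an ordered sequence with that of $w$, and likewise for the $[m+1,m+n]$-subword; hence $w'$ is again a shuffle of the same pair $(u,v)$ that produced $w$, placing $w' \in W$. In the case $i, k \in [1,m]$, the entries $i$ and $k$ are adjacent within the $[1,m]$-subword of $w$ (no $[1,m]$-value can lie between positions $s$ and $s+1$), so the standardizations $u, u'$ of this subword in $w$ and $w'$ differ by a single adjacent swap and satisfy $u \lessdot u'$. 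Every intermediate value $j \in (i,k)$ lies in $[1,m]$ and appears on the same side of position $s$ in $w$ as in $u$, so the arcs satisfy $\alpha(u, u') = \alpha(w, w')$. Since $\Theta_{m+n}$ contracts $\alpha(w, w')$ by hypothesis, the translational property forces $\Theta_m$ to contract this same arc, whence $u \equiv u' \mod \Theta_m$ and $u' \in f_m^{-1}(x)$; then $w'$ is a shuffle of $u'$ with $v[m]$, so $w' \in W$. The case $i, k \in [m+1, m+n]$ is symmetric: after subtracting $m$ the induced arc on $[n]$ is a translate of $\alpha(w, w')$, and translationality delivers $v \equiv v' \mod \Theta_n$.

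The main obstacle is the bookkeeping in the two ``internal'' cases: one must verify that the $\epsilon$-string of $\alpha(w, w')$ transfers verbatim to $\alpha(u, u')$ (and to the translated arc on $[n]$ in the other case). This hinges on the fact that every value strictly between $i$ and $k$ lies on the same side of the threshold $m$ as $i$ and $k$ themselves, so such values are retained in the relevant subword and their positions relative to the swap are unchanged. Once that verification is in hand, the translational hypothesis closes the argument with no further work.
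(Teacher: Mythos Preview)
The paper does not prove this statement; it simply records it as equivalent to \cite[Theorem 1.2, Proposition 7.1]{reading:2005lattice} and moves on. So there is no in-paper argument to compare against, and your direct proof is a genuine addition rather than a variant.

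Your approach is sound and the case analysis is correct, but the reduction step is stated too narrowly. You assert that it suffices to show: if $w\in W$ and $w\lessdot w'$ is a contracted cover, then $w'\in W$. That only shows $W$ is closed \emph{upward} inside each $\Theta_{m+n}$-class. Since a congruence class is an interval $[a,b]$, knowing $w\in W$ and upward-closure only yields $b\in W$; to reach $a$ (and hence every class element) you also need the downward implication: if $w'\in W$ and $w\lessdot w'$ is contracted, then $w\in W$. Fortunately every case you wrote is completely symmetric in $w$ and $w'$---the mixed swap leaves both subwords unchanged regardless of direction, and in the internal cases the induced cover $u\lessdot u'$ (or $v\lessdot v'$) carries the same arc either way---so the fix is a one-line remark. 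Just state the reduction as ``$w\in W$ iff $w'\in W$ for any contracted cover $w\lessdot w'$'' and observe that your case analysis establishes both implications.

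One small clarification worth adding in the internal case $i,k\in[1,m]$: you use translationality for the translate with shift $0$ (the arc $(i,k,\epsilon)$ viewed on $[m]$ rather than $[m+n]$), which is indeed permitted by the definition but deserves to be said explicitly. The $[m+1,m+n]$ case uses the shift by $-m$, as you note.
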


We proved (Theorem~\ref{main}) that the map $\Psi_G:\Sfrak_n\ra L_G$ is a lattice map if and only if $G$ is a filled graph. We determine when a sequence of filled graphs determines a translational family of lattice congruences of the weak order. As before, we will write $(i,j,+)$ to represent the arc $(i,j,(+,\ldots,+))$. An arc of the form $(i,j,+)$ is called a \emph{positive arc}. For nonnegative integers $k,n$, let $H_{k,n}$ be the graph with vertex set $[n]$ such that $\{i,j\}$ is an edge whenever $1\leq i<j\leq n$ and $j-i\leq k$. Clearly, if $k\geq n-1$, then $H_{k,n}$ is the complete graph on $[n]$.

\begin{proposition}\label{prop_translational_char}
  A sequence of filled graphs $\{G_n\}_{n\geq 0}$ determines a translational family $\{\Theta_n\}_{n\geq 0}$ if and only if there exists some $k\in\{0,1,2,\ldots\}\cup\{+\infty\}$ such that $G_n=H_{k,n}$ for all $n$.
\end{proposition}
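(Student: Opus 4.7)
The plan is to use Theorem~\ref{generators} to translate the translational condition for $\{\Theta_n\}$ into a condition on the edge sets of the filled graphs $G_n$, and then to use the filled property to pin down the possible edge sets.

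For the forward direction, suppose $G_n = H_{k,n}$ for every $n$. First I would identify the minimal non-edges of $H_{k,n}$: a non-edge $\{i,j\}$ with $j-i > k$ is minimal iff every intermediate vertex $z \in \{i+1, \ldots, j-1\}$ satisfies both $z - i \leq k$ and $j - z \leq k$, which forces $j - i = k+1$. Hence the minimal non-edges of $H_{k,n}$ are exactly the pairs $\{i, i+k+1\}$ for $1 \le i \le n-k-1$. By Theorem~\ref{generators}, $\Theta_n$ is generated by the positive arcs $(i, i+k+1, +)$. By Theorem~\ref{thm_forcing_arcs}, an arc $\alpha = (x, y, \epsilon)$ is contracted by $\Theta_n$ iff it has a subarc of the form $(a, a+k+1, +)$, i.e., iff the sign vector $\epsilon$ contains $k$ consecutive $+$ entries. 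This characterization depends only on the length $y-x$ and the sign vector $\epsilon$, which are exactly the data preserved under translation, so $\{\Theta_n\}$ is translational. The degenerate cases $k=0$ (every $\Theta_n$ is total) and $k=+\infty$ (every $\Theta_n$ is trivial) fit into this framework.

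For the converse, suppose $\{\Theta_n\}$ is translational. By Lemmas~\ref{not contracted} and~\ref{+ is contracted}, the positive arc $(x, y, +)$ on $[n]$ is contracted by $\Theta_n$ iff $\{x, y\}$ is not an edge of $G_n$. The translational condition, applied to positive arcs, forces whether $\{x,y\}$ is an edge of $G_n$ to depend only on the difference $d = y - x$, consistently across all $n$ for which such a pair exists. Let $S \subseteq \mathbb{Z}_{>0}$ be the resulting set of edge-lengths, so that $\{x,y\}$ is an edge of $G_n$ iff $y-x \in S$.

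To finish, I would show $S = \{1, 2, \ldots, k\}$ for some $k \in \{0, 1, 2, \ldots\} \cup \{+\infty\}$. If $d \in S$, pick some $n$ and $x$ with $\{x, x+d\}$ an edge of $G_n$; then for any $0 < d' < d$, the filled property applied to $\{x, x+d\}$ with intermediate vertex $x+d'$ shows that $\{x, x+d'\}$ is also an edge of $G_n$, so $d' \in S$. Thus $S$ is downward closed, giving $S = \{1, \ldots, k\}$ and $G_n = H_{k,n}$ for all $n$. The only substantive steps are the identification of minimal non-edges in $H_{k,n}$ and the use of the filled property to obtain downward closure of $S$; both are routine once Theorem~\ref{generators} reduces the problem to tracking positive arcs.
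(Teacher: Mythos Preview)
Your proof is correct and follows essentially the same route as the paper's: both directions hinge on the correspondence between non-edges of $G_n$ and contracted positive arcs (Lemmas~\ref{not contracted} and~\ref{+ is contracted}, or equivalently Theorem~\ref{generators}), then use the filled condition to force the edge-length set $S$ to be downward closed. Your treatment of the direction $G_n=H_{k,n}\Rightarrow$ translational is slightly more explicit than the paper's---you characterize \emph{all} contracted arcs as those whose sign vector contains $k$ consecutive $+$'s, whereas the paper simply observes that the generating set is closed under translation---but the underlying idea is the same.
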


\begin{proof}
  Let $\{G_n\}_{n\geq 0}$ be a sequence of filled graphs, and let $\Theta_n$ be the lattice congruence induced by $\Sfrak_n\ra L_{G_n}$.

  Suppose the family $\{\Theta_n\}_{n\geq 0}$ is translational. If $\{i,j\}$ is not an edge of $G_n$, then $\Theta_n$ contracts the arc $(i,j,+)$. Being a translational family means that any arc of the form $(i^{\pr},j^{\pr},+)$ is contracted by $\Theta_m$ where $1\leq i^{\pr}<j^{\pr}\leq m$ and $j-i=j^{\pr}-i^{\pr}$. This in turn means that $\{i^{\pr},j^{\pr}\}$ is not an edge of $G_m$. Hence, there must exist some set $S\subseteq\Nbb$ such that for all $i,j,n$ such that $1\leq i<j\leq n$, we have $j-i\in S$ if and only if $\{i,j\}$ is an edge of $G_n$. As the graphs $G_n$ are filled, the set $S$ must either be of the form $S=[k]$ for some $k\in\{0,1,2,\ldots\}$ or $S=\Nbb$.

  Conversely, suppose there exists $k\in\{0,1,2,\ldots\}\cup\{\infty\}$ such that $G_n=H_{k,n}$ for all~$n$. Then $\Theta_n$ is the lattice congruence generated by $\{\Theta^{(i,j,+)}:\ j-i=k+1\}$. Since the generating set is closed under translation, the family $\{\Theta_n\}_{n\geq 0}$ must be translational.
\end{proof}

\begin{remark}
  The lattice congruence $\Theta_n$ in Proposition~\ref{prop_translational_char} resembles the \emph{metasylvester congruence} $\equiv_n^k$, which is the lattice congruence of the weak order on $\Sfrak_n$ generated by relations of the form
  $$UacV_1b_1\cdots V_kb_kW\equiv_n^k UcaV_1b_1\cdots V_kb_kW,$$
  where $a<b_i<c$ holds for all $i\in[k]$. In other words, two letters $a,c$ in a permutation can be swapped if there are $k$ letters on the right with values between $a$ and $c$. Via the dictionary between arcs and join-irreducible lattice congruences, the metasylvester congruence is the most refined congruence that contracts every arc of the form $(a,c,\epsilon)$ where the number of $+$ entries in $\epsilon$ is at least $k$. In contrast, the congruence $\Theta_n$ corresponding to the graph $H_{k,n}$ is generated by $\Theta^{(i,j,+)}$ where $j-i=k+1$, meaning that $\epsilon_1=\cdots=\epsilon_k=+$.

  It is straight-forward to check that the family $\{\equiv_n^k\}_{n\geq 1}$ is both translational and insertional; cf. Section~\ref{subsec_insertional}. Hence, the embedding
  $$c:\Kbb[\Sfrak_{\infty}/\equiv^k]\hookra\Kbb[\Sfrak_{\infty}]$$
 realizes $\Kbb[\Sfrak_{\infty}/\equiv^k]$ as both a subalgebra and a sub-coalgebra of the Malvenuto-Reutenauer Hopf algebra. It follows that $\Kbb[\Sfrak_{\infty}/\equiv^k]$ inherits the structure of a bialgebra. It is known that an antipode is inherited as well, giving it the structure of a sub-Hopf algebra. Pilaud \cite{pilaud:2018brick} interpreted this Hopf algebra in terms of the vertices of a family of \emph{brick polytopes}, which are a different class of generalized permutahedra from those we consider in this paper.
\end{remark}

\subsection{Tubing algebras}\label{subsec_hopf_algebra}

We begin this section by recalling the tubing algebra defined by Ronco \cite{ronco:2012tamari}.

For $I\subseteq\Nbb,\ n\geq 0$, let $I+n:=\{i+n\ |\ i\in I\}$. In particular, $[m]+n=\{n+1,n+2,\ldots,n+m\}$. If $\Xcal$ is a tubing, we let $\Xcal+n:=\{I+n\ |\ I\in\Xcal\}$.

Consider a family of graphs $\Gcal=\bigsqcup_{n\geq 0}\Gcal_n$ where $\Gcal_n$ is a finite collection of graphs with vertex set $[n]$. We allow $\Gcal_n$ to contain multiple copies of the same graph, and for the purposes of defining the tubing algebra, it will be important to be able to distinguish between multiple copies of the same graph. This could be done by defining $\Gcal$ as a sequence of graphs rather than as a set, but we prefer to describe $\Gcal$ as a set. Define an operation $\circ$ on $\Gcal$ to be \emph{admissible} if 
\begin{itemize}
\item $(G\circ G^{\pr})\circ G^{\pr\pr}=G\circ(G^{\pr}\circ G^{\pr\pr})$, and
\item for $G\in\Gcal_n,\ G^{\pr}\in\Gcal_m$:
  \begin{itemize}
  \item $G\circ G^{\pr}$ is in $\Gcal_{n+m}$,
  \item $G=(G\circ G^{\pr})|_{[n]}$, and
  \item $(G^{\pr}+n)=(G\circ G^{\pr})|_{[m]+n}$.
  \end{itemize}
\end{itemize}

If $\circ$ is admissible, we call the pair $(\Gcal,\circ)$ an \emph{admissible family}. We remark that our definition of admissibility is stronger than that of \cite[Definition 3.4]{ronco:2012tamari}, but this is the appropriate condition to define an associative algebra of maximal tubings; cf. \cite[Theorem 3.10]{ronco:2012tamari}.

Let $\MTub(\Gcal)$ be the set of all maximal tubings of these graphs:
$$\MTub(\Gcal)=\bigsqcup_{G\in\Gcal}\MTub(G)$$

We let $\Kbb[\Gcal]=\Kbb[\MTub(\Gcal)]$ be the $\Kbb$-vector space for which $\MTub(\Gcal)$ indexes a basis. We will consider the distinguished basis $\{\Pbb_{\Xcal}:\ \Xcal\in\MTub(\Gcal)\}$ for $\Kbb[\Gcal]$. The vector space $\Kbb[\Gcal]$ is graded so that an element $\Pbb_{\Xcal}$ is of degree $n$ if $\Xcal$ is a tubing of a graph $G$ with $n$ vertices. Since each $\Gcal_n$ is finite, each graded component of $\Kbb[\Gcal]$ is finite-dimensional.

\begin{definition}\label{def:tubing_mult}
  Let $G\in\Gcal_n$ and $G^{\pr}\in\Gcal_m$ be given. For maximal tubings $\Xcal\in\MTub(G)$ and $\Ycal\in\MTub(G^{\pr})$, define
  $$\Pbb_{\Xcal}\cdot\Pbb_{\Ycal}=\sum\Pbb_{\Zcal}$$
  where the sum is over all maximal tubings $\Zcal$ of $G\circ G^{\pr}$ such that $\Xcal=\Zcal|_{[n]}$ and $(\Ycal+n)=\Zcal|_{[m]+n}$.
\end{definition}

\begin{theorem}[Theorem 3.10 \cite{ronco:2012tamari}]\label{thm_admissible_associative}
  If $\Gcal$ is a family of graphs with an admissible operation $\circ$ as above, then the binary operation in Definition~\ref{def:tubing_mult} is associative.
\end{theorem}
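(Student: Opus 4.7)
The plan is to show that both associations $(\Pbb_\Xcal\cdot\Pbb_\Ycal)\cdot\Pbb_\Wcal$ and $\Pbb_\Xcal\cdot(\Pbb_\Ycal\cdot\Pbb_\Wcal)$ evaluate to the same ``triple shuffle'' sum. Concretely, for $G\in\Gcal_n$, $G^\pr\in\Gcal_m$, $G^{\pr\pr}\in\Gcal_p$ and maximal tubings $\Xcal,\Ycal,\Wcal$ of these graphs respectively, I aim to show both sides equal
\[ \sum_\Ucal \Pbb_\Ucal, \]
where $\Ucal$ ranges over maximal tubings of $G\circ G^\pr\circ G^{\pr\pr}\in\Gcal_{n+m+p}$ (which is well-defined by the associativity of $\circ$) subject to the three restriction conditions $\Ucal|_{[n]}=\Xcal$, $\Ucal|_{[m]+n}=\Ycal+n$, and $\Ucal|_{[p]+n+m}=\Wcal+n+m$. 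The admissibility axioms guarantee that the induced subgraphs on these three blocks of $[n+m+p]$ are, up to the relevant shifts, exactly $G$, $G^\pr$, and $G^{\pr\pr}$, so the three restrictions live in the right tubing sets.

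The key technical ingredient is a \emph{functoriality of restriction} lemma: if $\Ucal$ is a tubing of a graph $H$ and $T\subseteq S\subseteq V(H)$, then $(\Ucal|_S)|_T=\Ucal|_T$. Unpacking the definition $\Xcal|_I=\bigcup_{J\in\Xcal}\Comp(I\cap J)$, this reduces to the graph-theoretic identity that the connected components of $H|_{T\cap J}$ coincide with the connected components of $H|_{T\cap K}$ as $K$ runs over the connected components of $H|_{S\cap J}$, which follows from a routine maximality argument. With this lemma in hand, expand the left-hand side using Definition~\ref{def:tubing_mult}:
\[ (\Pbb_\Xcal\cdot\Pbb_\Ycal)\cdot\Pbb_\Wcal=\sum_\Zcal\sum_\Ucal \Pbb_\Ucal, \]
where $\Zcal\in\MTub(G\circ G^\pr)$ satisfies $\Zcal|_{[n]}=\Xcal$ and $\Zcal|_{[m]+n}=\Ycal+n$, while $\Ucal\in\MTub((G\circ G^\pr)\circ G^{\pr\pr})$ satisfies $\Ucal|_{[n+m]}=\Zcal$ and $\Ucal|_{[p]+n+m}=\Wcal+n+m$. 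For each $\Ucal$ appearing in this double sum, the inner tubing $\Zcal$ is forced to equal $\Ucal|_{[n+m]}$ (which is automatically maximal by Lemma~\ref{lem:tubing_restriction}), so the sum collapses to a single sum over $\Ucal$. Applying functoriality to rewrite $\Zcal|_{[n]}=(\Ucal|_{[n+m]})|_{[n]}=\Ucal|_{[n]}$ and likewise for $\Zcal|_{[m]+n}$, the LHS becomes exactly the triple shuffle sum. The parallel computation for $\Pbb_\Xcal\cdot(\Pbb_\Ycal\cdot\Pbb_\Wcal)$ factors through the intermediate $\Ucal|_{[m+p]+n}\in\MTub(G^\pr\circ G^{\pr\pr})$ and produces the same triple shuffle sum.

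The main obstacles are essentially bookkeeping: keeping careful track of the index shifts (the tubes of $\Ycal$ sit in $[m]$ but are shifted by $n$ inside $\Ucal$, whereas on the right parenthesization they are first grouped with $\Wcal+m$ and then shifted jointly by $n$), and proving the functoriality lemma for restriction from the definition. Both require some care but are not conceptually subtle, and once they are settled the equality of the two associations is purely formal via the common triple shuffle description.
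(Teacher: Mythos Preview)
Your proposal is correct and matches the paper's approach exactly. The paper does not give a full proof but only a one-sentence sketch immediately after the theorem statement, asserting that both $(\Pbb_\Xcal\Pbb_\Ycal)\Pbb_\Zcal$ and $\Pbb_\Xcal(\Pbb_\Ycal\Pbb_\Zcal)$ equal the same triple sum $\sum\Pbb_\Wcal$ over $\Wcal\in\MTub(G\circ G^\pr\circ G^{\pr\pr})$ with $\Wcal|_{[n]}=\Xcal$, $\Wcal|_{[m]+n}=\Ycal+n$, $\Wcal|_{[r]+m+n}=\Zcal+n+m$; you have supplied the details the paper omits, in particular the functoriality-of-restriction lemma $(\Ucal|_S)|_T=\Ucal|_T$ and the collapse of the double sum.
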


To prove Theorem~\ref{thm_admissible_associative}, one may show directly that if $G\in\Gcal_n,\ G^{\pr}\in\Gcal_m,$ and $G^{\pr\pr}\in\Gcal_r$ are graphs with maximal tubings $\Xcal,\ \Ycal,$ and $\Zcal$, respectively, then
$$(\Pbb_{\Xcal}\Pbb_{\Ycal})\Pbb_{\Zcal}=\sum\Pbb_{\Wcal}=\Pbb_{\Xcal}(\Pbb_{\Ycal}\Pbb_{\Zcal})$$
where the sum is taken over $\Wcal\in\MTub(G\circ G^{\pr}\circ G^{\pr\pr})$ such that
\[ \Xcal=\Wcal|_{[n]},\ \Ycal+n=\Wcal|_{[m]+n},\ \Zcal+(n+m)=\Wcal|_{[r]+m+n}. \]

\begin{example}\label{ex_MR_alg}
  Consider the family of complete graphs $\Gcal=\{K_n\}_{n\geq 0}$ where we define $K_n\circ K_m=K_{n+m}$. If $\Xcal$ is any maximal tubing of $K_n$, its corresponding $K_n$-tree $\tau(\Xcal)$ is a chain. Letting $\Xcal\in\MTub(K_n),\ \Ycal\in\MTub(K_m)$, the elements $\Pbb_{\Zcal}$ in the support of $\Pbb_{\Xcal}\cdot\Pbb_{\Ycal}$ are indexed by precisely those tubings of $K_{n+m}$ for which $\tau(\Zcal)$ is a linear extension of $\tau(\Xcal)\sqcup \tau(\Ycal)$. But this is the shuffle product of $\tau(\Xcal)$ and $\tau(\Ycal)$ when viewed as permutations. Hence, the natural map $\Kbb[\Sfrak_{\infty}]\ra\Kbb[\Gcal]$ is an isomorphism of algebras from the Malvenuto-Reutenauer algebra to the tubing algebra on the family of complete graphs. A similar result about the coalgebra structure of $\Kbb[\Sfrak_{\infty}]$ will be given in Example~\ref{ex_MR_coalg}.
\end{example}

\begin{remark}\label{rem_MR_decorated}
  In many instances, it is useful to consider a generalization of the Malvenuto-Reutenauer algebra, which is indexed by \emph{decorated permutations}; see \cite{novelli2010free} or \cite{pilaud2018hopf}. A decorated permutation is a pair $(w,G)$ consisting of a permutation $w$ and an element $G$ called the decoration. If $\Gcal=\sqcup_{n=0}^{\infty}\Gcal_n$ is a graded set with an admissible operation $\circ$, one may define an algebra with a basis
  \[ \bigsqcup_{n=0}^{\infty}\{\Fbb_{(w,G)}:\ w\in\Sfrak_n,\ G\in\Gcal_n\} \]
  in much the same way as the Tubing algebra, where $\Fbb_{(u,G)}\cdot\Fbb_{(v,G^{\pr})}$ is the sum of $\Fbb_{(w,G\circ G^{\pr})}$ for which $w$ is a shuffle of $u$ and a shift of $v$. Likewise, the coalgebra structure can be extended to the decorated setting.

  For this paper, we have chosen to focus on the undecorated setting, though we expect many of our results to hold for decorated permutations as well.
\end{remark}

\begin{figure}
  
  \centering
  \includegraphics[scale=.8]{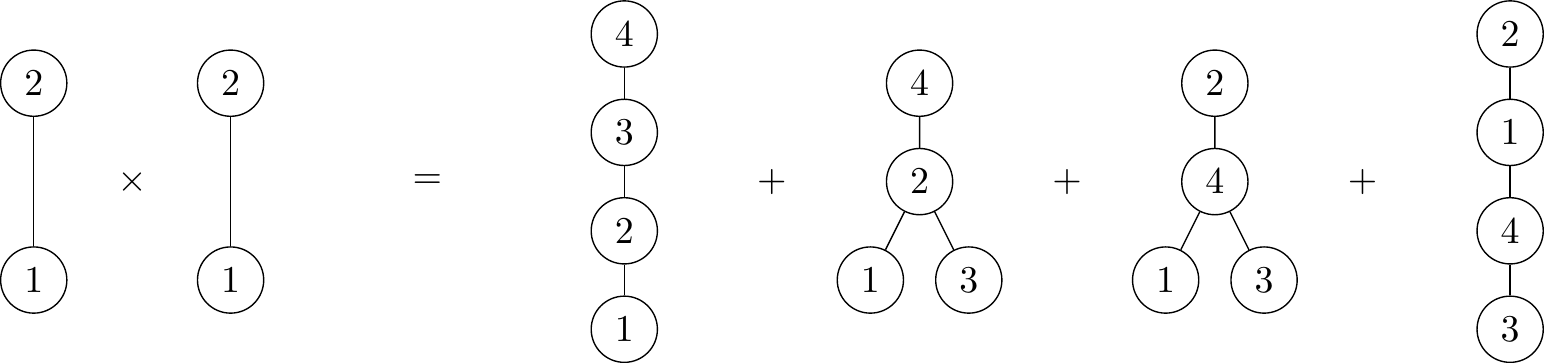}
  \caption{\label{fig:G2prod}This is the product of basis elements indexed by two $G$-trees (Example~\ref{ex_admissible}); for clarity, we remove the letter $\Pbb$ from this expression.}
  
\end{figure}

\begin{example}\label{ex_admissible}
  Let $G_n$ be the complete bipartite graph on $[n]$ where $i$ and $j$ are adjacent if $|i-j|$ is odd. It is straight-forward to check that $\{G_n\}_{n\geq 0}$ is an admissible family with $G_n\circ G_m=G_{n+m}$. The product of the basis elements indexed by the two $G$-trees for $G=G_2$ is shown in Figure~\ref{fig:G2prod}.

  Similarly, the sequences of path graphs, complete graphs, and edge-free graphs are admissible, so their tubings form the basis of an associative algebra. These algebras are the Loday-Ronco algebra, the Malvenuto-Reutenauer algebra, and the polynomial ring in one variable, respectively (c.f. \cite{forcey.springfield:2010geometric}).

  On the other hand, while the sequence of cycle graphs $C_n$ is not an admissible family, \cite{forcey.springfield:2010geometric} constructs a different binary operation to make the vector space $\Kbb[\Gcal]$ into an associative algebra. We leave the details of that construction to their paper.
\end{example}

For the remainder of this section, we make the assumption that $|\Gcal_n|=1$ for all $n\geq 0$. For clarity, we may refer to such a collection $\Gcal$ as a \emph{1-parameter family}. In this situation, there is at most one admissible operation $\circ$ defined by the fact that $G\circ G^{\pr}$ is in $\Gcal_{n+m}$ whenever $G\in\Gcal_n$ and $G^{\pr}\in\Gcal_m$. Hence, we simply say that the family $\Gcal$ is admissible if the operation $\circ$ is.

Our first main result in this section is a characterization of admissible families. For $A\subseteq\Nbb:=\{1,2,3,\ldots\}$, let $\Gcal(A)=\{G_n^A\}_{n\geq 0}$ be the family of graphs such that $V(G_n^A)=[n]$ and there is an edge between $i$ and $j$ if and only if $|j-i|\in A$.

\begin{proposition}\label{prop_admissible_characterization}
A 1-parameter family $\Gcal$ is admissible if and only if there exists $A\subseteq\Nbb$ such that $\Gcal=\Gcal(A)$.
\end{proposition}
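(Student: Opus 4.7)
The plan is to translate the admissibility conditions into a concrete combinatorial constraint on the edge sets of the graphs $G_n$, and then recognize that constraint as precisely saying that edges depend only on the differences of their endpoints.

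For the easy direction, given $A \subseteq \Nbb$ and $\Gcal = \Gcal(A)$, I would verify admissibility directly. The associativity condition $(G \circ G') \circ G'' = G \circ (G' \circ G'')$ is automatic, since the 1-parameter assumption forces $G_n \circ G_m = G_{n+m}$. For the restriction conditions, I would note that for any vertices $i < j$ in $[n]$ or in $\{n+1,\dots,n+m\}$, membership of $\{i,j\}$ in the edge set of either $G_{n+m}|_{\text{subset}}$ or the corresponding summand depends only on $j - i$, so both conditions reduce to the same statement $j - i \in A$.

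For the substantive direction, assume $\Gcal = \{G_n\}_{n \geq 0}$ is admissible. The main step is to define
\[ A := \{d \geq 1 : \{1, 1+d\} \in E(G_{d+1})\} \]
and prove that $G_n = G_n^A$ for every $n$. Fix $n$ and an edge candidate $\{i,j\}$ with $1 \leq i < j \leq n$. The first restriction condition applied with $n' = j$ and $m = n - j$ gives $G_n|_{[j]} = G_j$, so $\{i,j\} \in E(G_n)$ iff $\{i,j\} \in E(G_j)$. The second restriction condition applied with $n' = i-1$ and $m = j - i + 1$ (so $n' + m = j$) identifies $G_j|_{\{i,\dots,j\}}$ with $G_{j-i+1} + (i-1)$. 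Thus $\{i,j\} \in E(G_j)$ iff $\{1, j-i+1\} \in E(G_{j-i+1})$, which by definition of $A$ is equivalent to $j - i \in A$. Combining these equivalences gives $G_n = G_n^A$, as desired.

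I do not anticipate a genuine obstacle here: the proof is essentially a two-step unpacking of the restriction axioms, and the only subtlety is being careful about the direction of the shift in the second condition (the summand on the right is a shift of $G_m$, not $G_m$ itself). The associativity axiom plays no role in the 1-parameter case, which is why the hypothesis reduces to a purely local condition on edges.
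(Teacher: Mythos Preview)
Your proposal is correct and follows essentially the same approach as the paper: define $A$ via the edges $\{1,d+1\}$ of $G_{d+1}$, then use the two restriction conditions to reduce the question of whether $\{i,j\}\in E(G_n)$ first to $G_j$ and then to $G_{j-i+1}+(i-1)$. Your remark that associativity plays no role in the 1-parameter case is accurate and worth keeping.
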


\begin{proof}
  For a given $A\subseteq\Nbb$, it is clear that $\Gcal(A)$ is an admissible family. Indeed, it is clear from the definition that the restriction of $G_{n+m}^A$ to $[n]$ is equal to $G_n^A$, and the restriction of $G_{n+m}^A$ to $[m]+n$ is $G_m^A+n$, as desired.

  Now suppose $\Gcal=\{G_n\}_{n\geq 0}$ is an admissible family, and let $A=\{k\in\Nbb|\ (1,k+1)\in E(G_{k+1})\}$. We claim that $\Gcal=\Gcal(A)$. To this end, let $n\geq 1$ and $k\in A$ be given where $k\leq n-1$. Select $1\leq i<j\leq n$ such that $j-i=k$. We may decompose $G_n$ as $G_n=G_j\circ G_{n-j}$, so the edge $(i,j)$ is in $G_n$ if and only if it is in $G_j$. Furthermore, $G_j=G_{i-1}\circ G_{j-i+1}$, so $G_j$ has the edge $(i,j)$ exactly when $G_{j-i+1}+(i-1)$ does. By definition of $A$, this occurs exactly when $k\in A$. It follows that $G_n=G_n^A$.
\end{proof}

As a corollary, we may deduce the first part of Theorem~\ref{thm_main}.

\begin{proof}[Proof of Theorem~\ref{thm_main}(\ref{thm_main_1})]
  If $\Gcal$ is an admissible 1-parameter family of graphs, then $\Gcal=\Gcal(A)$ for some subset $A\subseteq\Nbb$. If each graph $G_n^A$ is filled, then for $i<j$, if $j\in A$ then $i\in A$. This is equivalent to the condition that there exists some $k\in\{0,1,2,\ldots\}\cup\{+\infty\}$ such that $A=\{i\in\Nbb:\ i\leq k\}$. But this means $G_n^A=H_{k,n}$ for all $n$. By Proposition~\ref{prop_translational_char}, this means that the sequence of lattice congruences $\mathbf{\Theta}=\{\Theta_n\}_{n\geq 0}$ corresponding to the filled graphs $\{G_n\}_{n\geq 0}$ form a translational family.
\end{proof}

Let $H,G$ be graphs on $[n]$ such that $E(H)\subseteq E(G)$. From the definition of the graph associahedron, the polytope $P_H$ is a Minkowski summand of $P_G$, so the normal fan of $P_H$ coarsens the normal fan of $P_G$. This in turn induces a surjective map $\Psi_H^G:\MTub(G)\ra\MTub(H)$.

For the remainder of the section, we fix subsets $A\subseteq B\subseteq\Nbb$. The graph $G_n^A$ is a subgraph of $G_n^B$ for all $n\geq 0$, which determines a surjective map $\MTub(G_n^B)\thra\MTub(G_n^A)$. For notational convenience, we write $\Psi_n$ in place of the map $\Psi_{G_n^A}^{G_n^B}$ for all $n\geq 0$.

\begin{lemma}\label{lem_psi_restriction}
  For $\Wcal\in\MTub(G_{n+m}^B)$:
  \begin{enumerate}
  \item\label{lem_psi_restriction_1} $\Psi_{n+m}(\Wcal)|_{[n]}=\Psi_n(\Wcal|_{[n]})$
  \item\label{lem_psi_restriction_2} $\std(\Psi_{n+m}(\Wcal)|_{[m]+n})=\Psi_m(\std(\Wcal|_{[m]+n}))$
  \end{enumerate}
\end{lemma}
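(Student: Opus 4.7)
The plan is to factor both identities through the surjection $\Sfrak_{n+m}\twoheadrightarrow\MTub(G_{n+m}^B)$. By Proposition~\ref{linear extensions}, choose any linear extension $w\in\Sfrak_{n+m}$ of $\tau(\Wcal)$, so that $\Psi_{G_{n+m}^B}(w)=\Wcal$. The argument then assembles two commutation identities, each of which takes a few lines to verify on its own.

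The first ingredient is that for every $k\geq 0$,
\[ \Psi_k\circ \Psi_{G_k^B}=\Psi_{G_k^A} \]
as maps $\Sfrak_k\to L_{G_k^A}$. This is immediate from Lemma~\ref{polytope_poset} together with the fact that $\Ncal(P_{G_k^A})$ coarsens $\Ncal(P_{G_k^B})$, since $P_{G_k^A}$ is a Minkowski summand of $P_{G_k^B}$: for any generic linear functional $f$, the vertex of $P_{G_k^A}$ maximizing $f$ is the $\Psi_k$-image of the vertex of $P_{G_k^B}$ maximizing $f$. Applying this to the functional $f$ built from $w$ in the proof of Lemma~\ref{polytope_poset} yields the identity.

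The second ingredient is a purely combinatorial compatibility of $\Psi_G$ with restriction of vertex sets:
\[ \std\bigl(\Psi_G(w)|_I\bigr)=\Psi_{\std(G|_I)}\bigl(\std(w|_I)\bigr) \]
for any graph $G$ on $[N]$, any $w\in\Sfrak_N$, and any $I\subseteq[N]$. Writing $w|_I=u_1\cdots u_r$ and letting $p_k$ be the position of $u_k$ in $w$, the tube of $\Psi_{G|_I}(w|_I)$ containing $u_k$ is the connected component of $G|_{\{u_1,\ldots,u_k\}}$ containing $u_k$, while the tube of $\Psi_G(w)|_I$ containing $u_k$ is the connected component of $G|_{I\cap X_{p_k}}$ containing $u_k$, where $X_{p_k}$ is the tube of $\Psi_G(w)$ topped by $u_k$. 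Using $\{u_1,\ldots,u_k\}=\{w_1,\ldots,w_{p_k}\}\cap I$ and the fact that a path in $G|_{\{w_1,\ldots,w_{p_k}\}\cap I}$ starting at $u_k$ must lie entirely in $X_{p_k}$, both descriptions reduce to the connected component of $G|_{\{w_1,\ldots,w_{p_k}\}\cap I}$ containing $u_k$. Since both tubings are maximal, comparing tube by tube gives equality.

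Combining the ingredients, part (\ref{lem_psi_restriction_1}) follows from the chain
\[ \Psi_{n+m}(\Wcal)|_{[n]}=\Psi_{G_{n+m}^A}(w)|_{[n]}=\Psi_{G_n^A}(w|_{[n]})=\Psi_n\bigl(\Psi_{G_n^B}(w|_{[n]})\bigr)=\Psi_n(\Wcal|_{[n]}), \]
where the odd-indexed equalities use the first ingredient (at $k=n+m$ and $k=n$) and the even-indexed equalities use the second (applied to $G_{n+m}^A$ and $G_{n+m}^B$ respectively, with $I=[n]$ and trivial standardization). Part (\ref{lem_psi_restriction_2}) follows by the same argument with $I=[m]+n$, now with the nontrivial standardization $i\mapsto i-n$ converting $G_{n+m}^A|_{[m]+n}$ and $G_{n+m}^B|_{[m]+n}$ into $G_m^A$ and $G_m^B$. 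The main obstacle is the second ingredient; once the comparison of connected components is established, the rest is formal diagram chasing.
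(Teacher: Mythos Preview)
Your proof is correct and follows essentially the same approach as the paper: both pick a linear extension $w$ of $\tau(\Wcal)$ and then verify that (i) the coarsening maps compose, $\Psi_k\circ\Psi_{G_k^B}=\Psi_{G_k^A}$, and (ii) $\Psi_G$ commutes with restriction to $I$. The only difference is packaging: the paper invokes Proposition~\ref{linear extensions} to phrase both ingredients in one breath (``$u$ is a linear extension of $\tau(\Wcal)$, hence of $\tau(\Zcal)$, hence $u|_{[n]}$ is a linear extension of both $\tau(\Zcal|_{[n]})$ and $\tau(\Wcal|_{[n]})$''), whereas you unpack the second ingredient into an explicit comparison of connected components, which makes your argument longer but more self-contained.
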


\begin{proof}
  The standardization map in (\ref{lem_psi_restriction_2}) has the effect of shifting the vertex set from $[m]+n$ to $[m]$. Besides this point, the two parts are symmetric, so we only prove the first.

  Let $\Wcal$ be a maximal tubing of $G_{n+m}^B$ and set $\Zcal=\Psi_{n+m}(\Wcal)$. We wish to show that $\Zcal|_{[n]}$ is equal to $\Psi_n(\Wcal|_{[n]})$. Since they are both maximal tubings of $G_n$, it suffices to show that their $G$-trees share a common linear extension.

  Let $u=u_1\cdots u_{n+m}$ be a permutation of $[n+m]$ that is a linear extension of $\tau(\Wcal)$. Then $u$ is also a linear extension of $\tau(\Zcal)$, so $u|_{[n]}$ is a linear extension of $\tau(\Zcal)|_{[n]}$. On the other hand, $u|_{[n]}$ is a linear extension of $\tau(\Wcal|_{[n]})$, so it is also a linear extension of $\Psi_n(\Wcal|_{[n]})$.
\end{proof}

Now we return to the embedding $c:\Kbb[Z_{\infty}]\hookra\Kbb[\Sfrak_{\infty}]$ from Section~\ref{subsec_MR}. The maps $\{\Psi_n\}_{n\geq 0}$ give rise to an embedding of vector spaces $c:\Kbb[\Gcal(A)]\hookra\Kbb[\Gcal(B)]$ where
$$c(\Pbb_{\Xcal})=\sum_{\Ycal\in\Psi_n^{-1}(\Xcal)}\Pbb_{\Ycal}$$
for $\Xcal\in G_n^A$.

\begin{theorem}\label{thm_graph_MR_subalg}
  The embedding
  $$c:\Kbb[\Gcal(A)]\hookra\Kbb[\Gcal(B)]$$
  is a map of algebras.
\end{theorem}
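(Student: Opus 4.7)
The strategy is to expand both $c(\Pbb_\Xcal \cdot \Pbb_{\Xcal'})$ and $c(\Pbb_\Xcal) \cdot c(\Pbb_{\Xcal'})$ as sums of basis elements $\Pbb_\Wcal$ indexed by maximal tubings of $G_{n+m}^B$, and to verify via Lemma~\ref{lem_psi_restriction} that the indexing sets coincide term-by-term. Fix $\Xcal \in \MTub(G_n^A)$ and $\Xcal' \in \MTub(G_m^A)$.

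First I will unpack the left-hand side. By the definition of multiplication in $\Kbb[\Gcal(A)]$,
\[
\Pbb_\Xcal \cdot \Pbb_{\Xcal'} = \sum_{\Zcal} \Pbb_\Zcal,
\]
where $\Zcal$ ranges over $\MTub(G_{n+m}^A)$ satisfying $\Zcal|_{[n]} = \Xcal$ and $\Zcal|_{[m]+n} = \Xcal' + n$. Applying $c$, each $\Pbb_\Zcal$ becomes $\sum_{\Wcal \in \Psi_{n+m}^{-1}(\Zcal)} \Pbb_\Wcal$, so the left-hand side is a sum of $\Pbb_\Wcal$ over those $\Wcal \in \MTub(G_{n+m}^B)$ such that $\Psi_{n+m}(\Wcal)$ is one of the valid $\Zcal$ above.

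Next I will unpack the right-hand side. Here
\[
c(\Pbb_\Xcal) \cdot c(\Pbb_{\Xcal'}) = \sum_{\Ycal \in \Psi_n^{-1}(\Xcal)} \sum_{\Ycal' \in \Psi_m^{-1}(\Xcal')} \Pbb_\Ycal \cdot \Pbb_{\Ycal'},
\]
and each inner product is the sum of $\Pbb_\Wcal$ over $\Wcal \in \MTub(G_{n+m}^B)$ with $\Wcal|_{[n]} = \Ycal$ and $\Wcal|_{[m]+n} = \Ycal' + n$. Thus the right-hand side is a sum of $\Pbb_\Wcal$ over those $\Wcal \in \MTub(G_{n+m}^B)$ for which $\Psi_n(\Wcal|_{[n]}) = \Xcal$ and $\Psi_m(\std(\Wcal|_{[m]+n})) = \Xcal'$.

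The two indexing sets are manifestly the same by Lemma~\ref{lem_psi_restriction}: setting $\Zcal = \Psi_{n+m}(\Wcal)$, part~(\ref{lem_psi_restriction_1}) gives $\Zcal|_{[n]} = \Psi_n(\Wcal|_{[n]})$, and part~(\ref{lem_psi_restriction_2}) gives $\std(\Zcal|_{[m]+n}) = \Psi_m(\std(\Wcal|_{[m]+n}))$; so the condition on the left-hand side (that there exists an appropriate $\Zcal$) is equivalent to the pair of conditions on the right-hand side. Moreover each $\Wcal$ appears with multiplicity one on each side, since $\Zcal$ is uniquely determined by $\Wcal$, and once $\Zcal$ is fixed the tubings $\Ycal = \Wcal|_{[n]}$ and $\Ycal'+n = \Wcal|_{[m]+n}$ are uniquely determined too. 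There is no serious obstacle here beyond bookkeeping: the entire content is in Lemma~\ref{lem_psi_restriction}, which ensures that restriction to $[n]$ and to $[m]+n$ commutes with the projections $\Psi_\bullet$, so the identification of index sets holds without any further combinatorial input.
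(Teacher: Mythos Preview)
Your proposal is correct and follows essentially the same approach as the paper's proof: expand both sides as sums of basis elements $\Pbb_\Wcal$ over $\MTub(G_{n+m}^B)$, use Lemma~\ref{lem_psi_restriction} to identify the two indexing conditions, and observe that each $\Wcal$ appears with multiplicity one on both sides. The only differences are cosmetic (variable names and the order in which the two sides are unpacked).
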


\begin{proof}
  Let $\Xcal\in\MTub(G_n^A)$ and $\Ycal\in\MTub(G_m^A)$ be given. Then $c(\Pbb_{\Xcal}\cdot\Pbb_{\Ycal})=\sum c(\Pbb_{\Zcal})$, where the sum is over $\Zcal\in\MTub(G_{n+m}^A)$ such that $\Xcal=\Zcal|_{[n]}$ and $\Ycal+n=\Zcal|_{[m]+n}$. We have
  $$\sum_{\Zcal} c(\Pbb_{\Zcal})=\sum_{\Zcal}\sum_{\Wcal\in\Psi_{n+m}^{-1}(\Zcal)}\Pbb_{\Wcal}$$
  On the other hand,
  \begin{align*}
  c(\Pbb_{\Xcal})\cdot c(\Pbb_{\Ycal}) &=(\sum_{\Wcal^{\pr}\in\Psi_n^{-1}(\Xcal)}\Pbb_{\Wcal^{\pr}})\cdot (\sum_{\Wcal^{\pr\pr}\in\Psi_m^{-1}(\Ycal)}\Pbb_{\Wcal^{\pr\pr}})\\
  &=\sum_{\substack{\Wcal^{\pr}\in\Psi_n^{-1}(\Xcal)\\\Wcal^{\pr\pr}\in\Psi_m^{-1}(\Ycal)}}\Pbb_{\Wcal^{\pr}}\cdot\Pbb_{\Wcal^{\pr\pr}}
  \end{align*}

  We show that $c(\Pbb_{\Xcal}\cdot \Pbb_{\Ycal})=c(\Pbb_{\Xcal})\cdot c(\Pbb_{\Ycal})$. To this end, fix $\Pbb_{\Zcal}$ in the expansion of $\Pbb_{\Xcal}\cdot \Pbb_{\Ycal}$, and let $\Wcal\in\Psi_{n+m}^{-1}(\Zcal)$. Set $\Wcal^{\pr}=\Wcal|_{[n]}$ and $\Wcal^{\pr\pr}+n=\Wcal_{[m]+n}$ so that $\Wcal^{\pr}\in\MTub(G_n^B)$ and $\Wcal^{\pr\pr}\in\MTub(G_m^B)$. Clearly, $\Pbb_{\Wcal}$ is in the expansion of $\Pbb_{\Wcal^{\pr}}\cdot \Pbb_{\Wcal^{\pr\pr}}$. But,
  \begin{align*}
    &\Psi_n(\Wcal^{\pr}) =\Psi_{n+m}(\Wcal)|_{[n]}=\Zcal|_{[n]}=\Xcal,\ \hspace{2mm}\mbox{and}\\
    &\Psi_m(\Wcal^{\pr\pr})=\std(\Psi_{n+m}(\Wcal)|_{[m]+n})=\std(\Zcal|_{[m]+n})=\Ycal,
  \end{align*}
  so $\Pbb_{\Wcal}$ is in the expansion of $c(\Pbb_{\Xcal})\cdot c(\Pbb_{\Ycal})$.
  Conversely, suppose $\Wcal^{\pr}\in\Psi_n^{-1}(\Xcal)$ and $\Wcal^{\pr\pr}\in\Psi_m^{-1}(\Ycal)$ are given, and let $\Pbb_{\Wcal}$ an element in the expansion of $\Pbb_{\Wcal^{\pr}}\cdot\Pbb_{\Wcal^{\pr\pr}}$. Set $\Zcal=\Psi_{n+m}(\Wcal)$. Then
  \begin{align*}
    &\Zcal|_{[n]}=\Psi_{n+m}(\Wcal)|_{[n]}=\Psi_n(\Wcal^{\pr})=\Xcal,\ \hspace{2mm}\mbox{and}\\
    &\std(\Zcal|_{[m]+n})=\std(\Psi_{n+m}(\Wcal)|_{[m]+n}=\Psi_m(\Wcal^{\pr\pr})=\Ycal,
  \end{align*}
  so $\Pbb_{\Zcal}$ is in the expansion of $\Pbb_{\Xcal}\cdot \Pbb_{\Ycal}$. Both $c(\Pbb_{\Xcal}\cdot \Pbb_{\Ycal})$ and $c(\Pbb_{\Xcal})\cdot c(\Pbb_{\Ycal})$ are multiplicity-free sums of basis elements with the same support, so they are equal.
\end{proof}

\begin{corollary}
  If $\Gcal$ is an admissible 1-parameter family, the tubing algebra $\Kbb[\Gcal]$ is a subalgebra of the Malvenuto-Reutenauer algebra.
\end{corollary}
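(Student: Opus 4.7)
The plan is to reduce the statement to a direct application of Theorem~\ref{thm_graph_MR_subalg} with $B=\Nbb$. First, by Proposition~\ref{prop_admissible_characterization}, any admissible 1-parameter family has the form $\Gcal=\Gcal(A)$ for some $A\subseteq\Nbb$. Taking $B:=\Nbb$, we have $A\subseteq B$, and the family $\Gcal(B)=\Gcal(\Nbb)$ consists of the complete graphs $K_n$ for each $n\geq 0$, since every pair $\{i,j\}$ with $|i-j|\in\Nbb$ is an edge.

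Next, I would invoke Example~\ref{ex_MR_alg}, which identifies $\Kbb[\Gcal(\Nbb)]$ with the Malvenuto-Reutenauer algebra $\Kbb[\Sfrak_{\infty}]$ as algebras, via the map sending a maximal tubing $\Xcal\in\MTub(K_n)$ to the unique linear extension of its associated $K_n$-tree $\tau(\Xcal)$ (which is a chain, hence corresponds bijectively to a permutation in $\Sfrak_n$). Call this isomorphism $\Phi:\Kbb[\Gcal(\Nbb)]\stackrel{\sim}{\longrightarrow}\Kbb[\Sfrak_{\infty}]$.

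Then I would apply Theorem~\ref{thm_graph_MR_subalg} to the inclusion $A\subseteq\Nbb$ to obtain an algebra embedding
$$c:\Kbb[\Gcal(A)]\hookrightarrow\Kbb[\Gcal(\Nbb)].$$
The composition $\Phi\circ c:\Kbb[\Gcal]\hookrightarrow\Kbb[\Sfrak_{\infty}]$ is then an embedding of algebras, realizing $\Kbb[\Gcal]$ as a subalgebra of the Malvenuto-Reutenauer algebra. Since both steps are already established, no real obstacle remains; the only subtlety is checking that the composition is indeed the natural embedding $c:\Kbb[Z_{\infty}]\hookra\Kbb[\Sfrak_{\infty}]$ from Section~\ref{subsec_MR} associated to the surjections $\Sfrak_n\thra\MTub(G_n^A)$, which follows immediately from unpacking the definitions: a permutation $w\in\Sfrak_n$ maps to $\Xcal\in\MTub(G_n^A)$ under this composition precisely when $w$ is a linear extension of $\tau(\Xcal)$, matching the description in Proposition~\ref{linear extensions}.
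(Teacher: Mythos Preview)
Your proposal is correct and is exactly the deduction the paper intends: the corollary is stated without proof immediately after Theorem~\ref{thm_graph_MR_subalg}, and the argument is precisely to take $B=\Nbb$ (so $\Gcal(B)=\{K_n\}$), invoke Example~\ref{ex_MR_alg} to identify $\Kbb[\Gcal(\Nbb)]$ with $\Kbb[\Sfrak_{\infty}]$, and apply the theorem. Your added remark that the composite agrees with the canonical embedding of Section~\ref{subsec_MR} via Proposition~\ref{linear extensions} is a helpful sanity check the paper leaves implicit.
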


\subsection{Tubing coalgebras}\label{subsec_tubing_coalgebra}

We next define a comultiplication on $\Kbb[\Gcal]$. We will assume throughout that $\Gcal$ is a 1-parameter family, though it should be possible to extend it to more general families of graphs by defining a ``selection'' operation as in \cite{pilaud2018hopf}.

Say that $\Gcal$ is \emph{restriction-compatible} if for any $G\in\Gcal$ and any subset of vertices $I\subseteq V(G)$,

\begin{itemize}
\item $\std(G|_I)$ is a subgraph of the graph $G^{\pr}\in\Gcal$ where $V(G^{\pr})=V(\std(G|_I))$, and
\item $\std(G/I)$ is a subgraph of the graph $G^{\pr\pr}\in\Gcal$ where $V(G^{\pr\pr})=V(\std(G/I))$.
\end{itemize}

We note that the second property actually implies the first since $\std(G|_I)$ is a subgraph of $\std(G/(V\setm I))$.

\begin{example}\label{ex_res_comp}
  Path graphs, complete graphs, and edge-free graphs are all restriction-compatible in addition to being admissible (Example~\ref{ex_admissible}). In these cases, the quotient graphs $G/I,\ I\subseteq V(G)$ are again path graphs, complete graphs, and edge-free graphs, respectively. Similarly, the sequence of cycle graphs $C_n$ whose vertices are labeled in cyclic order are also restriction-compatible since the quotient graphs are all cycles. On the other hand, the family of complete bipartite graphs in Example~\ref{ex_admissible} is not restriction-compatible.
\end{example}

We will not attempt to completely describe all restriction-compatible families of graphs, but we may describe those families that are both restriction-compatible and admissible.

\begin{proposition}
  If $\Gcal$ is a 1-parameter family of graphs that is both restriction-compatible and admissible, then $\Gcal$ must be either the set of path graphs, complete graphs, or edge-free graphs.
\end{proposition}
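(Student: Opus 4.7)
The plan is as follows. By Proposition~\ref{prop_admissible_characterization}, since $\Gcal$ is admissible we may write $\Gcal = \Gcal(A)$ for some $A \subseteq \Nbb$, where $\{i,j\}$ is an edge of $G_n^A$ iff $|i-j| \in A$. The three target families correspond to $A = \emptyset$ (edge-free), $A = \{1\}$ (paths), and $A = \Nbb$ (complete); in each of these three cases, one checks directly from the definition of contraction that $\std(G/I) \in \Gcal(A)$ for every $G \in \Gcal(A)$ and every $I \subseteq V(G)$ (edge-free contracts to edge-free, complete to complete, and a contraction of a path at a subset of internal vertices remains a path), so restriction-compatibility is automatic. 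It remains to show that for every other $A$, the family $\Gcal(A)$ fails restriction-compatibility, which I do by exhibiting an explicit $G \in \Gcal(A)$ and $I \subseteq V(G)$ whose contraction produces a forbidden edge after standardization.

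I split the contrapositive argument into two cases. First, if $1 \notin A$ but $A \ne \emptyset$, let $k := \min A \geq 2$. In $G := G_{k+1}^A$ the pair $\{1, k+1\}$ is already an edge (distance $k \in A$); contracting $I := \{2, \ldots, k\}$ preserves this edge, and standardization sends the vertex set $\{1, k+1\}$ to $[2]$, producing the edge $\{1, 2\}$. But $G_2^A$ has no edges since $1 \notin A$, so $\std(G/I)$ is not a subgraph of $G_2^A$. Second, if $1 \in A$ and $A \ne \{1\}, \Nbb$, set $m := \min(\Nbb \setminus A) \geq 2$, so $\{1, \ldots, m-1\} \subseteq A$ while $m \notin A$. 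For $m \geq 3$ take $G := G_{m+2}^A$ and $I := \{m\}$: the vertex $m$ is adjacent in $G$ to both $1$ (distance $m-1 \in A$) and $m+2$ (distance $2 \in A$, using $m \geq 3$), so by the definition of contraction the tube $\{m\}$ creates the edge $\{1, m+2\}$ in $G/I$. Standardization sends $m+2 \mapsto m+1$, so this becomes an edge of distance $m$ in $\std(G/I)$, violating $\std(G/I) \subseteq G_{m+1}^A$ since $m \notin A$. For the remaining subcase $m = 2$, since $A \ne \{1\}$ let $k := \min(A \setminus \{1\}) \geq 3$; take $G := G_{k+1}^A$ (the $(k+1)$-cycle formed by the distance-$1$ edges together with $\{1, k+1\}$) and $I := \{k+1\}$. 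The vertex $k+1$ is adjacent to both $1$ (distance $k \in A$) and $k$ (distance $1 \in A$), so $\{1, k\}$ is an edge of $G/I$ via the tube $\{k+1\}$; the standardization is the identity, and the distance $k-1$ lies outside $A$ because $A \cap [2, k-1] = \emptyset$ by the minimality of $k$.

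The main obstacle is the subcase $m = 2$: the clean ``length-two path'' contraction used when $m \geq 3$ requires simultaneously $m - 1 \in A$ and $2 \in A$, which collides when $m = 2$ and no intermediate distance is available in $A$. The fix is to instead exploit the smallest element $k \geq 3$ of $A \setminus \{1\}$, which exists as soon as $A \ne \{1\}$, build the long cycle $G_{k+1}^A$, and contract a single vertex to obtain a ``chord'' of forbidden distance $k - 1$. Once this substitute gadget is identified, the verification of the contradiction is parallel to the $m \geq 3$ case.
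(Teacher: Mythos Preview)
Your proof is correct. Both arguments begin by invoking Proposition~\ref{prop_admissible_characterization} to reduce to $\Gcal=\Gcal(A)$ and then use restriction/contraction to constrain $A$, but the organization is genuinely different. The paper proves the single implication ``if some $k\geq 2$ lies in $A$, then $A=\Nbb$'': for $j<k$ it restricts $G_{k+1}$ to $[j]\cup\{k+1\}$ to force $j\in A$, and for $j>k$ it contracts a carefully chosen subset $J$ of $G_{(j+1)k}$ (avoiding multiples of $k$) so that after standardization the surviving edge $\{k,n\}$ becomes $\{1,j+1\}$. From this one implication the three cases $A\in\{\emptyset,\{1\},\Nbb\}$ follow immediately. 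Your argument is instead a direct contrapositive case analysis on whether $1\in A$, and uses only single-vertex contractions (your Case~1 could equally be phrased as a restriction to $\{1,k+1\}$). The price you pay is the extra sub-subcase $m=2$, where the ``$m-1\in A$ and $2\in A$'' trick degenerates; your fix via the long cycle $G_{k+1}^A$ and contracting $\{k+1\}$ is clean and works. The paper's route is more uniform and avoids this split, but your argument is more elementary in that every contraction is of a single vertex and the required edges are verified by one adjacency check each.
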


\begin{proof}
  To be an admissible family, $\Gcal$ must be equal to $\Gcal(A)$ for some set $A\subseteq\Nbb$. We wish to show that restriction-compatibility forces either $A=\{1\},\ A=\Nbb$, or $A=\emptyset$. Restriction-compatibility of these cases was observed in Example~\ref{ex_res_comp}. To prove that these are the only examples, it is enough to show that if there exists $k\in A,\ k\geq 2$, then $A=\Nbb$.

  Suppose such $k$ exists, and let $j\in\Nbb$ with $j<k$. Let $H=(G_{k+1})|_{[j]\cup\{k+1\}}$. Since $\{1,k+1\}\in E(G_{k+1})$, the graph $\std(H)$ is a subgraph of $G_{j+1}$ containing the edge $\{1,j+1\}$. This implies $j\in A$.

  On the other hand, suppose $j\in\Nbb$ with $j>k$ and set $n=(j+1)k$. Let $I\subseteq[n]$ such that
  \begin{enumerate}
  \item $|I|=j-1$,
  \item $I$ does not contain any multiples of $k$, and
  \item the smallest element of $I$ is greater than $k$.
  \end{enumerate}

Such a collection exists since $k\geq 2$. Now let $J=[n]\setm (I\cup\{k,n\})$. Then the graph $\std(G_n/J)$ is a subgraph of $G_{j+1}$ containing the edge $\{1,j+1\}$. Hence, $j\in A$ holds.
\end{proof}

If $H$ is a subgraph of $G$ with the same vertex set $[n]$ and $\Xcal$ is in $\MTub(H)$, we let $c_H^G(\Xcal)=\sum\Ycal$ where the sum ranges over $\Ycal\in\MTub(G)$ such that $\Psi_H^G(\Ycal)=\Xcal$. Suppose $\Gcal$ is a restriction-compatible family. We define
$$\Delta_{\Gcal}=\Delta:\Kbb[\Gcal]\ra\Kbb[\Gcal]\otimes\Kbb[\Gcal]$$
as follows. If $\Xcal\in\MTub(G)$, let

$$\Delta(\Pbb_{\Xcal})=\sum c_{\std(G|_I)}^{G^{\pr}}(\Pbb_{\std(\Xcal|_I)})\otimes c_{\std(G/I)}^{G^{\pr\pr}}(\Pbb_{\std(\Xcal/I)}),$$

where the sum is over ideals $I$ of $\Xcal$, and $G^{\pr},G^{\pr\pr}\in\Gcal$ such that $|I|=|V(G^{\pr})|$ and $|V(G)\setm I|=|V(G^{\pr\pr})|$.

\begin{figure}
  
  \centering
  \includegraphics[scale=.75]{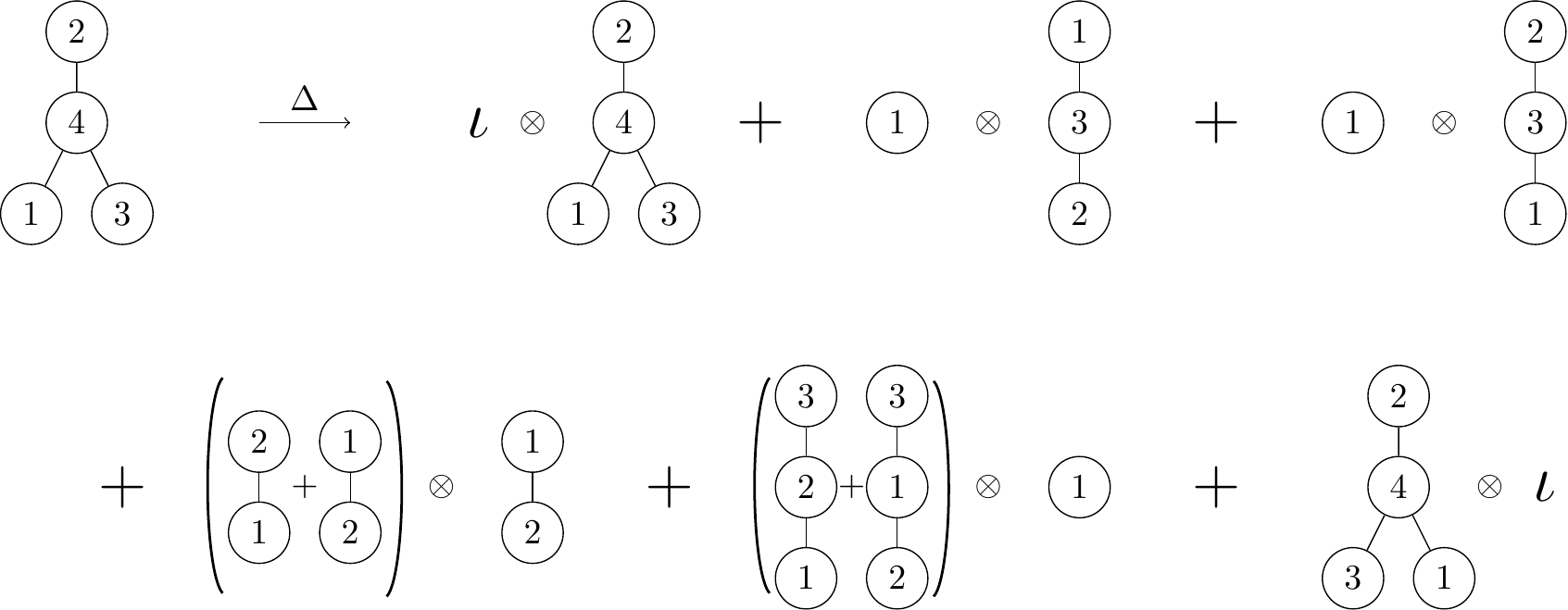}
  \caption{\label{fig:coprod}This is the comultiplication of a basis element indexed by a $G$-tree for the family of cycle graphs (Example~\ref{ex_cycle_coalg}); for clarity, we remove the letter $\Pbb$ from this expression.}
  
\end{figure}

\begin{example}\label{ex_MR_coalg}
  We again consider the case $\Gcal=\{K_n\}_{n\geq 0}$ from Example~\ref{ex_MR_alg}. Every induced subgraph $H$ of $K_n$ is a complete graph, as is the quotient $K_n/H$. Thus, for $\Xcal\in\MTub(K_n)$, the formula for $\Delta$ simplifies to
  $$\Delta(\Pbb_{\Xcal})=\sum\Pbb_{\std(\Xcal|_{I})}\otimes \Pbb_{\std(\Xcal/I)},$$
  where the sum ranges over the ideals of $\Xcal$. Since $\tau(\Xcal)$ is a chain $u_1<\cdots<u_n$, its order ideals are of the form $\{u_1,\ldots,u_i\}$ for $i=0,1,\ldots,n$. Under the bijection between $\MTub(K_n)$ and $\Sfrak_n$, this expression becomes
  $$\Delta(\Fbb_u)=\sum\Fbb_{\std(u_1\cdots u_i)}\otimes \Fbb_{\std(u_{i+1}\cdots u_n)}.$$
  Thus, $\Kbb[\Gcal]$ has the same coalgebra structure as $\Kbb[\Sfrak_{\infty}]$.
\end{example}

\begin{example}\label{ex_cycle_coalg}
  The set $\{C_n\}_{n\geq 0}$ of cyclically ordered cycle graphs is another restriction-compatible family. In Figure~\ref{fig:coprod} we show the comultiplication applied to a $C_4$-tree, or equivalently, a maximal tubing $\Xcal$ of $C_4$. The sum is split into six terms, one for each choice of ideal of $\Xcal$. We observe that for the two ideals $I$ such that $G|_I$ is not a cycle graph, the element $c_{\std(G|_I)}^{G^{\pr}}(\Pbb_{\std(\Xcal_I)})$ has multiple summands.

For example, the fourth term corresponds with the ideal $I=\{1,3\}$.
Observe that $\std(G|_I)$ is the edge-free graph on $[2]$, $\Xcal|_{\std(\{1,3\})}=\{\{1\}, \{2\}\}$, and the corresponding $G$-forest $T$ has $1$ and $2$ incomparable.
Since $C_2$ is also the complete graph on $[2]$, each element of $\Psi_{G}^{H}$ fiber of $\Xcal|_{\std(\{1,3\})}$ is just a linear extension of $T$.
\end{example}

\begin{theorem}
  If $\Gcal$ is a restriction compatible family, then the map
  $$c:\Kbb[\Gcal]\hookra\Kbb[\Sfrak_{\infty}]$$
  commutes with $\Delta$. In particular, $\Delta_{\Gcal}$ is coassociative.
\end{theorem}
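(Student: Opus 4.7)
The plan is to verify the identity $(c \otimes c) \circ \Delta_{\Gcal} = \Delta \circ c$ by expanding both sides into sums of basis tensors $\Fbb_u \otimes \Fbb_v$ of the Malvenuto-Reutenauer algebra and matching them via a combinatorial bijection. Fix $\Xcal \in \MTub(G)$ with $G \in \Gcal_n$. On the right-hand side, $\Delta(c(\Pbb_{\Xcal}))$ expands (using Proposition~\ref{linear extensions} and the definition of $\Delta$) as a double sum indexed by pairs $(w,i)$, where $w$ ranges over linear extensions of $T := \tau(\Xcal)$ and $0 \le i \le n$, with summand $\Fbb_{\std(w_1\cdots w_i)} \otimes \Fbb_{\std(w_{i+1}\cdots w_n)}$.

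For the left-hand side, the key preliminary observation is that for any subgraph inclusion $H \subseteq G'$ of graphs on $[m]$ and any $\Ycal \in \MTub(H)$,
\[
c\bigl(c_H^{G'}(\Pbb_{\Ycal})\bigr) \;=\; \sum_{w \in \Psi_H^{-1}(\Ycal)} \Fbb_w.
\]
This identity follows from the factorization $\Psi_H = \Psi_H^{G'} \circ \Psi_{G'}$ (which holds because both maps send a permutation to the appropriate coarsening of tubings built from the connected components of induced subgraphs, applied recursively) together with Proposition~\ref{linear extensions}. Consequently, $(c \otimes c)(\Delta_{\Gcal}(\Pbb_{\Xcal}))$ becomes a sum over ideals $I$ of $\Xcal$ of
\[
\Bigl(\sum_{u \in \Psi_{\std(G|_I)}^{-1}(\std(\Xcal|_I))} \Fbb_u\Bigr) \;\otimes\; \Bigl(\sum_{v \in \Psi_{\std(G/I)}^{-1}(\std(\Xcal/I))} \Fbb_v\Bigr),
\]
independent of the choices of $G',G'' \in \Gcal$ (which are canonical in the one-parameter case).

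The main step is to establish a bijection between the index sets of the two expressions, sending $(w,i) \mapsto (I,u,v)$ with $I = \{w_1,\ldots,w_i\}$, $u = \std(w_1\cdots w_i)$, and $v = \std(w_{i+1}\cdots w_n)$. I need to verify: (a)~the initial segment $\{w_1,\ldots,w_i\}$ of a linear extension of $T$ is an order ideal of $T$, which decomposes uniquely as a disjoint union of principal order ideals $(w_j)_{\down}$, and these principal order ideals are precisely tubes of $\Xcal$, so $I$ is an ideal of $\Xcal$; (b)~under the identification $\chi$ of Theorem~\ref{G-trees}, the $G|_I$-forest corresponding to $\Xcal|_I$ is exactly $T|_I$, so $u$ is a linear extension of $\tau(\std(\Xcal|_I))$; (c)~analogously, $T$ restricted to $V \setm I$ with the inherited partial order is the $G/I$-forest corresponding to $\Xcal/I$, so $v$ is a linear extension of $\tau(\std(\Xcal/I))$. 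The inverse map reconstructs $w$ by interleaving the letters of $u$ (shifted back into $I$) and $v$ (shifted back into $V \setm I$) in the unique order dictated by $I$. The main technical obstacle will be part (c), verifying that the forest-theoretic description of $\Xcal/I$ agrees with the graph-theoretic reconnected complement; this can be handled by checking on principal order ideals, using that for $k \notin I$, the smallest tube of $\Xcal/I$ containing $k$ is $(k_{\down} \setm I)$, and that this is a tube of $G/I$ by the definition of reconnected complement combined with the fact that the components of $G|_{I \cap k_{\down}}$ are tubes of $\Xcal|_{I}$.

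The \emph{In particular} clause is then immediate: since $c$ is injective (being an embedding) and $c \otimes c \otimes c$ is therefore also injective, coassociativity of $\Delta$ on $\Kbb[\Sfrak_{\infty}]$ transfers to $\Delta_{\Gcal}$. Explicitly,
\[
(c\otimes c\otimes c)\circ(\id \otimes \Delta_{\Gcal})\circ \Delta_{\Gcal} \;=\; (\id \otimes \Delta)\circ \Delta\circ c \;=\; (\Delta \otimes \id)\circ \Delta\circ c \;=\; (c\otimes c\otimes c)\circ(\Delta_{\Gcal}\otimes \id)\circ \Delta_{\Gcal},
\]
and cancelling $c \otimes c \otimes c$ gives coassociativity of $\Delta_{\Gcal}$.
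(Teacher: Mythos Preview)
Your proposal is correct and follows essentially the same approach as the paper: both expand $\Delta(c(\Pbb_{\Xcal}))$ over linear extensions of $\tau(\Xcal)$, use that initial segments of linear extensions are ideals, invoke the identifications $\tau(\Xcal)|_I=\tau(\Xcal|_I)$ and $\tau(\Xcal)|_{[n]\setminus I}=\tau(\Xcal/I)$, and absorb the intermediate graphs $G',G''$ via the factorization of the $c$ (equivalently $\Psi$) maps. The only cosmetic difference is that you phrase the matching as an explicit bijection $(w,i)\leftrightarrow(I,u,v)$ and factor through $\Psi_H=\Psi_H^{G'}\circ\Psi_{G'}$ at the outset, whereas the paper rewrites the sum as a chain of equalities and applies the factorization $c_K^G=c_H^G\circ c_K^H$ at the end.
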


\begin{proof}
  Fix a maximal tubing $\Xcal\in\MTub(G_n)$. We show that $\Delta(c(\Pbb_{\Xcal}))=(c\otimes c)\circ(\Delta_{\Gcal}(\Pbb_{\Xcal}))$.

  The element $c(\Pbb_{\Xcal})$ is supported by the permutations of $[n]$ that are linear extensions of the tree poset $\tau(\Xcal)$. Let $\Lcal(P)$ be the set of linear extensions of a poset $P$. Then,

  \begin{align*}
    \Delta(c(\Pbb_{\Xcal})) &= \sum_{u\in\Lcal(\tau(\Xcal))}\Delta(\Fbb_u)\\
    &= \sum_{i=0}^n\sum_{u\in\Lcal(\tau(\Xcal))}\Fbb_{\std(u_1\cdots u_i)}\otimes \Fbb_{\std(u_{i+1}\cdots u_n)}
  \end{align*}

  If $u=u_1\cdots u_n$ is a linear extension of $\tau(\Xcal)$, then the subset $\{u_1,\ldots,u_i\}$ is an ideal, and the complement $\{u_{i+1},\ldots,u_n\}$ is an order filter. If $I$ is an order ideal, then $\tau(\Xcal)|_I=\tau(\Xcal|_I)$ and $\tau(\Xcal)|_{[n]\setm I}=\tau(\Xcal/I)$. Putting these together, we have

  \begin{align*}
    \Delta(c(\Pbb_{\Xcal})) &= \sum_I(\sum_{u\in\Lcal(\tau(\Xcal)|_I)} \Fbb_{\std(u)}) \otimes (\sum_{w\in\Lcal(\tau(\Xcal)|_{[n]\setm I})}\Fbb_{\std(w)})\\
    &= \sum_I(\sum_{u\in\Lcal(\tau(\Xcal|_I))} \Fbb_{\std(u)}) \otimes (\sum_{w\in\Lcal(\tau(\Xcal/I))}\Fbb_{\std(w)})\\
    &= \sum_I c(\Pbb_{\std(\Xcal|_I)})\otimes c(\Pbb_{\std(\Xcal/I)}),
  \end{align*}

  where the sum ranges over ideals $I$ of $\tau(\Xcal)$. If $K\subseteq H\subseteq G$ is a sequence of subgraphs with a common vertex set $[n]$, then the map $c_K^G$ factors as $c_K^G=c_H^G\circ c_K^H$. Since $\Gcal$ is a restriction-compatible family,

   $$\sum_I c(\Pbb_{\std(\Xcal|_I)}) \otimes c(\Pbb_{\std(\Xcal/I)}) = \sum_I c_{G^{\pr}}^{K_{|I|}}c_{\std(G|_I)}^{G^{\pr}}(\Pbb_{\std(\Xcal|_I)})\otimes c_{G^{\pr\pr}}^{K_{|V\setm I|}}c_{\std(G/I)}^{G^{\pr\pr}}(\Pbb_{\std(\Xcal/I)}),$$

  where $G^{\pr},G^{\pr\pr}\in\Gcal$ such that $|V(G^{\pr})|=|I|$ and $|V(G^{\pr\pr})|=|V\setm I|$. The latter sum simplifies to

  $$(c\otimes c)\left(\sum_I c_{\std(G|_I)}^{G^{\pr}}(\Pbb_{\std(\Xcal|_I)})\otimes c_{\std(G/I)}^{G^{\pr\pr}}(\Pbb_{\std(\Xcal/I)})\right)=(c\otimes c)\circ(\Delta_{\Gcal}(\Pbb_{\Xcal})),$$
  as desired.
\end{proof}

\subsection{Insertional families of lattice congruences}\label{subsec_insertional}

If $\alpha=(i,j,\epsilon)$ is an arc on $[n]$, we define the \emph{deletion} $\alpha\setm k$ to be the arc on $[n-1]$ where
$$\alpha\setm k=\begin{cases}(i-1,j-1,\epsilon)\ \mbox{if }k<i\\(i,j,\epsilon)\ \mbox{if }k>j\\(i,j-1,\epsilon^{\pr})\ \mbox{if }i\leq k\leq j\end{cases},$$
where $\epsilon^{\pr}_l=\epsilon_l$ when $l\leq k-i$ and $\epsilon^{\pr}_l=\epsilon_{l+1}$ when $l>k-i$. That is, $\epsilon^{\pr}$ is obtained from $\epsilon$ by deleting some $+$ or $-$ entry. Reversing this operation, we say the arc $\beta$ is obtained from $\alpha$ by \emph{inserting} $k$ if $\alpha=\beta\setm k$.

A sequence of lattice congruences $\mathbf{\Theta}=\{\Theta_n\}_{n\geq 0}$ is an \emph{insertional family} if for any arc $\alpha$ contracted by $\Theta_n$, any arc $\beta$ obtained by inserting some $k\in[n+1]$ is contracted by $\Theta_{n+1}$. The analogue of Theorem~\ref{thm_translational_subalg} proved in \cite[Theorem 1.3, Proposition 8.1]{reading:2005lattice} is as follows.

\begin{theorem}
  If $\mathbf{\Theta}=\{\Theta_n\}_{n\geq 0}$ is an insertional family, then the map
  $$c:\Kbb[Z_{\infty}^{\mathbf{\Theta}}]\ra\Kbb[\Sfrak_{\infty}]$$
  embeds $\Kbb[Z_{\infty}^{\mathbf{\Theta}}]$ as a sub-coalgebra of $\Kbb[\Sfrak_{\infty}]$.
\end{theorem}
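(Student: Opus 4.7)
The plan is to show directly that $\Delta(c(\Pbb_x))$ lies in the image of $c \otimes c$ for every class $x \in Z_n$, from which one reads off a coproduct on $\Kbb[Z_{\infty}^{\mathbf{\Theta}}]$ making $c$ a coalgebra embedding. Write $d_L^i(w) = \std(w_1\cdots w_i)$ and $d_R^i(w) = \std(w_{i+1}\cdots w_n)$ for the deflation maps appearing in the Malvenuto-Reutenauer coproduct. For a subset $S \in \binom{[n]}{i}$ and $(u, v) \in \Sfrak_i \times \Sfrak_{n-i}$, denote by $w(S; u, v) \in \Sfrak_n$ the permutation whose first $i$ positions carry the values of $S$ ordered according to $u$ and whose last $n-i$ positions carry the values of $[n] \setm S$ ordered according to $v$; the fiber of $(d_L^i, d_R^i)$ over $(u, v)$ is exactly $\{w(S; u, v) : S \in \binom{[n]}{i}\}$.

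The crucial step is the following lemma: whenever $u \equiv u' \mod \Theta_i$ and $v \equiv v' \mod \Theta_{n-i}$, one has $w(S; u, v) \equiv w(S; u', v') \mod \Theta_n$ for every $S$. It suffices to treat a single cover relation $u \lessdot u'$ in $\Sfrak_i$ with contracted arc $\alpha(u, u')$, together with the symmetric case for $v$. Here $w(S; u, v) \lessdot w(S; u', v)$ is a cover in $\Sfrak_n$ whose arc has endpoints obtained from those of $\alpha(u, u')$ via the order-preserving bijection $\pi_S : [i] \to S \subseteq [n]$; a direct position analysis shows that each intermediate value in $[n] \setm S$ contributes a $+$ entry to $\epsilon$, because such a value necessarily occupies a position strictly to the right of the swap. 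Thus the new arc is produced from $\alpha(u, u')$ by $n-i$ single-element insertions (with specified $+$ entries and appropriate endpoint shifts), and iterating the insertional hypothesis forces it to be contracted by $\Theta_n$. The symmetric case for $v$-covers is identical, except that values of $S$ sitting between the swap endpoints contribute $-$ entries, since they now lie to the left of the swap.

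Given the lemma, for each $(y, z) \in Z_i \times Z_{n-i}$ the count $|\{S : w(S; u, v) \in f_n^{-1}(x)\}|$ is independent of the choice of $(u, v) \in f_i^{-1}(y) \times f_{n-i}^{-1}(z)$; call this constant $N(x, y, z, i)$. Regrouping
\[
\Delta(c(\Pbb_x)) = \sum_{w \in f_n^{-1}(x)} \sum_{i=0}^n \Fbb_{d_L^i(w)} \otimes \Fbb_{d_R^i(w)}
\]
first over fibers of $(d_L^i, d_R^i)$ and then over $(\Theta_i \times \Theta_{n-i})$-classes yields
\[
\Delta(c(\Pbb_x)) = \sum_{i=0}^n \sum_{(y, z) \in Z_i \times Z_{n-i}} N(x, y, z, i) \, c(\Pbb_y) \otimes c(\Pbb_z),
\]
so defining $\Delta_Z(\Pbb_x) = \sum_{i, y, z} N(x, y, z, i)\, \Pbb_y \otimes \Pbb_z$ realizes $\Kbb[Z_{\infty}^{\mathbf{\Theta}}]$ as a sub-coalgebra of $\Kbb[\Sfrak_\infty]$; coassociativity and counit compatibility are inherited from those of $\Delta$ via the injectivity of $c$. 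The principal difficulty is the explicit arc computation in the lemma, which is precisely the point where the insertional hypothesis enters; the remainder is bookkeeping.
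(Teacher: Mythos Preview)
The paper does not prove this statement; it is quoted from Reading's \emph{Lattice congruences, fans and Hopf algebras} (Theorem~1.3 and Proposition~8.1 there). Your argument is correct and is essentially the natural one motivating the definition of an insertional family. Two small points deserve a word. First, the reduction to a single contracted cover uses that each $\Theta_i$-class is an interval of the weak order, so congruent permutations are linked by a chain of contracted covers; one then treats changes in $u$ and in $v$ separately and concludes by transitivity of $\Theta_n$. Second, the arc computation is exactly as you describe: the arc of $w(S;u,v)\lessdot w(S;u',v)$ is obtained from $\alpha(u,u')$ by successively inserting the elements of $[n]\setminus S$ (each one landing strictly between the endpoints contributes a $+$, those outside merely shift the endpoints), so $n-i$ applications of the insertional hypothesis contract it in $\Theta_n$; the $v$-case is the mirror with $-$ signs. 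The final regrouping is routine.
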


We now prove the second part of Theorem~\ref{thm_main}.

\begin{proof}[Proof of Theorem~\ref{thm_main}(\ref{thm_main_2})]
Let $\Gcal=\{G_n\}_{n\geq 0}$ be a 1-parameter family of filled graphs, and let $\mathbf{\Theta}=\{\Theta_n\}_{n\geq 0}$ be the corresponding sequence of lattice congruences. We must prove that $\Gcal$ is restriction-compatible if and only if $\mathbf{\Theta}$ is insertional.

Suppose first that $\mathbf{\Theta}$ is an insertional family of lattice congruences. To prove that $\Gcal$ is restriction-compatible, it suffices to show that $\std(G_n/\{i_1,\ldots,i_l\})$ is a subgraph of $G_{n-l}$ for $1\leq i_1<\cdots<i_l\leq n$. Indeed, it is enough to prove this statement for $l=1$ since if $H$ and $G$ are graphs on $[n]$ such that $E(H)\subseteq E(G)$, the quotient $H/i$ is a subgraph of $G/i$ for any $i\in[n]$. Hence, the statement for $l=1$ gives a sequence of inclusions:
$$E(\std(G_n/\{i_1,\ldots,i_l\}))\subseteq\cdots\subseteq E(\std(G_{n-l+1}/\{i_1\}))\subseteq E(G_{n-l}).$$

For $k\in[n]$, we show that $\std(G_n/k)$ is a subgraph of $G_{n-1}$. Suppose $\{i,j\}$ is not an edge of $G_{n-1}$. Then $\Theta_{n-1}$ contracts the arc $\alpha=(i,j,+)$. Let $\beta^+=(i^{\pr},j^{\pr},+)$ be the arc obtained from $\alpha$ by inserting $k$ such that its sign vector $\epsilon$ is $(+,\ldots,+)$. If $i<k<j+1$, then there is another arc $\beta^-=(i^{\pr},j^{\pr},\epsilon^{\pr})$ such that $\epsilon^{\pr}_{k-i^{\pr}}=-$. Since $\mathbf{\Theta}$ is insertional, both $\beta^+$ and $\beta^-$ are contracted by $\Theta_n$.

We claim that $\{i,j\}$ is not an edge of $\std(G_n/k)$. If, to the contrary, it is an edge of $\std(G_n/k)$, then either $\{i^{\pr},j^{\pr}\}$ is an edge of $G_n$, or $\{i^{\pr},k\}$ and $\{k,j^{\pr}\}$ are both edges of $G_n$. In the former case, the arc $\beta^+$ is not contracted by $\Theta_n$, a contradiction. On the other hand, suppose $\{i^{\pr},j^{\pr}\}$ is not an edge, but $\{i^{\pr},k\}$ and $\{k,j^{\pr}\}$ both are. Then $i^{\pr}<k<j^{\pr}$ holds since $G_n$ is filled. But this means $i^{\pr}=i$ and $j^{\pr}=j+1$, so the arc $\beta^-$ is well-defined, and it is contracted by $\Theta_n$. Since $\Theta_n$ is generated by positive arcs, either $(i^{\pr},k,+)$ or $(k,j^{\pr},+)$ must be contracted by $\Theta_n$. But this contradicts the assumption that $\{i^{\pr},k\}$ and $\{k,j^{\pr}\}$ are edges of $G_n$.

Now assume that $\Gcal$ is a restriction-compatible family. Let $\alpha=(i,j,\epsilon)$ be an arc contracted by $\Theta_{n-1}$, and pick $k\in[n]$. We claim that any arc $\beta$ obtained by inserting $k$ into $\alpha$ is contracted by $\Theta_n$. This will prove that $\mathbf{\Theta}$ is an insertional family.

Since $\Theta_{n-1}$ is generated by positive arcs, there exists a positive subarc $\alpha^{\pr}=(i^{\pr},j^{\pr},+)$ of $\alpha$ that is contracted by $\Theta_{n-1}$. As a result, the pair $\{i^{\pr},j^{\pr}\}$ is not an edge of $G_{n-1}$. Moreover, any arc $\beta$ of $[n]$ with $\beta\setm k=\alpha$ contains a subarc $\beta^{\pr}$ such that $\beta^{\pr}\setm k=\alpha^{\pr}$. Hence, to show that $\beta$ is contracted by $\Theta_n$, it is enough to show that $\beta^{\pr}$ is contracted by $\Theta_n$.

If $\beta^{\pr}$ is a positive arc, then it follows that $\beta^{\pr}$ is contracted by $\Theta_n$ since $E(\std(G_n\setm k))\subseteq E(G_{n-1})$. If $\beta^{\pr}$ is not a positive arc, then $i^{\pr}<k\leq j^{\pr}$ and $\beta^{\pr}=(i^{\pr},j^{\pr}+1,\epsilon^{\pr})$ where $\epsilon^{\pr}_{k-i^{\pr}}$ is the only negative entry in $\epsilon^{\pr}$. In this case, since $E(\std(G_n/k))\subseteq E(G_{n-1})$, either $\{i^{\pr},k\}$ or $\{k,j^{\pr}+1\}$ is not an edge, which means that some subarc of $\beta^{\pr}$ is contracted by $\Theta_n$. It follows that $\beta^{\pr}$ is contracted by $\Theta_n$ as well.
\end{proof}

\section{Open problems}\label{sec:other}

\subsection{Lattices of maximal tubings}\label{subsec:tubing_lattice}

Not every poset of maximal tubings is a lattice. For example, the two indicated atoms of the poset of maximal tubings shown in Figure~\ref{fig_nl} has two minimal upper bounds, so it is not a lattice.

\begin{figure}
  
  \centering
  \includegraphics{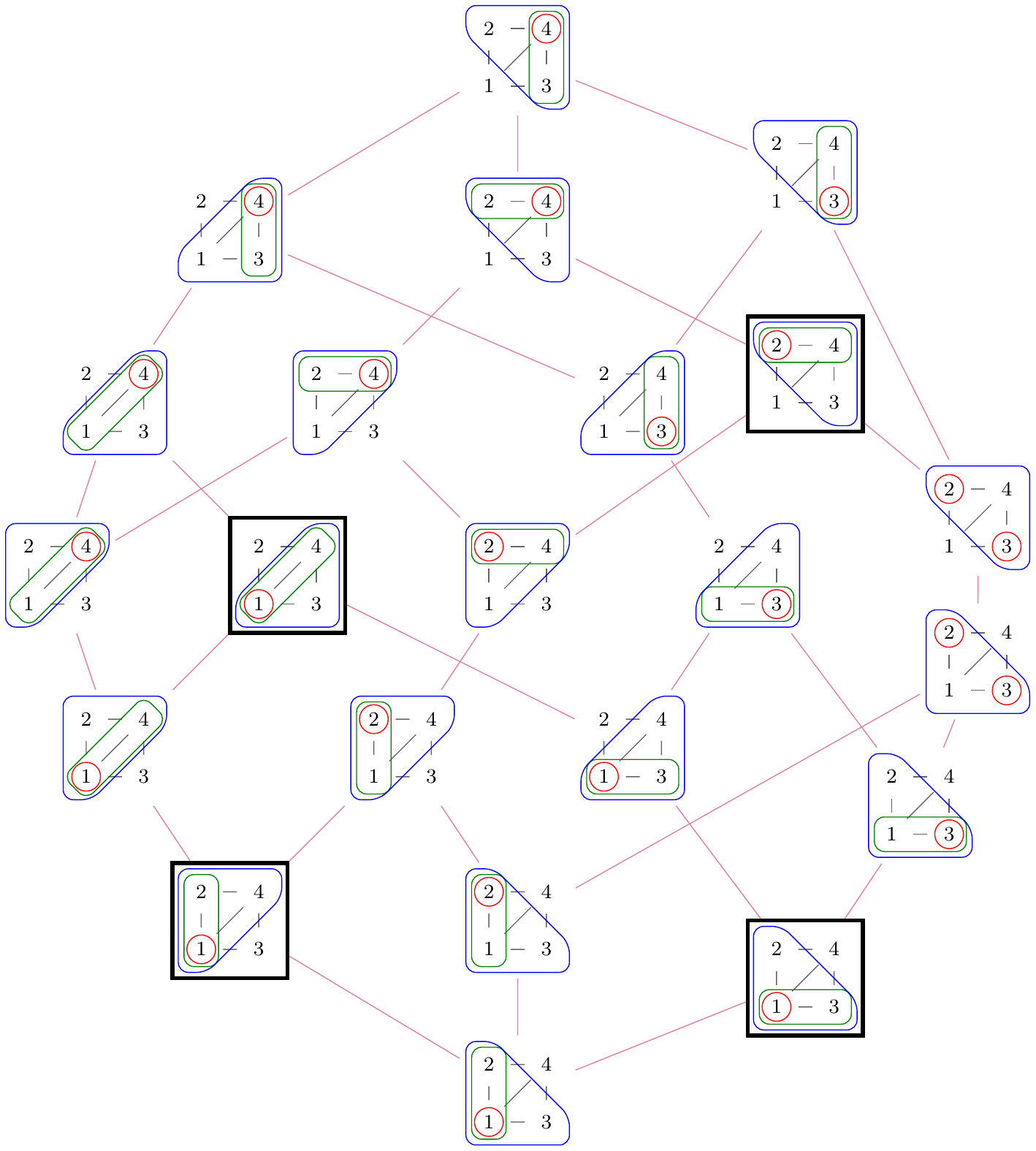}
  \caption{\label{fig_nl}A poset of maximal tubings that is not a lattice}
  
\end{figure}

Corollary~\ref{g-perm lattice cong} characterizes graphs $G$ for which $L_G$ is a meet-semilattice quotient of the weak order. A more fundamental problem is to characterize all graphs such that $L_G$ is a lattice. To this end, we make the simple observation that an interval $L^{\pr}$ of a lattice $L$ is a sublattice of $L$. In particular if $G^{\pr}$ is any graph obtained by contracting or deleting vertices of $G$ such that $L_{\std(G^{\pr})}$ is not a lattice, then $L_G$ is not a lattice either. Continuing to borrow from matroid terminology, we say that $G^{\pr}$ is a \emph{minor} of $G$ if it is the standardization of a sequence of contractions and deletions.

\begin{problem}
  Give an explicit list of minors such that $L_G$ is a lattice whenever $G$ does not contain a minor from the list.
\end{problem}

By exhaustive search, we found that when $G$ is a connected graph with four vertices, the poset $L_G$ is not a lattice if and only if $\{1,3\}$ and $\{2,4\}$ are edges but $\{2,3\}$ is not an edge in $G$. These are the seven graphs shown in Figure~\ref{fig_nlex}.

\begin{figure}
  \centering
  \includegraphics{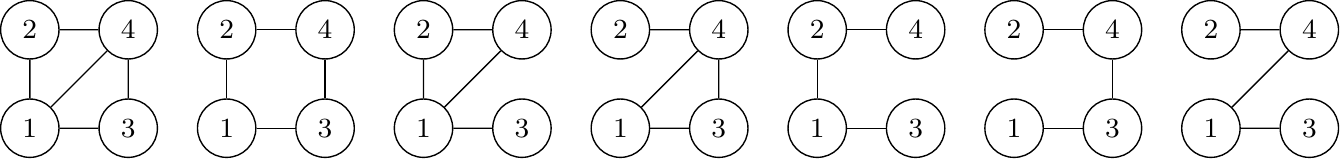}
  \caption{\label{fig_nlex}Graphs with four vertices such that $L_G$ is not a lattice}
\end{figure}

\subsection{Cyclohedra}\label{subsec:cycles}

Let $C_n$ be the $n$-cycle graph, with vertices labeled $1,2,\ldots,n$ in cyclic order. The graph associahedron $P_{C_n}$ is known as a \emph{cyclohedron}. The cyclohedron is combinatorially equivalent to the \emph{Type $B_{n-1}$ associahedron} \cite{simion:2003typeB}. Its facial structure is usually described in terms of Type $B_n$ Coxeter-Catalan combinatorial objects, e.g. centrally symmetric triangulations of polygons. The graph associahedron $P_{C_n}$ does not have the same normal fan as the Type $B_{n-1}$ associahedron, however. This geometric distinction is relevant in many of its applications. The graph associahedron $P_{C_n}$ is used to study the self-linking of knots \cite{bott.taubes:1994self} or to tile the moduli space $\ov{Z}^n$ in \cite{devadoss:2002space}, whereas the Type $B_n$ associahedron arises in the theory of cluster algebras \cite{fomin.zelevinsky:2003clusterII}.

From the Coxeter-Catalan point of view, the vertices of the Type $B_n$ associahedron can be partially ordered in several ways, which are called Cambrian lattices; \cite{reading:2006cambrian},\cite{thomas:2006tamari}. A \emph{Cambrian lattice} is a certain lattice quotient of the weak order of a finite Coxeter system. We remark that the poset of maximal tubings $L_{C_n}$ is not isomorphic to a Type $B_{n-1}$ Cambrian lattice for $n\geq 3$, despite the fact that they arise as orientations of the same undirected graph. Indeed, $L_{C_3}=L_{K_3}$ is the weak order of Type $A_2$, which is not isomorphic to any Cambrian lattice of Type $B_2$.

Cambrian lattices have a remarkable structure: they are all semidistributive lattices \cite{reading:2006cambrian}. A lattice is \emph{semidistributive} if for any three elements $x,y,z$: 
\begin{itemize}
\item if $x\wedge z=y\wedge z$, then $(x\vee y)\wedge z=x\wedge z$ and
\item if $x\vee z=y\vee z$, then $(x\wedge y)\vee z=x\vee z$.
\end{itemize}

The weak order is known to be semidistributive, so when $G$ is filled, the poset $L_G$ inherits semidistributivity as a lattice quotient of the weak order. We do not know of a way to represent $L_{C_n}$ as a lattice quotient of the weak order for $n\geq 4$. In particular, the canonical map $\Psi_{C_n}:\Sfrak_n\ra L_{C_n}$ is not a lattice map as $C_n$ is not filled for $n\geq 4$. However, we have verified by computer calculation that $L_{C_n}$ is a semidistributive lattice for $n\leq 6$. This has led us to the following question.

\begin{question}
  Is $L_{C_n}$ a semidistributive lattice for each $n\geq 1$?
\end{question}

We remark that the poset $L_G$ need not be semidistributive even when it is a lattice. For example, on may check that the star graph $G$ with $E(G)=\{\{1,2\},\{1,3\},\{1,4\}\}$ has a lattice of maximal tubings that is not semidistributive.




\subsection{Facial weak order}\label{subsec:facial_weak}

For $n\geq 0$, let $\Pi_n$ be the set of \emph{ordered set partitions} $(B_1,\ldots,B_l)$ of $[n]$. In \cite{chapoton:2000algebres}, Chapoton defined a Hopf algebra $\Kbb[\Pi_{\infty}]=\bigoplus\Kbb[\Pi_n]$ on the set of ordered set partitions. Identifying maximally refined ordered set partitions $(B_1,\ldots,B_n)$ with permutations, the natural inclusion $\Kbb[\Sfrak_{\infty}]\ra\Kbb[\Pi_{\infty}]$ is a Hopf algebra map.

This led to the development of the \emph{facial weak order} by Palacios and Ronco \cite{palacios:2006weak}, which is a partial ordering on $\Pi_n$ distinct from the usual refinement order. Under this poset, the product of two ordered set partitions is a sum of elements in an interval of the facial weak order. Dermenjian, Hohlweg, and Pilaud \cite{dermenjian:2018facial} proved that the facial weak order on $\Pi_n$ is a lattice for all $n\geq 1$. Furthermore, they show that any lattice congruence of the weak order may be ``lifted'' to a lattice congruence of the facial weak order. This suggests the following question:


\begin{question}
  Does a translational (resp. insertional) family $\mathbf{\Theta}=\{\Theta_n\}_{n\geq 0}$ of lattice congruences of the weak order lift to a family $\hat{\mathbf{\Theta}}=\{\hat{\Theta}_n\}_{n\geq 0}$ of congruences of the facial weak order such that $\Kbb[\Pi_{\infty}/\hat{\mathbf{\Theta}}]$ is a subalgebra (resp. sub-coalgebra) of $\Kbb[\Pi_{\infty}]$?
\end{question}









\section*{Acknowledgements}

The second author was supported by NSF/DMS-1440140 while in residence at the Mathematical Sciences Research Institute in Fall 2017.
We thank Vincent Pilaud and Ricky Ini Liu for helpful suggestions.

\bibliography{bib_graph_assoc}{}

\begin{thebibliography}{10}

\bibitem{bjorner:1992essential}
Anders Bj{\"o}rner.
\newblock Essential chains and homotopy type of posets.
\newblock {\em Proceedings of the American Mathematical Society},
  116(4):1179--1181, 1992.

\bibitem{bott.taubes:1994self}
Raoul Bott and Clifford Taubes.
\newblock On the self-linking of knots.
\newblock {\em Journal of Mathematical Physics}, 35(10):5247--5287, 1994.

\bibitem{bruggesser.mani:1972shellable}
Heinz Bruggesser and Peter Mani.
\newblock Shellable decompositions of cells and spheres.
\newblock {\em Mathematica Scandinavica}, 29(2):197--205, 1972.

\bibitem{carr.devadoss:2006coxeter}
Michael Carr and Satyan~L Devadoss.
\newblock Coxeter complexes and graph-associahedra.
\newblock {\em Topology and its Applications}, 153(12):2155--2168, 2006.

\bibitem{chapoton:2000algebres}
Fr{\'e}d{\'e}ric Chapoton.
\newblock Algebres de hopf des permutahedres, associahedres et hypercubes.
\newblock {\em Advances in Mathematics}, 150(2):264--275, 2000.

\bibitem{davis.janus.scott:2003fundamental}
M~Davis, T~Januszkiewicz, and R~Scott.
\newblock Fundamental groups of blow-ups.
\newblock {\em Advances in Mathematics}, 177(1):115--179, 2003.

\bibitem{dermenjian:2018facial}
Aram Dermenjian, Christophe Hohlweg, and Vincent Pilaud.
\newblock The facial weak order and its lattice quotients.
\newblock {\em Transactions of the American Mathematical Society},
  370(2):1469--1507, 2018.

\bibitem{devadoss:2002space}
L~Devadoss.
\newblock A space of cyclohedra.
\newblock {\em Discrete \& Computational Geometry}, 29(1):61--75, 2002.

\bibitem{feichtner.sturmfels:2005matroid}
Eva~Maria Feichtner and Bernd Sturmfels.
\newblock Matroid polytopes, nested sets and bergman fans.
\newblock {\em Portugaliae Mathematica}, 62(4):437--468, 2005.

\bibitem{fomin.zelevinsky:2003clusterII}
Sergey Fomin and Andrei Zelevinsky.
\newblock Cluster algebras {II}: {F}inite type classification.
\newblock {\em Inventiones mathematicae}, 154(1):63--121, 2003.

\bibitem{forcey:2012species}
Stefan Forcey.
\newblock Extending the {T}amari lattice to some compositions of species.
\newblock In {\em Associahedra, Tamari Lattices and Related Structures}, pages
  187--210. Springer, 2012.

\bibitem{forcey.springfield:2010geometric}
Stefan Forcey and Derriell Springfield.
\newblock Geometric combinatorial algebras: cyclohedron and simplex.
\newblock {\em Journal of Algebraic Combinatorics}, 32(4):597--627, 2010.

\bibitem{grinberg.reiner:2014hopf}
Darij Grinberg and Victor Reiner.
\newblock Hopf algebras in combinatorics.
\newblock {\em arXiv preprint arXiv:1409.8356}, 2014.

\bibitem{hersh:2018nonrevisiting}
Patricia Hersh.
\newblock Posets arising as 1-skeleta of simple polytopes, the non-revisiting
  path conjecture, and poset topology.
\newblock preprint on webpage at \url{http://www4.ncsu.edu/~plhersh/}.

\bibitem{kalai:1997linear}
Gil Kalai.
\newblock Linear programming, the simplex algorithm and simple polytopes.
\newblock {\em Mathematical Programming}, 79(1-3):217--233, 1997.

\bibitem{loday.ronco:1998hopf}
Jean-Louis Loday and Mar{\'\i}a~O Ronco.
\newblock Hopf algebra of the planar binary trees.
\newblock {\em Advances in Mathematics}, 139(2):293--309, 1998.

\bibitem{malvenuto.reutenauer:1995duality}
Clauda Malvenuto and Christophe Reutenauer.
\newblock Duality between quasi-symmetrical functions and the solomon descent
  algebra.
\newblock {\em Journal of Algebra}, 177(3):967--982, 1995.

\bibitem{manneville.pilaud:2015graph}
Thibault Manneville and Vincent Pilaud.
\newblock Graph properties of graph associahedra.
\newblock {\em S\'eminaire Lotharingien de Combinatoire}, B73d, 2015.

\bibitem{mcconville:2017crosscut}
Thomas McConville.
\newblock Crosscut-simplicial lattices.
\newblock {\em Order}, 34(3):465--477, 2017.

\bibitem{novelli2010free}
Jean-Christophe Novelli and Jean-Yves Thibon.
\newblock Free quasi-symmetric functions and descent algebras for wreath
  products, and noncommutative multi-symmetric functions.
\newblock {\em Discrete Mathematics}, 310(24):3584--3606, 2010.

\bibitem{palacios:2006weak}
Patricia Palacios and Mar{\'\i}a~O Ronco.
\newblock Weak bruhat order on the set of faces of the permutohedron and the
  associahedron.
\newblock {\em Journal of Algebra}, 299(2):648--678, 2006.

\bibitem{pilaud:2018brick}
Vincent Pilaud.
\newblock Brick polytopes, lattice quotients, and hopf algebras.
\newblock {\em Journal of Combinatorial Theory, Series A}, 155:418--457, 2018.

\bibitem{pilaud2018hopf}
Vincent Pilaud.
\newblock Hopf algebras on decorated noncrossing arc diagrams.
\newblock {\em arXiv preprint arXiv:1801.03867}, 2018.

\bibitem{pilaud.santos:2017quotientopes}
Vincent Pilaud and Francisco Santos.
\newblock Quotientopes.
\newblock {\em arXiv preprint arXiv:1711.05353}, 2017.

\bibitem{postnikov.reiner.williams:2008faces}
Alex Postnikov, Victor Reiner, and Lauren Williams.
\newblock Faces of generalized permutohedra.
\newblock {\em Doc. Math}, 13(207-273):51, 2008.

\bibitem{postnikov:2009permutohedra}
Alexander Postnikov.
\newblock Permutohedra, associahedra, and beyond.
\newblock {\em International Mathematics Research Notices}, 2009(6):1026--1106,
  2009.

\bibitem{reading:2017homomorphisms}
Nathan Reading.
\newblock Lattice homomorphisms between weak orders.
\newblock arXiv:1712.01723.

\bibitem{reading:2004lattice}
Nathan Reading.
\newblock Lattice congruences of the weak order.
\newblock {\em Order}, 21(4):315--344, 2004.

\bibitem{reading:2005lattice}
Nathan Reading.
\newblock Lattice congruences, fans and {H}opf algebras.
\newblock {\em Journal of Combinatorial Theory, Series A}, 110(2):237--273,
  2005.

\bibitem{reading:2006cambrian}
Nathan Reading.
\newblock Cambrian lattices.
\newblock {\em Advances in Mathematics}, 205(2):313--353, 2006.

\bibitem{reading:2015noncrossing}
Nathan Reading.
\newblock Noncrossing arc diagrams and canonical join representations.
\newblock {\em SIAM Journal on Discrete Mathematics}, 29(2):736--750, 2015.

\bibitem{reading:2016lattice}
Nathan Reading, George Gr{\"a}tzer, and Friedrich Wehrung.
\newblock Lattice theory of the poset of regions.
\newblock In {\em Lattice theory: Special topics and applications}, pages
  399--487. Springer, 2016.

\bibitem{ronco:2012tamari}
Mar{\'\i}a Ronco.
\newblock Generalized {T}amari order.
\newblock In {\em Associahedra, Tamari Lattices and Related Structures}, pages
  339--350. Springer, 2012.

\bibitem{santos:2012counterexample}
Francisco Santos.
\newblock A counterexample to the {H}irsch conjecture.
\newblock {\em Annals of mathematics}, pages 383--412, 2012.

\bibitem{simion:2003typeB}
Rodica Simion.
\newblock A type-{B} associahedron.
\newblock {\em Advances in Applied Mathematics}, 30(1-2):2--25, 2003.

\bibitem{thomas:2006tamari}
Hugh Thomas.
\newblock Tamari lattices and noncrossing partitions in type {B}.
\newblock {\em Discrete mathematics}, 306(21):2711--2723, 2006.

\bibitem{ZieglerGu}
G\"unter~M Ziegler.
\newblock {\em Lectures on polytopes}.
\newblock Graduate texts in mathematics; 152. Springer-Verlag, New York, 1995.

\end{thebibliography}
\bibliographystyle{plain}

\end{document}